\documentclass[reqno,12pt]{amsart}
\usepackage{mathrsfs}
\setlength{\textheight}{8.1in}
\setlength{\oddsidemargin}{0.6cm}
\setlength{\evensidemargin}{0.6cm}
\setlength{\textwidth}{5.8in}

\usepackage{amscd}
\usepackage{amssymb}
\usepackage{amsmath}
\usepackage{amsfonts}
\usepackage{enumerate}
\usepackage{graphicx}
\usepackage{epic}
\usepackage[all,cmtip]{xy}
\usepackage{overpic}
\usepackage{subfig}
\usepackage{caption}
\usepackage{tikz}

\newtheorem{thm}{Theorem}[section]
\newtheorem{prop}[thm]{Proposition}
\newtheorem{lem}[thm]{Lemma}
\newtheorem{cor}[thm]{Corollary}
\theoremstyle{definition}
\newtheorem{Def}[thm]{Definition}
\newtheorem{conj}[thm]{Conjecture}
\newtheorem{exmp}[thm]{Example}

\newtheorem{rem}[thm]{Remark}
\newtheorem*{rem*}{Remark}

\parskip=5pt
\newcommand{\kk}{\mathbf{k}}
\newcommand{\xx}{\mathbf{x}}

\newcommand{\ZZ}{\mathcal {Z}}
\newcommand{\Aa}{\mathcal {A}}
\newcommand{\WW}{\mathcal {W}}
\newcommand{\KK}{\mathcal {K}}

\newcommand{\Ll}{\mathcal {L}}

\newcommand{\CC}{\mathcal {C}}
\newcommand{\RR}{\mathcal {R}}
\newcommand{\Ss}{\mathcal {S}}

\newcommand{\VV}{\mathcal {V}}
\newcommand{\UU}{\mathcal {U}}

\newcommand{\FF}{\mathcal {F}}
\newcommand{\II}{\mathcal {I}}

\newcommand{\DD}{\mathcal {D}}
\newcommand{\Zz}{\mathbb {Z}}
\newcommand{\Cc}{\mathbb {C}}
\newcommand{\Rr}{\mathbb {R}}
\newcommand{\Qq}{\mathbb {Q}}
\newcommand{\Nn}{\mathbb {N}}

\def\w{\widetilde}

\def\xr{\xrightarrow}
\def\sfr{\smallfrown}
\def\ssm{\smallsmile}

\numberwithin{equation}{section}
\usepackage[colorlinks,linktocpage]{hyperref}
\hypersetup{citecolor=blue,linkcolor=blue}
\begin{document}
	\title[Toric spaces and face enumeration on simplicial manifolds]{Toric spaces and face enumeration on simplicial manifolds}
	\author[F.~Fan]{Feifei Fan}
	\thanks{The author is supported by the National Natural Science Foundation of China (Grant Nos. 11801580, 11871284)}
	\address{Feifei Fan, School of Mathematical Sciences, South China Normal University, Guangzhou, 510631, China.}
	\email{fanfeifei@mail.nankai.edu.cn}
	\subjclass[2010]{Primary 05E45, 13F55, 57P10; Secondary 05E40, 14M25, 55N32, 57P05.}
	\maketitle
	\begin{abstract}
		In this paper, we study the well-know $g$-conjecture for rational homology spheres in a topological way.
		To do this, we construct a class of topological spaces with torus actions, which can be viewed as topological generalizations of toric varieties.
		Along this way we prove that after doing stellar subdivision operations at certain middle dimensional faces of an arbitrary rational homology sphere, the $g$-conjecture is valid.
		Furthermore, we give topological proofs of several fundamental algebraic results about Buchsbaum complexes and simplicial manifolds.
		In this process, we also get a few interesting results in toric topology.
	\end{abstract}
	\tableofcontents
	\section{Introduction}
	\renewcommand{\thethm}{\arabic{thm}}
	Our motivating problem is the following celebrated $g$-conjecture in algebraic combinatorics, which was first
	proposed by McMullen for characterizing the face numbers of simplicial polytopes \cite{Mc71}.
	See the great survey article \cite{Sw14} about this conjecture by Swartz.
	\begin{conj}[$g$-conjecture]\label{conj:g-con}
		The $g$-vector of a rational homology sphere is a $M$-vector.
	\end{conj}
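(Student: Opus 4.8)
The plan is to reproduce, in the toric--topological setting of this paper, Stanley's proof of the $g$-theorem for simplicial polytopes. To a rational homology $(d-1)$-sphere $K$ on vertex set $[n]$ I would attach a $2d$-dimensional toric space $X_K$ with an effective $T^d$-action, chosen so that $H^*(X_K;\Qq)\cong\Qq[K]/(\theta_1,\dots,\theta_d)$ as graded $\Qq$-algebras for a generic linear system of parameters $\theta_1,\dots,\theta_d$; in particular $H^{\mathrm{odd}}(X_K;\Qq)=0$ and $\dim_\Qq H^{2i}(X_K;\Qq)=h_i$. The goal is then to produce a class $\omega\in H^2(X_K;\Qq)$ for which multiplication $\omega\colon H^{2i}(X_K;\Qq)\to H^{2i+2}(X_K;\Qq)$ is injective for every $i<d/2$. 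Granting this, $H^*(X_K;\Qq)/(\omega)$ is a standard graded $\Qq$-algebra whose Hilbert function in degrees $0,\dots,\lfloor d/2\rfloor$ is exactly the $g$-vector $(g_0,\dots,g_{\lfloor d/2\rfloor})$, so Macaulay's characterization of Hilbert functions of standard graded algebras (together with the fact that an initial segment of an $M$-vector is again an $M$-vector) forces the $g$-vector to be an $M$-vector.

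The construction of $X_K$ and its basic properties should come out of the toric-space machinery of this paper. Starting from the moment-angle complex $\ZZ_K$ with its residual $T^n$-action, one passes to the quotient by a generic $(n-d)$-dimensional subtorus; over $\Qq$ such a ``rational characteristic function'' exists with no polytopality hypothesis on $K$, which is precisely why one works with toric spaces rather than quasitoric manifolds, and the identification of $H^*(X_K;\Qq)$ with the Artinian reduction of the face ring is the expected Eilenberg--Moore spectral sequence computation for the Borel fibration, using that $K$, being a homology sphere, is Cohen--Macaulay. Since moreover $K$ is Gorenstein$^*$, the algebra $\Qq[K]/(\theta_1,\dots,\theta_d)$ is a Poincaré duality algebra of socle degree $d$; this should be promoted to the statement that $X_K$ is a rational Poincaré duality space of formal dimension $2d$, which recovers the Dehn--Sommerville relations $h_i=h_{d-i}$ topologically and makes available the Lefschetz bilinear form $(\alpha,\beta)\mapsto\langle\alpha\beta\,\omega^{\,d-2i},[X_K]\rangle$ on $H^{2i}(X_K;\Qq)$, whose nondegeneracy is equivalent to the desired Lefschetz injectivity in that degree.

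Producing $\omega$ and proving nondegeneracy of this form is the crux and the only genuinely hard point: when $K$ is polytopal one takes $X_K$ to be a projective toric variety and $\omega$ an ample class, and this is exactly the ingredient the general case lacks. I would take $\omega=\sum_{v}c_v x_v$ to be a generic combination of the vertex classes and attack nondegeneracy by a double induction, on $d$ and on the skeleton of $K$, run geometrically on $X_K$ in the spirit of Adiprasito's combinatorial Lefschetz theorems: the $2(d-1)$-dimensional torus-invariant subspace of $X_K$ cut out by a vertex $v$ is the toric space $X_{\mathrm{lk}\,v}$ of the link, and the Mayer--Vietoris sequence of the orbit-type stratification of $X_K$, the Gysin sequences of the circle subbundles over the codimension-$2$ strata, and localization at the $T^d$-fixed points should reduce nondegeneracy of the pairing on $H^\bullet(X_K;\Qq)$ to the same statement for the links $X_{\mathrm{lk}\,v}$, where it holds by induction, the genericity of the $c_v$ keeping the reduction transversal. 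The real obstacle is the base of this degeneration, namely nondegeneracy in the middle degree $i=\lfloor d/2\rfloor$: this is the single place where no positivity is automatically at hand, and getting the requisite sign (a biased Poincaré pairing / Hall--Laman type argument) to interact correctly with the equivariant topology of $X_K$ rather than with the bare combinatorics of $\Qq[K]$ is the essential new content a complete proof must supply. I emphasize that the stellar-subdivision theorem established above does not shortcut this: subdividing changes the complex, and hence its $g$-vector, so one cannot transfer the conclusion back to $K$ itself (and it is not known that every rational homology sphere refines to a polytopal one), so a direct Lefschetz argument on $X_K$ of the kind sketched above appears unavoidable.
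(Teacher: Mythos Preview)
The statement you are attempting to prove is labeled in the paper as a \emph{conjecture}, not a theorem: the paper does not prove it in general, and there is therefore no ``paper's own proof'' to compare against. What the paper actually establishes are partial results---the weak Lefschetz property for the $\lceil d/2\rceil$th and $\lfloor d/2\rfloor$th partial barycentric subdivisions (Theorem~\ref{thm:WLP of derived}) and for the stellar subdivisions in $\Ss(\Delta,\mathscr{A}_{\lceil d/2\rceil})$ and $\Ss(\Delta,\mathscr{A}_{\lfloor d/2\rfloor})$ (Theorem~\ref{thm:stellar WLP}, Corollary~\ref{cor:g-conj})---not the $g$-conjecture for $\Delta$ itself.

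Your proposal is honest about this: you correctly identify that the construction of $X_K=M(\Delta,\mathit\Lambda)$, the identification $H^*(X_K;\Qq)\cong\Qq[K]/\Theta$, and rational Poincar\'e duality are exactly what the paper supplies, and you correctly isolate the missing ingredient as the Lefschetz injectivity in the middle degree. But then you do not supply it; you write that the biased-pairing/Hall--Laman step ``is the essential new content a complete proof must supply,'' and you leave it at that. That is not a proof, it is a statement of what remains to be proved. The inductive scheme you sketch (reduce to links via Gysin/Mayer--Vietoris and use genericity of the $c_v$) is precisely the mechanism behind Proposition~\ref{prop:link} and Theorem~\ref{thm:basis link} of the paper, and the paper pushes it as far as it goes without new input: it yields WLP after enough stellar subdivisions in middle dimension, but---as you yourself note---that does not transfer back to $K$. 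So your proposal has a genuine gap at exactly the point where the paper stops and the conjecture begins.
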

	
	To understand this conjecture, let us recall some notions.
	
	For a $(d-1)$-dimensional simplicial complex $\Delta$, the \emph{$f$-vector} of $\Delta$ is
	\[(f_0,f_1,\dots,f_{d-1}),\]
	where $f_i$ is the number of the $i$-dimensional faces of $\Delta$.
	Sometimes it is convenient to set $f_{-1}=1$ corresponding to the empty set. The \emph{$h$-vector} of $\Delta$ is the integer vector
	$(h_0,h_1,\dots,h_d)$ defined from the equation
	\[
	h_0t^d+\cdots+h_{d-1}t+h_d=f_{-1}(t-1)^d+f_0(t-1)^{d-1}+\cdots+f_{d-1}.
	\]
	The $g$-vector $(g_0,\dots,g_{\lfloor d/2 \rfloor})$ is defined to be $g_0=1$, $g_i=h_i-h_{i-1}$ for $1\leqslant i\leqslant \lfloor d/2 \rfloor$.
	
	In order to define $M$-vectors we first introduce the pseudopowers.
	For any two positive integers $a$ and $i$ there is a unique way to write
	\[a=\binom{a_i}{i}+\binom{a_{i-1}}{i-1}+\cdots+\binom{a_j}{j}\]
	with $a_{i}>a_{i-1}>\cdots>a_{j} \geqslant j \geqslant 1$. Define the $i$th \emph{pseudopower of $a$} as
	\[a^{\langle i\rangle}=\binom{a_i+1}{i+1}+\binom{a_{i-1}+1}{i}+\cdots+\binom{a_j+1}{j+1}.\]
	For convenience we define $0^{\langle i\rangle}=0$ for all $i$.
	A sequence of integers $(k_0,k_1,k_2,\dots)$ satisfies $k_{0}=1$ and  $0\leqslant k_{i+1} \leqslant k_{i}^{\langle i\rangle}$ for $i\geqslant1$ is called an \emph{$M$-sequence}. Finite $M$-sequences are \emph{$M$-vectors}. Its name comes from the following fundamental result of Macaulay.
	\begin{thm}[Macaulay {\cite{Mac27}}, see {\cite[\S 4.2]{BH98}}]\label{thm:m-vector}
		A sequence of integers $(k_0,k_1,k_2,\dots)$ is a $M$-sequence if and only if there exists a connected commutative graded algebra $A=A_0\oplus A_1\oplus A_2\oplus\cdots$ over a field $\kk$ such that $A$ is generated by its degree-one elements and $\dim_\kk A_i=k_i$ for $i\geqslant 0$.
	\end{thm}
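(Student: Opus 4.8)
The plan is to route both implications through \emph{lex-segment ideals}, for which all the relevant quantities are explicit. Fix the polynomial ring $R=\kk[x_1,\dots,x_n]$ with $x_1>\dots>x_n$, graded by total degree; for a homogeneous ideal $I$ write $H(R/I,i)=\dim_\kk(R/I)_i$, and let $R_1$ denote the degree-one component of $R$. Call a monomial ideal $L\subseteq R$ a \emph{lex ideal} if in each degree $i$ the monomials of $L$ form an initial segment of the lexicographic order. The computational backbone will be the identity
\[
H(R/L,i+1)=H(R/L,i)^{\langle i\rangle}\qquad\text{for every lex ideal }L,
\]
obtained by writing $H(R/L,i)$ in its unique Macaulay (pseudopower) representation and directly counting standard monomials; thus lex ideals realize $M$-sequences, and they will also turn out to be extremal.

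\textbf{From an $M$-sequence to an algebra.} Given an $M$-sequence $(k_0,k_1,\dots)$, I would set $n=k_1$ and take $L_i\subseteq R_i$ to be spanned by the lexicographically \emph{largest} $\binom{n+i-1}{i}-k_i$ monomials of degree $i$; this is well defined because iterating $k_{i+1}\le k_i^{\langle i\rangle}$ and using monotonicity of the pseudopower gives $k_i\le\binom{n+i-1}{i}$. The claim is that $L=\bigoplus_iL_i$ is an ideal, i.e.\ $R_1L_i\subseteq L_{i+1}$. Indeed, the product of $R_1$ with the lex-initial segment of degree-$i$ monomials of codimension $k_i$ is again a lex-initial segment, now of codimension $k_i^{\langle i\rangle}$ (the count recalled above); since $k_{i+1}\le k_i^{\langle i\rangle}$, the segment $L_{i+1}$, of codimension $k_{i+1}$, contains it. Then $A:=R/L$ is connected since $k_0=1$ forces $L_0=0$, it is generated by its degree-one elements since it is a quotient of $R$, and $\dim_\kk A_i=k_i$ by construction.

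\textbf{From an algebra to an $M$-sequence.} Let $A$ be as in the statement; $A_0=\kk$ gives $k_0=1$. Picking a $\kk$-basis of $A_1$ presents $A$ as $R/I$ with $R=\kk[x_1,\dots,x_n]$, $n=k_1$, and $I$ homogeneous, $I_0=I_1=0$. Replacing $I$ by an initial ideal $J=\mathrm{in}_\prec(I)$ for any term order does not change the Hilbert function (the standard monomials form a $\kk$-basis of $(R/I)_i$ in each degree), so it suffices to prove $H(R/J,i+1)\le H(R/J,i)^{\langle i\rangle}$ for an arbitrary monomial ideal $J$. This is the combinatorial heart: among all sets of monomials of degree $i$, a lex-initial segment $W^{\mathrm{lex}}$ has the smallest upper shadow $\lvert R_1\cdot W^{\mathrm{lex}}\rvert$; moreover $R_1\cdot W^{\mathrm{lex}}$ is itself a lex-initial segment and $\binom{n+i}{i+1}-\lvert R_1\cdot W^{\mathrm{lex}}\rvert=c^{\langle i\rangle}$, where $c=\binom{n+i-1}{i}-\lvert W^{\mathrm{lex}}\rvert$. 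Taking $W^{\mathrm{lex}}$ of the same cardinality as the set $J_i$ of degree-$i$ monomials of $J$ (so $c=H(R/J,i)$) and using $R_1J_i\subseteq J_{i+1}$, one gets
\begin{align*}
H(R/J,i+1)&=\binom{n+i}{i+1}-\lvert J_{i+1}\rvert\le\binom{n+i}{i+1}-\lvert R_1J_i\rvert\\
&\le\binom{n+i}{i+1}-\lvert R_1\cdot W^{\mathrm{lex}}\rvert=H(R/J,i)^{\langle i\rangle},
\end{align*}
that is, $k_{i+1}\le k_i^{\langle i\rangle}$.

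\textbf{The main obstacle.} The single genuinely nontrivial ingredient is the compression lemma invoked above — that a lex-initial segment minimizes the upper shadow among monomial sets of a fixed size, with the extremal value dictated by the pseudopower. I would prove it by induction on the number $n$ of variables: split a monomial set $W$ according to the exponent of $x_n$, ``compress'' each layer to a lex segment in $x_1,\dots,x_{n-1}$ and check that the shadow does not grow, then show that a fully compressed set which is not already a lex segment can be improved further, so the minimum is attained by a lex segment; translating the resulting count into binomial coefficients yields the pseudopower formula. Everything else is bookkeeping with Macaulay representations, together with the standard fact that passing to an initial ideal preserves Hilbert functions; alternatively one may simply quote the existence of a lex ideal with prescribed Hilbert function, as in \cite[\S4.2]{BH98}.
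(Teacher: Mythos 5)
The paper does not prove this statement at all: it is quoted as Macaulay's classical theorem with a citation to \cite[\S 4.2]{BH98}, and your argument is precisely the standard proof given there — realizing a prescribed $M$-sequence by a lex ideal, and bounding the Hilbert function of an arbitrary standard graded algebra by passing to an initial (monomial) ideal and invoking the compression lemma that lex segments minimize the upper shadow with the pseudopower count. Your proposal is correct, with the one genuinely nontrivial ingredient (the compression lemma) correctly isolated and either sketched by the usual induction on the number of variables or delegated to the cited reference, so there is nothing to compare against on the paper's side and nothing to fix.
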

	
	In 1980, by using results from algebraic geometry, Stanley gave a beautiful proof of Conjecture \ref{conj:g-con} for the case where $\Delta$ is a \emph{polytopal sphere}, i.e. the boundary complex of a simplicial polytope. Later, McMullen gave another proof of the same result without using algebraic geometry.
	
	Let $\Theta=(\theta_1,\dots,\theta_d)$ be a l.s.o.p. (linear system of parameters) of the face ring $\Qq[\Delta]$ of a $(d-1)$-compelx $\Delta$ (see \S \ref{subsec:l.s.o.p.} for the definitions).
	Stanley first noticed that when $\Delta$ is Cohen-Macaulay (a class of simplicial complexes including simplicial spheres), the Hilbert function of $\Qq[\Delta]/\Theta$ is equal to the $h$-vector of $\Delta$.
	So in the case of polytopal spheres, Conjecture \ref{conj:g-con} is an immediate consequence  of Theorem \ref{thm:m-vector} and the following theorem.
	\begin{thm}[Stanley \cite{S80}, McMullen \cite{Mc93,Mc96}]\label{thm:Lefschetz}
		If $\Delta$ is the boundary of a simplicial $d$-polytope, then for a certain l.s.o.p. $\Theta$ of $\Qq[\Delta]$, there exists a linear form $\omega\in\Qq[\Delta]$ such that the multiplication map
		\[\cdot\omega^{d-2i}:(\Qq[\Delta]/\Theta)_{i}\to(\Qq[\Delta]/\Theta)_{d-i}\]
		is an isomorphism for all $i\leqslant d/2$.
	\end{thm}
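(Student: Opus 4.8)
The plan is to follow Stanley's original route through toric geometry: to realize $\Qq[\Delta]/\Theta$ as the rational cohomology ring of a projective toric variety and then invoke the Hard Lefschetz theorem there. An alternative, entirely elementary argument is McMullen's, via the polytope algebra and mixed volumes; but the toric route is the natural one in the present context, and it is the one I would take.

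First I would pass to a convenient geometric model. Since the face ring $\Qq[\Delta]$ depends only on the combinatorial type of $\Delta=\partial P$, and since every simplicial $d$-polytope can be perturbed, without changing its combinatorial type, to one with integral vertices, I may assume $P\subset\Rr^{d}$ has integral vertices and contains the origin in its interior. Let $\Sigma_{P}$ be the face fan of $P$ (the fan whose cones are spanned by the faces of $P$) and let $u_{1},\dots,u_{n}$ be the primitive integral generators of its rays, indexed so that $u_{i}$ lies on the ray through the $i$-th vertex. Because $P$ is simplicial, $\Sigma_{P}$ is a complete \emph{simplicial} fan, and because it is the normal fan of the polar dual polytope $P^{*}$, the toric variety $X=X_{\Sigma_{P}}$ is \emph{projective}; being simplicial, $X$ is an orbifold, in particular a compact rational homology manifold of real dimension $2d$. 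By the theorem of Jurkiewicz and Danilov there is an isomorphism of graded rings $\Qq[\Delta]/\Theta\cong H^{*}(X;\Qq)$ doubling degrees, where $\Theta=(\theta_{1},\dots,\theta_{d})$ with $\theta_{j}=\sum_{i=1}^{n}\langle e_{j},u_{i}\rangle x_{i}$ (for $e_{1},\dots,e_{d}$ the standard basis of $(\Rr^{d})^{*}$); these $\theta_{j}$ form an l.s.o.p.\ of $\Qq[\Delta]$ precisely because $\Sigma_{P}$ is complete. Moreover, a strictly convex support function for $P$ yields a linear form $\omega=\sum_{i}a_{i}x_{i}\in\Qq[\Delta]_{1}$ whose class in $\Qq[\Delta]/\Theta$ corresponds to an ample (Kähler) class in $H^{2}(X;\Qq)$.

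The heart of the argument is then Hard Lefschetz. Since $X$ is a compact Kähler orbifold, Hodge theory for $V$-manifolds (Baily) supplies the full Kähler package with $\Qq$ coefficients; in particular, cup product with $\omega^{d-2i}$ is an isomorphism $H^{2i}(X;\Qq)\to H^{2(d-i)}(X;\Qq)$ for every $i\leqslant d/2$. Transporting this across the Jurkiewicz--Danilov isomorphism, and matching the two multiplicative structures, shows that $\cdot\,\omega^{d-2i}\colon(\Qq[\Delta]/\Theta)_{i}\to(\Qq[\Delta]/\Theta)_{d-i}$ is an isomorphism for all $i\leqslant d/2$, which is exactly the assertion.

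The main obstacle is the appeal to Hard Lefschetz for the (generally singular) variety $X$. Because $P$ is simplicial, the only singularities of $X$ are finite quotient singularities, so $X$ is an orbifold and the classical Hodge theory of compact Kähler manifolds carries over verbatim; alternatively, one observes that for a simplicial fan $H^{*}(X;\Qq)$ agrees with the intersection cohomology $IH^{*}(X;\Qq)$ and invokes Hard Lefschetz for the intersection cohomology of complex projective varieties. Either way the argument uses the polytopality of $\Delta$ in an essential way, through the existence of the projective toric variety $X$; for a general rational homology sphere no such variety is available, and bridging this gap is precisely the purpose of the topological constructions developed in the rest of the paper.
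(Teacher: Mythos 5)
Your argument is correct and is precisely the route the paper itself points to: the paper does not prove this theorem but quotes it from Stanley and McMullen, noting that Stanley's proof rests on the hard Lefschetz theorem for projective (simplicial) toric varieties. Your reconstruction --- rationalizing the vertices, taking the face fan of $P$ (equivalently the normal fan of the polar dual), applying the Danilov--Jurkiewicz description of $H^*(X;\Qq)$ as $\Qq[\Delta]/\Theta$, and invoking hard Lefschetz for the projective toric orbifold with $\omega$ an ample class --- is a faithful account of that proof, so there is nothing to add.
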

	Stanley's proof of the theorem above  used deep results from algebraic geometry, in particular, the hard Lefschetz theorem for projective toric varieties. McMullen's proof builds upon the notion of the \emph{polytope algebra}, which may be
	thought of as a combinatorial model for the cohomology algebras of toric varieties.
	
	Let $\Delta$ be a rational homology $(d-1)$-sphere. Then it satisfies the Dehn-Sommerville relations, i.e., $h_i(\Delta)=h_{d-i}(\Delta)$ \cite{Kle64}.
	We say $\Delta$ has \emph{Lefschetz property} if there exists an l.s.o.p. $\Theta$ for $\Qq[\Delta]$ and a linear form $\omega$ satisfying the condition in Theorem \ref{thm:Lefschetz}. Obviously the $g$-conjecture can be deduced from the following algebraic conjecture.
	\begin{conj}[algebraic $g$-conjecture]\label{conj:Lefschetz}
		Every rational homology sphere has Lefshetz property.
	\end{conj}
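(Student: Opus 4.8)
The plan is to realize the Artinian reduction $\Qq[\Delta]/\Theta$ as the rational cohomology ring of a topological space equipped with a torus action --- a \emph{toric space} playing the role of the projective toric variety in Stanley's argument --- and then to deduce the Lefschetz property from a topological hard Lefschetz statement for that space. The point is that a general rational homology sphere $\Delta$ need not be polytopal, nor even PL, so there is no toric variety attached to it; but the \emph{topology} underlying the toric variety of a polytopal sphere can be imitated for any $\Delta$, and what one really needs from Theorem~\ref{thm:Lefschetz} is not the algebraic geometry but a Lefschetz element in degree two.

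First I would attach to $\Delta$, together with a generic choice of characteristic data (a map from the vertex set of $\Delta$ to $\Zz^d$ sending the vertices of each facet to a lattice basis up to finite index), a space $X_\Delta$ as follows: take the moment-angle complex $\ZZ_\Delta$ with its $T^m$-action, $m=f_0(\Delta)$, and form the quotient $X_\Delta=\ZZ_\Delta/K$ by the $(m-d)$-dimensional subtorus $K$ determined by the characteristic data. When $\Delta$ is a polytopal sphere and the data are the primitive ray generators of the normal fan, $X_\Delta$ is the associated projective toric variety; for an arbitrary rational homology $(d-1)$-sphere it is only a topological space, and the first technical task is to show that it is a rational homology manifold of dimension $2d$, hence a Poincar\'e duality space over $\Qq$.

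Second, I would compute $H^*(X_\Delta;\Qq)$. The Borel construction together with an Eilenberg--Moore argument identifies $H^*_{T^m}(\ZZ_\Delta;\Qq)$ with the face ring $\Qq[\Delta]$, and passing to the quotient by the subtorus $K$ amounts to killing the degree-two linear system $\Theta$ read off from the characteristic data; genericity makes $\Theta$ an l.s.o.p., so that (using that $\Delta$ is Cohen--Macaulay) one obtains $H^*(X_\Delta;\Qq)\cong\Qq[\Delta]/\Theta$ with Hilbert function the $h$-vector, and Poincar\'e duality of $X_\Delta$ reproduces the Dehn--Sommerville relations $h_i=h_{d-i}$. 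At this point the algebraic $g$-conjecture for $\Delta$ is equivalent to the existence of a class $\omega\in H^2(X_\Delta;\Qq)$ with $\cdot\,\omega^{d-2i}\colon H^{2i}(X_\Delta;\Qq)\to H^{2d-2i}(X_\Delta;\Qq)$ an isomorphism for all $i\le d/2$.

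The crux --- and the step I expect to be the real obstacle --- is producing such an $\omega$. In the projective case it is an ample (K\"ahler) class and the hard Lefschetz theorem does the job, but $X_\Delta$ carries no algebraic or symplectic structure in general, so that input is unavailable. My strategy would be to gain the missing structure by modifying $\Delta$: a stellar subdivision of $\Delta$ at a face corresponds to an equivariant blow-up of $X_\Delta$ along a subspace, and one can track the resulting change in the $h$- and $g$-vectors explicitly. Performing stellar subdivisions at suitably chosen middle-dimensional faces, one can arrange that the subdivided complex admits a characteristic function for which a Lefschetz element is forced to exist --- for instance by pushing the relevant cohomology into a range where positivity of the intersection form applies, or by an inductive Mayer--Vietoris argument across the blow-up --- which already establishes the $g$-conjecture for the subdivided complex. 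Eliminating the need for subdivision, i.e.\ proving hard Lefschetz for \emph{every} $X_\Delta$ and hence the full Conjecture~\ref{conj:Lefschetz}, would require a genuinely topological hard Lefschetz theorem for this class of toric spaces; at present that seems out of reach, and this is the main gap between the method sketched here and the conjecture.
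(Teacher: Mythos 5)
The statement you were asked to prove is stated in the paper as a \emph{conjecture} (Conjecture~\ref{conj:Lefschetz}); the paper offers no proof of it, and your proposal does not supply one either --- you say so yourself in your last sentence. So the honest verdict is: genuine gap, namely the entire final step. Your constructive part faithfully reproduces the paper's program: build $M(\Delta,\mathit{\Lambda})=\ZZ_\Delta/K_{\mathit{\Lambda}}$ from a characteristic matrix, show it is a rational homology $2d$-manifold when $\Delta$ is a rational homology sphere, identify $H^*(M_\Delta;\Qq)\cong\Qq[\Delta]/\Theta$ via the Borel construction and the collapse of the Serre spectral sequence (Proposition~\ref{prop:equi toric}, Theorem~\ref{thm:cohomology of orbifold}, Corollary~\ref{cor:coho toric orbi}), and read off Dehn--Sommerville from Poincar\'e duality (Lemma~\ref{lem:duality}). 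All of that is correct and is exactly what the paper does; none of it produces a Lefschetz element.

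Where your sketch diverges from what the paper can actually prove is the subdivision step, and there it is both vaguer and slightly off target. The paper's mechanism is not ``positivity of the intersection form'' or a Mayer--Vietoris argument across a blow-up; it is that the link of each new vertex $v_\sigma$ introduced by a stellar subdivision at a face $\sigma$ with $\dim\sigma\geqslant\lceil(d-1)/2\rceil$ is a join $\partial\sigma*\mathrm{lk}_\sigma\Delta$, which has the WLP by an elementary exact-sequence argument (Lemma~\ref{lem:join WLP}), combined with a Poincar\'e-duality-based injection of $(\Qq[\Delta']/\Theta')_{2i}$ into the sum of the stars of the new vertices (Proposition~\ref{prop:link}, Proposition~\ref{prop:link of derived}, Proposition~\ref{prop:injection of stellar sub}) and a generic-linear-combination argument (Lemma~\ref{lem:generic}); this yields Theorems~\ref{thm:WLP of derived} and~\ref{thm:stellar WLP} and hence the $g$-conjecture only for $\DD_{\lceil d/2\rceil}(\Delta)$, $\DD_{\lfloor d/2\rfloor}(\Delta)$ and the complexes in $\Ss(\Delta,\mathscr{A}_{\lceil d/2\rceil})$, $\Ss(\Delta,\mathscr{A}_{\lfloor d/2\rfloor})$ (Corollary~\ref{cor:g-conj}). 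Note also that these results establish the \emph{weak} Lefschetz property (enough for the numerical $g$-conjecture via Proposition~\ref{prop:WLP}), not the property demanded by Conjecture~\ref{conj:Lefschetz} (isomorphisms $\cdot\omega^{d-2i}$ in all degrees), and moreover the subdivision implications of Proposition~\ref{prop:stellar subdivision} do not let one descend from the subdivided complex back to the original $\Delta$ in the critical middle dimension. So the missing idea you name --- a topological hard Lefschetz theorem for arbitrary $M_\Delta$ --- is indeed missing, from your proposal and from the paper alike, and without it the statement remains a conjecture.
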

	Recently, Adiprasito \cite{A18} announced a proof of conjecture \ref{conj:Lefschetz}, but his paper is too technical and complicated.
	Still, the first three sections of his paper are inspiring and readable.
	
	In the spirit of Stanley's topological proof of Theorem \ref{thm:Lefschetz}, the following natural question arises.
	
	\emph{What is the topological spaces behind the $g$-conjecture for general simplicial spheres (or even rational homology spheres)?}
	
	In this paper, we answer this question by constructing a class of topological spaces with torus actions, as a generalization of toric varieties.
	It turns out that some algebraic properties of face rings can be explained by the topological properties of these toric spaces, such as the Dehn-Sommerville relations
	just correspond to the Poincar\'e duality of rational toric manifolds as we will show in \S \ref{sec:duality}.
	
	We can deduce many interesting results from the well-behaved local topology of these toric spaces. For example, in \S \ref{sec:stellar sub} we prove that after doing stellar subdivision operations at certain middle dimensional faces of an arbitrary rational homology sphere, the $g$-conjecture is valid (Corollary \ref{cor:g-conj}).
	
	Another important research object in algebraic combinatorics is the class of Buchsbaum complexes, which includes homology manifolds. In \S \ref{sec:buchsbaum complex}, we calculate the rational cohomology of toric spaces associated to a Buchsbaum complex $\Delta$ (Theorem \ref{thm:coho of toric over buchs}), especially when $\Delta$ is a rational homology manifold (Theorem \ref{thm:poincare duality}).
	This gives topological expositions for several fundamental algebraic results about Buchsbaum complexes (e.g. Theorem \ref{thm:schenzel} and Theorem \ref{thm:socle of manifolds}).
	
	Our topological construction is inspired by Davis-Januszkiewicz's  \cite{DJ91} construction of \emph{(quasi)toric manifolds} over simple polytopes.
	Their pioneering work \cite{DJ91} is the beginning of a very recent field called \emph{toric topology}.
	\S \ref{subsec:m-a}-\ref{subsec:cohom toric} are short introductions to the main two research objects in toric topology: \emph{moment-angle complexes} and \emph{toric spaces by D-J construction}.
	
	\section{Preliminaries}
	\renewcommand{\thethm}{\thesection.\arabic{thm}}
	\subsection{Notations and conventions}\label{subsec:notation}
	
	For an abstract simplicial complex $\Delta$, let $\FF_i(\Delta)$ be the set of $i$-dimensional faces (simplices) of $\Delta$. For convenience, we set $\FF_{-1}=\{\varnothing\}$. Unless otherwise stated, we assume $\Delta$ has $m$ vertices and identify $\FF_0(\Delta)$ with $[m]=\{1,\dots,m\}$.
	By $\Delta^{m-1}$ we denote the simplex consisting of all subsets of $[m]$, and by $\partial\Delta^{m-1}$ the boundary complex of $\Delta^{m-1}$.
	
	For a subset $J\subset[m]$, the \emph{full subcomplex} $\Delta_J\subset\Delta$ is defined to be
	\[\Delta_J=\{\sigma\in\Delta:\sigma\subset J\}.\]
	
	A subset $I\subset[m]$ is a \emph{missing face} of $\Delta$ if $I\not\in\Delta$ but $J\in\Delta$ for all proper subsets $J\subset I$.
	
	The \emph{link} and the \emph{star} of a face $\sigma\in\Delta$ are the subcomplexes
	\[\begin{split}
		\mathrm{lk}_\sigma\Delta=&\{\tau\in\Delta:\tau\cup\sigma\in\Delta,\tau\cap\sigma=\varnothing\};\\
		\mathrm{st}_\sigma\Delta=&\{\tau\in\Delta:\tau\cup\sigma\in\Delta\}.
	\end{split}\]
	
	The \emph{join} of two simplicial complexes $\Delta$ and $\Delta'$, where the vertex set $\FF_0(\Delta)$ is disjoint from $\FF_0(\Delta')$ , is the simplicial complex
	\[\Delta*\Delta'=\{\sigma\cup\sigma':\sigma\in\Delta, \sigma'\in\Delta'\}.\]
	In particular, we say that $\Delta^0*\Delta$ is the \emph{cone} over $\Delta$, simply denoted $\CC\Delta$.
	
	Let $\sigma\in \Delta$ be a nonempty face of $\Delta$.
	The \emph{stellar subdivision} of $\Delta$ at $\sigma$ is obtained by replacing the star of $\sigma$ by the cone
	over its boundary:
	\[\Delta(\sigma)=(\Delta\setminus \mathrm{st}_\sigma\Delta)\cup \big(\CC(\partial\sigma*\mathrm{lk}_\sigma\Delta)\big).\]
	If $\dim\sigma=0$ then $\Delta(\sigma)=\Delta$. Otherwise the complex $\Delta(\sigma)$ acquires an additional vertex (the apex of the cone). In this case, denote by $v_\sigma$ this new vertex.
	
	If $\kk$ is a field, the \emph{reduced Betti numbers} of $\Delta$ are $\w \beta_i(\Delta;\kk):=\dim_\kk\w H_i(\Delta;\kk)$.
	
	A simplicial complex $\Delta$ is called a \emph{triangulated manifold} (or \emph{simplicial manifold}) if the geometric realization $|\Delta|$ is a topological manifold. More generally, a $d$-dimensional simplicial complex $\Delta$ is a \emph{$\kk$-homology manifold} ($\kk$ is a commutative ring) if
	\[H_*(|\Delta|,|\Delta|-x;\kk)=\w H_*(S^{d};\kk)\quad \text{for all }x\in|\Delta|,\]
	or equivalently, \[H_*(\mathrm{lk}_\sigma\Delta;\kk)=H_*(S^{d-|\sigma|};\kk)\quad \text{for all }\varnothing\neq\sigma\in\Delta.\]
	Especially, when $\kk=\Qq$, it is also referred to as a \emph{rational homology manifold}, and when $\kk=\Zz$, it is simply called a \emph{homology manifold}.
	The notions for manifold, such as orientable, closed, with boundary, etc., are similarly defined for $\kk$-homology manifold. For example, A pair $(\Delta,\partial \Delta)$ of simplicial complexes is a \emph{$\kk$-homology $d$-manifold with boundary} if the following conditions hold:
	\begin{itemize}
		\item $\Delta-\partial\Delta$ is a $\kk$-homology $d$-manifold,
		\item $\partial\Delta$ is a $\kk$-homology $(d-1)$-manifold, and
		\item for each $x\in|\partial\Delta|$, the  homology groups $H_*(|\Delta|,|\Delta|-x;\kk)$ all vanish.
	\end{itemize}
	
	$\Delta$ is a \emph{$\kk$-homology $d$-sphere} if it is a $\kk$-homology $d$-manifold with the same $\kk$-homology as $S^d$.
	Similarly, when $\kk=\Qq$, it is also called a \emph{rational homology sphere}, and for $\kk=\Zz$, a \emph{homology sphere}.
	(Remark: Usually, the terminology ``homology sphere"  means a manifold having the homology of a sphere. Here we take it in a more relaxed sense than its usual meaning.) The \emph{$\kk$-homology ball} is defined similarly.
	\subsection{Face rings and l.s.o.p}\label{subsec:l.s.o.p.}
	For a commutative ring $\kk$ with unit, let $\kk[x_1,\dots,x_m]$ be the polynomial algebra with one generator for each
	vertex in $\Delta$. We make it a graded algebra by setting $\mathrm{deg}\,x_i=2$.
	(This even grading is unusual for algebraists. The reason why we set $\mathrm{deg}\,x_i=2$ rather than $1$ is to make it agree with the grading of the cohomology of some toric spaces we constructed below.)
	
	The \emph{Stanley-Reisner ideal} of $\Delta$ is
	\[I_\Delta:=(x_{i_1}x_{i_2}\cdots x_{i_k}:\{i_1,i_2,\dots,i_k\}\not\in\Delta).
	\]
	The \emph{Stanley-Reisner ring} (or \emph{face ring}) of $\Delta$ is the quotient \[\kk[\Delta]:=\kk[x_1,\dots,x_m]/I_\Delta.\]
	Since $I_\Delta$ is a monomial ideal, the quotient ring $\kk[\Delta]$ is graded by degree.
	
	For a face $\sigma=\{x_1,\dots,i_k\}\in\FF_{k-1}(\Delta)$, denote by $\xx_\sigma=x_{i_1}\cdots x_{i_k}\in\Qq[\Delta]$ the face monomial corresponding to $\sigma$.
	
	Assuming $\kk$ is a field, a set $\Theta=\{\theta_1,\dots,\theta_d\}$ consisting of $d=\textrm{dim}\Delta+1$ linear forms in $\kk[\Delta]$ is called a \emph{linear
		system of parameters} (l.s.o.p. for short), if $\kk[\Delta]/\Theta$ is finite-dimensional as a vector space over $\kk$; here $\Theta:=(\theta_1,\dots,\theta_d)$ also denotes the ideal that the l.s.o.p generates.
	It can be shown that a linear sequence $\theta_1,\dots,\theta_d$ is an l.s.o.p if and only if the restriction $\Theta_\sigma=r_\sigma(\Theta)$ to each face $\sigma\in\Delta$ generates the polynomial algebra $\kk[x_i:i\in\sigma]$; here $r_\sigma:\kk[\Delta]\to\kk[x_i:i\in\sigma]$ is the projection homomorphism (see \cite[Theorem 5.1.16]{BH98}). For the case that $\kk=\Zz$, a linear sequence $\theta_1,\dots,\theta_d$  is referred to as an \emph{integral l.s.o.p}
	if its reduction modulo $p$ is an l.s.o.p. for $\Zz_p[\Delta]$ for any prime $p$.
	Equivalently, $\theta_1,\dots,\theta_d$ is an integral l.s.o.p. if and only if the restriction $\Theta_\sigma=r_\sigma(\Theta)$ to each simplex $\sigma\in\Delta$ generates the polynomial algebra $\Zz[x_i:i\in\sigma]$.
	
	\begin{rem}
		If $\kk$ is an infinite field, then there always exists an l.s.o.p for $\kk[\Delta]$ by Noether normalization lemma, but if $\kk$ is a finite field (or $\kk=\Zz$) then an l.s.o.p. for $\kk[\Delta]$ (or an integral l.s.o.p. for $\Zz[\Delta]$) may fail to exist  (cf. \cite[Example 3.3.4]{BP15}).
	\end{rem}

	\subsection{Algebraic properties of face rings} In this subsection we review some basic combinatorial and algebraic concepts used in the rest of our paper. Throughout this subsection, $\kk$ is an infinite field of arbitrary characteristic.
	
	Let $\Delta$ be a simplicial complex of dimension $d-1$. The face ring $\kk[\Delta]$ is a \emph{Cohen-Macaulay ring} if for any l.s.o.p $\Theta=\{\theta_1,\dots,\theta_d\}$, $\kk[\Delta]/\Theta$ is a free $\kk[\theta_1,\cdots,\theta_d]$ module. In this case, $\Delta$ is called a \emph{Cohen-Macaulay complex over $\kk$}.
	
	Let $A$ be a connected commutative graded $\kk$-algebra. The \emph{socle} of $A$ is the ideal
	\[\mathrm{Soc}(A)=\{x\in A:A_+\cdot x=0\}.\]
	The face ring $\kk[\Delta]$ is a \emph{Gorenstein ring} if it is Cohen-Macaulay and for any l.s.o.p $\Theta=\{\theta_1,\dots,\theta_d\}$, $\dim_\kk\mathrm{Soc}(\kk[\Delta]/\Theta)=1$.
	In other words, $\kk[\Delta]/\Theta$ is a Poincar\'e duality $\kk$-algebra.
	We call $\Delta$ \emph{Gorenstein over $\kk$} if its face ring $\kk[\Delta]$ is a Gorenstein ring.
	Further, $\Delta$ is called \emph{Gorenstein*} if $\kk[\Delta]$ is Gorenstein and $\Delta$ is not a cone, i.e., $\Delta\neq\Delta^0*\Delta'$.
	
	The face ring $\kk[\Delta]$ is said to be \emph{Buchsbaum} if for every l.s.o.p $\{\theta_1,\dots,\theta_d\}$ and all $1\leqslant i\leqslant d$,
	\[\{x\in \kk[\Delta]/(\theta_1,\dots,\theta_{i-1}):x\theta_i=0\}=\mathrm{Soc}(\kk[\Delta]/(\theta_1,\dots,\theta_{i-1})).\]
	Similarly, $\Delta$ is called \emph{Buchsbaum over $\kk$} in this case.
	
	All these algebraic properties of face rings have combinatorial-topological characterisations as follows.
	\begin{thm}\label{thm:algebraic property}
		Let $\Delta$ be a simplicial complex. Then
		\begin{enumerate}[(a)]
			\item {\rm(Reisner \cite{Rei76})} $\Delta$ is Cohen-Macaulay (over $\kk$) if and only if for all faces $\sigma\in\Delta$ (including $\sigma=\varnothing$)
			and $i<\dim\mathrm{lk}_\sigma\Delta$, we have $\w H_i(\mathrm{lk}_\sigma\Delta;\kk)=0$.\vspace{8pt}
			
			\item {\rm(Stanley \cite[Theorem II.5.1]{S96})} $\Delta$ is Gorenstein* (over $\kk$) if and only if it is a $\kk$-homology sphere.\vspace{8pt}
			
			\item\label{item:3} {\rm(Schenzel \cite{Sch81})} $\Delta$ is Buchsbaum (over $\kk$) if and only if it is pure and the link of each nonempty face is Cohen-Macaulay (over $\kk$).
		\end{enumerate}
	\end{thm}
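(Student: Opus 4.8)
The plan is to prove (a), (b) and (c) uniformly, extracting all three from Hochster's description of the $\Zz^m$-graded local cohomology of the face ring. First I record the ring-theoretic reformulations; these are legitimate because $\kk$ is infinite, so l.s.o.p.s exist and the combinatorial definitions of \S\ref{subsec:l.s.o.p.} agree with the intrinsic ones. Write $d=\dim\Delta+1=\dim\kk[\Delta]$ and $\mathfrak m=(x_1,\dots,x_m)$. By Grothendieck's vanishing and non-vanishing theorems, $\kk[\Delta]$ is Cohen-Macaulay iff $\mathrm{depth}\,\kk[\Delta]=d$ iff $H^i_{\mathfrak m}(\kk[\Delta])=0$ for all $i<d$. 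Unwinding the socle conditions in the definition of Buchsbaum---via the St\"uckrad-Vogel theory of Buchsbaum rings, specialized to Stanley-Reisner rings---shows that $\kk[\Delta]$ is Buchsbaum iff $H^i_{\mathfrak m}(\kk[\Delta])$ has finite length for every $i<d$. Finally, when $\kk[\Delta]$ is Cohen-Macaulay it is Gorenstein iff its canonical module is free of rank one, i.e.\ $\omega_{\kk[\Delta]}\cong\kk[\Delta]$ after a $\Zz^m$-graded shift; and by graded local duality $\omega_{\kk[\Delta]}$ is the graded Matlis dual of $H^d_{\mathfrak m}(\kk[\Delta])$.

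The topological ingredient is Hochster's formula: using the $\Zz^m$-graded \v{C}ech complex of $\kk[\Delta]$ on $x_1,\dots,x_m$ (equivalently, the Koszul/Taylor complex together with graded local duality), one identifies each graded piece with a reduced simplicial cohomology group of a link. Explicitly, $H^i_{\mathfrak m}(\kk[\Delta])_{\mathbf a}=0$ unless $\mathbf a\leqslant\mathbf 0$ and $\sigma:=\mathrm{supp}(\mathbf a)\in\Delta$, in which case
\[
H^i_{\mathfrak m}(\kk[\Delta])_{\mathbf a}\;\cong\;\widetilde H^{\,i-|\sigma|-1}\big(\mathrm{lk}_\sigma\Delta;\kk\big),
\]
with the conventions $\mathrm{lk}_\varnothing\Delta=\Delta$ and $\widetilde H^{-1}(\{\varnothing\};\kk)=\kk$. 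Establishing this---matching the \v{C}ech differential with the simplicial coboundary of the link, and carefully bookkeeping the empty-face (degree $-1$) term---is the crux of the argument and the step I expect to demand the most care; the empty-face term is precisely what forces purity below. (If one first develops the cohomology of the toric spaces constructed later in the paper, this formula can instead be read off from there; but the \v{C}ech computation is self-contained and characteristic-free.)

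After this everything is formal. For (a): $\kk[\Delta]$ is Cohen-Macaulay iff $H^i_{\mathfrak m}(\kk[\Delta])=0$ for $i<d$, i.e.\ $\widetilde H^{\,i-|\sigma|-1}(\mathrm{lk}_\sigma\Delta;\kk)=0$ for every face $\sigma$ (including $\varnothing$) and $i<d$. Taking $\sigma$ a facet (so $\mathrm{lk}_\sigma\Delta=\{\varnothing\}$) and $i=|\sigma|$, some graded piece of $H^{|\sigma|}_{\mathfrak m}(\kk[\Delta])$ equals $\widetilde H^{-1}(\{\varnothing\};\kk)=\kk$, so Cohen-Macaulayness forces $|\sigma|\geqslant d$; hence $\Delta$ is pure and $\dim\mathrm{lk}_\sigma\Delta=d-|\sigma|-1$ for every face $\sigma$. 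Substituting $j=i-|\sigma|-1$ and passing from cohomology to homology over the field $\kk$ turns the condition into exactly ``$\widetilde H_j(\mathrm{lk}_\sigma\Delta;\kk)=0$ for $j<\dim\mathrm{lk}_\sigma\Delta$''. For (c): $\kk[\Delta]$ is Buchsbaum iff each $H^i_{\mathfrak m}(\kk[\Delta])$ with $i<d$ has finite length; since $H^i_{\mathfrak m}(\kk[\Delta])_{\mathbf a}$ depends on $\mathbf a\leqslant\mathbf 0$ only through $\mathrm{supp}(\mathbf a)$, a single nonzero $\widetilde H^{\,i-|\sigma|-1}(\mathrm{lk}_\sigma\Delta;\kk)$ with $\sigma\neq\varnothing$ already makes $H^i_{\mathfrak m}$ infinite-dimensional, so finite length is equivalent to $\widetilde H^{\,i-|\sigma|-1}(\mathrm{lk}_\sigma\Delta;\kk)=0$ for every nonempty $\sigma$ and $i<d$ (the remaining piece, in degree $\mathbf 0$, is $\widetilde H^{i-1}(\Delta;\kk)$, automatically finite-dimensional and allowed to survive). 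The facet case again forces purity, and the rest says, by (a) applied to the links, that $\mathrm{lk}_\sigma\Delta$ is Cohen-Macaulay for every nonempty $\sigma$---Schenzel's condition.

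For (b): if $\Delta$ is a $\kk$-homology $(d-1)$-sphere, every $\mathrm{lk}_\sigma\Delta$ is a $\kk$-homology $(d-|\sigma|-1)$-sphere, hence acyclic below its dimension, so $\kk[\Delta]$ is Cohen-Macaulay by (a); moreover $H^d_{\mathfrak m}(\kk[\Delta])_{\mathbf a}\cong\widetilde H^{\dim\mathrm{lk}_\sigma\Delta}(\mathrm{lk}_\sigma\Delta;\kk)\cong\kk$ for every face $\sigma$ and every $\mathbf a\leqslant\mathbf 0$ with $\mathrm{supp}(\mathbf a)=\sigma$, and a direct computation shows the Matlis dual of this module is $\kk[\Delta]$ up to shift, so $\kk[\Delta]$ is Gorenstein; being non-conical, $\Delta$ is Gorenstein$^*$. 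Conversely, if $\Delta$ is Gorenstein$^*$ then $\kk[\Delta]$ is Cohen-Macaulay (so all links are acyclic below their dimension, by (a)) and $\omega_{\kk[\Delta]}\cong\kk[\Delta](-\mathbf s)$ for some $\mathbf s\in\Zz^m$; comparing graded pieces through Hochster's formula, as in Stanley's argument, forces $\widetilde H^{\dim\mathrm{lk}_\sigma\Delta}(\mathrm{lk}_\sigma\Delta;\kk)\cong\kk$ for every face $\sigma$ including $\varnothing$ (this is where non-conicality enters), so each link has the $\kk$-homology of a sphere of the expected dimension, i.e.\ $\Delta$ is a $\kk$-homology sphere. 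As indicated, the one substantial obstacle is the second step---the local cohomology formula with its degree bookkeeping---after which all three equivalences fall out.
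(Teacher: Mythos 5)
The paper does not actually prove Theorem \ref{thm:algebraic property}: it is a compilation of classical results quoted with citations (Reisner, Stanley's book, Schenzel), used later as black boxes. So there is no internal proof to compare against; what you have written is, in outline, the standard commutative-algebra proof from those sources --- Grothendieck vanishing, the $\Zz^m$-graded Hochster formula $H^i_{\mathfrak m}(\kk[\Delta])_{\mathbf a}\cong \widetilde H^{\,i-|\sigma|-1}(\mathrm{lk}_\sigma\Delta;\kk)$ for $\mathbf a\leqslant\mathbf 0$ with $\mathrm{supp}(\mathbf a)=\sigma\in\Delta$, and local duality for the Gorenstein case --- and the overall architecture is the right one. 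Two places, however, are thinner than the rest and one of them hides the actual content of the theorem. First, in the converses of (a) and (c) you need the stated topological condition (vanishing below $\dim\mathrm{lk}_\sigma\Delta$) to force purity, because Hochster demands vanishing in the larger range $i<d-|\sigma|-1$; you only derive purity from Cohen--Macaulayness via the facet/$\widetilde H^{-1}$ term, which is the other direction. The fix is standard (induct on dimension: vertex links inherit the condition, hence are pure; $\widetilde H_0(\Delta)=0$ gives connectedness; facets sharing a vertex then have equal dimension), but it must be said.

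Second, and more seriously, your opening reformulation for (c) --- ``$\kk[\Delta]$ is Buchsbaum iff $H^i_{\mathfrak m}(\kk[\Delta])$ has finite length for all $i<d$'' --- is not a quotable specialization of St\"uckrad--Vogel theory. In general only the implication Buchsbaum $\Rightarrow$ finite length holds; finite length (generalized Cohen--Macaulayness) does not imply Buchsbaum, and the validity of this equivalence for Stanley--Reisner rings is precisely the nontrivial content of Schenzel's theorem that you are supposed to be proving. The argument can be repaired: Hochster's formula shows that for $\kk[\Delta]$ finite length of $H^i_{\mathfrak m}$, $i<d$, is equivalent to these modules being concentrated in degree $\mathbf 0$ (nonempty supports contribute infinitely many identical graded pieces), and one then invokes the graded criterion of St\"uckrad--Vogel/Schenzel that a graded module whose local cohomology below the dimension is concentrated in degree $0$ is Buchsbaum (equivalently, the surjectivity criterion for the natural maps $\mathrm{Ext}^i_S(\kk,\kk[\Delta])\to H^i_{\mathfrak m}(\kk[\Delta])$). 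With that citation in place of the false ``iff finite length,'' and with the purity step above, your proof of (a), (b), (c) goes through and coincides with the classical proofs the paper cites; the sketch of (b) via the canonical module and non-conicality is the standard Stanley argument and is fine at the level of detail given.
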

	Hence, every simplicial complex whose geometric realization is a $\kk$-homology manifold is Buchsbaum over $\kk$.
	
	If $\Delta$ is Cohen-Macaulay, the following result of Stanley shows that the $h$-vector of $\Delta$ has a pure algebraic description.
	\begin{thm}[Stanley]\label{thm:stanley}
		Let $\Delta$ be a $(d-1)$-dimensional Cohen-Macaulay complex and let $\Theta=\{\theta_1,\dots,\theta_d\}$ be an l.s.o.p. for $\kk[\Delta]$. Then
		\[\dim_\kk(\kk[\Delta]/\Theta)_{2i}=h_i(\Delta),\quad \text{for all } 0\leqslant i\leqslant d.\]
	\end{thm}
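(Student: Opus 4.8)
The plan is to pass everything through Hilbert series. For a finitely generated non-negatively graded $\kk[\Delta]$-module $M$ concentrated in even degrees (recall $\deg x_i=2$), write $F(M,t)=\sum_{j\ge 0}(\dim_\kk M_{2j})\,t^j$ for its Hilbert series, and set $h(\Delta,t)=\sum_{i=0}^d h_i(\Delta)\,t^i$; the assertion to be proved is precisely the identity $F(\kk[\Delta]/\Theta,t)=h(\Delta,t)$. I would split this into three independent steps: (i) the purely combinatorial identity $F(\kk[\Delta],t)=h(\Delta,t)/(1-t)^d$; (ii) the fact that, because $\Delta$ is Cohen-Macaulay, $\theta_1,\dots,\theta_d$ is a regular sequence on $\kk[\Delta]$; and (iii) the elementary description of how $F$ changes upon quotienting by a homogeneous non-zero-divisor of degree $2$.

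For step (i) I would use the standard $\kk$-vector-space decomposition $\kk[\Delta]=\bigoplus_{\sigma\in\Delta}\xx_\sigma\cdot\kk[x_i:i\in\sigma]$ (each summand being the span of the monomials of support exactly $\sigma$), which gives $F(\kk[\Delta],t)=\sum_{\sigma\in\Delta}\bigl(t/(1-t)\bigr)^{|\sigma|}=\sum_{i=0}^{d}f_{i-1}(\Delta)\,t^i/(1-t)^i$; clearing denominators to the common denominator $(1-t)^d$ produces the numerator $\sum_{i}f_{i-1}(\Delta)\,t^i(1-t)^{d-i}$, and applying the substitution $t\mapsto 1/t$ followed by multiplication by $t^d$ to the equation defining the $h$-vector shows this numerator is exactly $h(\Delta,t)$. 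For step (iii), if $\theta$ is homogeneous of degree $2$ and a non-zero-divisor on $M$, then the short exact sequence of graded modules $0\to M(-2)\xrightarrow{\,\cdot\theta\,}M\to M/\theta M\to 0$ yields at once $F(M/\theta M,t)=(1-t)F(M,t)$; iterating this over $\theta_1,\dots,\theta_d$ (legitimate by step (ii)) and combining with step (i) gives $F(\kk[\Delta]/\Theta,t)=(1-t)^d\cdot h(\Delta,t)/(1-t)^d=h(\Delta,t)$, which is the theorem.

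It remains to carry out step (ii), the only part that is not a formal manipulation. Since $\Theta$ is an l.s.o.p., $\kk[\Delta]/\Theta$ is finite-dimensional over $\kk$, so by graded Nakayama $\kk[\Delta]$ is a finitely generated module over the subalgebra $\kk[\Theta]$ generated by $\theta_1,\dots,\theta_d$; as the Krull dimension of $\kk[\Delta]$ equals $d$, a dimension count forces $\theta_1,\dots,\theta_d$ to be algebraically independent, so $\kk[\Theta]$ is a polynomial ring in $d$ variables (this is Noether normalization). The Cohen-Macaulay hypothesis then says that $\kk[\Delta]$ is a \emph{free} $\kk[\Theta]$-module; and a regular sequence on $\kk[\Theta]$ remains regular on any free $\kk[\Theta]$-module, so $\theta_1,\dots,\theta_d$ is a regular sequence on $\kk[\Delta]$, as needed. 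I expect this implication ``Cohen-Macaulay $\Rightarrow$ the l.s.o.p.\ is a regular sequence'' to be the only substantive point; once it is in place the theorem follows by the generating-function bookkeeping of steps (i) and (iii).
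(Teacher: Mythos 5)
Your proof is correct. Note that the paper itself gives no argument for this statement: it is quoted as a classical theorem of Stanley (see \cite[Ch.~II]{S96} or \cite[\S 5.1]{BH98}), so there is no ``paper proof'' to compare against; what you have written is precisely the standard argument. Step (i), the identity $F(\kk[\Delta],t)=\sum_{i}f_{i-1}t^i/(1-t)^{d}\cdot(1-t)^{d-i}=h(\Delta,t)/(1-t)^d$ via the decomposition of $\kk[\Delta]$ by supports, checks out against the paper's defining relation for the $h$-vector (your substitution $t\mapsto 1/t$ and multiplication by $t^d$ is exactly the right normalization), and step (iii) is the usual short-exact-sequence bookkeeping for a degree-two non-zero-divisor. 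For step (ii), your chain ``l.s.o.p.\ $\Rightarrow$ finite over $\kk[\Theta]$ by graded Nakayama $\Rightarrow$ $\kk[\Theta]$ polynomial by dimension count; CM $\Rightarrow$ $\kk[\Delta]$ free over $\kk[\Theta]$ $\Rightarrow$ $\Theta$ regular'' is sound; the only cosmetic remark is that with the paper's working definition of Cohen--Macaulayness (freeness of $\kk[\Delta]$ as a $\kk[\theta_1,\dots,\theta_d]$-module, as restated in \S\ref{subsec:cohom toric}) you could skip the regular-sequence language altogether: writing $\kk[\Delta]\cong\bigoplus_j \kk[\Theta](-2b_j)$ gives $F(\kk[\Delta],t)=\bigl(\sum_j t^{b_j}\bigr)/(1-t)^d$ and $F(\kk[\Delta]/\Theta,t)=\sum_j t^{b_j}$ directly, which combined with step (i) finishes the proof in one line. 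Either way the argument is complete.
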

	
	A generalization of Theorem \ref{thm:stanley} for Buchsbaum complexes was found by Schenzel \cite{Sch81}.
	\begin{thm}[Schenzel \cite{Sch81}]\label{thm:schenzel}
		Let $\Delta$ be a $(d-1)$-dimensional Buchsbaum complex and let $\Theta=\{\theta_1,\dots,\theta_d\}$ be an l.s.o.p. for $\kk[\Delta]$. Then for all $0\leqslant j\leqslant d$,
		\[\dim_\kk(\kk[\Delta]/\Theta)_{2j}=h_j(\Delta)-{d\choose j}\sum_{i=1}^{j-1}(-1)^i\w\beta_{j-i-1}(\Delta;\kk).\]
	\end{thm}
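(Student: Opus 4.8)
The plan is to pin down the entire Hilbert function of $\kk[\Delta]/\Theta$ and then read off its value in degree $2j$. For a graded vector space $V$ concentrated in even degrees write $\mathrm{Hilb}(V;t)=\sum_{n\ge0}(\dim_\kk V_{2n})t^n$, and put $R=\kk[\Delta]$, $d=\dim\Delta+1$, $R^{(i)}=R/(\theta_1,\dots,\theta_i)$, so that $R^{(0)}=R$, $R^{(d)}=\kk[\Delta]/\Theta$, and $\dim R^{(i)}=d-i$. The usual face count gives $(1-t)^d\mathrm{Hilb}(R;t)=\sum_{j=0}^{d}h_j(\Delta)t^j=:h(t)$, and Theorem~\ref{thm:stanley} is the special case in which $\Theta$ is a regular sequence; the present statement measures how that regularity fails. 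Multiplication by $\theta_{i+1}$ on $R^{(i)}$ has cokernel $R^{(i+1)}$ and kernel $(0:_{R^{(i)}}\theta_{i+1})$, which by the very definition of Buchsbaum (the hypothesis) equals $\mathrm{Soc}(R^{(i)})$ for every $0\le i\le d-1$. Turning the resulting four-term exact sequence into Hilbert series yields
\[\mathrm{Hilb}(R^{(i+1)};t)=(1-t)\,\mathrm{Hilb}(R^{(i)};t)+t\,\mathrm{Hilb}\bigl(\mathrm{Soc}(R^{(i)});t\bigr),\]
and unrolling it from $i=0$ to $i=d-1$ gives
\[\mathrm{Hilb}(\kk[\Delta]/\Theta;t)=h(t)+t\sum_{i=0}^{d-1}(1-t)^{d-1-i}\,\mathrm{Hilb}\bigl(\mathrm{Soc}(R^{(i)});t\bigr).\]
So everything reduces to computing the Hilbert series of the socles $\mathrm{Soc}(R^{(i)})$.

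For this I would pass to local cohomology with respect to the graded maximal ideal $\mathfrak m$. Since $\dim R^{(i)}\ge1$ for $i\le d-1$ and the local cohomology of a Buchsbaum ring below the top degree is a finite-dimensional $\kk$-vector space annihilated by $\mathfrak m$, we have $\mathrm{Soc}(R^{(i)})=H^0_{\mathfrak m}(R^{(i)})$. The standard exact-sequence argument for dividing a Buchsbaum module by a parameter then shows, degree by degree, that $H^p_{\mathfrak m}(M/\theta M)\cong H^p_{\mathfrak m}(M)\oplus H^{p+1}_{\mathfrak m}(M)(-2)$ whenever $p\le\dim M-2$, since the ``multiplication by $\theta$'' maps on $H^{\le\dim M-1}_{\mathfrak m}(M)$ all vanish. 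Iterating along the chain $R=R^{(0)}\to\cdots\to R^{(i)}$ — the condition $p\le\dim M-2$ being met at every stage exactly because $i\le d-1$, so that the top local cohomology is never touched — produces
\[H^0_{\mathfrak m}(R^{(i)})\cong\bigoplus_{k=0}^{i}\binom{i}{k}\,H^k_{\mathfrak m}(\kk[\Delta])(-2k),\qquad 0\le i\le d-1.\]
Finally, Hochster's description of the local cohomology of a Stanley--Reisner ring in terms of links, together with Theorem~\ref{thm:algebraic property}(c) (every link of a nonempty face is Cohen--Macaulay) and Reisner's criterion (Theorem~\ref{thm:algebraic property}(a)), shows that for $0\le k\le d-1$ only the empty face contributes; hence $H^k_{\mathfrak m}(\kk[\Delta])$ lives entirely in degree $0$ with $\dim_\kk H^k_{\mathfrak m}(\kk[\Delta])_0=\w\beta_{k-1}(\Delta;\kk)$. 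Therefore $\dim_\kk\mathrm{Soc}(R^{(i)})_{2n}=\binom{i}{n}\w\beta_{n-1}(\Delta;\kk)$.

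Substituting this into the second display above and extracting the coefficient of $t^j$, the correction to $h_j(\Delta)$ becomes
\[\sum_{\ell\ge0}(-1)^{j-\ell}\,\w\beta_{\ell}(\Delta;\kk)\sum_{i=0}^{d-1}\binom{i}{\ell+1}\binom{d-1-i}{j-2-\ell},\]
whose inner sum equals $\binom{d}{j}$ by the Vandermonde convolution $\sum_{i=0}^{N}\binom{i}{a}\binom{N-i}{b}=\binom{N+1}{a+b+1}$ (the terms with $j-2-\ell<0$ vanishing automatically); the substitution $\ell=j-i-1$ then turns the correction into $-\binom{d}{j}\sum_{i=1}^{j-1}(-1)^{i}\w\beta_{j-i-1}(\Delta;\kk)$, which is the claimed identity.

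I expect the genuinely substantial part to be the middle step, establishing that $\mathrm{Soc}(R^{(i)})$ has exactly this Hilbert series: this requires both that the partial quotients $R^{(i)}$ are again Buchsbaum (so that $\mathrm{Soc}=H^0_{\mathfrak m}$ and the low-degree local cohomology is $\mathfrak m$-torsion) and a careful account of the internal degree shifts in the iterated long exact sequences — it is precisely the shift $(-2)$ attached to the raised cohomological index that, after iteration, produces the binomial coefficient $\binom{d}{j}$ rather than something smaller, and one must also verify that the (generally infinite-dimensional) top local cohomology never enters the computation of $H^0_{\mathfrak m}$. In the toric-topological framework of \S\ref{sec:buchsbaum complex} one can sidestep the commutative algebra: the integers $\dim_\kk\mathrm{Soc}(R^{(i)})_{2n}$ can be read off the rational cohomology of the toric space associated with $\Delta$, where the coefficients $\binom{i}{n}$ appear geometrically as Betti numbers of a torus, so that the binomial identity above becomes the Euler-characteristic shadow of that cohomology computation.
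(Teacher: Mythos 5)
Your proposal is correct in outline and in every step I can check: the four-term exact sequence together with the paper's definition of Buchsbaum gives exactly the Hilbert-series recursion you write; the iteration of $H^p_{\mathfrak m}(M/\theta M)\cong H^p_{\mathfrak m}(M)\oplus H^{p+1}_{\mathfrak m}(M)(-2)$ stays strictly below the top local cohomology precisely because $i\leqslant d-1$; Hochster's local-cohomology formula combined with Theorem \ref{thm:algebraic property}(a),(c) does force $H^k_{\mathfrak m}(\kk[\Delta])$, $k\leqslant d-1$, into internal degree $0$ with dimension $\w\beta_{k-1}(\Delta;\kk)$; and your Vandermonde extraction reproduces the stated formula on the nose. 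It is, however, a genuinely different route from the paper's. The paper does not reprove Theorem \ref{thm:schenzel} where it is stated; its own account is the characteristic-zero exposition in \S\ref{sec:buchsbaum complex}: in Lemma \ref{lem:E3-page} one forms the \v{C}ech--Koszul double complex $\RR^{p,-q}=\bigoplus_{\sigma\in\FF_{p-1}(\Delta)}\Qq[\mathrm{st}_\sigma\Delta]\otimes\Lambda^{q}[v_1,\dots,v_d]$, collapses one spectral sequence by a theorem of Hochster and the other by Cohen--Macaulayness of the stars, and extracts the $q=0$ row (Schenzel's formula) by an Euler-characteristic count in each internal degree; the payoff is geometric, since the correction terms are then realized as honest cohomology of the toric space $M_\Delta$ (Theorem \ref{thm:coho of toric over buchs}), with $\binom{d}{p+q}$ appearing as Betti numbers of $T^d$ --- exactly the shadow you allude to at the end. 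Your route is essentially Schenzel's original commutative-algebra argument: it is valid over any infinite field and for every l.s.o.p.\ (the theorem as stated, not just its rational version), needs no genericity and no toric space, but yields no topological interpretation of the correction terms.

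One caveat, which you partly flag yourself: the two facts you lean on --- that each partial quotient $R^{(i)}=\kk[\Delta]/(\theta_1,\dots,\theta_i)$ is again Buchsbaum, and that the non-top local cohomology of a Buchsbaum module is a finite-dimensional vector space annihilated by $\mathfrak m$ (whence $\mathrm{Soc}(R^{(i)})=H^0_{\mathfrak m}(R^{(i)})$ and the vanishing of the multiplication maps that split the long exact sequences) --- do not follow formally from the weak-sequence definition of Buchsbaum adopted in this paper; they are theorems of the Schenzel/St\"uckrad--Vogel theory. Since you identify them explicitly as the substantial middle step and they are standard, I read your proposal as a correct reduction to that toolkit rather than a self-contained proof; with those two citations made precise, the argument is complete.
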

	In \S \ref{subsec:buchsbaum}, we give a topological exposition of this formula in characteristic zero.
	
	\begin{Def}\label{def:WLP}
		Let $\Delta$ be a simplicial complex of dimension $d-1$. We say that $\Delta$  has the \emph{weak Lefschetz property over $\kk$} (WLP for short)
		if there is an l.s.o.p. $\{\theta_1,\dots,\theta_d\}$ for $\kk[\Delta]$ and a linear form
		$\omega$ such that the multiplication maps
		\[\cdot\omega:(\kk[\Delta]/\Theta)_{2i}\to(\kk[\Delta]/\Theta)_{2i+2}\]
		have full rank for all $i<d$, i.e. either
		injective or surjective.  Such a linear form $\omega$ is called a \emph{weak Lefschetz element} (WLE).
	\end{Def}
	The WLP is closely related to the $g$-conjecture because of the following well known result (cf. \cite{Sw14}).
	\begin{prop}\label{prop:WLP}
		Let $\Delta$ be a $\kk$-homology $(d-1)$-sphere. If $\Delta$ has the WLE over $\kk$ then $\Delta$ satisfies the $g$-conjecture;
		A linear form $\omega$ is a WLE if and only if the multiplication map
		$(\kk[\Delta]/\Theta)_{2\lfloor d/2 \rfloor}\xr{\cdot\omega}(\kk[\Delta]/\Theta)_{2\lfloor d/2 \rfloor+2}$ is surjective, or equivalently, the map $(\kk[\Delta]/\Theta)_{2\lceil d/2 \rceil-2}\xr{\cdot\omega}(\kk[\Delta]/\Theta)_{2\lceil d/2 \rceil}$ is injective.
	\end{prop}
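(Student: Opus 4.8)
The plan is to pass to the doubly Artinian quotient $B:=\kk[\Delta]/(\Theta,\omega)=A/\omega A$, where $A:=\kk[\Delta]/\Theta$, and to read off its Hilbert function; this is essentially the classical argument of Stanley reproduced in \cite{Sw14}. First I would collect the structural input. Since a $\kk$-homology $(d-1)$-sphere is Gorenstein$^*$ over $\kk$ (Theorem \ref{thm:algebraic property}(b)), $A$ is a Poincar\'e duality $\kk$-algebra concentrated in even degrees $0,2,\dots,2d$ with $\dim_\kk A_{2i}=h_i(\Delta)$ (Theorem \ref{thm:stanley}); moreover $A$, hence $B$, is generated by its degree-$2$ part, because $\kk[\Delta]$ is. The lemma used throughout is \emph{Poincar\'e adjointness}: under the perfect pairings $A_{2i}\times A_{2(d-i)}\to A_{2d}\cong\kk$ the map $\cdot\omega\colon A_{2i}\to A_{2i+2}$ is the transpose of $\cdot\omega\colon A_{2(d-1-i)}\to A_{2(d-i)}$, so these have equal rank and one is surjective iff the other is injective. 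This immediately gives the ``equivalently'' clause of the statement, taking $i=\lfloor d/2\rfloor$ and noting $\lceil d/2\rceil-1=d-1-\lfloor d/2\rfloor$.

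Next I would compute the Hilbert function of $B$ assuming $\omega$ is a weak Lefschetz element. Writing $r_i=\mathrm{rank}(\cdot\omega\colon A_{2i}\to A_{2i+2})$, full rank means $r_i=\min(h_i,h_{i+1})$, so from $A_{2i-2}\xrightarrow{\cdot\omega}A_{2i}\to B_{2i}\to 0$ we get $\dim_\kk B_{2i}=h_i-\min(h_{i-1},h_i)=\max(0,h_i-h_{i-1})$ (with $h_{-1}:=0$). Because $B$ is generated in degree $2$, $B_{2i}=0$ forces $B_{2j}=0$ for all $j\ge i$, so the nonzero pieces of $B$ are exactly degrees $0,2,\dots,2s$ for some $s$, which translates to $h_0<h_1<\dots<h_s\ge h_{s+1}\ge\dots\ge h_d$. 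Combining this single-peak shape with Dehn--Sommerville $h_i=h_{d-i}$ forces $s\le\lfloor d/2\rfloor$ and $h_s=h_{s+1}=\dots=h_{d-s}$, and a short check then yields $g_i=h_i-h_{i-1}=\dim_\kk B_{2i}$ for every $0\le i\le\lfloor d/2\rfloor$ (treating $i\le s$ and $s<i\le\lfloor d/2\rfloor$ separately, the latter giving $g_i=0=\dim_\kk B_{2i}$). Since $B$ is a connected graded $\kk$-algebra generated in degree $2$, regrading by halving and applying Macaulay's theorem (Theorem \ref{thm:m-vector}) shows $(\dim_\kk B_{2i})_{i\ge 0}$ is an $M$-sequence; its initial segment $(g_0,\dots,g_{\lfloor d/2\rfloor})$ is therefore an $M$-vector, proving the $g$-conjecture for $\Delta$. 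The same single-peak analysis shows $h_{\lfloor d/2\rfloor+1}\le h_{\lfloor d/2\rfloor}$, so the full-rank middle map $\cdot\omega\colon A_{2\lfloor d/2\rfloor}\to A_{2\lfloor d/2\rfloor+2}$ is in fact surjective --- the forward implication of the last equivalence.

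For the converse I would show that surjectivity of the single middle map already forces full rank everywhere. Upward propagation: if $\cdot\omega\colon A_{2i}\to A_{2i+2}$ is onto, then $A_{2i+4}=A_2\cdot A_{2i+2}=A_2\cdot(\omega A_{2i})=\omega(A_2\cdot A_{2i})=\omega A_{2i+2}$ by generation in degree $2$, so $\cdot\omega\colon A_{2i+2}\to A_{2i+4}$ is onto; inductively $\cdot\omega$ is surjective in all degrees $\ge 2\lfloor d/2\rfloor$. Downward: by Poincar\'e adjointness, $\cdot\omega\colon A_{2i}\to A_{2i+2}$ is injective iff $\cdot\omega\colon A_{2(d-1-i)}\to A_{2(d-i)}$ is surjective, and for $i\le\lfloor d/2\rfloor-1$ one has $d-1-i\ge\lfloor d/2\rfloor$, so the latter holds by the upward step. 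These two ranges cover all $0\le i\le d-1$, so every $\cdot\omega\colon A_{2i}\to A_{2i+2}$ has full rank, i.e.\ $\omega$ is a WLE.

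I expect the main obstacle to be the middle bookkeeping: extracting the precise single-peak shape of the $h$-vector from ``$B$ is generated in degree $2$'' and then matching it against Dehn--Sommerville so as to identify $\dim_\kk B_{2i}$ with $g_i$ \emph{on the nose} for all $i\le\lfloor d/2\rfloor$, including the degenerate regime $s<\lfloor d/2\rfloor$ where one must verify $g_i=0$ there. The remaining ingredients --- Poincar\'e adjointness, the upward propagation, and the appeal to Macaulay's theorem --- are routine once that combinatorial shape is in hand.
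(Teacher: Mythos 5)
Your argument is correct and complete: the paper itself gives no proof of Proposition \ref{prop:WLP}, citing it as well known (cf.\ \cite{Sw14}), and your proof is exactly the standard Stanley--Swartz argument that reference has in mind --- Gorenstein* gives Poincar\'e duality of $\kk[\Delta]/\Theta$, adjointness of $\cdot\omega$ under the pairing yields the surjective/injective equivalence and the downward propagation, generation in degree two yields the upward propagation and the identification of the Hilbert function of $\kk[\Delta]/(\Theta,\omega)$ with the $g$-vector, and Macaulay's theorem (Theorem \ref{thm:m-vector}) finishes. No gaps; the single-peak bookkeeping via Dehn--Sommerville is handled correctly, including the degenerate range $s<\lfloor d/2\rfloor$.
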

	
	We define a set of pairs $\WW(\Delta)\subset \kk^{f_0}\oplus\kk^{df_0}$ to be
	\[\WW(\Delta)=\{(\omega,\Theta):\Theta\text{ is an l.s.o.p. for }\kk[\Delta]\text{ and }\omega \text{ is a WLE }\}.\]
	It is well known that $\WW(\Delta)$ is a Zariski open set (see e.g. \cite[Proposition 3.6]{Swa06}).
	We will loosely use the term \emph{`generic choice'} of $\Theta$ or $\omega$ to mean that these elements are chosen from a
	non-empty Zariski open set, to be understood from the context.
	
	If $\kk$ is an infinite field and $\omega$ is a WEL for $\kk[\Delta]/\Theta$,
	then the generic linear combination of $\omega$ and some other arbitrary one-forms $\omega_1,\dots,\omega_k$ is also a WLE.
	This can be seen from the following elementary result in linear algebra theory.
	\begin{lem}\label{lem:generic}
		Suppose we are given $r\times s$ matrices ($r\leqslant s$) $A_1,\dots,A_j$ with entries in an infinite field $\kk$, and one of these matrices has rank $r$.
		Let $B_{b_1,\dots,b_j}=\sum_{i=1}^j b_iA_i$ ($b_i\in\kk$) be a  linear combination of $A_i,\dots,A_j$. Then the set \[X=\{(b_1,\dots,b_j)\in\kk^j:\mathrm{rank}\,B_{b_1,\dots,b_j}=r\}\] is a nonempty Zariski open subset in $\kk^j$.
	\end{lem}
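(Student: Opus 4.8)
The plan is to reduce the statement to the elementary observation that ``full row rank'' is an open condition cut out by the non-vanishing of maximal minors, each of which is a polynomial function of the coefficients $b_1,\dots,b_j$.

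First I would fix the entry functions. Write $B_{b_1,\dots,b_j}=(\beta_{pq})_{1\leqslant p\leqslant r,\,1\leqslant q\leqslant s}$, where each $\beta_{pq}=\sum_{i=1}^j b_i\,(A_i)_{pq}$ is a linear form in the variables $b_1,\dots,b_j$ with coefficients in $\kk$. Since $r\leqslant s$, the matrix $B_{b_1,\dots,b_j}$ has rank $r$ if and only if at least one of its $r\times r$ submatrices, obtained by selecting a subset $Q\subset[s]$ of $r$ columns, has nonzero determinant. For each such $Q$ the determinant $M_Q(b_1,\dots,b_j)$ of the corresponding submatrix is a polynomial (in fact homogeneous of degree $r$) in $b_1,\dots,b_j$. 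Thus
\[
X=\bigcup_{\substack{Q\subset[s]\\ |Q|=r}}\bigl\{\,(b_1,\dots,b_j)\in\kk^j : M_Q(b_1,\dots,b_j)\neq 0\,\bigr\},
\]
which is precisely the complement in $\kk^j$ of the common zero locus $\{(b_1,\dots,b_j): M_Q(b_1,\dots,b_j)=0 \text{ for all such }Q\}$. This common zero locus is the vanishing set of the (finite) collection of polynomials $M_Q$, hence Zariski closed, so $X$ is Zariski open.

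For non-emptiness I would invoke the hypothesis that some $A_{i_0}$ has rank $r$. Taking $b_{i_0}=1$ and $b_i=0$ for $i\neq i_0$ gives $B_{b_1,\dots,b_j}=A_{i_0}$, which has rank $r$; hence this tuple lies in $X$, so $X\neq\varnothing$. Equivalently, this shows that at least one of the polynomials $M_Q$ is not identically zero, so that $X$ contains the nonempty principal open set $\{M_Q\neq 0\}$; since $\kk$ is infinite, $X$ is moreover Zariski dense, although only non-emptiness is needed for the applications.

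There is essentially no serious obstacle here: the only points deserving care are (i) noting that ``$\mathrm{rank}=r$'' coincides with ``full row rank'' precisely because $r\leqslant s$, so that maximal minors detect it, and (ii) recording that the minors $M_Q$ are genuine polynomials in $(b_1,\dots,b_j)$, so that the relevant subsets of $\kk^j$ are honestly Zariski closed and open. Both are immediate from the definitions.
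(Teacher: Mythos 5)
Your proof is correct and follows essentially the same route as the paper: the rank condition is detected by the $r\times r$ minors of $B_{b_1,\dots,b_j}$, which are homogeneous degree-$r$ polynomials in $b_1,\dots,b_j$, not all identically zero because some $A_{i_0}$ has rank $r$. If anything, your write-up is slightly more careful than the paper's, which reduces without loss of generality to a single square minor and invokes the infinitude of $\kk$, whereas you take the union over all column subsets $Q$ (so openness of $X$ itself is explicit) and get non-emptiness directly by evaluating at the coordinate vector for $i_0$.
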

	\begin{proof}
		Without loss of generality, we may assume $A_i,\dots,A_k$ are square $r\times r$ matrices, and $|A_1|\neq0$. Viewing $b_i$ as variables, then it is easily verified that the determinant $|B_{b_1,\dots,b_k}|$ is a nonzero homogeneous polynomial $f(b_1,\dots,b_k)$ of degree $r$. The statement of this lemma follows immediately since $\kk$ is infinite.
	\end{proof}
	\subsection{Moment-angle complexes and manifolds}\label{subsec:m-a}
	The moment-angle complexes first appeared in the work of Davis and Januszkiewicz \cite{DJ91} and further studied in detail and named by Buchstaber and Panov \cite{BP00}.
	They play a key role in the emerging field of toric topology, which has many connections with algebraic geometry, commutative algebra and combinatorics, etc.
	
	Let $\Delta$ be a simplicial complex, and let $(D^2, S^1)$ denote the pair of a disk and its boundary circle.
	For each simplex $\sigma=\{i_1,\dots,i_k\}\in \Delta$, set
	\[B_\sigma=\{(z_1,\dots,z_m)\in (D^2)^m:z_i\in S^1 \text{ when } i\not\in\sigma\}.\]
	The \emph{moment-angle complex} $\ZZ_\Delta$ is defined to be the CW complex
	\[\ZZ_\Delta:=\bigcup_{\sigma\in\Delta}B_\sigma\subset(D^2)^m.\]
	The standard coordinatewise action of the $m$-torus $T^m=\mathbb{R}^m/\mathbb{Z}^m$ on $(D^2)^m$ induces
	the canonical $T^m$-action on $\ZZ_\Delta$.
	
	We have a natural cellular decomposition of $\ZZ_\Delta$ as follows.
	Consider the following decomposition of the disc $D^2$ into 3 cells: the $0$-cell $e^0=1\in D^2$; the $1$-cell $e^1=S^1\setminus\{1\}$; the $2$-cell $e^2=D^2\setminus S^1$. By taking product we obtain a cellular decomposition of $(D^2)^m$, and then $\ZZ_\Delta$ embeds as a CW subcomplex
	in $(D^2)^m$. Each cell of $\ZZ_\Delta$ has the form
	\[e_\sigma\times t_J=e^2_{i_1}\times\cdots\times e^2_{i_k}\times e^1_{j_1}\times\cdots\times e^1_{j_r},\]
	where $\sigma=\{i_1,\dots, i_k\}\in\Delta$, $J=\{j_1,\dots, j_r\}$, $\sigma\cap J=\varnothing$. (We omit the $0$-cell in the product.)
	
	There is an alternative way to define $\ZZ_\Delta$ in terms of the dual simple polyhedral complex $P_\Delta$, constructed in \cite{DJ91}.
	As a polyhedron, $P_\Delta$ is the cone over the barycentric subdivision $\Delta'$ of $\Delta$.
	Precisely, for each simplex $\sigma\in \Delta$ (including $\varnothing$), let $F_\sigma$ denote the geometric realization of the poset
	$\Delta_{\geqslant\sigma}=\{\tau\in \Delta:\tau\geqslant\sigma\}$.
	Hence, for $\sigma\neq\varnothing$, $F_\sigma$ is the subcomplex of $\Delta'$ consisting of all
	simplices of the form $\sigma=\sigma_0<\sigma_1<\cdots<\sigma_k$, and $F_\varnothing=P_\Delta$ is the cone on $\Delta'$.
	If $\sigma$ is a $(k-1)$-simplex, then we say that $F_\sigma$ is a \emph{face of codimension $k$}.
	
	The polyhedron $P_\Delta$ together with its decomposition into ``faces" $\{F_\sigma\}_{\sigma\in \Delta}$ will be called a \emph{simple polyhedral complex}.
	In particular, there are $m$ facets $F_i,\dots,F_m$ of $P_\Delta$, in which $F_i$ is the geometric realization of the star of the $i$th vertex of $\Delta$ in $\Delta'$. Let $T_i=S^1$ be the coordinate circle subgroup of $T^m$.
	For each point $x\in P_\Delta$, define a subtorus
	\[T(x)=\prod_{i:\,x\in F_i}T_i\subset T^m,\]
	assuming that $T(x)=\{1\}$ if there are no facets containing $x$. Then define
	\begin{equation}\label{eq:m-a}
		\ZZ_\Delta=P_\Delta\times T^m/\sim,
	\end{equation}
	where the equivalence relation $\sim$ is given by $(x,g)\sim(x',g')$ if and only if $x=x'$ and $g^{-1}g'\in T(x)$.
	The action of $T^m$ on $P_\Delta\times T^m$ by the right translations descends to
	a $T^m$-action on $\ZZ_\Delta$, and the orbit space of this action is just $P_\Delta$.
	These two definitions are equivalent and both have their own convenience in different situations (cf. \cite[Chapter 6]{BP02}).
	\begin{exmp}\label{exmp:m-a}
		(i) Let $\Delta=\partial\Delta^{m-1}$ (the boundary of a simplex), then
		\[\begin{split}
			\ZZ_\Delta&=(D^2\times\cdots\times D^2\times S^1)\cup (D^2\times\cdots\times S^1\times D^2)\cup\cdots\\
			&\cup( S^1\times\cdots\times D^2\times D^2)=\partial\big((D^2)^m\big)=S^{2m-1}.
		\end{split}\]
		
		(ii) If $\Delta=\Delta_1*\Delta_2$, then $\ZZ_\Delta=\ZZ_{\Delta_1}\times\ZZ_{\Delta_2}$.
	\end{exmp}
	$\ZZ_\Delta$ is a closed orientable topological manifold (resp. $\kk$-homology manifold) of dimension $m+d$ if and only if $\Delta$ is a homology $(d-1)$-sphere (resp. $\kk$-homology $(d-1)$-sphere). (see \cite[\S2.1]{C17}). In this case,  we call $\ZZ_\Delta$ a \emph{moment-angle manifold} (resp. \emph{$\kk$-homology moment-angle manifold}).
	In particular, if $\Delta$ is a polytopal sphere, then $\ZZ_\Delta$ admits a smooth structure (see \cite[Chapter 6]{BP15}). In general, the smoothness of $\ZZ_\Delta$ is open.
	
	\subsection{Cohomology of moment-angle complexes}\label{subsec:cohom m-a}
	Throughout this subsection, $\kk$ is an commutative ring with unit.
	
	For a simplicial complex $\Delta$, the \emph{Koszul complex} of the face ring $\kk[\Delta]$ is defined as
	the differential $\Zz\oplus\Nn^m$-graded algebra $(\Lambda[y_1,\dots,y_m]\otimes\kk[\Delta],d)$,
	where $\Lambda[y_1,\dots,y_m]$ is the exterior algebra on $m$ generators over $\kk$, and the multigrading and differential is given by
	\begin{gather*}
		\mathrm{mdeg}\,y_i=(-1,2\boldsymbol{e}_i),\ \mathrm{mdeg}\,x_i=(0,2\boldsymbol{e}_i),\ \boldsymbol{e}_i\in\Nn^m\text{ is the $i$th unit vector };\\
		dy_i=x_i,\quad dx_i=0.
	\end{gather*}
	It is known that $H^*(\Lambda[y_1,\dots,y_m]\otimes\kk[\Delta],d)=\mathrm{Tor}_{\kk[x_1,\dots,x_m]}(\kk[\Delta],\kk)$.
	Then the Tor-algebra $\mathrm{Tor}_{\kk[x_1,\dots,x_m]}(\kk[\Delta],\kk)$ is canonically an $\Zz\oplus\Nn^m$-graded algebra.
	
	\begin{thm}[\cite{BBP04},{\cite[Theorem 4.5.4]{BP15}}]
		The following isomorphism of algebras holds:
		\begin{gather*}
			H^*(\ZZ_\Delta;\kk)\cong\mathrm{Tor}_{\kk[x_1,\dots,x_m]}(\kk[\Delta],\kk),\\
			H^p(\ZZ_\Delta;\kk)=\bigoplus_{-i+2|J|=p}\mathrm{Tor}^{-i,2J}_{\kk[x_1,\dots,x_m]}(\kk[\Delta],\kk),
		\end{gather*}
		where $J=(j_1,\dots,j_m)\in\Nn^m$ and $|J|=j_1+\cdots+j_m$.
	\end{thm}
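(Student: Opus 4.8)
The statement is classical (Baskakov--Buchstaber--Panov), and I will reconstruct the cellular proof, since that is the one adapted to the CW decomposition of $\ZZ_\Delta$ recalled above. The plan is: (1) build an explicit cochain-level dg-algebra model for $C^*(\ZZ_\Delta;\kk)$; (2) identify this model with the Koszul complex up to quasi-isomorphism; (3) read off the bigraded refinement. For step (1), I use the cells $e_\sigma\times t_J$ ($\sigma\in\Delta$, $J\subseteq[m]$, $\sigma\cap J=\varnothing$). In the three-cell decomposition $D^2=e^0\cup e^1\cup e^2$ one has $\delta(e^1)^*=\pm(e^2)^*$ and $\delta(e^0)^*=\delta(e^2)^*=0$, so the Leibniz rule over product cells gives, up to sign,
\[
\delta\,(e_\sigma\times t_J)^*=\sum_{\substack{j\in J\\ \sigma\cup\{j\}\in\Delta}}\pm\,\big(e_{\sigma\cup\{j\}}\times t_{J\setminus\{j\}}\big)^*.
\]
Set $R^*(\Delta):=\Lambda[y_1,\dots,y_m]\otimes\kk[\Delta]\big/\big(x_i^2,\,x_iy_i:i\in[m]\big)$ with $dy_i=x_i$, $dx_i=0$; as a $\kk$-module it is free on $\{\xx_\sigma y_J:\sigma\in\Delta,\ \sigma\cap J=\varnothing\}$, in bijection with the cells, and the assignment $(e_\sigma\times t_J)^*\mapsto\xx_\sigma y_J$ converts the displayed coboundary into the Koszul differential. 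Hence $C^*(\ZZ_\Delta;\kk)\cong R^*(\Delta)$ as dg-algebras (multiplicativity because both products come from the coordinatewise product on $(D^2)^m$).

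For step (2), recall that the full Koszul complex $\Lambda[y_1,\dots,y_m]\otimes\kk[x_1,\dots,x_m]$ resolves $\kk$ freely over $\kk[x_1,\dots,x_m]$, so $H^*(\Lambda[y_1,\dots,y_m]\otimes\kk[\Delta],d)=\mathrm{Tor}_{\kk[x_1,\dots,x_m]}(\kk[\Delta],\kk)$ with its $\Zz\oplus\Nn^m$-grading. It remains to see that the quotient map $\Lambda[y_1,\dots,y_m]\otimes\kk[\Delta]\to R^*(\Delta)$ is a quasi-isomorphism. I would argue one coordinate at a time: the quotient $\Lambda[y_i]\otimes\kk[x_i]\to\kk\langle 1,x_i,y_i\rangle$ (with $dy_i=x_i$) is a quasi-isomorphism, its kernel $(x_i^2,x_iy_i)$ being acyclic via the explicit homotopy $x_i^k\mapsto x_i^{k-1}y_i$; take the $m$-fold tensor product over $\kk$ (Künneth, all terms $\kk$-free) and then pass to the quotient by the Stanley--Reisner ideal, using that $I_\Delta$ is a monomial ideal so that the coordinatewise homotopy descends. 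This gives $H^*(R^*(\Delta))\cong\mathrm{Tor}_{\kk[x_1,\dots,x_m]}(\kk[\Delta],\kk)$ as bigraded $\kk$-algebras.

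For step (3), combining (1) and (2) gives the ring isomorphism $H^*(\ZZ_\Delta;\kk)\cong\mathrm{Tor}_{\kk[x_1,\dots,x_m]}(\kk[\Delta],\kk)$. For the refinement, the cell $e_\sigma\times t_J$ has dimension $2|\sigma|+|J|$, while $\xx_\sigma y_J$ sits in Koszul homological degree $-|J|$ and internal multidegree $2(\sigma\cup J)$; writing $K=\sigma\cup J$ so that $|K|=|\sigma|+|J|$, one has $-|J|+2|K|=2|\sigma|+|J|=p$, which is precisely the indexing in $H^p(\ZZ_\Delta;\kk)=\bigoplus_{-i+2|K|=p}\mathrm{Tor}^{-i,2K}_{\kk[x_1,\dots,x_m]}(\kk[\Delta],\kk)$.

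The main obstacle is the careful bookkeeping in step (1) --- matching the cellular signs produced by $D^2=e^0\cup e^1\cup e^2$ across all $m$ coordinates with the Koszul sign convention, and checking compatibility with cup products --- together with the last part of step (2), namely that quotienting the $m$-fold Koszul complex by $I_\Delta$ creates no new cohomology in positive Koszul degree. An alternative that avoids step (2) entirely is the Eilenberg--Moore spectral sequence of the pullback realizing $\ZZ_\Delta$ as the homotopy fiber of $ET^m\times_{T^m}\ZZ_\Delta\to BT^m$: its $E_2$-page is $\mathrm{Tor}_{H^*(BT^m;\kk)}\big(H^*(ET^m\times_{T^m}\ZZ_\Delta;\kk),\kk\big)=\mathrm{Tor}_{\kk[x_1,\dots,x_m]}(\kk[\Delta],\kk)$, and the remaining work is a separate argument --- via the internal $\Nn^m$-grading --- that this spectral sequence collapses at $E_2$ with no multiplicative extension problems.
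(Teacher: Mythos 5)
This theorem is not proved in the paper at all: it is quoted as background from \cite{BBP04} and \cite[Theorem 4.5.4]{BP15}, so there is no ``paper proof'' to compare against; your reconstruction follows exactly the standard route of those references (cellular cochains of $\ZZ_\Delta$, the quotient dg-algebra $R^*(\Delta)=\Lambda[y_1,\dots,y_m]\otimes\kk[\Delta]/(x_i^2,x_iy_i)$, and the quasi-isomorphism from the Koszul complex). The additive and bigraded parts of your argument are sound: the basis bijection $(e_\sigma\times t_J)^*\leftrightarrow \xx_\sigma y_J$, the identification of the cellular coboundary with the Koszul differential, the coordinatewise contracting homotopy together with its descent through the monomial ideal $I_\Delta$, and the degree bookkeeping $-|J|+2|\sigma\cup J|=2|\sigma|+|J|$ are all correct and are essentially the argument of \cite[Lemma 3.2.6, Theorem 3.2.9]{BP15}.

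The one genuine gap is the multiplicative statement in your step (1). Cellular cochains of a CW complex do not carry a natural associative cup product, so the parenthetical justification ``multiplicativity because both products come from the coordinatewise product on $(D^2)^m$'' is not an argument: the cochain-level product requires a choice of cellular approximation to the diagonal, and constructing one on $D^2$ (hence, coordinatewise, on $(D^2)^m$ and on $\ZZ_\Delta$) whose induced product on $C^*(\ZZ_\Delta;\kk)$ matches the product of $R^*(\Delta)$, and agrees with the topological cup product in cohomology, is precisely where the real work of \cite[Theorem 4.5.4]{BP15} lies. You flag ``compatibility with cup products'' as an obstacle but do not address it, and since the theorem asserts an isomorphism of \emph{algebras}, this step cannot be omitted. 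Your suggested Eilenberg--Moore alternative has the symmetric defect, which you also acknowledge: the collapse at $E_2$ (via the $\Nn^m$-grading) and the absence of multiplicative extension problems must be argued separately. So the proposal is the right plan, correctly executed on the additive level, but the ring-structure step needs the diagonal-approximation construction (or the extension-problem analysis) spelled out.
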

	We may view a subset $J\subset[m]$ as a $(0,1)$-vector in $\Nn^m$ whose $j$th coordinate
	is $1$ if $j\in J$ and is $0$ otherwise. Then there is the following well known
	Hochster's formula:
	\begin{thm}[Hochster \cite{H75}, see also {\cite[Theorem 3.2.9]{BP15}}]
		For any subset $J\subset[m]$ we have
		\[\mathrm{Tor}^{-i,2J}_{\kk[x_1,\dots,x_m]}(\kk[\Delta],\kk)\cong\w H^{|J|-i-1}(\Delta_J;\kk),\]
		and $\mathrm{Tor}^{-i,2J}_{\kk[x_1,\dots,x_m]}(\kk[\Delta],\kk)=0$ if $J$ is not a $(0,1)$-vector. We assume $\w H^{-1}(\Delta_\varnothing;\kk)=\kk$ above.
	\end{thm}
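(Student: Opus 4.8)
The plan is to compute the Tor-algebra directly from the Koszul complex $K=(\Lambda[y_1,\dots,y_m]\otimes\kk[\Delta],d)$, which (as recalled above) has $H^*(K)=\mathrm{Tor}_{\kk[x_1,\dots,x_m]}(\kk[\Delta],\kk)$. Since $d$ preserves the $\Nn^m$-grading, the piece $\mathrm{Tor}^{*,2\mathbf{b}}$ for $\mathbf{b}\in\Nn^m$ is the cohomology of the finite-dimensional subcomplex $K_{2\mathbf{b}}$, and I would first write down its $\kk$-basis: the monomials $y_L\,x^{\mathbf{a}}$ with $L\subseteq[m]$, $\chi_L+\mathbf{a}=\mathbf{b}$ (where $\chi_L\in\Nn^m$ is the $0/1$ characteristic vector of $L$), and $\mathrm{supp}(\mathbf{a})\in\Delta$ (so that $x^{\mathbf{a}}\neq0$ in $\kk[\Delta]$); such a generator lies in homological degree $-|L|$, so it records a class in $\mathrm{Tor}^{-|L|,2\mathbf{b}}$. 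In particular the condition $\chi_L+\mathbf a=\mathbf b$ forces $L\subseteq\mathrm{supp}(\mathbf b)$.

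Next I would dispose of the non-$(0,1)$ case. Suppose $\mathbf{b}_k\geqslant2$ for some $k$. Then every generator $y_Lx^{\mathbf a}$ of $K_{2\mathbf b}$ satisfies $\mathbf a_k=\mathbf b_k-\chi_L(k)\geqslant1$, so $x_k\mid x^{\mathbf a}$ and $\mathrm{supp}(\mathbf a-\boldsymbol e_k)\subseteq\mathrm{supp}(\mathbf a)\in\Delta$. Hence the $\kk$-linear map $h$ of $K_{2\mathbf b}$ defined by $h(y_Lx^{\mathbf a})=0$ when $k\in L$ and $h(y_Lx^{\mathbf a})=\pm\,y_{L\cup\{k\}}x^{\mathbf a-\boldsymbol e_k}$ when $k\notin L$ is well defined, and a Leibniz-rule computation gives $dh+hd=\pm\,\mathrm{id}$ on $K_{2\mathbf b}$. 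So $K_{2\mathbf b}$ is acyclic and $\mathrm{Tor}^{-i,2\mathbf b}=0$, which is the claimed vanishing.

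It then remains to treat $\mathbf b=\chi_J$ for $J\subseteq[m]$. Here a generator $y_Lx^{\mathbf a}$ is forced to have $L\subseteq J$, $\mathrm{supp}(\mathbf a)=J\setminus L$, and all exponents of $x^{\mathbf a}$ equal to $1$, the sole surviving constraint being $J\setminus L\in\Delta$; writing $\tau:=J\setminus L$ puts the generators of $K_{2J}$ in bijection with the faces $\tau\in\Delta_J$, the generator at $\tau$ sitting in homological degree $-|L|=-(|J|-|\tau|)$. Unwinding $d(y_Lx_\tau)=\sum_{j\in L}\pm\,y_{L\setminus\{j\}}x_{\tau\cup\{j\}}$ (a term vanishing exactly when $\tau\cup\{j\}\notin\Delta$), one recognizes $(K_{2J},d)$, under the relabelling $\tau\leftrightarrow$ ``reduced cochain of dimension $\dim\tau$'', as the augmented simplicial cochain complex $\w C^{\bullet}(\Delta_J;\kk)$: the generator with $L=J$, $\tau=\varnothing$ supplies the augmentation summand, and the one with $L=\varnothing$, $\tau=J$ (present precisely when $J\in\Delta$) supplies the top cochains. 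Matching degrees then yields $\mathrm{Tor}^{-i,2J}\cong\w H^{\,|J|-i-1}(\Delta_J;\kk)$; for $J=\varnothing$ the complex $K_{0}$ is $\kk$ in degree $0$, so $\mathrm{Tor}^{0,0}=\kk$, in agreement with the convention $\w H^{-1}(\Delta_\varnothing;\kk)=\kk$.

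I expect the only genuinely delicate step to be the sign bookkeeping: choosing the signs in the bijection $y_Lx_\tau\leftrightarrow$ cochain so that $d$ becomes \emph{exactly} the simplicial coboundary (and, in the non-squarefree case, so that $dh+hd$ is exactly $\pm\mathrm{id}$). This is the standard Koszul-sign calculation and poses no real difficulty. As a consistency check one notes that $\mathrm{Tor}^{-i,2J}$ is automatically zero unless $0\leqslant i\leqslant|J|$, which corresponds to $-1\leqslant|J|-i-1\leqslant|J|-1$, precisely the range of nonzero reduced cochain dimensions of $\Delta_J$, so there is no mismatch at either extreme of the complex.
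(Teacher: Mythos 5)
Your argument is correct, and since the paper quotes Hochster's formula without proof (citing \cite{H75} and \cite[Theorem 3.2.9]{BP15}), the relevant comparison is with the standard reference proof, which yours essentially reproduces: split the Koszul complex $\Lambda[y_1,\dots,y_m]\otimes\kk[\Delta]$ by $\Nn^m$-multidegree, kill the non-squarefree degrees $2\mathbf{b}$ (where some $\mathbf{b}_k\geqslant 2$, so every generator is divisible by $x_k$ and the homotopy $dh+hd=\pm\mathrm{id}$ applies) and identify the squarefree piece $K_{2J}$, with generators indexed by faces $\tau\in\Delta_J$ in homological degree $-(|J|-|\tau|)$, with the augmented simplicial cochain complex of $\Delta_J$. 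The only delicate point is the sign bookkeeping you already flag, which is absorbed into the choice of signs in the basis identification, so there is no gap.
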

	
	So $\mathrm{Tor}_{\kk[x_1,\dots,x_m]}(\kk[\Delta],\kk)$ is isomorphic to $\bigoplus_{J\subset[m]}\w H^*(\Delta_J;\kk)$  as $\kk$-modules,
	and this isomorphism endows the direct sum $\bigoplus_{J\subset[m]}\w H^*(\Delta_J;\kk)$ with a $\kk$-algebra structure.
	On the other hand, Baskakov \cite{B02} directly defined a multiplication structure on $\bigoplus_{J\subset[m]}\w H^*(\Delta_J;\kk)$ to make the isomorphism in the Hochster's formula
	to be algebraic.
	Before describing this multiplication structure precisely, let us see some operations on the homology and cohomology of the full subcomplexes of $\Delta$.
	
	Let $\w C^i(\Delta;\kk)$ (resp. $\w C_i(\Delta;\kk)$) denote the $i$th reduced simplicial cochain (resp. chain) group of $\Delta$ with coefficients in $\kk$. For an oriented (ordered) simplex $\sigma=(i_1,\dots,i_p)\in \Delta$, denote still by $\sigma\in\w C^{p-1}(\Delta;\kk)$ the basis cochain corresponding to $\sigma$; it takes value $1$ on $\sigma$ and vanishes on all other simplices.  For simplicity we will omit the coefficient ring $\kk$ from the notations throughout the rest of this subsection.
	\begin{Def}\label{def:cup}
		The \emph{union product} in the simplicial cochains of full subcomplexes of $\Delta$ is defined to be the $\kk$-bilinear operation
		\begin{align*}
			\sqcup :\w C^{p-1}(\Delta_I)\otimes \w C^{q-1}(\Delta_J)&\to \w C^{p+q-1}(\Delta_{I\cup J}), \quad p,q\geqslant 0,\\
			\sigma\otimes\tau&\mapsto\sigma\sqcup\tau
		\end{align*}
		in which $\sigma\sqcup \tau$ is the juxtaposition of $\sigma$ and $\tau$ if $I\cap J=\varnothing$ and $\sigma\cup\tau$ is a simplex of $\Delta_{I\cup J}$; zero otherwise.
		
		Similarly, the \emph{excision product} in the simplicial chains and cochains of full subcomplexes is defined by
		\begin{align*}
			\sqcap :\w C_{p+q-1}(\Delta_I)\otimes \w C^{p-1}(\Delta_J)&\to \w C_{q-1}(\Delta_{I\setminus J}), \quad p,q\geqslant 0.\\
			\sigma\otimes\tau&\mapsto\sigma\sqcap\tau
		\end{align*}
		Here $\sigma\sqcap\tau=\varepsilon_{\sigma,\tau}(\sigma\setminus\tau)$ if $J\subset I$, $\tau\subset\sigma$ and $\sigma\setminus\tau\subset I\setminus J$; zero otherwise, and $\varepsilon_{\sigma,\tau}$ is the sign of the permutation sending $\tau\sqcup(\sigma\setminus\tau)$ to $\sigma$.
	\end{Def}
	It is easily verified that the union product of cochains
	induces a union product of cohomology classes in the full subcomplexes  of $K$:
	\begin{equation}\label{eq:union pruduct}
		\sqcup :\w H^{p-1}(\Delta)\otimes \w H^{q-1}(\Delta)\to \w H^{p+q-1}(\Delta), \quad p,q\geqslant 0.
	\end{equation}
	Similarly, there is an induced excision product in homology and cohomology of the full subcomplexes of $\Delta$. Union and excision product are related by the formula \[\psi(c\sqcap\phi)=(\phi\sqcup\psi)(c)\] for $c\in\w C_{p+q-1}(\Delta_I)$, $\phi\in\w C^{p-1}(\Delta_J)$ and $\psi\in\w C^{q-1}(\Delta_{I\setminus J})$.
	
	Intuitively, the union product (resp. excision product) is an analog of cup product (resp. cap product) in cohomology (resp. homology and cohomology) of a space.
	Actually, the union and excision product for $\Delta$ do induce the cup and cap product for $\ZZ_\Delta$, respectively (cf. \cite[Chapter 4.5]{BP15} and \cite{FW15}).
	
	\begin{thm}[{\cite[Proposition 3.2.10]{BP15}}]\label{thm:union product}
		There is a ring isomorphism (up to a sign for products).
		\[H^*(\ZZ_\Delta)\cong \bigoplus_{J\subset[m]}\w H^*(\Delta_J),\quad H^p(\ZZ_\Delta)\cong\bigoplus_{J\subset[m]}\w H^{p-|J|-1}(\Delta_J),\]
		where the ring structure on the right hand side is given by the union product $\sqcup$ in \eqref{eq:union pruduct}.
	\end{thm}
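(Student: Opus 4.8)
The plan is to realise both sides of the asserted isomorphism at the cochain level and to read off the ring structure there. Give $\ZZ_\Delta$ the CW structure of \S\ref{subsec:m-a}, whose cells are the products $e_\sigma\times t_J$ with $\sigma\in\Delta$ and $J\cap\sigma=\varnothing$, of dimension $2|\sigma|+|J|$. Sending such a cell to the monomial $\xx_\sigma y_J:=\prod_{i\in\sigma}x_i\cdot\prod_{j\in J}y_j$ identifies the cellular cochain algebra of $\ZZ_\Delta$, graded so that $\deg x_i=2$ and $\deg y_i=1$, with the finite differential graded algebra
\[
R^*(\Delta):=\bigl(\Lambda[y_1,\dots,y_m]\otimes\kk[\Delta]\bigr)\big/(y_i^2,\ x_iy_i,\ x_i^2),\qquad dy_i=x_i,\ dx_i=0,
\]
which is a quotient of the Koszul complex of \S\ref{subsec:cohom m-a} and is quasi-isomorphic to it, so $H^*(R^*(\Delta))\cong\mathrm{Tor}_{\kk[x_1,\dots,x_m]}(\kk[\Delta],\kk)\cong H^*(\ZZ_\Delta)$. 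Establishing this as an isomorphism of DGAs is the first, largely mechanical, step: it requires choosing orientations of the product cells consistent with fixed orientations of $e^1\subset S^1$ and $e^2\subset D^2$, and, for the cup product, a cellular diagonal approximation compatible with the product decomposition $\ZZ_\Delta\subset(D^2)^m$, which reduces the verification to a single disc factor. (If one is content to import the algebra isomorphism $H^*(\ZZ_\Delta)\cong\mathrm{Tor}$ and Hochster's formula as already stated in \S\ref{subsec:cohom m-a}, one may skip this and argue only about products, but the route through $R^*(\Delta)$ produces the additive statement as a by-product.)

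Next I would split $R^*(\Delta)$ by \emph{support}. Each monomial $\xx_\sigma y_J$ has support $\sigma\sqcup J\subset[m]$, and since $dy_i=x_i$ leaves the support unchanged, $R^*(\Delta)=\bigoplus_{J\subset[m]}R^*(\Delta)_J$ as cochain complexes, where $R^*(\Delta)_J$ is spanned by the monomials $\xx_\sigma y_{J\setminus\sigma}$ with $\sigma\subset J$ and $\sigma\in\Delta$. The assignment $\xx_\sigma y_{J\setminus\sigma}\mapsto\sigma$ identifies $R^*(\Delta)_J$, up to the degree shift $p\mapsto p-|J|-1$ and up to sign, with the reduced simplicial cochain complex $\w C^{*}(\Delta_J;\kk)$ — the unit $1=y_\varnothing$ of the $J=\varnothing$ summand accounting for the convention $\w H^{-1}(\Delta_\varnothing;\kk)=\kk$ — because $d(\xx_\sigma y_{J\setminus\sigma})=\sum_{j\in J\setminus\sigma}\pm\,\xx_{\sigma\cup j}\,y_{J\setminus(\sigma\cup j)}$, whose $j$-th term is nonzero in $\kk[\Delta]$ exactly when $\sigma\cup j$ is a face of $\Delta_J$, i.e.\ exactly as in the simplicial coboundary $\delta\sigma$. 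Passing to cohomology yields $H^p(\ZZ_\Delta;\kk)\cong\bigoplus_{J\subset[m]}\w H^{p-|J|-1}(\Delta_J;\kk)$, the additive content of the theorem (and a re-proof of Hochster's formula).

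For the ring structure I would note that the multiplication of $R^*(\Delta)$ respects the support grading, mapping $R^*(\Delta)_I\otimes R^*(\Delta)_J$ into $R^*(\Delta)_{I\cup J}$, and that it vanishes whenever $I\cap J\neq\varnothing$, since then some $x_i^2$, $x_iy_i$ or $y_i^2$ appears. When $I\cap J=\varnothing$, the product $(\xx_\sigma y_{I\setminus\sigma})(\xx_\tau y_{J\setminus\tau})$ equals, up to sign, $\xx_{\sigma\cup\tau}\,y_{(I\cup J)\setminus(\sigma\cup\tau)}$, which is nonzero in $\kk[\Delta]$ precisely when $\sigma\cup\tau\in\Delta$; under the identification above this is exactly the union product $\sigma\sqcup\tau$ of Definition \ref{def:cup}. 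Hence the ring structure transported to $\bigoplus_{J\subset[m]}\w H^*(\Delta_J;\kk)$ via the cohomology isomorphism is the one induced by $\sqcup$ in \eqref{eq:union pruduct}, which is the assertion.

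The real difficulty, and the reason the statement carries the qualifier ``up to a sign for products'', lies in reconciling three independent systems of signs: the orientations of the product cells of $\ZZ_\Delta$ (which fix the signs in the cellular coboundary and cup product), the Koszul signs of $\Lambda[y_1,\dots,y_m]$, and the signs in simplicial cohomology coming from a chosen total order on $[m]$. These can be arranged to agree degreewise so that the additive isomorphism and all differentials match on the nose, but in general they cannot simultaneously be made to match the multiplications exactly, only up to a global sign depending on the bidegrees; pinning down that discrepancy is the main technical point and the only place where genuine care beyond bookkeeping is needed.
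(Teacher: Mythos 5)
Your proposal is correct and follows essentially the same route as the source the paper cites for this statement ([BP15, Proposition 3.2.10] and the cellular cochain model of $\ZZ_\Delta$), which is also precisely the intuition the paper records in Remark \ref{rem:cell}: identify the cellular cochains of $\ZZ_\Delta$ with the quotient DGA of the Koszul complex, split by support into shifted reduced cochain complexes of the full subcomplexes $\Delta_J$, and observe that the cellular product corresponds, up to sign, to the union product $\sqcup$. The paper gives no independent proof of this theorem, so there is nothing further to compare.
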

	
	\begin{rem}\label{rem:cell}
		Here is the  topological intuition for Theorem \ref{thm:union product}. For a subset $J\subset[m]$ and a $(k-1)$-face $\sigma\in \Delta_J$, we have a
		$(|J|+k)$-cell $e_\sigma\times t_{J\setminus\sigma}$ of $\ZZ_\Delta$.
		Let $C^*(\ZZ_\Delta)$ be the cellular cochain groups of $\ZZ_\Delta$. It has a basis of cochains $e_\sigma^* t_{J\setminus\sigma}^*$ dual to the corresponding cells.
		Hence, the cup product in $C^*(\ZZ_\Delta)$ just corresponds to the dual cochain of the cartesian product of these cells, so that corresponds up to a sign to the union product in the simplicial cochains of full subcomplexes of $\Delta$. Similarly, we have the correspondence between the cap product for $\ZZ_\Delta$ and excision product for the full subcomplexes of $\Delta$.
	\end{rem}
	To conclude this subsection, we mention that the $T^m$-equivariant cohomology is considerably simpler than the ordinary cohomology of $\ZZ_\Delta$.
	\begin{thm}[{\cite[Theorem 4.8]{DJ91}}]\label{thm:equiv}
		The $T^m$-equivariant cohomology ring of the moment-angle complex $\ZZ_\Delta$ is isomorphic to the face ring of $\Delta$:
		\[H^*_{T^m}(\ZZ_\Delta)\cong \kk[\Delta].\]
	\end{thm}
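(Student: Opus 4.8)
The plan is to compute the equivariant cohomology via the Borel construction and a decomposition of $\ZZ_\Delta$ into equivariantly simple pieces. By definition $H^*_{T^m}(\ZZ_\Delta)=H^*(ET^m\times_{T^m}\ZZ_\Delta)$, so start from the presentation $\ZZ_\Delta=\bigcup_{\sigma\in\Delta}B_\sigma$ with $B_\sigma=(D^2)^\sigma\times(S^1)^{[m]\setminus\sigma}$. The radial contraction $(z_i)_{i\in\sigma}\mapsto((1-t)z_i)_{i\in\sigma}$ of the disc factors onto their centres is $T^m$-equivariant, so $B_\sigma$ is $T^m$-homotopy equivalent to the single orbit $\{0\}^\sigma\times(S^1)^{[m]\setminus\sigma}\cong T^m/T^\sigma$, where $T^\sigma=\prod_{i\in\sigma}T_i$. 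Since the Borel construction sends $T^m$-homotopy equivalences to homotopy equivalences, $(B_\sigma)_{hT^m}\simeq ET^m/T^\sigma=BT^\sigma\simeq(\Cc P^\infty)^\sigma$, where $(\Cc P^\infty)^\sigma:=\{(y_1,\dots,y_m)\in(\Cc P^\infty)^m:y_i=\ast\text{ for }i\notin\sigma\}$, so that $H^*((B_\sigma)_{hT^m};\kk)=\kk[x_i:i\in\sigma]$.

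Next I would globalise this. The presentation $\ZZ_\Delta=\mathrm{colim}_{\sigma\in\Delta}B_\sigma$ over the face poset is a diagram of closed $T^m$-cofibrations, hence a homotopy colimit, and $ET^m\times_{T^m}(-)$ commutes with colimits; therefore
\[ET^m\times_{T^m}\ZZ_\Delta\;\simeq\;\mathrm{colim}_{\sigma\in\Delta}(\Cc P^\infty)^\sigma\;=\;\bigcup_{\sigma\in\Delta}(\Cc P^\infty)^\sigma\;\subset\;(\Cc P^\infty)^m.\]
Denote this polyhedral product by $DJ(\Delta)$. It then remains to prove the ring isomorphism $H^*(DJ(\Delta);\kk)\cong\kk[\Delta]$. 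The inclusion $DJ(\Delta)\hookrightarrow(\Cc P^\infty)^m$ induces a ring homomorphism $\kk[x_1,\dots,x_m]=H^*((\Cc P^\infty)^m;\kk)\to H^*(DJ(\Delta);\kk)$, and the claim is that it is surjective with kernel exactly $I_\Delta$; since it is induced by a continuous map, this automatically gives the algebra isomorphism.

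For this last point I would induct on the faces of $\Delta$, adjoined in non-decreasing order of dimension, so that at each stage $\Delta'=\Delta\cup\overline\tau$ where $\overline\tau$ is the full simplex on a newly added top-dimensional face $\tau$ and $\Delta\cap\overline\tau=\partial\overline\tau$. This yields a closed cover $DJ(\Delta')=DJ(\Delta)\cup DJ(\overline\tau)$ with intersection $DJ(\partial\overline\tau)$ and an associated Mayer--Vietoris sequence. In the base case $\Delta=\overline\tau$ one has $DJ(\overline\tau)=(\Cc P^\infty)^\tau$ and the statement reduces to $H^*(\Cc P^\infty;\kk)=\kk[x]$ and the Künneth theorem (the relevant cohomology being a free $\kk$-module). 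For the inductive step, all rings occurring are concentrated in even degrees and all restriction maps are surjective, so the Mayer--Vietoris long exact sequence degenerates into short exact sequences
\[0\to H^*(DJ(\Delta');\kk)\to H^*(DJ(\Delta);\kk)\oplus \kk[x_i:i\in\tau]\to \kk[x_i:i\in\tau]/(\xx_\tau)\to0,\]
using $H^*(DJ(\partial\overline\tau);\kk)=\kk[\partial\tau]=\kk[x_i:i\in\tau]/(\xx_\tau)$. By the induction hypothesis $H^*(DJ(\Delta);\kk)=\kk[\Delta]$, and the displayed sequence exhibits $H^*(DJ(\Delta');\kk)$ as the fibre product $\kk[\Delta]\times_{\kk[\partial\tau]}\kk[x_i:i\in\tau]$, which one checks equals $\kk[\Delta']$ at the level of Stanley--Reisner rings. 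Combining the two identifications gives Theorem \ref{thm:equiv}.

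The step I expect to be the main obstacle is the careful justification of the two ``commutation'' claims: that $\ZZ_\Delta=\mathrm{colim}_\sigma B_\sigma$ may, after applying the Borel construction, be replaced by $\mathrm{colim}_\sigma(B_\sigma)_{hT^m}$ without changing the homotopy type (i.e. that the face diagram is cofibrant), and that the Mayer--Vietoris sequences genuinely collapse to the short exact sequences above. Both are routine once the cofibration and degree hypotheses are in place, but they carry the actual content; an alternative to the induction is to run the Bousfield--Kan spectral sequence of the homotopy colimit $\mathrm{hocolim}_{\sigma\in\Delta}BT^\sigma$, with $E_2$-term $\lim^s_\sigma H^t(BT^\sigma;\kk)$, and to verify that the higher derived limits over the face poset vanish while $\lim_\sigma H^*(BT^\sigma;\kk)=\kk[\Delta]$.
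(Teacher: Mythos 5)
The paper itself gives no proof of this statement---it is quoted directly from Davis--Januszkiewicz \cite{DJ91}---and your argument is essentially the standard proof of that cited result: replace $ET^m\times_{T^m}\ZZ_\Delta$ by the Davis--Januszkiewicz space $\bigcup_{\sigma\in\Delta}(\Cc P^\infty)^\sigma\subset(\Cc P^\infty)^m$ using the piecewise equivariant retractions $B_\sigma\simeq T^m/T^\sigma$, then identify its cohomology with $\kk[\Delta]$ by a Mayer--Vietoris induction over faces (equivalently, the $\lim$-computation over the face poset), exactly as in \cite{DJ91} and \cite[Chapter 4]{BP15}. The ``commutation/cofibrancy'' point you flag is precisely the homotopy-invariance lemma for polyhedral products proved in those sources, so your proposal is correct and coincides with the cited proof.
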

	\subsection{Quasitoric manifolds}
	In their pioneering work \cite{DJ91} Davis and Januszkiewicz suggested a topological generalisation of \emph{projective toric manifolds} (nonsingular projective toric varieties), which became
	known as \emph{quasitoric manifolds}. A quasitoric manifold is a $2d$-dimensional manifold $M$ with a locally standard
	action of $T^d$ (that is, it locally looks like the standard coordinatewise action of $T^d$ on $\Cc^d$) such that the quotient $M/T^d$ can be identified with a simple $d$-polytope $P$. Let us review this object as a guide to the further generalized spaces.
	
	Let $P$ be a simple $d$-polytope, $\FF=\{F_1,\dots,F_m\}$ the set of facets of $P$. Given a map $\lambda: \FF\to\mathbb{Z}^d$, and write $\lambda(F_i)$
	in the standard basis of $\mathbb{Z}^d$:
	\[\lambda(F_i)=\boldsymbol{\lambda}_i=(\lambda_{1i},\dots,\lambda_{di})^T\in\mathbb{Z}^d,\quad 1\leqslant i\leqslant m.\]
	If the matrix
	\[\mathit{\Lambda}=
	\begin{pmatrix}
		\lambda_{11}&\cdots&\lambda_{1m}\\
		\vdots&\ddots&\vdots\\
		\lambda_{d1}&\cdots&\lambda_{dm}
	\end{pmatrix}
	\]
	has the following property:
	\begin{equation}
		\mathrm{det}(\boldsymbol{\lambda}_{i_1},\dots,\boldsymbol{\lambda}_{i_d})=\pm1\quad \text{whenever }F_{i_1}\cap\dots\cap F_{i_d}\neq\varnothing\text{ in }P,
	\end{equation}
	then $\lambda$ is called a \emph{characteristic function} for $P$, and $\mathit{\Lambda}$ is called a \emph{characteristic matrix}.
	
	Let $(P,\mathit{\Lambda})$ be a \emph{characteristic pair} consisting of a simple polytope $P$ and its characteristic matrix $\mathit{\Lambda}$.
	Denote by $T_i=S^1$ the circle subgroup of $T^d=\mathbb{R}^d/\mathbb{Z}^d$ corresponding to the subgroup $\boldsymbol{\lambda_i}\in \mathbb{Z}^d$.
	For each point $x\in P$, define a subtorus
	\[T(x)=\prod_{i:\,x\in F_i}T_i\subset T^d.\]
	Then the \emph{quasitoric manifold} $M(P,\mathit{\Lambda})$ is defined to be
	\begin{equation*}
		M(P,\mathit{\Lambda})=P\times T^d/\sim,\quad\text{the relation $\sim$ is as in \eqref{eq:m-a}.}
	\end{equation*}
	In particular, If $P$ is a \emph{Delzant polytope} (a simple $d$-polytope $P\subset\Rr^d$ is called a Delzant polytope if for every vertex
	$v\in P$ the normal vectors to the facets meeting at $v$ can be chosen to form a basis of $\Zz^d$), and the function $\lambda$ is defined by the normal vectors of $P$, then $M(P,\mathit{\Lambda})$ is a projective toric manifold (cf. \cite{O88}).
	
	There is an equivalent way to define quasitoric manifolds from polytopal spheres.
	Let $\Delta$ be a polytopal $(d-1)$-sphere. Then the simple polyhedral complex $P_\Delta$ (see subsection \ref{subsec:m-a}) can be viewed as the dual simple polytope of the simplicial polytope which $\Delta$ bounds.
	Suppose $\lambda:\FF_0(\Delta)\to \Zz^d$, $i\mapsto\boldsymbol{\lambda}_i$ is a characteristic function, that is it satisfies the condition
	\begin{equation}\label{con:*}
		\mathrm{det}(\boldsymbol{\lambda}_{i_1},\dots,\boldsymbol{\lambda}_{i_d})=\pm1\quad \text{whenever }(i_1,\dots, i_d)\in\Delta.
	\end{equation}
	By means of $\lambda$, we get a $T^d$-space $M(\Delta,\mathit{\Lambda}):=P_\Delta\times T^d/\sim$ as in the construction \eqref{eq:m-a} of $\ZZ_\Delta$.
	Let $M(P_\Delta,\mathit{\Lambda})$ be the space constructed in the first way. It is obvious that $M(P_\Delta,\mathit{\Lambda})=M(\Delta,\mathit{\Lambda})$. For notational consistency, we use the second construction $M(\Delta,\mathit{\Lambda})$ to denote a quasitoric manifold in the rest of this paper.
	
	\begin{rem}
		(i) The condition \eqref{con:*} is equivalent to saying that the linear sequence $\{\theta_i=\lambda_{i1}x_1+\cdots+\lambda_{im}x_m\}_{1\leqslant i\leqslant d}$ is an integral l.s.o.p. for $\Zz[\Delta]$.
		
		(ii) For every $2$- or $3$-dimensional simple polytope $P$ there exists a quasitoric
		manifold over $P$. (The $3$-dimensional case is due to the Four Color Theorem.) But for $n\geqslant4$, there exist simple $n$-polytopes which do not arise as the base spaces of quasitoric manifolds, since the integral l.s.o.p. for a polytopal sphere $\Delta$ may fail to exist when $\dim\Delta\geqslant3$.
	\end{rem}
	
	Note that a characteristic matrix $\mathit{\Lambda}$ for $\Delta$  defines a map of lattices:
	$\mathit{\Lambda}:\Zz^m\to \Zz^d,\,\boldsymbol{e}_i\mapsto\boldsymbol{\lambda}_i$. Condition \eqref{con:*} implies that there is a short exact sequence
	\[0\to\Zz^{m-d}\to\Zz^m\xr{\mathit{\Lambda}}\Zz^d\to0.\]
	The matrix $\mathit{\Lambda}$ also induces an epimorphism of tori
	\[\exp\mathit{\Lambda}:T^m\to T^d,\quad T_i\mapsto\{(e^{2\pi i\lambda_{1i}t},\dots,e^{2\pi i\lambda_{di}t})\in T^d,\ t\in\Rr\},\]
	whose kernel we denote by $K_\mathit{\Lambda}$. Obviously $K_\mathit{\Lambda}=T^{m-d}$.
	From the construction of moment-angle manifolds and quasitoric manifolds we can easily see the following relation between them.
	\begin{prop}[{\cite[Proposition 7.3.12]{BP15}}]
		The group $K_\mathit{\Lambda}=T^{m-d}$ acts freely and smoothly on $\ZZ_\Delta$. There is a $T^d$-equaviriant homeomorphism
		\[\ZZ_\Delta/K_\mathit{\Lambda}\cong M(\Delta,\mathit{\Lambda}).\]
	\end{prop}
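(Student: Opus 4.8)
The plan is to stay in the polyhedral model $\ZZ_\Delta=P_\Delta\times T^m/\!\sim$ of \eqref{eq:m-a} and to realize the identification as the map induced by the torus epimorphism $\exp\mathit{\Lambda}$ on the second factor. First I would record the isotropy groups of the standard $T^m$-action on $\ZZ_\Delta$: each point of $P_\Delta$ lies in the relative interior of a unique face $F_\sigma$ with $\sigma=\{i_1,\dots,i_k\}\in\Delta$, the facets $F_i$ of $P_\Delta$ containing such a point are precisely those with $i\in\sigma$, and hence by the definition \eqref{eq:m-a} the $T^m$-isotropy group of any $[x,g]$ with $x\in\mathrm{relint}\,F_\sigma$ is the coordinate subtorus $T_\sigma=T_{i_1}\times\cdots\times T_{i_k}\subset T^m$, independently of $g$.

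The key linear-algebra input comes next. Since $\Delta$ is a $(d-1)$-sphere, every face $\sigma\in\Delta$ extends to a facet $\{i_1,\dots,i_d\}\in\Delta$, and condition \eqref{con:*} says $\boldsymbol{\lambda}_{i_1},\dots,\boldsymbol{\lambda}_{i_d}$ is a $\Zz$-basis of $\Zz^d$; therefore $\boldsymbol{\lambda}_{i_1},\dots,\boldsymbol{\lambda}_{i_k}$ extends to a basis, and the restricted homomorphism $\exp\mathit{\Lambda}|_{T_\sigma}\colon T_\sigma\to T^d$ is a split injection onto the subtorus generated by $\boldsymbol{\lambda}_{i_1},\dots,\boldsymbol{\lambda}_{i_k}$. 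In particular $K_\mathit{\Lambda}\cap T_\sigma=\ker(\exp\mathit{\Lambda}|_{T_\sigma})=\{1\}$ for every $\sigma\in\Delta$, which is exactly the assertion that $K_\mathit{\Lambda}$ acts freely on $\ZZ_\Delta$. For smoothness I would invoke that a polytopal sphere yields a smooth $\ZZ_\Delta$ (e.g.\ as an intersection of quadrics, \cite[Chapter 6]{BP15}) on which the coordinatewise $T^m$-action, hence its restriction to $K_\mathit{\Lambda}\cong T^{m-d}$, is smooth; a free smooth action of a compact Lie group then makes $\ZZ_\Delta\to\ZZ_\Delta/K_\mathit{\Lambda}$ a smooth principal $T^{m-d}$-bundle, so the quotient is a smooth manifold.

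To build the homeomorphism I would push the map $\mathrm{id}_{P_\Delta}\times\exp\mathit{\Lambda}\colon P_\Delta\times T^m\to P_\Delta\times T^d$ to the two quotients. By the isotropy computation, for $x\in\mathrm{relint}\,F_\sigma$ this map carries the subgroup $T(x)=T_\sigma$ appearing in \eqref{eq:m-a} onto the subgroup $\prod_{i\in\sigma}\exp\mathit{\Lambda}(T_i)$ appearing in the definition of $M(\Delta,\mathit{\Lambda})$, so it descends to a continuous map $\phi\colon\ZZ_\Delta\to M(\Delta,\mathit{\Lambda})$ that is $T^m$-equivariant when $T^m$ acts on the target through $\exp\mathit{\Lambda}$; as $\phi$ is constant on $K_\mathit{\Lambda}$-orbits it factors through a continuous $T^d$-equivariant surjection $\bar\phi\colon\ZZ_\Delta/K_\mathit{\Lambda}\to M(\Delta,\mathit{\Lambda})$. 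For injectivity, two points with the same image must first agree in their $P_\Delta$-coordinate $x$, and then the relation in $M(\Delta,\mathit{\Lambda})$ forces their torus coordinates to differ by an element of $(\exp\mathit{\Lambda})^{-1}\bigl(\exp\mathit{\Lambda}(T_\sigma)\bigr)=T_\sigma\cdot K_\mathit{\Lambda}$ (using injectivity of $\exp\mathit{\Lambda}|_{T_\sigma}$), which says precisely that the two points lie in one $K_\mathit{\Lambda}$-orbit of $\ZZ_\Delta$. Thus $\bar\phi$ is a continuous $T^d$-equivariant bijection; since $\ZZ_\Delta/K_\mathit{\Lambda}$ is compact and $M(\Delta,\mathit{\Lambda})$ is Hausdorff, the standard fact that a continuous bijection from a compact space onto a Hausdorff space is a homeomorphism finishes the proof.

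I expect the only genuine obstacle to be the single fact that for each face $\sigma$ the restriction $\exp\mathit{\Lambda}|_{T_\sigma}$ is injective with image a coordinate subtorus of $T^d$; this is exactly where condition \eqref{con:*} (equivalently, the integral-l.s.o.p.\ property of the associated $\theta_i$) is used, and it does double duty, yielding both the freeness of the $K_\mathit{\Lambda}$-action and the matching of the two gluing relations under $\exp\mathit{\Lambda}$. Everything else is routine: the identification of the isotropy subtori, the compact-to-Hausdorff upgrade of the continuous bijection, and the smoothness, which rests on the already-cited smooth structure on $\ZZ_\Delta$ for polytopal spheres.
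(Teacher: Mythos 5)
Your proof is correct and takes essentially the route the paper itself relies on: the paper offers no argument of its own but cites \cite[Proposition 7.3.12]{BP15}, whose proof is exactly yours --- condition \eqref{con:*} makes $\exp\mathit{\Lambda}$ injective on each isotropy subtorus $T_\sigma$ (hence $K_{\mathit{\Lambda}}$ acts freely, smoothly via the smooth structure on $\ZZ_\Delta$ for polytopal spheres), and $\mathrm{id}_{P_\Delta}\times\exp\mathit{\Lambda}$ descends to a $T^d$-equivariant continuous bijection $\ZZ_\Delta/K_{\mathit{\Lambda}}\to M(\Delta,\mathit{\Lambda})$, upgraded to a homeomorphism by compactness. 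The only point left tacit is that $M(\Delta,\mathit{\Lambda})$ is Hausdorff, which the compact-to-Hausdorff step requires; this is routine because the gluing relation on $P_\Delta\times T^d$ is closed (along $x_n\to x$ one eventually has $T(x_n)\subset T(x)$ after passing to a subsequence with constant facet set), so no genuine gap remains.
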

	The cohomology ring of a quasitoric manifold has a simple expression as follows.
	\begin{thm}[Davis-Januszkiewicz, \cite{DJ91}]\label{thm:quasi}
		Let $M(\Delta,\mathit{\Lambda})$ be a quasitoric manifold constructed from a polytopal $(d-1)$-sphere $\Delta$ with $m$ vertices,
		$\mathit{\Lambda}=(\lambda_{ij})$ be the corresponding characteristic $d\times m$ matrix.
		Then the cohomology ring $H^*(M(\Delta,\mathit{\Lambda});\mathbb{Z})$ is generated by the degree-two classes, and is given by
		\[H^*(M(\Delta,\mathit{\Lambda});\mathbb{Z})=\mathbb{Z}[\Delta]/\Theta,\]
		where $\Theta$ is the ideal generated by the linear forms $\lambda_{i1}x_1+\cdots+\lambda_{im}x_m,\ 1\leqslant i\leqslant m$.
	\end{thm}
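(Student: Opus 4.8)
The plan is to compute $H^*(M(\Delta,\mathit{\Lambda});\Zz)$ by first understanding the $T^d$-equivariant cohomology and then descending to ordinary cohomology via the Eilenberg--Moore spectral sequence. Write $M:=M(\Delta,\mathit{\Lambda})$. By the proposition preceding the statement, $K_{\mathit{\Lambda}}\cong T^{m-d}$ acts freely on $\ZZ_\Delta$ with $\ZZ_\Delta/K_{\mathit{\Lambda}}\cong M$, and $T^d=T^m/K_{\mathit{\Lambda}}$ acts on $M$. Using the standard fact that for a normal subgroup acting freely one has $ET^m\times_{T^m}\ZZ_\Delta\simeq ET^d\times_{T^d}(\ZZ_\Delta/K_{\mathit{\Lambda}})=ET^d\times_{T^d}M$, Theorem \ref{thm:equiv} gives a ring isomorphism $H^*_{T^d}(M)\cong H^*_{T^m}(\ZZ_\Delta)\cong\Zz[\Delta]$. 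I also need the $H^*(BT^d)$-module structure: the classifying map $ET^d\times_{T^d}M\to BT^d$ is identified with $ET^m\times_{T^m}\ZZ_\Delta\to BT^m\xr{B(\exp\mathit{\Lambda})}BT^d$, and on second cohomology $H^2(BT^d)\to H^2(BT^m)=\langle x_1,\dots,x_m\rangle$ is the transpose of $\exp\mathit{\Lambda}$, hence sends the $i$-th generator $t_i$ to $\theta_i=\lambda_{i1}x_1+\cdots+\lambda_{im}x_m$. So, as an $H^*(BT^d)=\Zz[t_1,\dots,t_d]$-algebra, $H^*_{T^d}(M)$ is $\Zz[\Delta]$ with $t_i$ acting by multiplication by $\theta_i$.

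Next, $M$ is the homotopy fibre of $ET^d\times_{T^d}M\to BT^d$, equivalently the pullback of this Borel fibration along the basepoint inclusion $\mathrm{pt}\to BT^d$. Since $BT^d$ is simply connected, the Eilenberg--Moore spectral sequence applies and yields a multiplicative spectral sequence converging to $H^*(M;\Zz)$ with $E_2=\mathrm{Tor}_{\Zz[t_1,\dots,t_d]}(\Zz,\Zz[\Delta])$. As $\theta_1,\dots,\theta_d$ are linearly independent, the subalgebra they generate is a polynomial ring, so this equals $\mathrm{Tor}_{\Zz[\theta_1,\dots,\theta_d]}(\Zz,\Zz[\Delta])$, which is computed by the Koszul complex $(\Lambda[u_1,\dots,u_d]\otimes\Zz[\Delta],\ du_i=\theta_i)$. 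It therefore suffices to show that $\theta_1,\dots,\theta_d$ is a regular sequence in $\Zz[\Delta]$: then the Koszul homology is concentrated in homological degree zero and equals $\Zz[\Delta]/\Theta$.

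For the regularity: a polytopal $(d-1)$-sphere is shellable, so $\Zz[\Delta]$ is Cohen--Macaulay, and by the remark following the construction of $M(\Delta,\mathit{\Lambda})$, condition \eqref{con:*} says precisely that $\theta_1,\dots,\theta_d$ is an integral l.s.o.p.\ for $\Zz[\Delta]$; hence $\Zz[\Delta]$ is a free $\Zz[\theta_1,\dots,\theta_d]$-module and $\theta_1,\dots,\theta_d$ is a regular sequence. Consequently $E_2=E_\infty=\Zz[\Delta]/\Theta$ is concentrated in a single Tor-degree, so there are no differentials and no extension problems, and multiplicativity of the Eilenberg--Moore spectral sequence upgrades this to a ring isomorphism $H^*(M;\Zz)\cong\Zz[\Delta]/\Theta$; generation in degree two is then immediate since $\Zz[\Delta]$ is generated by $x_1,\dots,x_m$ in degree two.

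I expect the main obstacle to be the bookkeeping over $\Zz$ rather than a field --- pinning down the $H^*(BT^d)$-module structure of $H^*_{T^d}(M)$ precisely and justifying that the Eilenberg--Moore (or, equivalently, the Serre) spectral sequence collapses with no hidden extensions. For this reason it may be cleaner to run the alternative, more geometric argument of Davis--Januszkiewicz: build a CW structure on $M$ with only even-dimensional cells via a generic linear functional on the dual simple polytope $P_\Delta$ (so $H^*(M;\Zz)$ is free with $\mathrm{rank}\,H^{2i}(M)=h_i(\Delta)$), define a map $\Zz[\Delta]/\Theta\to H^*(M;\Zz)$ sending $x_j$ to the Poincar\'e dual of the characteristic submanifold $\pi^{-1}(F_j)$, verify it is a ring homomorphism using that intersections of characteristic submanifolds realize the Stanley--Reisner relations and that the linear relations encode $\mathit{\Lambda}$, and conclude it is an isomorphism by a rank comparison degree-by-degree (via Theorem \ref{thm:stanley}) together with a surjectivity argument built on Poincar\'e duality and the explicit cell structure.
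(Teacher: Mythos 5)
Your argument is essentially sound, but note that the paper does not prove this statement at all: Theorem \ref{thm:quasi} is quoted from Davis--Januszkiewicz \cite{DJ91}, so the only in-paper comparison is with the rational analogue for general toric spaces (Proposition \ref{prop:equi toric}, Theorem \ref{thm:cohomology of orbifold}, Corollary \ref{cor:coho toric orbi}). There the author runs the Serre spectral sequence of the principal $T^d$-bundle $ET^d\times(ET^{m-d}\times_{T^{m-d}}\ZZ_\Delta)\to ET^m\times_{T^m}\ZZ_\Delta$, with $E_2=\Qq[\Delta]\otimes\Lambda[v_1,\dots,v_d]$, $d_2v_i=\theta_i$, and uses freeness of the face ring over $\Qq[\theta_1,\dots,\theta_d]$ to collapse at $E_3$; your Eilenberg--Moore route for the Borel fibration $M\to ET^d\times_{T^d}M\to BT^d$ uses exactly the same algebraic engine (the Koszul complex on the $\theta_i$) packaged in a different spectral sequence, and in the integral, free-action setting both give the full ring isomorphism with no extension problems because everything is concentrated in a single filtration degree. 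The one place where the real content over $\Zz$ sits is the step you assert rather than prove: that shellability of a polytopal sphere together with condition \eqref{con:*} (the integrality of the l.s.o.p.) makes $\Zz[\Delta]$ a \emph{free} $\Zz[\theta_1,\dots,\theta_d]$-module, hence $\theta_1,\dots,\theta_d$ a regular sequence; this is a standard fact (it is the integral refinement of Cohen--Macaulayness used by Davis--Januszkiewicz and Buchstaber--Panov), but it deserves a citation or a short rank-counting argument over $\Qq$ and each $\Zz_p$. Two small nits: the clause ``linearly independent, so the subalgebra they generate is a polynomial ring'' is both imprecise (linear independence of linear forms in a quotient ring does not give algebraic independence; the l.s.o.p.\ property does) and unnecessary, since $\mathrm{Tor}_{\Zz[t_1,\dots,t_d]}(\Zz,\Zz[\Delta])$ is computed directly from the Koszul resolution of $\Zz$; and your fallback sketch of the original geometric Davis--Januszkiewicz argument (even-cell decomposition plus characteristic submanifolds) is fine as an outline but would require substantially more work to close than the spectral-sequence route you lead with.
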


	\subsection{Topological toric orbifolds and rational toric manifolds}\label{subsec:rational toric manifold}
	In fact, the construction of quasitoric manifolds over simple polytopes can be generalized to cases where the base space is an arbitrary simple polyhedral complex (cf. \cite[\S 2]{DJ91}). Now we discuss such a generalization, and introduce the central topological spaces of this paper.
	
	Let $\Delta$ be a simplicial complex of dimension $d-1$, $P_\Delta$ the simple polyhedral complex associated to $\Delta$.
	A map $\lambda:\FF_0(\Delta)\to \Zz^d,\ i\mapsto\boldsymbol{\lambda_i}=(\lambda_{1i},\dots,\lambda_{di})^T$ (here we require $\boldsymbol{\lambda_i}$ to be primitive in $\Zz^d$) is called a \emph{generalized characteristic function} if the linear sequence $\{\theta_i=\lambda_{i1}x_1+\cdots+\lambda_{im}x_m\}_{1\leqslant i\leqslant d}$ is an l.s.o.p. for $\Qq[\Delta]$.
	In this case, the $d\times m$ matrix $\mathit{\Lambda}=(\lambda_{ij})$ is called a \emph{generalized characteristic matrix}.
	Note that the rational face ring $\Qq[\Delta]$  always admits an l.s.o.p.
	
	For a \emph{generalized characteristic pair} $(\Delta,\mathit{\Lambda})$  consisting of a simplicial complex $\Delta$ and its generalized characteristic matrix $\mathit{\Lambda}$.
	We put $M(\Delta,\mathit{\Lambda})=P_\Delta\times T^d/\sim$, where the equivalence relation is defined exactly as in the case of quasitoric manifold;
	as before, $M(\Delta,\mathit{\Lambda})$ is a $T^d$-space over $P_\Delta$. We call $M(\Delta,\mathit{\Lambda})$ a \emph{toric space by D-J construction}.
	
	\emph{Terminology Convention.} To simplify terminologies, we will omit the word `generalized' and the words `by D-J construction' in the definitions above, since we always discuss such toric spaces in the rest of this paper.
	
	As we have seen, the matrix $\mathit{\Lambda}$  defines a map of lattices:
	$\mathit{\Lambda}:\Zz^m\to \Zz^d$, which can be extended to a exact sequence
	\[0\to\Zz^{m-d}\to\Zz^m\xr{\mathit{\Lambda}}\Zz^d\to G\to0.\]
	
	Since $\{\theta_1,\dots,\theta_m\}$ is an l.s.o.p for $\Qq[\Delta]$, $G$ is a finite group. Set $N=\mathrm{Im}\,\mathit{\Lambda}$.
	Then the  exact sequence above splits into two short exact sequences:
	\begin{gather*}
		0\to\Zz^{m-d}\to\Zz^m\xr{\mathit{\Lambda}}N\to 0,\\
		0\to N\to\Zz^d\to G\to 0.
	\end{gather*}
	We apply the functor $\otimes_\Zz S^1$ ($S^1\subset \Cc$) to them and we get
	\begin{gather*}
		0\to T^{m-d}\to T^m\to T^d_N\to 0,\\
		0\to \mathrm{Tor}_{\Zz}^1(G,S^1)=G\to T^d_N\to T^d\to 0=G\otimes_\Zz S^1.
	\end{gather*}
	Thus the lattice map $\mathit{\Lambda}$ induces an epimorphism of tori $\exp\mathit{\Lambda}:T^m\to T^d$ with kernel $K_\mathit{\Lambda}=T^{m-d}\times G$.
	
	Recall that an action of a group on a topological space is \emph{almost free} if all isotropy subgroups are finite.
	As in the case of quasitoric manifold, it can be shown that
	\begin{prop}[cf. {\cite[Theorem 4.8.5]{BP15}} and {\cite[Theorem 5.1.11]{CLS11}}]\label{prop:orbi}
		The group $K_\mathit{\Lambda}=T^{m-d}\times G$ acts almost freely and properly on $\ZZ_\Delta$. There is a $T^d$-equaviriant homeomorphism
		\[\ZZ_\Delta/K_\mathit{\Lambda}\cong M(\Delta,\mathit{\Lambda}).\]
	\end{prop}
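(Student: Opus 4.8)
The plan is to imitate the proof of the quasitoric case (\cite[Proposition 7.3.12]{BP15}); the single new feature is that the isotropy subgroups become \emph{finite} rather than trivial, because $\mathit{\Lambda}$ is no longer assumed unimodular on the faces of $\Delta$. Recall that $K_{\mathit{\Lambda}}=T^{m-d}\times G$ is precisely the kernel of $\exp\mathit{\Lambda}\colon T^m\to T^d$, as computed just above the statement.

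\emph{The comparison map.} First I would check that $\mathrm{id}\times\exp\mathit{\Lambda}\colon P_\Delta\times T^m\to P_\Delta\times T^d$ passes to a map $p\colon\ZZ_\Delta\to M(\Delta,\mathit{\Lambda})$. The key observation is that $\exp\mathit{\Lambda}$ sends the $i$th coordinate circle $T_i\subset T^m$ onto the circle $T_i\subset T^d$ spanned by $\boldsymbol{\lambda}_i$ (here one uses that $\boldsymbol{\lambda}_i$ is primitive, so the image is a genuine circle), and hence sends the subtorus $T(x)=\prod_{i\colon x\in F_i}T_i\subset T^m$ onto the subtorus $T(x)\subset T^d$ occurring in the definition of $M(\Delta,\mathit{\Lambda})$. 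Therefore $\mathrm{id}\times\exp\mathit{\Lambda}$ carries the equivalence relation defining $\ZZ_\Delta$ into the one defining $M(\Delta,\mathit{\Lambda})$, so $p$ is a well-defined continuous surjection; it is equivariant along $\exp\mathit{\Lambda}$ and constant on $K_{\mathit{\Lambda}}$-orbits (since $\exp\mathit{\Lambda}$ kills $K_{\mathit{\Lambda}}$), hence descends to a continuous $T^d$-equivariant surjection $\bar p\colon\ZZ_\Delta/K_{\mathit{\Lambda}}\to M(\Delta,\mathit{\Lambda})$.

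\emph{Fibres are orbits, and the action is almost free.} For $x\in P_\Delta$ put $\sigma(x)=\{i\colon x\in F_i\}$; this is a face of $\Delta$, so $|\sigma(x)|\le d$. Since $\{\theta_1,\dots,\theta_d\}$ is an l.s.o.p.\ for $\Qq[\Delta]$, its restriction $r_{\sigma(x)}(\Theta)$ generates $\Qq[x_i\colon i\in\sigma(x)]$, which forces the columns $\{\boldsymbol{\lambda}_i\colon i\in\sigma(x)\}$ of $\mathit{\Lambda}$ to be linearly independent over $\Qq$; equivalently the lattice map $\Zz^{\sigma(x)}\to\Zz^d$, $\boldsymbol{e}_i\mapsto\boldsymbol{\lambda}_i$, is injective. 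Consequently the restriction $\exp\mathit{\Lambda}|_{T(x)}\colon T(x)\to T^d$ has \emph{finite} kernel and image exactly the corresponding $T(x)\subset T^d$. Granting this, suppose $p(x,g)=p(x',g')$; then $x=x'$ and $\exp\mathit{\Lambda}(g^{-1}g')$ lies in $T(x)\subset T^d$, so we may choose $v\in T(x)\subset T^m$ with $\exp\mathit{\Lambda}(v)=\exp\mathit{\Lambda}(g^{-1}g')$; then $k:=(g^{-1}g')v^{-1}\in K_{\mathit{\Lambda}}$ and $(x,g')\sim(x,gk)$ in $\ZZ_\Delta$, so $(x,g)$ and $(x,g')$ lie in the same $K_{\mathit{\Lambda}}$-orbit and $\bar p$ is injective. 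Taking $g=g'$ above shows that the $T^m$-isotropy group of $(x,g)$, namely $T(x)$, meets $K_{\mathit{\Lambda}}$ in the finite group $\ker(\exp\mathit{\Lambda}|_{T(x)})$; hence $K_{\mathit{\Lambda}}$ acts almost freely on $\ZZ_\Delta$, and properly since $K_{\mathit{\Lambda}}$ is compact.

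\emph{Upgrading to a homeomorphism.} It remains to see that the continuous bijection $\bar p$ is a homeomorphism. Since $\exp\mathit{\Lambda}$ is an open surjective homomorphism of compact abelian groups it is a quotient map, and since $P_\Delta$ is locally compact Hausdorff so is $\mathrm{id}\times\exp\mathit{\Lambda}$; thus $M(\Delta,\mathit{\Lambda})$ carries the quotient topology induced from $P_\Delta\times T^m$, and as $\ZZ_\Delta$ does too, $p$ itself is a quotient map. A quotient map whose point-fibres are exactly the $K_{\mathit{\Lambda}}$-orbits induces a homeomorphism $\ZZ_\Delta/K_{\mathit{\Lambda}}\xrightarrow{\ \cong\ }M(\Delta,\mathit{\Lambda})$, which is $\bar p$. (When $\Delta$ is finite one may instead simply note that $\bar p$ is a continuous bijection from the compact space $\ZZ_\Delta/K_{\mathit{\Lambda}}$ to the Hausdorff space $M(\Delta,\mathit{\Lambda})$.) The only step carrying genuine content is the passage, via the l.s.o.p.\ hypothesis, from combinatorics to the $\Qq$-linear independence of the face-columns of $\mathit{\Lambda}$ and thence to the finiteness of $\ker(\exp\mathit{\Lambda}|_{T(x)})$; this is what simultaneously yields injectivity of $\bar p$ and almost-freeness, and it is exactly the place where the argument for quasitoric manifolds would instead produce a \emph{trivial} isotropy group and hence a free action.
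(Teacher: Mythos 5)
Your argument is correct, and it is exactly the route the paper intends: the paper gives no proof of Proposition \ref{prop:orbi}, saying only ``as in the case of quasitoric manifolds'' and citing \cite[Theorem 4.8.5]{BP15} and \cite[Theorem 5.1.11]{CLS11}, and your write-up is that standard comparison-map argument adapted to the present setting. In particular you correctly isolate the one genuinely new ingredient — the l.s.o.p.\ condition forces the columns $\{\boldsymbol{\lambda}_i : i\in\sigma\}$ to be $\Qq$-linearly independent for every face $\sigma$, so each isotropy group $K_{\mathit{\Lambda}}\cap T(x)=\mathrm{Ker}\,(\exp\mathit{\Lambda}|_{T(x)})$ is finite rather than trivial — which yields both the injectivity of the induced map and the almost-freeness.
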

	In particular, if $\Delta$ is a homology $(d-1)$-sphere, then $\ZZ_\Delta$ is a closed, orientable manifold of dimension $m+d$.
	So Proposition \ref{prop:orbi} implies that for a given characteristic matrix $\mathit{\Lambda}$, $M(\Delta,\mathit{\Lambda})$ is a closed, orientable $2d$-dimensional orbifold with a $T^d$-action.
	The orientation of $M(\Delta,\mathit{\Lambda})$ can be defined as follows.
	First, give an orientation to $X=\ZZ_\Delta/T^{m-d}$ by choosing orientations of $\ZZ_\Delta$ and $T^{m-d}$ respectively. Then since the $G$-action on $X$ extends to a toral action, it preserves the orientation. Thus $M(\Delta,\mathit{\Lambda})=X/G$ inherits an orientation from $X$. In this case, we call $M(\Delta,\mathit{\Lambda})$ a \emph{topological toric orbifold}. 
	
	Similarly, if $\Delta$ is a rational homology $(d-1)$-sphere, $\ZZ_\Delta$ is a closed, orientable, rational homology $(m+d)$-manifold.
	It follows that $M(\Delta,\mathit{\Lambda})$ is a closed, orientable, rational homology $2d$-manifold, called a \emph{rational toric manifold}.
	Furthermore, if $\Delta$ is a rational homology ball, $M(\Delta,\mathit{\Lambda})$ is an orientable, rational homology $2d$-manifold with boundary.
	
	\begin{exmp}Let $\Delta=\partial\Delta^2$, the boundary complex of a $2$-simplex. Hence $\ZZ_\Delta=S^5$ (Example \ref{exmp:m-a} (i)).
		We define $M(\Delta,\mathit{\Lambda})$ in three cases with different $\mathit{\Lambda}$.
		
		(i) Take $\mathit{\Lambda}$ to be
		\[\boldsymbol{\lambda}_1=(1,0)^T,\ \boldsymbol{\lambda}_2=(0,1)^T,\ \boldsymbol{\lambda}_3=(-1,-1)^T.\]
		In this case, $M(\Delta,\mathit{\Lambda})$ is the quotient space of $S^5\subset\Cc^3$ under the diagonal $S^1$ action,
		so $M(\Delta,\mathit{\Lambda})=\Cc P^2$ is a toric manifold.
		
		(ii) Let $a,\,b\neq0$ are relatively prime positive integers. Define $\mathit{\Lambda}$ to be
		\[\boldsymbol{\lambda}_1=(1,0)^T,\ \boldsymbol{\lambda}_2=(0,1)^T,\ \boldsymbol{\lambda}_3=(-a,-b)^T.\]
		Then $M(\Delta,\mathit{\Lambda})$ is the quotient space of $S^5\subset\Cc^3$ under a twisted $S^1$ action:
		\[
		M(\Delta,\mathit{\Lambda})=\{(z_1,z_2,z_3)\in S^5\}/\sim,\quad (z_1,z_2,z_3)\sim(t^a z_1,t^b z_2,t z_3),\ t\in S^1.
		\]
		This space is the so-called \emph{weighted projective space} $\Cc P^2_{(a,b,1)}$ with weight $(a,b,1)$ (cf. \cite{Kaw73}).
		
		(iii) Take $\mathit{\Lambda}$ to be
		\[\boldsymbol{\lambda}_1=(1,-1)^T,\ \boldsymbol{\lambda}_2=(1,2)^T,\ \boldsymbol{\lambda}_3=(-2,-1)^T.\]
		In this case, the kernel of $\mathit{\Lambda}:\Zz^3\to\Zz^2$  is $\Zz\cdot(1,1,1)$, and the cokernel of $\mathit{\Lambda}$ is $\Zz_3$, which is generated by $\frac{1}{3}(\boldsymbol{\lambda}_2+2\boldsymbol{\lambda}_3)$. From Proposition \ref{prop:orbi} we know that $M(\Delta,\mathit{\Lambda})=S^5/(S^1\times\Zz_3)=\Cc P^2/\Zz_3$ with the following action of $\Zz_3$
		\[\varepsilon(z_1:z_2:z_3)=(z_1:\varepsilon z_2:\varepsilon^2z_3),\]
		where $\varepsilon$ is a primitive root of unity of degree $3$. Such a space is known as a \emph{fake weighted projective space} (cf. \cite{Buc08}).
	\end{exmp}
	\begin{rem}\label{rem:stellar sub}
		Let $\mathit{\Lambda}=(\boldsymbol{\lambda}_1,\dots,\boldsymbol{\lambda}_m)$ be a characteristic matrix for $\Delta$. Then we can defined a characteristic matrix $\mathit{\Lambda'}$ for the stellar subdivision $\Delta(\sigma)$ ($\sigma=\{1,\dots,k\}\in\Delta$, $k\geqslant 2$) as follows.
		For any $(a_1,\dots,a_k)\in\Qq^k$ with $a_i\neq0$ for $1\leqslant i\leqslant k$, define
		\[\mathit{\Lambda}'=(\mathit{\Lambda}\mid\boldsymbol{\lambda}_{v_\sigma}),\quad \boldsymbol{\lambda}_{v_\sigma}=a_1\boldsymbol{\lambda}_{1}+\cdots+a_k\boldsymbol{\lambda}_{k}.\]
		It is easy to check that $\mathit{\Lambda}'$ is a characteristic matrix for $\Delta(\sigma)$.
	\end{rem}
	
	\subsection{Cohomology of toric spaces associated to Cohen-Macaulay complexes}\label{subsec:cohom toric}
	In this subsection, we assume $\Delta$ is a Cohen-Macaulay complex over $\Qq$, i.e., for every l.s.o.p. $\{\theta_1,\dots,\theta_d\}$ for $\Qq[\Delta]$, the face ring $\Qq[\Delta]$ is free as a $\Qq[\theta_1,\dots,\theta_d]$-module.
	
	For a  characteristic matrix $\mathit{\Lambda}$ for $\Delta$, let $T^{m-d}\subset T^m$ be the subtorus corresponding to the kernel $\Zz^{m-d}$ of the lattice map $\mathit{\Lambda}:\Zz^m\to\Zz^d$. First, let us consider the rational $T^{m-d}$-equivariant cohomology of $\ZZ_\Delta$.
	
	\begin{prop}\label{prop:equi toric}
		The rational $T^{m-d}$-equivariant cohomology of $\ZZ_\Delta$ is given by
		\[H^*_{T^{m-d}}(\ZZ_\Delta;\Qq)\cong\Qq[\Delta]/\Theta,\]
		where $\Theta$ is the ideal generated by the l.s.o.p. $\{\theta_i=\lambda_{i1}x_1+\cdots+\lambda_{im}x_m\}_{1\leqslant i\leqslant d}$ corresponding to $\mathit{\Lambda}=(\lambda_{ij})$.
	\end{prop}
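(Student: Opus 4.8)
The plan is to compute the Borel construction $ET^{m-d}\times_{T^{m-d}}\ZZ_\Delta$ by combining the two standard facts recalled in the preliminaries: the equivariant cohomology of the full torus, $H^*_{T^m}(\ZZ_\Delta;\Qq)\cong\Qq[\Delta]$ (Theorem \ref{thm:equiv}), and the Cohen--Macaulay hypothesis, which makes $\Qq[\Delta]$ a free module over the subalgebra generated by an l.s.o.p. First I would fix the splitting of tori induced by the lattice sequence $0\to\Zz^{m-d}\to\Zz^m\xrightarrow{\mathit{\Lambda}}N\to 0$, giving $0\to T^{m-d}\to T^m\to T^d_N\to 0$ with $T^d_N$ a torus (here $N=\mathrm{Im}\,\mathit{\Lambda}$, a finite-index sublattice of $\Zz^d$, so $T^d_N\cong T^d$ rationally). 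Then $ET^m\times_{T^m}\ZZ_\Delta$ fibers over $BT^d_N$ with fiber $ET^{m-d}\times_{T^{m-d}}\ZZ_\Delta$, and the Serre spectral sequence of this bundle has $E_2=H^*(BT^d_N;\Qq)\otimes H^*_{T^{m-d}}(\ZZ_\Delta;\Qq)$ (the base is simply connected, so there is no local-coefficient issue) converging to $H^*_{T^m}(\ZZ_\Delta;\Qq)\cong\Qq[\Delta]$.

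The key step is to identify the edge homomorphism $H^*(BT^d_N;\Qq)\to H^*_{T^m}(\ZZ_\Delta;\Qq)$ with the inclusion of the polynomial subalgebra $\Qq[\theta_1,\dots,\theta_d]\hookrightarrow\Qq[\Delta]$ generated by the l.s.o.p. determined by $\mathit{\Lambda}$; this is a matter of chasing the definition of the $T^m$-action on $\ZZ_\Delta$ against the description of the face ring as $H^*_{T^m}$, essentially the same bookkeeping that appears in the proof of Theorem \ref{thm:quasi} for quasitoric manifolds. Once this identification is in place, the Cohen--Macaulay property says precisely that $\Qq[\Delta]$ is a free $\Qq[\theta_1,\dots,\theta_d]$-module, i.e. free over the image of the base. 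A freeness argument (Leray--Hirsch, or equivalently the collapse criterion: a spectral sequence of modules over $H^*(\mathrm{base})$ whose abutment is free over that ring must degenerate at $E_2$) then forces $E_2=E_\infty$, and moreover gives $H^*_{T^m}(\ZZ_\Delta;\Qq)\cong H^*(BT^d_N;\Qq)\otimes H^*_{T^{m-d}}(\ZZ_\Delta;\Qq)$ as modules. Dividing out the base, i.e. tensoring over $\Qq[\theta_1,\dots,\theta_d]$ with $\Qq$, yields $H^*_{T^{m-d}}(\ZZ_\Delta;\Qq)\cong\Qq[\Delta]/\Theta$ as graded vector spaces.

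To upgrade this to a \emph{ring} isomorphism I would instead argue directly with the fibration of Borel constructions coming from $T^{m-d}\hookrightarrow T^m$: there is a fibration $ET^{m-d}\times_{T^{m-d}}\ZZ_\Delta\to ET^m\times_{T^m}\ZZ_\Delta\to BT^d_N$ and the fiber inclusion induces a ring map $\Qq[\Delta]=H^*_{T^m}(\ZZ_\Delta;\Qq)\to H^*_{T^{m-d}}(\ZZ_\Delta;\Qq)$. Collapse of the spectral sequence shows this map is surjective, and its kernel is exactly the ideal generated by the image of $\widetilde H^{>0}(BT^d_N;\Qq)$, which is $\Theta$; hence the ring isomorphism $\Qq[\Delta]/\Theta\cong H^*_{T^{m-d}}(\ZZ_\Delta;\Qq)$.

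The main obstacle is the edge-homomorphism identification in the second paragraph: one must check that the class in $H^2_{T^m}(\ZZ_\Delta;\Qq)$ pulled back from the $i$-th generator of $H^2(BT^d_N;\Qq)$ corresponds, under $H^*_{T^m}(\ZZ_\Delta)\cong\Qq[\Delta]$, to the linear form $\theta_i=\lambda_{i1}x_1+\cdots+\lambda_{im}x_m$. This comes down to comparing the map of tori $\exp\mathit{\Lambda}:T^m\to T^d_N$ on classifying spaces — where it is the transpose lattice map $H^2(BT^d_N)=N^*\to (\Zz^m)^*=H^2(BT^m)$ — with the standard identification of $H^2(BT^m)$ with the span of the $x_i$ inside $\Qq[\Delta]$; this is precisely the content of the matrix $\mathit{\Lambda}=(\lambda_{ij})$. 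Everything else (simple-connectivity of the base, the Leray--Hirsch collapse, passage to the quotient ring) is formal.
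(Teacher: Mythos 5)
Your route is genuinely different from the paper's, and it works, but one step is stated too loosely. The paper runs the Serre spectral sequence of the bundle $T^d \to ET^d\times(ET^{m-d}\times_{T^{m-d}}\ZZ_\Delta)\to ET^{m}\times_{T^m}\ZZ_\Delta$, whose total space is the object to be computed: by Theorem \ref{thm:equiv} the $E_2$-term is the Koszul complex $\Qq[\Delta]\otimes\Lambda[v_1,\dots,v_d]$, the comparison with the $T^m$-bundle identifies $d_2(v_i)=\theta_i$ (the same transgression bookkeeping you single out as the main obstacle, so that part of your plan matches the paper exactly), and Cohen--Macaulayness kills everything off the bottom row at $E_3$, giving $E_3=E_\infty=\Qq[\Delta]/\Theta$. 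You instead put the unknown space in the fiber of $ET^{m-d}\times_{T^{m-d}}\ZZ_\Delta\to ET^{m}\times_{T^m}\ZZ_\Delta\to BT^d_N$ and try to recover $H^*$ of the fiber from the known total space $\Qq[\Delta]$ and base $\Qq[\theta_1,\dots,\theta_d]$. That fibration does exist (it is the Borel fibration for the residual $T^d_N$-action on $ET^m\times_{T^{m-d}}\ZZ_\Delta$), your identification of the edge map with $\Qq[\theta_1,\dots,\theta_d]\hookrightarrow\Qq[\Delta]$ is correct, and your handling of $N=\mathrm{Im}\,\mathit{\Lambda}$ is if anything cleaner than the paper's reduction to the surjective case. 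The trade-off: your argument is shorter under the CM hypothesis, while the paper's Koszul-type $E_2$-page is set up so that it still says something when freeness fails, which is exactly what is exploited later for Buchsbaum complexes (Lemma \ref{lem:E3-page}).

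The weak point is the justification ``a spectral sequence of modules over $H^*(\mathrm{base})$ whose abutment is free must degenerate at $E_2$.'' This is not a formal fact about such spectral sequences, and in your situation it is dangerously close to circular: the $E_2$-term contains the unknown $H^*(\mathrm{fiber})$, so degeneration cannot be checked by comparing Poincar\'e series, and Leray--Hirsch needs surjectivity of restriction to the fiber, which is part of what you are trying to prove. The clean repairs are: (i) the Eilenberg--Moore spectral sequence over the simply connected base $BT^d_N$, where freeness makes $\mathrm{Tor}^{>0}_{\Qq[\theta_1,\dots,\theta_d]}(\Qq[\Delta],\Qq)$ vanish and gives $H^*(\mathrm{fiber})\cong\Qq[\Delta]\otimes_{\Qq[\theta_1,\dots,\theta_d]}\Qq=\Qq[\Delta]/\Theta$ as algebras in one stroke; or (ii) an induction over the circle factors of $T^d_N$ using the Gysin sequence, where at each stage the relevant Euler class acts injectively because $\theta_1,\dots,\theta_d$ is a regular sequence on the Cohen--Macaulay ring $\Qq[\Delta]$; degeneration of the Serre spectral sequence and the ring statement then follow a posteriori. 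With either substitution your proof is complete.
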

	
	Before giving the proof, let us recall some notions about equivariant topology.
	For an $i$-torus ($i>0$) $T^i$, let $T^i\to ET^i\to BT^i$ be the universal principal $T^i$ bundle.
	Note that the universal principal $S^1$-bundle is the infinite Hopf bundle $S^\infty\to\Cc P^\infty$.
	So the classifying space $BT^i$ of the $i$-torus $T^i$ is the product $(\Cc P^\infty)^i$ of $i$ copies of $\Cc P^\infty$, and the total
	space $ET^i$ over $BT^i$ can be identified with the $i$-fold product of the infinite-dimensional sphere $S^\infty$.
	Let $X$ be a $T^i$-space. Then the $T^i$-equivariant cohomology of $X$ is isomorphic to the ordinary cohomology of the \emph{Borel construction} $ET^i\times_{T^i}X$.
	Here \[
	ET^i\times_{T^i}X:=ET^i\times X/\sim,
	\]
	where $(eg,x)\sim(e,gx)$ for any $e\in ET^i,\ x\in X,\ g\in T^i$.
	
	In addition, we may assume that the lattice map $\mathit{\Lambda}:\Zz^m\to\Zz^d$ is onto, so that $\Zz^d$ corresponds to a subtorus $T^d\subset T^m$ and $T^m=T^{d}\times T^{m-1}$. Indeed if this is not the case, suppose $\mathrm{Im}\,\mathit{\Lambda}=N\subset\Zz^d$.
	(Remember that $G=\Zz^d/N$ is a finite group.)
	Choose a basis of the lattice $N$ and then we can get another characteristic matrix $\mathit{\Lambda}'$ written in this basis.
	It is easy to see that there is a $d\times d$ matrix $B\in GL(d,\Qq)$ such that $B\mathit{\Lambda}=\mathit{\Lambda}'$.
	So the rational ideal generated by the l.s.o.p. $\Theta'$ corresponding to $\mathit{\Lambda}'$ is the same as $\Theta$, and therefore $\Qq[\Delta]/\Theta=\Qq[\Delta]/\Theta'$.
	
	\begin{proof}[Proof of Propsition \ref{prop:equi toric}]
		The coefficient $\Qq$ will be implicit throughout the proof. Consider the principal $T^d$-bundle:
		\[ET^d\times(ET^{m-d}\times_{T^{m-d}}\ZZ_\Delta)\to ET^{m}\times_{T^m}\ZZ_\Delta.\]
		The the Serre spectral sequence of this fibration has $E_2$-term
		\[E_2^{p,q}=H^p(ET^{m}\times_{T^m}\ZZ_\Delta;H^q(T^d)).\]
		According to Theorem \ref{thm:equiv}, $E_2=\Qq[\Delta]\otimes\Lambda[v_1,\dots,v_d]$, where $\Lambda[v_1,\dots,v_d]$ ($\mathrm{deg}\,v_i=1$) is the exterior algebra over $\Qq$.
		
		We assert that the differential $d_2$ of the $E_2$-term sends $v_i$ to $\theta_i\in\Qq[\Delta]$.
		To see this, we consider the bundle map:
		\[
		\xymatrix{
			T^m \ar[r] \ar[d]^-{\exp\mathit{\Lambda}} & ET^d\times(ET^{m-d}\times\ZZ_\Delta) \ar[r] \ar[d] & ET^{m}\times_{T^m}\ZZ_\Delta \ar@{=}[d]\\
			T^d \ar[r] & ET^d\times(ET^{m-d}\times_{T^{m-d}}\ZZ_\Delta) \ar[r] & ET^{m}\times_{T^m}\ZZ_\Delta}
		\]
		Theorem \ref{thm:equiv} shows that the Serre spectral sequence of the upper fibration has $E_2$-term
		\[E_2=\Qq[\Delta]\otimes\Lambda[y_1,\dots,y_m],\quad \mathrm{deg}\,y_i=1.\]
		The homomorphism $(\textrm{exp\,}\mathit{\Lambda})^*:H^1(T^d)\to H^1(T^m)$ can be identified with the dual map of $\mathit{\Lambda}:\Zz^m\to \Zz^d$.
		Hence $(\textrm{exp\,}\mathit{\Lambda})^*(v_i)=\lambda_{i1}y_i+\cdots+\lambda_{im}y_m$.
		Since we have $d_2(y_i)=x_i$ in the $E_2$-term of the Serre spectral sequence of the upper fibration (cf. Appendix \ref{appdx:cell rep}), the assertion is true.
		
		The fact that $\Qq[\Delta]$ is a free $\Qq[\theta_1,\dots,\theta_d]$-module implies that the Serre spectral sequence of the lower fibration collapses at the $E_3$-term: $E_3=\Qq[\Delta]/\Theta$. Notice that $ET^d\times(ET^{m-d}\times_{T^{m-d}}\ZZ_\Delta)$ is homotopy equivalent to $ET^{m-d}\times_{T^{m-d}}\ZZ_\Delta$. Hence
		\[H^*_{T^{m-d}}(\ZZ_\Delta)= H^*(ET^{m-d}\times_{T^{m-d}}\ZZ_\Delta)\cong\Qq[\Delta]/\Theta.\]
	\end{proof}
	
	In the Serre fibration $ET^{m-d}\times\ZZ_\Delta\to ET^{m-d}\times_{T^{m-d}}\ZZ_\Delta$, the projection onto the second factor of $ET^{m-d}\times\ZZ_\Delta$ descends to a projection $ET^{m-d}\times_{T^{m-d}}\ZZ_\Delta\to \ZZ_\Delta/T^{m-d}$, compose this with
	the quotient map $\ZZ_\Delta/T^{m-d}\to \ZZ_\Delta/K_\mathit{\Lambda}=M(\Delta,\mathit{\Lambda})$ if necessary we get a map
	\[p:ET^{m-d}\times_{T^{m-d}}\ZZ_\Delta\to M(\Delta,\mathit{\Lambda}).\]
	\begin{thm}\label{thm:cohomology of orbifold}
		For any $(d-1)$-dimensional complex $\Delta$ (not necessarily being Cohen-Macaulay), we have the following ring isomorphism
		\[p^*:H^*(M(\Delta,\mathit{\Lambda});\Qq)\cong H^*_{T^{m-d}}(\ZZ_\Delta;\Qq),\]
		which is induced by the quotient map $p$ above.
	\end{thm}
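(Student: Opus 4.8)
The plan is to factor the map $p$ through the orbit space $X:=\ZZ_\Delta/T^{m-d}$ and to check that each of the two resulting maps is an isomorphism on rational cohomology. By the very construction of $p$, it is the composite
\[
ET^{m-d}\times_{T^{m-d}}\ZZ_\Delta\xr{q}X=\ZZ_\Delta/T^{m-d}\xr{\pi}\ZZ_\Delta/K_\mathit{\Lambda}=M(\Delta,\mathit{\Lambda}),
\]
where $q$ is the projection induced by the second factor of $ET^{m-d}\times\ZZ_\Delta$ and $\pi$ is the quotient by the residual action of $K_\mathit{\Lambda}/T^{m-d}$, which is canonically the finite group $G$. All three spaces are built from the finite CW complex $\ZZ_\Delta$, and all maps are continuous (hence induce ring homomorphisms on cohomology), so it suffices to prove that $q^{*}$ and $\pi^{*}$ are isomorphisms on $H^{*}(-;\Qq)$; then $p^{*}=q^{*}\circ\pi^{*}$ is the desired ring isomorphism.

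First I would handle $q^{*}$. By Proposition \ref{prop:orbi} the torus $T^{m-d}$ acts on $\ZZ_\Delta$ with finite isotropy groups. Hence the fibre of $q$ over an orbit $T^{m-d}\cdot z$ is $ET^{m-d}\times_{T^{m-d}}(T^{m-d}/H)=ET^{m-d}/H=BH$, where $H=T^{m-d}_{z}$ is finite; since $\w H^{*}(BH;\Qq)=0$, every such fibre is $\Qq$-acyclic. A Leray (Vietoris--Begle type) spectral sequence argument for $q$ — equivalently, the general fact that rationally the Borel construction of an almost free torus action computes the cohomology of the orbit space — then shows $q^{*}\colon H^{*}(X;\Qq)\to H^{*}_{T^{m-d}}(\ZZ_\Delta;\Qq)$ is an isomorphism of graded rings.

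Next, $\pi^{*}$. Here $\pi$ is the quotient of $X$ by the finite group $G$, and $\ZZ_\Delta/K_\mathit{\Lambda}=X/G$. Since $\Qq$ has characteristic $0$, the transfer homomorphism shows that $\pi^{*}$ is injective with image the invariant ring $H^{*}(X;\Qq)^{G}$. It remains to observe that $G$ acts trivially on $H^{*}(X;\Qq)$: the $G$-action on $X$ is the restriction of the action of the connected torus $T^{m}/T^{m-d}\cong T^{d}_{N}$ (recall $G\subset T^{d}_{N}$ from the exact sequences in \S\ref{subsec:rational toric manifold}, and $T^{m}/T^{m-d}$ acts on $X$), and every element of a path-connected topological group is homotopic to the identity, so it induces the identity on cohomology. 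Therefore $H^{*}(X;\Qq)^{G}=H^{*}(X;\Qq)$ and $\pi^{*}$ is a ring isomorphism.

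The main technical point is the first step, identifying $H^{*}_{T^{m-d}}(\ZZ_\Delta;\Qq)$ with $H^{*}(X;\Qq)$: this needs a little care because $q$ is not a fibre bundle, its fibres $BH$ varying with the finite isotropy $H$. This is controlled by the fact that $\ZZ_\Delta$ has only finitely many orbit types (so the Leray sheaf is constant $\Qq$ in degree $0$ and vanishes in higher degrees, stratum by stratum); alternatively one invokes the standard theorem on almost free torus actions. Everything else — the factorization $p=\pi\circ q$, the transfer argument for the finite quotient, and the triviality of the $G$-action on rational cohomology — is routine.
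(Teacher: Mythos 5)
Your proposal is correct, and its skeleton coincides with the paper's: the paper's own proof also factors $p$ as the Borel-to-orbit map $q$ followed by the finite quotient $\pi$, and its treatment of $\pi^*$ is literally yours (Borel's theorem $H^*(X/G;\Qq)\cong H^*(X;\Qq)^G$ for a finite group action, plus triviality of the $G$-action on rational cohomology because $G$ sits inside the connected torus $T^m/T^{m-d}$ acting on $X=\ZZ_\Delta/T^{m-d}$). Where you genuinely diverge is the key step for $q^*$: the paper proves $H^*_{T^{m-d}}(\ZZ_\Delta;\Qq)\cong H^*(\ZZ_\Delta/T^{m-d};\Qq)$ by hand, using the decomposition of $\ZZ_\Delta$ into the pieces $B_\sigma$, the deformation retraction of $B_\sigma$ onto a torus, the Leray--Hirsch theorem for $T^{m-d}\to ET^m\times B_\sigma\to ET^m\times_{T^{m-d}}B_\sigma$, and a double induction with Mayer--Vietoris over facets and dimension; you instead invoke the general fact that for an almost free action of a compact torus the Borel construction rationally computes the orbit space, sketched via $\Qq$-acyclicity of the fibres $BH$ ($H$ finite) and a Leray-type argument. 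Both routes work: the paper's is elementary and self-contained, exploiting the special combinatorial structure of $\ZZ_\Delta$ and avoiding sheaf-theoretic machinery, while yours is shorter and applies verbatim to any almost free torus action on a reasonable compact space, at the price of the heavier input you acknowledge. One small caution on your technical point: Vietoris--Begle in its usual form requires a closed (proper) surjection, and $q$ is not closed since $ET^{m-d}$ is infinite-dimensional; so one should either compute the Leray sheaf of $q$ using slices for the $T^{m-d}$-action (finitely many orbit types, local models $T^{m-d}\times_H S$), or apply Vietoris--Begle to the proper finite-dimensional approximations $E_nT^{m-d}\times_{T^{m-d}}\ZZ_\Delta\to\ZZ_\Delta/T^{m-d}$ and pass to the limit, or simply cite the standard theorem on almost free actions, as you yourself suggest. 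With that understood, $p^*=q^*\circ\pi^*$ is a ring isomorphism exactly as you argue, the ring compatibility being automatic since all maps are induced by continuous maps.
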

	We include the proof of Theorem \ref{thm:cohomology of orbifold} in Appendix \ref{appdx:A1} for the reader's convenience.
	
	Although the integral cohomology of rational toric manifolds often has torsion, and the ring structure
	is subtle even in the simplest case of weighted projective spaces (see \cite{Kaw73}), their rational cohomology has the same simple form as quaitoric manifolds.
	\begin{cor}\label{cor:coho toric orbi}
		If $\Delta$ is Cohen-Macaulay, then we have a ring isomorphism \[H^*(M(\Delta,\mathit{\Lambda});\Qq)\cong \Qq[\Delta]/\Theta.\]
	\end{cor}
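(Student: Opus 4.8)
The plan is to obtain the statement immediately by composing the two ring isomorphisms that have already been set up, so the proof is essentially a one-line deduction. First I would invoke Theorem~\ref{thm:cohomology of orbifold}, which applies to an arbitrary $(d-1)$-dimensional complex $\Delta$ with no Cohen--Macaulay hypothesis: the quotient map $p\colon ET^{m-d}\times_{T^{m-d}}\ZZ_\Delta\to M(\Delta,\mathit{\Lambda})$ induces a ring isomorphism
\[
p^*\colon H^*(M(\Delta,\mathit{\Lambda});\Qq)\xr{\ \cong\ }H^*_{T^{m-d}}(\ZZ_\Delta;\Qq).
\]
Then, invoking the hypothesis that $\Delta$ is Cohen--Macaulay over $\Qq$, I would apply Proposition~\ref{prop:equi toric} to get a ring isomorphism $H^*_{T^{m-d}}(\ZZ_\Delta;\Qq)\cong\Qq[\Delta]/\Theta$, where $\Theta$ is the ideal generated by the l.s.o.p. $\{\theta_i=\lambda_{i1}x_1+\cdots+\lambda_{im}x_m\}_{1\leqslant i\leqslant d}$ attached to $\mathit{\Lambda}=(\lambda_{ij})$. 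Composing the two maps gives a graded ring isomorphism $H^*(M(\Delta,\mathit{\Lambda});\Qq)\cong\Qq[\Delta]/\Theta$, which is exactly the claim; this is the rational analogue of the integral statement for quasitoric manifolds in Theorem~\ref{thm:quasi}.

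The only point that needs a word of care is the identification of the ideal $\Theta$ occurring in Proposition~\ref{prop:equi toric} with the one built directly from the characteristic matrix $\mathit{\Lambda}$, and its independence of the auxiliary choices made in the proof of that proposition. Concretely, if $\mathrm{Im}\,\mathit{\Lambda}=N\subsetneq\Zz^d$ one replaces $\mathit{\Lambda}$ by $\mathit{\Lambda}'=B\mathit{\Lambda}$ for some $B\in GL(d,\Qq)$ adapted to a basis of $N$; since $B$ is invertible over $\Qq$ the two linear sequences generate the same ideal $\Theta=\Theta'$ in $\Qq[\Delta]$, so $\Qq[\Delta]/\Theta$ is unambiguous. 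Because the isomorphisms furnished by Theorem~\ref{thm:cohomology of orbifold} and Proposition~\ref{prop:equi toric} are both isomorphisms of graded $\Qq$-algebras, their composite is as well, and there is no genuine obstacle: all the substance of the corollary is contained in the two results it cites, the harder of which is Proposition~\ref{prop:equi toric}, where freeness of $\Qq[\Delta]$ over $\Qq[\theta_1,\dots,\theta_d]$ forces the relevant Serre spectral sequence to collapse.
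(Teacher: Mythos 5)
Your proposal is correct and matches the paper's intent exactly: the corollary is stated without proof precisely because it is the immediate composition of Theorem \ref{thm:cohomology of orbifold} with Proposition \ref{prop:equi toric}, with the Cohen--Macaulay hypothesis entering only through the spectral sequence collapse in the latter. Your remark on identifying $\Theta$ with the ideal coming from $B\mathit{\Lambda}$ when $\mathrm{Im}\,\mathit{\Lambda}\subsetneq\Zz^d$ is the same point the paper makes just before proving Proposition \ref{prop:equi toric}, so nothing is missing.
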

	\begin{rem}
	If $\Delta$ is the underlying complex of a (rational) complete simplicial fan $\Sigma\subset \Rr^d$ and $\mathit{\Lambda}$ is induced by the ray vectors of $\Sigma$, then $M(\Delta,\mathit{\Lambda})$ is just the toric variety corresponding to $\Sigma$. In this case,  the above cohomology formula was proved by Danilov \cite{Dan78} (see also \cite[\S 12.4]{CLS11}).		
	\end{rem}
	
	As we have seen, every characteristic matrix for $\Delta$ defines an l.s.o.p. for $\Qq[\Delta]$. Conversely, if $\Theta$ is an l.s.o.p. for $\Qq[\Delta]$, then the associated $d\times m$ matrix $A=(\boldsymbol{a}_1,\dots,\boldsymbol{a}_m)$ can be written as $A=(\frac{1}{p_1}\boldsymbol{\lambda}_1,\dots,\frac{1}{p_m}\boldsymbol{\lambda}_m)$ with $p_i\in\Zz$, such that $\mathit{\Lambda}=(\boldsymbol{\lambda}_1,\dots,\boldsymbol{\lambda}_m)$ is a  characteristic matrix for $\Delta$.
	Let $\Theta_\mathit{\Lambda}$ be the l.s.o.p. corresponds to $\mathit{\Lambda}$. Then it is easy to see that $\Qq[\Delta]/\Theta\to \Qq[\Delta]/\Theta_\mathit{\Lambda},\,x_i\mapsto p_ix_i$ is a ring isomorphism.
	So we will do not distinguish the ring $\Qq[\Delta]/\Theta$ from the cohomology of $M(\Delta,\mathit{\Lambda})$ for Cohen-Macaulay complex $\Delta$ because of Corollary \ref{cor:coho toric orbi}.

	\section{Topology of rational toric manifolds and its applications}\label{sec:duality}
	Throughout this section, $\Delta$ is a Cohen-Macaulay complex of dimension $d-1$.
	By $(\Delta,\mathit{\Lambda})$ and $\Theta$, we denote a characteristic pair and the corresponding l.s.o.p.
	So $M(\Delta,\mathit{\Lambda})$ is a $2d$-dimensional toric space.
	The simplified notation $M_\Delta$ for $M(\Delta,\mathit{\Lambda})$ will be also used whenever it creates no confusion.
	
	\subsection{Local topology of toric spaces}\label{subsec:local}
	For $(\Delta,\mathit{\Lambda})$ and a subset $\Ss=\{i_1,\dots,i_j\}\subset[m]=\FF_0(\Delta)$, let $\mathit{\Lambda}_\Ss=(\boldsymbol{\lambda}_{i_1},\dots,\boldsymbol{\lambda}_{i_j})$ be the restricted $d\times j$ matrix, and let $\Theta_\Ss=r_\Ss(\Theta)$ be the image of $\Theta$ under the projection $r_\Ss:\Qq[\Delta]\to\Qq[\Delta_\Ss]$.
	For a $(k-1)$-face $\sigma=\{i_1,\dots,i_k\}\in\Delta$, setting $\Ss_\sigma=\FF_0(\mathrm{st}_\sigma\Delta)=\{i_1,\dots,i_j\}$, then we have a map of tori
	$\exp\mathit{\Lambda}_{\Ss_\sigma}:T^{j}\to T^d$ and get a $T^d$-space
	\[M_\sigma:=M(\mathrm{st}_\sigma\Delta,\mathit{\Lambda}_{\Ss_\sigma})=\ZZ_{\mathrm{st}_\sigma\Delta}/K_{\mathit{\Lambda}_{\Ss_\sigma}},\quad K_{\mathit{\Lambda}_{\Ss_\sigma}}:=\mathrm{Ker}\,\exp\mathit{\Lambda}_{\Ss_\sigma}.\]
	Since $\mathrm{st}_\sigma\Delta$ is clearly Cohen-Macaulay, $H^*(M_\sigma;\Qq)=\Qq[\mathrm{st}_\sigma\Delta]/\Theta_{\Ss_\sigma}$.
	
	On the other hand, let $T^{m-j}_{[m]\setminus\Ss_\sigma}$ be the coordinate subtorus corresponding to the subset $[m]\setminus\Ss_\sigma$.
	We get another $T^d$-space
	\[\hat M_\sigma:=(\ZZ_{\mathrm{st}_\sigma\Delta}\times T^{m-j}_{[m]\setminus\Ss_\sigma})/K_{\mathit{\Lambda}},\quad K_{\mathit{\Lambda}}:=\mathrm{Ker}\,\exp\mathit{\Lambda}.\]
	It is easy to see that $\hat M_\sigma$ is the quotient space of $M_\sigma$ under a finite group $G\subset T^d$ action: $G\times M_\sigma\to M_\sigma$, so their rational cohomology rings are isomorphic (cf. Appendix \ref{appdx:A1}). Let $\Phi_\sigma: \Qq[\Delta_\Ss]/\Theta_{\Ss_\sigma}\to \Qq[\mathrm{st}_\sigma\Delta]/\Theta_{\Ss_\sigma}$ be the natural map.
	Then, the composition map $\Psi_\sigma=\Phi_\sigma\circ r_{\Ss_\sigma}$ is induced by an inclusion $\psi_\sigma:\hat M_\sigma\hookrightarrow M_\Delta$, as shown in the following commutative diagram:
	\begin{equation}\label{eq:restriction}
		\begin{gathered}
			\xymatrix
			{\Qq[\Delta]/\Theta\ar[r]^-{\Psi_\sigma} \ar@{=}[d] & \Qq[\mathrm{st}_\sigma\Delta]/\Theta_{\Ss_\sigma} \ar@{=}[d]\\
				H^*(M_\Delta;\Qq) \ar[r]^-{\psi_\sigma^*} & H^*(\hat M_\sigma;\Qq)}
		\end{gathered}
	\end{equation}
	
	Now  consider the subcomplex $\mathrm{lk}_\sigma\Delta$.
	Reordering the vertices if necessary, there exists a matrix $A\in GL(d,\Zz)$  such that
	\begin{equation}\label{eq:matrix}
		A\cdot\mathit{\Lambda}_\sigma=
		\begin{pmatrix}
			U_\sigma\\0
		\end{pmatrix}
		\ \text{ and }\ A\cdot\mathit{\Lambda}_{\Ss_\sigma}=
		\left(\begin{array}{c|c}
			U_\sigma&B\\\hline
			0&\Gamma\end{array}\right),
	\end{equation}
	where $U_\sigma$ is a full rank $k\times k$ upper triangle matrix.
	It is easily verified that the $(d-k)\times(j-k)$ matrix $\Gamma$ is a characteristic matrix for $\mathrm{lk}_\sigma\Delta$.
	Thus, we can define a $(2d-2k)$-dimensional toric space $N_\sigma$ as
	\[N_\sigma:=M(\mathrm{lk}_\sigma\Delta,\Gamma)=\ZZ_{\mathrm{lk}_\sigma\Delta}/K_{\Gamma},\quad K_{\Gamma}:=\mathrm{Ker}\,\exp\Gamma.\]
	
	Viewing $\ZZ_{\mathrm{lk}_\sigma\Delta}$ as a subspace of $\ZZ_{\mathrm{st}_\sigma\Delta}$:
	\[\ZZ_{\mathrm{lk}_\sigma\Delta}=\{(x_1,\dots,x_j)\in (D^2)^j:x_i=0 \text{ for }i\in\sigma\}\subset\ZZ_{\mathrm{st}_\sigma\Delta}.\]
	Then $N_\sigma$ is a deformation retract of $M_\sigma$ induced by the deformation retraction from $\ZZ_{\mathrm{st}_\sigma\Delta}=(D^2)^k_\sigma\times\ZZ_{\mathrm{lk}_\sigma\Delta}$ onto $\ZZ_{\mathrm{lk}_\sigma\Delta}$, and $\pi_\sigma:M_\sigma\to N_\sigma$ is (rationally) an orientable $D^{2k}$-bundle.
	
	In particular, if $\Delta$ is a rational homology $(d-1)$-sphere (resp. rational homology $(d-1)$-ball), $\mathrm{lk}_\sigma\Delta$ is a rational homology $(d-k-1)$-sphere (resp. rational homology $(d-k-1)$-sphere or $(d-k-1)$-ball). So in this case, $N_\sigma$ is a rational toric
	$(2d-2k)$-manifold ( resp. rational toric
	$(2d-2k)$-manifold with or without boundary). Let us look at an example.
	\begin{exmp}\label{exmp:toric manifold}
		Let $\Delta$ be the boundary of a square with $\{1,3\}$ and $\{2,4\}$ as missing faces. So $\ZZ_\Delta=S^3\times S^3$ (see Example \ref{exmp:m-a}).
		Define $\mathit{\Lambda}$ to be
		\[\boldsymbol{\lambda}_1=(1,0)^T,\ \boldsymbol{\lambda}_2=(0,1)^T,\ \boldsymbol{\lambda}_3=(-1,-1)^T,\ \boldsymbol{\lambda}_4=(0,-1)^T.\]
		Then $M_\Delta$ is the connected sum $\Cc P^2\#\overline{\Cc P^2}$ (see \cite{OF70}), where $\overline{\Cc P^2}$ is the projective space with the reversed orientation. The kernel subtorus $K_\mathit{\Lambda}=T^2$ corresponds to the sublattice
		\[\Zz\cdot(1,1,1,0)\oplus\Zz\cdot(0,1,0,1).\]
		It is not hard to verify that $N_i=S^2$ for all $1\leqslant i\leqslant 4$;
		$M_i=\hat M_i=D^2\times S^2$ for $i=1,3$; and for $i=2,4$, $M_i=\hat M_i$ is the total space of a $D^2$-bundle over $S^2$ such that the boundary of $M_i$ is the Hopf bundle:
		\[S^1\to S^3\to S^2.\]
	\end{exmp}
	
	In the previous notations, we have a composition map
	\begin{equation}\label{eq:map}
		\rho_\sigma:N_\sigma\xr{\phi_\sigma}M_\sigma\xr{q_\sigma}\hat M_\sigma\xr{\psi_\sigma} M_\Delta,
	\end{equation}
	where $q_\sigma$ is the quotient map; $\phi_\sigma$ and $\psi_\sigma$ are inclusions, and there are induced ring isomorphisms
	\begin{equation}\label{eq:link=star}
		H^*(\hat M_\sigma)\xr[\cong]{q_\sigma^*}H^*(M_\sigma)\xr[\cong]{\phi_\sigma^*}H^*(N_\sigma).
	\end{equation}
	
	\subsection{Excision for rational toric manifolds with boundary}
	In this subsection, we assume $\Delta$ is a rational homology ball with characteristic matrix $\mathit{\Lambda}$, so that $M_\Delta$ is a rational toric manifolds with boundary. The following lemma can be used to calculate the relative cohomology of the pair $(M_\Delta,\partial M_\Delta)$.
	\begin{lem}[Excision]\label{lem:excision}
		Suppose a characteristic pair $(\Delta',\mathit{\Lambda}')$ satisfies that $\Delta'$ is a rational homology sphere of the same dimension as $\Delta$, $\Delta\subset\Delta'$ and $\mathit{\Lambda}$ is the restriction of $\mathit{\Lambda}'$ to $\Delta$. Let $D$ be the closure of $\Delta'-\Delta$, $\DD=\FF_0(D)$, and $r_\DD:\Qq[\Delta']/\Theta'\to\Qq[D]/\Theta'_{\DD}$.  Then we have an isomorphism
		\[H^*(M_\Delta,\partial M_\Delta;\Qq)\cong\mathrm{Ker}\,r_\DD.\]
	\end{lem}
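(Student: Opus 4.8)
The plan is to realize the pair $(M_\Delta,\partial M_\Delta)$ inside the closed rational toric manifold $M_{\Delta'}$ and to read off its relative cohomology from an excision combined with the long exact sequence of a pair. Since $\Delta'$ is a rational homology $(d-1)$-sphere it is Gorenstein*, hence Cohen--Macaulay, over $\Qq$ (Theorem \ref{thm:algebraic property}), so Corollary \ref{cor:coho toric orbi} gives $H^*(M_{\Delta'};\Qq)\cong\Qq[\Delta']/\Theta'$, and $M_{\Delta'}$ is a closed orientable rational homology $2d$-manifold; likewise $\Delta$, being a rational homology $(d-1)$-ball, is Cohen--Macaulay over $\Qq$.

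The combinatorial input is a description of the ``complement'' $D=\overline{\Delta'\setminus\Delta}$, with $\DD=\FF_0(D)$. Because $\Delta'$ is a rational homology sphere it is a normal $\Qq$-pseudomanifold (the link of every $(d-2)$-face is a rational homology $S^0$, i.e.\ two points); combining this with the purity of $\Delta$ one checks that the $(d-1)$-faces of $\Delta'$ split as those of $\Delta$ together with those of $D$, that $\Delta\cap D=\partial\Delta$, and --- inducting on dimension through the decompositions $\mathrm{lk}_\tau\Delta'=\mathrm{lk}_\tau\Delta\cup\mathrm{lk}_\tau D$ together with a Mayer--Vietoris computation --- that $D$ is itself a rational homology $(d-1)$-ball with $\partial D=\partial\Delta$. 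In particular $D$ is Cohen--Macaulay over $\Qq$, so $H^*(M_D;\Qq)\cong\Qq[D]/\Theta'_{\DD}$ by Corollary \ref{cor:coho toric orbi} again. Passing to moment--angle complexes over the fixed vertex set $[m]=\FF_0(\Delta')$ one has $\ZZ_{\Delta'}=\ZZ_\Delta\cup\ZZ_D$ with $\ZZ_\Delta\cap\ZZ_D=\ZZ_{\partial\Delta}$, and $\partial\ZZ_\Delta=\ZZ_{\partial\Delta}$ since $\Delta$ is a homology ball; dividing by the almost free action of $K:=K_{\mathit{\Lambda}'}$ gives $M_{\Delta'}=M_\Delta\cup M_D$ with $M_\Delta\cap M_D=\partial M_\Delta=\partial M_D=\ZZ_{\partial\Delta}/K$.

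Now I would run the homological argument in the Borel construction $ET^{m-d}\times_{T^{m-d}}\ZZ_{\bullet}$, where everything is a genuine CW complex; by Theorem \ref{thm:cohomology of orbifold} (and, for the pair $(\Delta,\partial\Delta)$, its relative form obtained by the five lemma) this computes the ordinary cohomology of the spaces $M_\bullet$ and of the pair $(M_\Delta,\partial M_\Delta)$. Excision applied to $\ZZ_{\Delta'}=\ZZ_\Delta\cup\ZZ_D$ with intersection $\ZZ_{\partial\Delta}$ yields $H^*(M_\Delta,\partial M_\Delta;\Qq)\cong H^*\bigl(ET^{m-d}\times_{T^{m-d}}\ZZ_{\Delta'},\,ET^{m-d}\times_{T^{m-d}}\ZZ_D;\Qq\bigr)$, and the long exact sequence of this pair reads
\[
\cdots\longrightarrow H^*(M_{\Delta'},M_D;\Qq)\longrightarrow H^*(M_{\Delta'};\Qq)\xrightarrow{\ \iota^*\ }H^*(M_D;\Qq)\longrightarrow\cdots .
\]
By naturality of the identifications in Theorem \ref{thm:cohomology of orbifold} and Proposition \ref{prop:equi toric} with respect to the subcomplex inclusion $D\subset\Delta'$ --- on $E_2$-pages the relevant spectral sequence map is the face-ring restriction tensored with the identity on $\Lambda[v_1,\dots,v_d]$, and both spectral sequences degenerate at $E_3$ by Cohen--Macaulayness --- the map $\iota^*$ is exactly $r_{\DD}$. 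Finally $r_{\DD}$ is surjective: the ring homomorphism $\Qq[\Delta']\twoheadrightarrow\Qq[D]$ sending $x_i\mapsto x_i$ for $i\in\DD$ and $x_i\mapsto0$ otherwise is well defined and onto because $D\subseteq\Delta'$, and stays onto after dividing by $\Theta'$. Hence the connecting map into $H^*(M_{\Delta'},M_D;\Qq)$ vanishes, so $H^*(M_{\Delta'},M_D;\Qq)\cong\mathrm{Ker}\,r_{\DD}$, and chaining the isomorphisms proves the lemma.

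The main obstacle is the second paragraph: establishing cleanly, in the non-PL rational-coefficient setting, that $\Delta\cap D=\partial\Delta$ and that $D$ is a rational homology ball (so that $M_D$ has the expected face-ring cohomology), together with the identification $\partial\ZZ_\Delta=\ZZ_{\partial\Delta}$ and hence $\partial M_\Delta=\ZZ_{\partial\Delta}/K$. These are folklore-type facts about complements of homology balls inside homology spheres and about boundaries of moment--angle complexes, believable but somewhat delicate to pin down; once they are in hand, the excision and the extraction of $\mathrm{Ker}\,r_{\DD}$ from the long exact sequence, using the naturality of the face-ring description of (equivariant) cohomology, are routine.
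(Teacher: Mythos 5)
Your argument is correct and is essentially the paper's own proof: the paper likewise writes $M_{\Delta'}=\hat M_\Delta\cup\hat M_D$ with $\hat M_\Delta=(\ZZ_\Delta\times T^i_\Ss)/K_{\mathit{\Lambda}'}$ and $\hat M_D=(\ZZ_D\times T^j_\UU)/K_{\mathit{\Lambda}'}$ meeting along $\partial\hat M_\Delta$ (these are exactly the spaces you call $M_\Delta$ and $M_D$ after adjoining the ghost-vertex tori), excises to get $H^*(\hat M_\Delta,\partial\hat M_\Delta;\Qq)\cong H^*(M_{\Delta'},\hat M_D;\Qq)$, and extracts $\mathrm{Ker}\,r_\DD$ from the long exact sequence of the pair via Corollary \ref{cor:coho toric orbi} and the surjectivity of $r_\DD$. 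The points you flag as delicate --- that $D$ is a rational homology ball meeting $\Delta$ in $\partial\Delta$ --- are simply asserted in the paper without proof, and your detour through the Borel constructions plays the same role as the paper's remark that $\hat M_\Delta$, $\hat M_D$ are rationally cohomology equivalent to $M_\Delta$, $M_D$.
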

	\begin{proof}
		Note that $D$ is a rational homology ball. Let $\Ss=\FF_0(\Delta')-\FF_0(\Delta)$, $i=|\Ss|$ and $\UU=\FF_0(\Delta')-\FF_0(D)$, $j=|\UU|$.  We can define spaces
		\[\begin{split}
			\hat M_\Delta=(\ZZ_\Delta\times T^i_\Ss)/K_{\mathit{\Lambda}'}\ &\text{ and }\ \hat M_D=(\ZZ_D\times T^j_\UU)/K_{\mathit{\Lambda}'},\\
			\text{where }K_{\mathit{\Lambda}'}:&=\mathrm{Ker}\,\exp\mathit{\Lambda}'.
		\end{split}\]
		As we showed in \S \ref{subsec:local} that $M_\sigma$ and $\hat M_\sigma$ are rational cohomology equivalent, in the same way, we have $H^*(M_\Delta,\partial M_\Delta;\Qq)\cong H^*(\hat M_\Delta,\partial\hat M_\Delta;\Qq)$ and $H^*(M_D;\Qq)\cong H^*(\hat M_D;\Qq)$. Using Corollary \ref{cor:coho toric orbi} and five-lemma, we can readily deduce that $H^*(M_{\Delta'},\hat M_D;\Qq)\cong \mathrm{Ker}\,r_\DD$. Since $M_{\Delta'}=\hat M_\Delta\cup\hat M_D$ and $\hat M_\Delta\cap\hat M_D=\partial\hat M_\Delta$,  $H^*(M_{\Delta'},\hat M_D;\Qq)\cong H^*(\hat M_\Delta,\partial\hat M_\Delta;\Qq)$ by excision.  So The lemma is proved.
	\end{proof}
	\begin{cor}\label{cor:relative coho}
		Let $I$ be the ideal of $\Qq[\Delta]$ generated by $\{\xx_\sigma:\sigma\in\Delta-\partial\Delta\}$, $\Theta$ an l.s.o.p. for $\Qq[\Delta]$ and $M_\Delta$ the corresponding rational toric manifold with boundary. Then we have an isomorphism
		\[H^*(M_\Delta,\partial M_\Delta;\Qq)\cong I/I\Theta.\]
	\end{cor}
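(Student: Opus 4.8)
The plan is to deduce this from Lemma~\ref{lem:excision} by realizing $\Delta$ inside a rational homology sphere. Let $w$ be a new vertex, set $\Delta'=\Delta\cup_{\partial\Delta}\CC\partial\Delta$ with apex $w$, and let $D=\CC\partial\Delta$ be the closure of $\Delta'-\Delta$. Since $\partial\Delta$ is a rational homology $(d-2)$-sphere, a Mayer--Vietoris argument shows $\Delta'$ is a rational homology $(d-1)$-sphere in which $\Delta$ is the full subcomplex on $[m]$, while $D$, being a cone over a rational homology sphere, is Cohen--Macaulay. I would extend $\mathit{\Lambda}$ to a characteristic matrix $\mathit{\Lambda}'=(\mathit{\Lambda}\mid\boldsymbol{\lambda}_w)$ by taking $\boldsymbol{\lambda}_w$ to be a primitive vector lying outside the finitely many proper subspaces $\mathrm{span}_\Qq\{\boldsymbol{\lambda}_i:i\in\tau\}$, $\tau$ a facet of $\partial\Delta$; the face-wise l.s.o.p.\ criterion then guarantees that the associated sequence $\Theta'$ is an l.s.o.p.\ for $\Qq[\Delta']$ restricting to $\Theta$ on $\Delta$. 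Note that, with $\DD=\FF_0(D)$, the set $\FF_0(\Delta')-\DD$ is exactly the set of interior vertices of $\Delta$.

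Lemma~\ref{lem:excision} then yields $H^*(M_\Delta,\partial M_\Delta;\Qq)\cong\mathrm{Ker}\,r_\DD$, where $r_\DD\colon\Qq[\Delta']/\Theta'\twoheadrightarrow\Qq[D]/\Theta'_\DD$ is induced by the canonical surjection $\Qq[\Delta']\twoheadrightarrow\Qq[D]$ attached to the subcomplex $D\subset\Delta'$. The combinatorial input I need is a description of the kernel $\w I$ of this ring map: a monomial of $\Qq[\Delta']$ maps to zero iff its support is a face of $\Delta'$ that is not a face of $D$, and such faces are precisely those containing an interior face of $\Delta$; hence $\w I$ is the ideal of $\Qq[\Delta']$ spanned by all monomials whose support is an interior face of $\Delta$. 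Consequently $\mathrm{Ker}\,r_\DD=(\w I+\Theta'\Qq[\Delta'])/\Theta'\Qq[\Delta']\cong\w I/(\w I\cap\Theta'\Qq[\Delta'])$.

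The technical heart is the identity $\w I\cap\Theta'\Qq[\Delta']=\Theta'\w I$. I would prove it by tensoring $0\to\w I\to\Qq[\Delta']\to\Qq[D]\to0$ with $\Qq[\Delta']/\Theta'$ over $\Qq[\Delta']$: because $\Delta'$ is Cohen--Macaulay, $\Theta'$ is a regular sequence on $\Qq[\Delta']$, so its Koszul complex resolves $\Qq[\Delta']/\Theta'$ and identifies $\mathrm{Tor}_1^{\Qq[\Delta']}(\Qq[D],\Qq[\Delta']/\Theta')$ with the first Koszul homology of the image sequence $\Theta'_\DD$ acting on $\Qq[D]$; as $\Theta'_\DD$ is an l.s.o.p.\ for the Cohen--Macaulay ring $\Qq[D]$, this Koszul homology vanishes, which is exactly the statement that $\w I/\Theta'\w I\hookrightarrow\Qq[\Delta']/\Theta'$. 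Therefore $\mathrm{Ker}\,r_\DD\cong\w I/\Theta'\w I$.

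Finally I would match $\w I/\Theta'\w I$ with $I/I\Theta$. No monomial generating $\w I$ involves $x_w$, so via the split inclusion $\Qq[\Delta]\hookrightarrow\Qq[\Delta']$ dual to $\Qq[\Delta']/(x_w)\cong\Qq[\Delta]$ the ideal $\w I$ is identified with the interior-face ideal $I$ of $\Qq[\Delta]$. Writing $\theta'_i=\theta_i+\lambda_{iw}x_w$ and using that $\xx_\sigma x_w=0$ in $\Qq[\Delta']$ for every interior face $\sigma$ (because $\sigma\cup\{w\}\notin\Delta'$), one gets $x_w\w I=0$, hence $\Theta'\w I=\Theta I$, so $\w I/\Theta'\w I\cong I/I\Theta$; moreover the isomorphism respects the $\Qq[\Delta]/\Theta\cong H^*(M_\Delta;\Qq)$-module structures, since the kernel of $\Qq[\Delta']/\Theta'\to\Qq[\Delta]/\Theta$ is generated by $x_w$, which annihilates $\w I/\Theta'\w I$. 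The step I expect to be most delicate is not this last bookkeeping but making the geometric map $r_\DD$ of Lemma~\ref{lem:excision} explicit enough to pin down its ring-level kernel, together with the Tor-vanishing step, which is where all three Cohen--Macaulay hypotheses (on $\Delta$, on $\Delta'$, and on $D$) are used.
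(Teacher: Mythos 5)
Your proposal is correct, and its first half is the same as the paper's: you embed $\Delta$ into the rational homology sphere $\Delta'$ obtained by coning off $\partial\Delta$, extend the characteristic matrix so that the new l.s.o.p.\ $\Theta'$ restricts to $\Theta$, and apply Lemma \ref{lem:excision} to identify $H^*(M_\Delta,\partial M_\Delta;\Qq)$ with $\mathrm{Ker}\,r_\DD\cong I/(I\cap\Theta'\Qq[\Delta'])$, where $I$ is the interior-face ideal; your verification that the generic extension $\boldsymbol{\lambda}_w$ exists and that the ring-level kernel of $\Qq[\Delta']\to\Qq[D]$ is exactly this ideal is more explicit than the paper, which simply asserts these points (and your observation $x_wI=0$, hence $\Theta'I=\Theta I$, is the paper's remark $I\Theta=I\Theta'$). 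Where you genuinely diverge is the key identity $I\cap\Theta'\Qq[\Delta']=\Theta'I$. The paper never proves this equality directly: it shows the natural surjection $I/I\Theta\to I/(I\cap\Theta')$ is an isomorphism by a dimension count, computing $\dim I/(I\cap\Theta')=\dim \Qq[\Delta]/\Theta$ from Lefschetz duality of the rational homology manifold with boundary $M_\Delta$ together with Corollary \ref{cor:coho toric orbi}, and $\dim I/I\Theta=\dim\Qq[\Delta]/\Theta$ from Stanley's identification of $I$ with the canonical module of the Cohen--Macaulay ring $\Qq[\Delta]$. You instead prove the equality purely algebraically: $\Theta'$ is a regular sequence on $\Qq[\Delta']$ (a homology sphere, hence CM), its image $\Theta'_\DD$ is an l.s.o.p., hence a regular sequence, on the CM ring $\Qq[D]$ ($D=\CC\partial\Delta$), so the Koszul computation gives $\mathrm{Tor}_1^{\Qq[\Delta']}(\Qq[D],\Qq[\Delta']/\Theta')=0$ and $I/\Theta'I$ injects into $\Qq[\Delta']/\Theta'$. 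This buys a self-contained argument that avoids both the Lefschetz duality input and the canonical-module theorem (and does not even use CM-ness of $\Delta$ itself at that step, contrary to your closing remark), at the cost of a little homological algebra; the paper's route is shorter but leans on deeper external results. The only point stated at the paper's own level of brevity is that $\Delta'$ is a rational homology \emph{sphere} in the paper's sense: besides the Mayer--Vietoris computation one should note the link conditions, which follow routinely from $\Delta$ being a homology ball, exactly as the paper tacitly assumes.
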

	\begin{proof}
		Let $\Delta'=\Delta\cup_{\partial\Delta}\CC\Delta$. Then $\Delta'$ is a rational homology sphere, and there is an l.s.o.p. $\Theta'$ for $\Qq[\Delta']$ such that $\Theta'$, restricted to $\Delta$, is $\Theta$.
		Then we have a short exact sequence
		\[0\to I/(I\cap\Theta')\to\Qq[\Delta']/\Theta'\to\Qq[\Delta]/\Theta\to 0.\]
		According to Lemma \ref{lem:excision}, $H^*(M_\Delta,\partial M_\Delta;\Qq)\cong I/(I\cap\Theta')$. So it remains to prove that $I/(I\cap\Theta')=I/I\Theta$.
		
		Since $M_\Delta$ is an orientable rational homology manifold with boundary, from Lefschetz duality we have
		\[I/(I\cap\Theta')=H^*(M_\Delta,\partial M_\Delta)\cong H^*(M_\Delta)=\Qq[\Delta]/\Theta.\]
		By \cite[II. Theorem 7.3]{S96} $I$ is isomorphic to the canonical module of $\Qq[\Delta]$. So $\dim(I/I\Theta)=\dim(\Qq[\Delta]/\Theta)$ (see \cite[Theorem 3.3.5 (a)]{BH98} and \cite[I. Theorem 12.5 (b)]{S96}). Note that $I\Theta=I\Theta'\subset I\cap\Theta'$. Therefore, the natural surjection $I/I\Theta\to I/(I\cap\Theta')$ is actually an isomorphism.
	\end{proof}
	
	\subsection{Poincar\'e  duality of rational toric manifolds}\label{subsec:duality}
	In this subsection, we assume that $\Delta$ is a rational homology sphere (or ball) of dimension $d-1$.
	So $M_\Delta$ is a rational toric $2d$-manifold (resp. rational toric $2d$-manifold with boundary), and therefore it should have (rational) Poincar\'e duality (resp. Lefschetz duality) property.
	After choosing an orientation of $M_\Delta$, denote by $[M_\Delta]\in H_{2d}(M_\Delta;\Qq)$ (resp. $H_{2d}(M_\Delta,\partial M_\Delta;\Qq)$) the fundamental class of $M_\Delta$ (resp. $(M_\Delta,\partial M_\Delta)$).
	The following lemma plays an important role in this paper.
	\begin{lem}[Poincar\'e duality]\label{lem:duality}
		If $\Delta$ is a rational homology $(d-1)$-sphere, then the map defined by
		\[H^{2j}(M_\Delta;\Qq)\xr{[M_\Delta]\sfr}H_{2d-2j}(M_\Delta;\Qq)\] is an isomorphism for all $j$.
		Namely, the rational algebra $\Qq[\Delta]/\Theta$ is a Poincar\'e duality algebra.
		Moreover, for any $(k-1)$-face $\sigma=\{i_1,\dots, i_k\}\in\Delta$, we have \[\Qq\cdot[M_\Delta]\sfr\xx_\sigma=\Qq\cdot(\rho_\sigma)_*([N_\sigma]),\]
		where $[N_\sigma]\in H_{2d-2k}(N_\sigma;\Qq)$ is a rational fundamental class of $N_\sigma$, and $\rho_\sigma$ is defined by \eqref{eq:map}.
	\end{lem}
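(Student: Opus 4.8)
The first assertion is immediate. By the discussion in \S\ref{subsec:rational toric manifold}, when $\Delta$ is a rational homology $(d-1)$-sphere the space $M_\Delta$ is a closed, orientable, rational homology $2d$-manifold, so cap product with the fundamental class $[M_\Delta]$ gives an isomorphism $H^{j}(M_\Delta;\Qq)\to H_{2d-j}(M_\Delta;\Qq)$ for all $j$; since $H^*(M_\Delta;\Qq)\cong\Qq[\Delta]/\Theta$ by Corollary \ref{cor:coho toric orbi} is concentrated in even degrees (whence all odd groups vanish), this says precisely that $\Qq[\Delta]/\Theta$ is a Poincar\'e duality algebra of formal dimension $2d$.

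For the second assertion the plan is to identify $\xx_\sigma\in H^{2k}(M_\Delta;\Qq)$ with the (global) Thom class of the normal bundle of $N_\sigma$ in $M_\Delta$, so that the asserted formula becomes the standard identity ``cap with the Thom class $=$ push-forward of the fundamental class''. Two facts are needed. (i) $\xx_\sigma\neq0$ in $\Qq[\Delta]/\Theta$: choosing a facet $\tau\supseteq\sigma$ one has $\xx_\sigma\xx_{\tau\setminus\sigma}=\xx_\tau$, and $\xx_\tau\neq0$ because $(\Qq[\Delta]/\Theta)_{2d}$ is spanned by the facet monomials, any two of which coming from facets sharing a ridge $\rho$ are proportional with a nonzero scalar (multiply $\xx_\rho$ by the forms $\theta_l$ and use that $\mathrm{lk}_\rho\Delta$ is a $\Qq$-homology $0$-sphere), while $\dim_\Qq(\Qq[\Delta]/\Theta)_{2d}=h_d=h_0=1\neq0$ by the Dehn--Sommerville relations and part one. (ii) $(\rho_\sigma)_*[N_\sigma]\neq0$ in $H_{2d-2k}(M_\Delta;\Qq)$: by \eqref{eq:link=star} the map $\rho_\sigma^*=\phi_\sigma^*\circ q_\sigma^*\circ\psi_\sigma^*$ is surjective onto $H^*(N_\sigma;\Qq)$ (since $\psi_\sigma^*=\Psi_\sigma$ is), so choosing $\alpha$ with $\rho_\sigma^*\alpha$ the top class of $N_\sigma$ gives $\langle\alpha,(\rho_\sigma)_*[N_\sigma]\rangle=\langle\rho_\sigma^*\alpha,[N_\sigma]\rangle\neq0$.

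Now the computation. By the constructions of \S\ref{subsec:local}, $\psi_\sigma$ realizes $\hat M_\sigma$ as (rationally) a closed tubular neighbourhood of $N_\sigma=\rho_\sigma(N_\sigma)$ in $M_\Delta$, with $\hat M_\sigma\setminus N_\sigma\simeq\partial\hat M_\sigma$ and with $M_\Delta\setminus N_\sigma$ deformation retracting, compatibly with the inclusions into $M_\Delta$, onto the toric space $M_{\mathrm{del}_\sigma\Delta}$ of the face-deletion $\mathrm{del}_\sigma\Delta=\{\tau\in\Delta:\sigma\not\subseteq\tau\}$. Since $\sigma\notin\mathrm{del}_\sigma\Delta$, the element $\xx_\sigma\in\Qq[\Delta]=H^*_{T^m}(\ZZ_\Delta)$ restricts to $0$ in $\Qq[\mathrm{del}_\sigma\Delta]=H^*_{T^m}(\ZZ_{\mathrm{del}_\sigma\Delta})$ along $\ZZ_{\mathrm{del}_\sigma\Delta}\ha\ZZ_\Delta$, so, restricting equivariance to $T^{m-d}$ and using Theorem \ref{thm:cohomology of orbifold}, $\xx_\sigma$ restricts to $0$ on $M_\Delta\setminus N_\sigma$; hence it lifts to a class $\widetilde\xx_\sigma\in H^{2k}(M_\Delta,M_\Delta\setminus N_\sigma;\Qq)$. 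By excision this relative group is $H^{2k}(\hat M_\sigma,\partial\hat M_\sigma;\Qq)$, which is one-dimensional by Corollary \ref{cor:relative coho} applied to the Cohen--Macaulay ball $\mathrm{st}_\sigma\Delta=\Delta^{k-1}*\mathrm{lk}_\sigma\Delta$, whose ideal of interior faces is the principal ideal $(\xx_\sigma)$. Using the naturality $j^*(a)\sfr c=a\sfr\bar c$ of the cap product (where $\bar c$ is the image of $c$ in the relative homology, so the answer is independent of the chosen lift), excision (which sends the image $\overline{[M_\Delta]}$ to $[\hat M_\sigma,\partial\hat M_\sigma]$), and Lefschetz duality for the oriented $\Qq$-homology $2d$-manifold with boundary $\hat M_\sigma$ (which carries $H_{2d-2k}(\hat M_\sigma;\Qq)$ isomorphically onto $H_{2d-2k}(N_\sigma;\Qq)=\Qq\cdot[N_\sigma]$), together with the identifications \eqref{eq:link=star}, we obtain
\[
[M_\Delta]\sfr\xx_\sigma=[M_\Delta]\sfr j^*\widetilde\xx_\sigma=(\psi_\sigma)_*\big(\widetilde\xx_\sigma\sfr[\hat M_\sigma,\partial\hat M_\sigma]\big)=\mu\cdot(\rho_\sigma)_*[N_\sigma]
\]
for some $\mu\in\Qq$. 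By part one the left-hand side is nonzero iff $\xx_\sigma\neq0$, which holds by (i); by (ii) the right-hand side is nonzero iff $\mu\neq0$; hence $\mu\neq0$ and $\Qq\cdot[M_\Delta]\sfr\xx_\sigma=\Qq\cdot(\rho_\sigma)_*[N_\sigma]$. The delicate part is this last paragraph: checking that $\hat M_\sigma$ really is a tubular neighbourhood of $N_\sigma$, that $M_\Delta\setminus N_\sigma$ is equivariantly homotopy equivalent to $M_{\mathrm{del}_\sigma\Delta}$, and that the chain of isomorphisms (excision, Corollary \ref{cor:relative coho}, Lefschetz duality, \eqref{eq:restriction}, \eqref{eq:link=star}) is compatible with cap products; I expect the equivariant vanishing of $\xx_\sigma$ on the complement and the facet-monomial nonvanishing in (i) to be the two genuinely substantive inputs, the rest being bookkeeping with rational homology manifolds.
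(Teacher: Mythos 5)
Your proposal is correct in substance, but it takes a genuinely different route from the paper's proof. The paper argues at the chain level: using the orbit-cell decomposition of $M_\Delta$ and the algebraic model of Appendix \ref{appdx:cell rep}, the class $\xx_\sigma$ is represented (up to an integer multiple) by the dual orbit cochain $\tilde e_\sigma^*$ and $[M_\Delta]$ by $\pm\sum_{\tau\in\FF_{d-1}(\Delta)}\tilde e_\tau$, so the cap product is computed explicitly as $\pm\sum_{\tau\in\FF_{d-k-1}(\mathrm{lk}_\sigma\Delta)}\tilde e_\tau$, which is visibly a representative of a fundamental class of $N_\sigma$; in particular no separate nonvanishing argument is needed there. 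You instead run a Thom-class/tubular-neighbourhood argument: lift $\xx_\sigma$ to $H^{2k}(M_\Delta,M_\Delta\setminus\rho_\sigma(N_\sigma);\Qq)$ using vanishing on the complement (via the retraction onto $M_{\mathrm{del}_\sigma\Delta}$), excise to $(\hat M_\sigma,\partial\hat M_\sigma)$, land in the one-dimensional group $H_{2d-2k}(\hat M_\sigma;\Qq)$ spanned by the image of $[N_\sigma]$, and pin the scalar down by the algebraic nonvanishing of $\xx_\sigma$. This buys independence from the cell-level model (the somewhat informal treatment of orbit cells as rational cells), at the price of the extra inputs you flag, which do check out: $\hat M_\sigma$ contains a neighbourhood of $\rho_\sigma(N_\sigma)$ because a point of $\ZZ_\Delta$ close to one with vanishing $\sigma$-coordinates can have $|z_i|<1$ only for $i\in\FF_0(\mathrm{st}_\sigma\Delta)$, and the linear retraction of $P_\Delta\setminus F_\sigma$ onto the cone on $(\mathrm{del}_\sigma\Delta)'$ satisfies $T(x)\subseteq T(h_t(x))$ (since $F_\sigma$ and $\CC(\mathrm{del}_\sigma\Delta)'$ are full subcomplexes of $\CC\Delta'$ on complementary vertex sets), so it descends to $M_\Delta$. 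Two small repairs: in (i), the single ridge relation only yields $\xx_\tau\in\Qq\cdot\xx_{\tau'}$ with a possibly zero scalar, so you need the symmetric relation (exchanging the two facets across the ridge) together with strong connectedness of $\Delta$ to propagate nonvanishing from the one nonzero facet monomial (which exists since they span the one-dimensional top degree) to your chosen $\tau\supseteq\sigma$; and the identification $H_{2d-2k}(\hat M_\sigma;\Qq)\cong\Qq\cdot[N_\sigma]$ comes from the deformation retraction $N_\sigma\hookrightarrow M_\sigma$ and the rational equivalence $M_\sigma\to\hat M_\sigma$, not from Lefschetz duality (which, like Corollary \ref{cor:relative coho}, is not actually needed); with that reading your final display is correct, and step (ii) is redundant because $[M_\Delta]\sfr\xx_\sigma\neq0$ already forces $\mu\neq0$.
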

	\begin{proof}
		Since $M_\Delta$ is a rational homology manifold when $\Delta$ is a rational homology sphere, the first statement is obvious.
		For the second statement, let $e_\sigma=e^2_{i_1}\times\cdots\times e^2_{i_k}$ be the $2k$-cell of $\ZZ_\Delta$ defined in Remark \ref{rem:cell}.
		Define the `orbit cell' $\tilde e_\sigma$ to be the image of $e_\sigma$ in the orbit space $M_\Delta$ under the quotient map $\ZZ_\Delta\to M_\Delta$. (Remark: Actually, $\tilde e_\sigma$ may not be a cell in general, but a rational homology ball, which is homeomorphic to the quotient of $D^{2k}$ under a finite group $G\subset T^k$ action. However, a rational homology ball plays the same role as a cell in rational homology calculations. This is what we need.)
		
		We claim that $\xx_\sigma\in\Qq[\Delta]/\Theta$ is represented by the cocycle
		$\tilde e_\sigma^*\in C^{2k}(M_\Delta;\Qq)$ up to multiplication by an
		integer (see Appendix \ref{appdx:cell rep} for a proof).
		Similarly, the fundamental class $[M_\Delta]$ is represented by the cycle
		\[\pm\sum_{\tau\in\FF_{d-1}(\Delta)}\tilde e_\tau\in C_{2d}(M_\Delta;\Qq)\]
		(up to multiplication by an integer for each term). Hence, the cap product $[M_\Delta]\sfr\xx_\sigma$ is represented by
		\[\pm\sum_{\tau\in\FF_{d-k-1}(\mathrm{lk}_\sigma\Delta)}\tilde e_\tau, \quad \text{summing over all facets of }\mathrm{lk}_\sigma\Delta.\]
		(Compare with the relation between the cap product for $\ZZ_\Delta$ and the excision product for full subcomplexes of $\Delta$.) But this is just a representative of a fundamental class of $N_\sigma$.
	\end{proof}
	\begin{rem}
		The Poincar\'e duality of $\Qq[\Delta]/\Theta$ can also be obtained in a purely algebraic way \cite[I.12]{S96}. Lemma \ref{lem:duality} provides a topological explanation of this algebraic phenomenon.
	\end{rem}
	Similar to the Poincar\'e duality of rational toric manifolds, for rational toric manifolds with boundary we have
	\begin{lem}[Lefschetz duality]\label{lem:Lefschetz duality}
		If $\Delta$ is a rational homology $(d-1)$-ball, then the maps defined by
		\[
		\begin{split}
			H^{2j}(M_\Delta,\partial M_\Delta;\Qq)&\xr{[M_\Delta]\sfr}H_{2d-2j}(M_\Delta;\Qq),\ \text{ and }\\
			H^{2j}(M_\Delta;\Qq)&\xr{[M_\Delta]\sfr}H_{2d-2j}(M_\Delta,\partial M_\Delta;\Qq)
		\end{split}
		\]
		are isomorphisms for all $j$. Moreover, for $\xx_\sigma\in I/I\Theta$ with $I=(\xx_\sigma:\sigma\in \Delta-\partial\Delta)$ (resp. $\xx_\sigma\in\Qq[\Delta]$ ),
		$\Qq\cdot[M_\Delta]\sfr\xx_\sigma=\Qq\cdot(\rho_\sigma)_*([N_\sigma])$, where $[N_\sigma]\in H_{2d-2k}(N_\sigma;\Qq)$ is a rational fundamental class of $N_\sigma$ (resp. $(N_\sigma,\partial N_\sigma)$).
	\end{lem}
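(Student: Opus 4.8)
The plan is to mirror the proof of Lemma~\ref{lem:duality}, replacing absolute (co)homology by the relative pair $(M_\Delta,\partial M_\Delta)$ and Poincar\'e duality by Lefschetz duality. First I would observe that $M_\Delta$ is an orientable rational homology $2d$-manifold with boundary (this follows from Proposition~\ref{prop:orbi} together with the fact that $\ZZ_\Delta$ is a rational homology manifold with boundary when $\Delta$ is a rational homology ball, as recalled in \S\ref{subsec:rational toric manifold}). Hence classical Lefschetz duality gives the two cap-product isomorphisms with the fundamental class $[M_\Delta]\in H_{2d}(M_\Delta,\partial M_\Delta;\Qq)$; the first statement is therefore immediate once we have identified $H^*(M_\Delta,\partial M_\Delta;\Qq)\cong I/I\Theta$ (Corollary~\ref{cor:relative coho}) and $H^*(M_\Delta;\Qq)\cong\Qq[\Delta]/\Theta$ (Corollary~\ref{cor:coho toric orbi}).

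The content is in the second statement, the explicit identification of $[M_\Delta]\sfr\xx_\sigma$. I would proceed exactly as in the proof of Lemma~\ref{lem:duality}, using the orbit-cell language of Remark~\ref{rem:cell} and Appendix~\ref{appdx:cell rep}. For $\sigma=\{i_1,\dots,i_k\}\in\Delta$ let $\tilde e_\sigma$ be the orbit cell (a rational homology $2k$-ball) in $M_\Delta$; then $\xx_\sigma$ is represented, up to an integer multiple, by the cochain $\tilde e_\sigma^*$, and the relative fundamental class $[M_\Delta]$ by $\pm\sum_{\tau\in\FF_{d-1}(\Delta)}\tilde e_\tau$, the sum over the facets of $\Delta$ (a relative cycle since the boundary pieces lie in $\partial M_\Delta$). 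Capping, and using the combinatorial description of the cap product via the excision product $\sqcap$ on full subcomplexes, the terms surviving are exactly $\pm\sum \tilde e_\tau$ over facets $\tau$ of $\mathrm{lk}_\sigma\Delta$, which is a representative of a rational fundamental class of $N_\sigma$ (of $(N_\sigma,\partial N_\sigma)$ when $\mathrm{lk}_\sigma\Delta$ is a rational homology ball rather than a sphere), pushed forward along $\rho_\sigma$. This yields $\Qq\cdot[M_\Delta]\sfr\xx_\sigma=\Qq\cdot(\rho_\sigma)_*([N_\sigma])$ in both cases: the case $\xx_\sigma\in I/I\Theta$ (so $\sigma\in\Delta-\partial\Delta$, $\mathrm{lk}_\sigma\Delta$ a rational homology sphere, $N_\sigma$ closed) pairs with $[M_\Delta]\in H_{2d}(M_\Delta,\partial M_\Delta)$, while the case $\xx_\sigma\in\Qq[\Delta]/\Theta$ pairs $[M_\Delta]$ into $H_*(M_\Delta,\partial M_\Delta)$ and $\mathrm{lk}_\sigma\Delta$ may be a ball.

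The main obstacle I anticipate is bookkeeping at the boundary: one must check that $\sum_{\tau\in\FF_{d-1}(\Delta)}\tilde e_\tau$ is genuinely a relative cycle mod $\partial M_\Delta$ (the internal $(2d-1)$-faces cancel in pairs by orientability of the rational homology manifold $M_\Delta$, and the faces meeting $\partial M_\Delta$ are absorbed into the chain group of $(M_\Delta,\partial M_\Delta)$), and that when $\mathrm{lk}_\sigma\Delta$ is a rational homology ball the resulting chain on $N_\sigma$ is the relative fundamental class of $(N_\sigma,\partial N_\sigma)$ rather than an absolute one. This is the same orientation-and-signs argument as in Lemma~\ref{lem:duality}, carried out relative to the boundary; none of it is deep, but it requires care with the conventions of Appendix~\ref{appdx:cell rep}. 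Since the rest of the argument is a verbatim adaptation of the closed case, I would write the proof by referring to that of Lemma~\ref{lem:duality} and only spelling out the relative modifications.
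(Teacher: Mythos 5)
Your proposal is correct and follows exactly the route the paper intends: the paper states this lemma without a separate proof, indicating it is the relative analogue of Lemma \ref{lem:duality}, and your adaptation (classical Lefschetz duality for the orientable rational homology manifold with boundary $M_\Delta$, plus the orbit-cell representatives of $\xx_\sigma$ and of the relative fundamental class from Appendix \ref{appdx:cell rep}, with the boundary faces absorbed into $\partial M_\Delta$) is precisely that argument. The attention you give to the two cases ($\sigma$ interior versus $\mathrm{lk}_\sigma\Delta$ a ball, matching absolute versus relative targets) is the only point needing care, and you handle it correctly.
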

	
	\subsection{Applications of the Poincar\'e duality lemma}
	Restricting attention to closed rational toric manifolds for simplicity,  we assume $\Delta$ is a rational homology sphere throughout this subsection.
	As an application of Lemma \ref{lem:duality}, we have the following result which is an essential ingredient of this paper.
	\begin{prop}\label{prop:link}
		For a face $\sigma\in \Delta$, let $\rho_\sigma:N_\sigma\to M_\Delta$ be the map defined in \eqref{eq:map}.
		Then for every $1\leqslant i\leqslant d$, the map
		\[H^{2k}(M_\Delta;\mathbb{Q})\xr{\bigoplus\rho_\sigma^*}\bigoplus_{\sigma\in\FF_{i-1}(\Delta)} H^{2k}(N_\sigma;\Qq)\]
		is an injection for all $k\leqslant d-i$.
	\end{prop}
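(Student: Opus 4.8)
The statement to prove is Proposition~\ref{prop:link}: for $\sigma\in\FF_{i-1}(\Delta)$ the combined restriction map $H^{2k}(M_\Delta;\Qq)\to\bigoplus_{\sigma\in\FF_{i-1}(\Delta)}H^{2k}(N_\sigma;\Qq)$ is injective for $k\leqslant d-i$. The plan is to prove the \emph{contrapositive at the level of Poincar\'e duals}: a nonzero class $u\in H^{2k}(M_\Delta;\Qq)$ that dies under every $\rho_\sigma^*$ is forced to be zero. By Lemma~\ref{lem:duality} (Poincar\'e duality), $u\neq0$ means the cap product $[M_\Delta]\sfr u\in H_{2d-2k}(M_\Delta;\Qq)$ is nonzero, and since $2d-2k\geqslant 2i$ we can pair this homology class against cohomology in complementary degree. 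The key pairing identity I want to use is the standard adjunction between cap and cup products together with the projection formula for $\rho_\sigma$: for $v\in H^{2k}(M_\Delta)$,
\[
\langle u\smile v,[M_\Delta]\rangle \;=\; \langle v,[M_\Delta]\sfr u\rangle,
\]
and if $v=\xx_\sigma$ for a face $\sigma$ of dimension $i-1$, then Lemma~\ref{lem:duality} tells us $[M_\Delta]\sfr\xx_\sigma$ is a rational multiple of $(\rho_\sigma)_*([N_\sigma])$.

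\textbf{The main steps.} First, reduce to showing: if $\rho_\sigma^*(u)=0$ for all $\sigma\in\FF_{i-1}(\Delta)$, then $\langle u\smile\xx_\sigma,[M_\Delta]\rangle=0$ for all such $\sigma$. Indeed, by the projection formula, $\langle u\smile\xx_\sigma,[M_\Delta]\rangle$ is a multiple of $\langle\rho_\sigma^*(u), [N_\sigma]\sfr \text{(something)}\rangle$ — more precisely, using Lemma~\ref{lem:duality} we write $[M_\Delta]\sfr\xx_\sigma=c_\sigma\,(\rho_\sigma)_*[N_\sigma]$, so $\langle u,[M_\Delta]\sfr\xx_\sigma\rangle = c_\sigma\langle u,(\rho_\sigma)_*[N_\sigma]\rangle = c_\sigma\langle\rho_\sigma^*(u),[N_\sigma]\rangle = 0$. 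Second, observe that the products $\xx_\sigma$ with $\sigma$ ranging over $(i-1)$-faces span $(\Qq[\Delta]/\Theta)_{2i}=H^{2i}(M_\Delta;\Qq)$ in degree $2i$ — this is immediate because $\Qq[\Delta]$ is generated in degree $2$, so $H^{2i}$ is spanned by all degree-$i$ squarefree-or-not monomials, and via the Stanley--Reisner relations every monomial of degree $i$ is either zero or equals a face monomial $\xx_\tau$ with $|\tau|\le i$; multiplying by further linear forms (absorbed into $\Theta$ or by a generic choice) one sees the $\xx_\sigma$ with $|\sigma|=i$ already span degree $2i$. Wait — more carefully, one needs that $\{\xx_\sigma:\sigma\in\FF_{i-1}(\Delta)\}$ together with products $x_{j}\xx_\tau$ span, but since every degree-$i$ monomial in $\Qq[\Delta]$ is supported on a face (else it is zero), it is of the form $x_{j_1}\cdots x_{j_i}$ with $\{j_1,\dots,j_i\}\in\FF_{i-1}(\Delta)$ possibly with repetitions; the ones with repetitions can be rewritten using an l.s.o.p. relation to reduce to lower-degree monomials times linear forms, so by downward induction the squarefree ones span modulo $\Theta$. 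Third, combine: $u\smile(-)$ kills all of $H^{2i}(M_\Delta;\Qq)$, hence $u\smile H^{2i}=0$; since the Poincar\'e pairing $H^{2k}\otimes H^{2(d-k)}\to\Qq$ is perfect and $2(d-k)\geqslant 2i$... but this is not quite enough because I only know $u$ annihilates $H^{2i}$, not all of $H^{2(d-k)}$.

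\textbf{Closing the gap / the main obstacle.} The genuine difficulty is bridging from "$u$ annihilates $H^{2i}$ under cup product" to "$u=0$". The resolution: since $\Qq[\Delta]/\Theta$ is generated in degree $2$, the subspace $H^{2i}$ \emph{generates} $H^{\geqslant 2i}$ as a module over $H^*$, so $u\smile H^{2i}=0$ forces $u\smile H^{2j}=0$ for all $j\geqslant i$ — wait, that requires $u\smile$ to factor, which it does: $u\smile(w\smile z)=(u\smile w)\smile z$ with $w\in H^{2i}$, $z\in H^*$, so indeed $u\smile H^{2\ell}=0$ for all $\ell\geqslant i$. Taking $\ell=d-k\geqslant i$: $u\smile H^{2(d-k)}=0$, so $\langle u\smile v,[M_\Delta]\rangle=0$ for all $v\in H^{2(d-k)}$, and by Lemma~\ref{lem:duality} (perfectness of the Poincar\'e pairing) this forces $u=0$. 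So the structure is clean; the step I expect to require the most care is Step~two — justifying rigorously that the face monomials $\xx_\sigma$, $\sigma\in\FF_{i-1}(\Delta)$, span $(\Qq[\Delta]/\Theta)_{2i}$ — together with correctly invoking the projection formula in the rational-homology-manifold (not smooth manifold) setting, where $N_\sigma\to M_\Delta$ is only a map of rational homology manifolds and $[M_\Delta]\sfr\xx_\sigma$ is identified with $(\rho_\sigma)_*[N_\sigma]$ only up to a nonzero scalar. Both are essentially bookkeeping once Lemma~\ref{lem:duality} and Corollary~\ref{cor:coho toric orbi} are in hand.
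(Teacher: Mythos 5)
Your overall strategy is viable, and it actually differs from the paper's (the paper dualizes to homology, proves surjectivity of $\bigoplus(\rho_\sigma)_*$ in degree $2(d-i)$ using the spanning of $(\Qq[\Delta]/\Theta)_{2i}$ by face monomials together with Lemma \ref{lem:duality}, and then handles $k<d-i$ geometrically by factoring $\rho_\tau$ through $N_\sigma$ for faces $\tau\supset\sigma$). But as written there is a real gap at your central step, and it affects exactly the range $k<d-i$. Step one only establishes the vanishing of the scalars $\langle u\smile\xx_\sigma,[M_\Delta]\rangle=\langle u,[M_\Delta]\sfr\xx_\sigma\rangle$. Here $u\in H^{2k}$ is paired against $[M_\Delta]\sfr\xx_\sigma\in H_{2d-2i}$, so this Kronecker pairing carries information only when $k=d-i$; for $k<d-i$ it vanishes for degree reasons no matter what $u$ is. Yet Step three uses the class-level statement $u\smile\xx_\sigma=0$ in $H^{2(k+i)}(M_\Delta;\Qq)$ (``$u\smile H^{2i}=0$''), which is what the associativity/generated-in-degree-two propagation needs; nothing you proved gives this when $k+i<d$, since vanishing of the pairing of a non-top-degree class with $[M_\Delta]$ says nothing about the class itself.

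The statement you need is true, and the repair stays inside your own toolkit. Either apply the projection formula at the level of classes: $(\rho_\sigma)_*\bigl([N_\sigma]\sfr\rho_\sigma^*(u)\bigr)=\bigl((\rho_\sigma)_*[N_\sigma]\bigr)\sfr u$, which by Lemma \ref{lem:duality} is a nonzero multiple of $([M_\Delta]\sfr\xx_\sigma)\sfr u=[M_\Delta]\sfr(\xx_\sigma\smile u)$; so $\rho_\sigma^*(u)=0$ forces $[M_\Delta]\sfr(\xx_\sigma\smile u)=0$, and the isomorphism in Lemma \ref{lem:duality} then gives $\xx_\sigma\smile u=0$, after which your propagation and perfect-pairing argument go through. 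Or stay at the pairing level but insert an auxiliary factor: for $w\in H^{2(d-k-i)}$ one has $\langle u\smile w\smile\xx_\sigma,[M_\Delta]\rangle=\langle u\smile w,[M_\Delta]\sfr\xx_\sigma\rangle=c_\sigma\langle\rho_\sigma^*(u)\smile\rho_\sigma^*(w),[N_\sigma]\rangle=0$, and since the products $w\smile\xx_\sigma$ span $H^{2(d-k)}$ (generation in degree two plus your Step two), perfectness of the Poincar\'e pairing gives $u=0$. With either patch your proof is complete and is a more algebraic route than the paper's: its reduction from $k<d-i$ to the complementary-degree case uses only that $\Qq[\Delta]/\Theta$ is generated in degree two plus Poincar\'e duality, whereas the paper factors through the toric spaces of larger faces; both arguments rest on the same two inputs, namely the spanning of $(\Qq[\Delta]/\Theta)_{2i}$ by the $\xx_\sigma$ (your sketch of this is essentially the lemma following Theorem \ref{thm:basis link}) and Lemma \ref{lem:duality}.
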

	\begin{proof}
		It is equivalent to prove that
		\begin{equation}\label{eq:surj}
			\bigoplus_{\sigma\in\FF_{i-1}(\Delta)} H_{2k}(N_\sigma;\Qq)\xr{\bigoplus(\rho_\sigma)_*} H_{2k}(M_\Delta;\Qq)
		\end{equation}
		is a surjection for $k\leqslant d-i$.
		
		First we will show that for any $1\leqslant i\leqslant d$, \eqref{eq:surj} holds for $k=d-i$. As a consequence of Lemma \ref{lem:duality}, we have the following commutative diagram:
		\begin{equation}\label{diagram:link}
			\begin{gathered}
				\xymatrix{
					\bigoplus\limits_{\sigma\in\FF_{i-1}(\Delta)}\Qq\cdot\xx_\sigma \ar[rr]^-{[M_\Delta]\sfr}_-\cong \ar@{->>}[dd] & & \bigoplus\limits_{\sigma\in\FF_{i-1}(\Delta)}H_{2d-2i}(N_\sigma;\Qq) \ar[dd]^-{\bigoplus(\rho_\sigma)_*}\\
					& &\\
					H^{2i}(M_\Delta;\Qq) \ar[rr]^-{[M_\Delta]\sfr}_-\cong & & H_{2d-2i}(M_\Delta;\Qq)}
			\end{gathered}
		\end{equation}
		Since the left vertical map is surjective (see \cite[Lemma III.2.4]{S96}), so is the right vertical map.
		
		Next for $k<d-i$, notice that for each face pair $\tau\supset\sigma$ with $\dim\sigma=i-1$, $\dim\tau=d-1-k$, there is a map $\rho_{\tau|\sigma}:N_\tau\to N_\sigma$, and $\rho_\tau$ factors through $N_\sigma$ by this map.
		Hence, we have the following commutative diagram:
		\begin{equation}\label{diagram:link surj}
			\begin{gathered}
				\xymatrix{
					\bigoplus\limits_{\sigma\in\FF_{i-1}(\Delta)}\big(\bigoplus\limits_{\substack{\tau\supset\sigma,\\\tau\in\FF_{d-1-k}(\Delta)}} H_{2k}(N_\tau;\Qq)\big) \ar[rr]^-{\bigoplus\bigoplus(\rho_{\tau|\sigma})_*} \ar@{->>}[dd]^-{\bigoplus\bigoplus(\rho_\tau)_*} & & \bigoplus\limits_{\sigma\in\FF_{i-1}(\Delta)}H_{2k}(N_\sigma;\Qq) \ar[dd]^-{\bigoplus(\rho_\sigma)_*} \\
					& \\
					H_{2k}(M_\Delta;\Qq) \ar@{=}[rr] & & H_{2k}(M_\Delta;\Qq)
				}
			\end{gathered}
		\end{equation}
		We have already seen that the left vertical map is surjective, so the right vertical map is surjective too. Thus, \eqref{eq:surj} holds for all $k\leqslant d-i$.
	\end{proof}
	\begin{rem}
		Proposition can be translated into a purely algebraic description, that is, for every $1\leqslant i\leqslant d-1$ and $k\leqslant d-i$, we have an injection
		\[(\Qq[\Delta]/\Theta)_{2k}\xr{\bigoplus \Psi_\sigma} \bigoplus_{\sigma\in\FF_{i-1}(\Delta)}(\Qq[\mathrm{st}_\sigma\Delta]/\Theta_{\Ss_\sigma})_{2k},\]
		where $\Ss_\sigma=\FF_0(\mathrm{st}_\sigma\Delta)$ and $\Psi_\sigma$ is as in \eqref{eq:restriction}. For the special case that $i=1$, this algebraic result is also obtained by Adiprasito \cite[Lemma 3.4]{A18}.
	\end{rem}
	From the proof of Proposition \ref{prop:link}, we can readily generalize it to
	\begin{thm}\label{thm:basis link}
		If $\{\xx_{\sigma_1},\dots,\xx_{\sigma_{h_k}}\}$ is a basis for $(\Qq[\Delta]/\Theta)_{2k}$, then the map
		\[(\Qq[\Delta]/\Theta)_{2j}\xr{\bigoplus \Psi_{\sigma_i}} \bigoplus_{1\leqslant i\leqslant h_k}(\Qq[\mathrm{st}_{\sigma_i}\Delta]/\Theta_{\Ss_{\sigma_i}})_{2j}.\]
		is an injection for $j<d-k$ and an isomorphism for $j=d-k$.
	\end{thm}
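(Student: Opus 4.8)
The plan is to follow the proof of Proposition~\ref{prop:link} almost word for word for the top degree $j=d-k$, upgrading ``surjection'' to ``isomorphism'' because here a \emph{basis} is used instead of all faces of a fixed dimension, and then to descend to the degrees $j<d-k$ by a multiplicative argument rather than by the face‑inclusion diagram \eqref{diagram:link surj}. Two preliminary observations fix the set‑up. First, since $\xx_{\sigma_i}\in(\Qq[\Delta]/\Theta)_{2k}$, every $\sigma_i$ is a $(k-1)$-face, so $N_{\sigma_i}=M(\mathrm{lk}_{\sigma_i}\Delta,\Gamma)$ is a closed orientable rational homology $2(d-k)$-manifold and $H^{2(d-k)}(N_{\sigma_i};\Qq)\cong\Qq$. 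Second, by \eqref{eq:restriction} and \eqref{eq:link=star}, under the identifications $H^*(M_\Delta;\Qq)\cong\Qq[\Delta]/\Theta$ and $H^*(N_{\sigma_i};\Qq)\cong\Qq[\mathrm{st}_{\sigma_i}\Delta]/\Theta_{\Ss_{\sigma_i}}$ the map $\bigoplus_i\Psi_{\sigma_i}$ of the statement coincides with $\bigoplus_i\rho_{\sigma_i}^*$, where $\rho_{\sigma_i}$ is the map \eqref{eq:map}; in particular each $\Psi_{\sigma_i}$ is a ring homomorphism. So it suffices to analyse the $\rho_{\sigma_i}^*$.

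For $j=d-k$ I would reproduce the commutative square \eqref{diagram:link}, with the direct sums now indexed by $\sigma_1,\dots,\sigma_{h_k}$. By Lemma~\ref{lem:duality}, capping with $[M_\Delta]$ carries each line $\Qq\cdot\xx_{\sigma_i}$ isomorphically onto $\Qq\cdot(\rho_{\sigma_i})_*[N_{\sigma_i}]\subset H_{2(d-k)}(M_\Delta;\Qq)$ — nonzero because $\xx_{\sigma_i}\neq 0$ and Poincar\'e duality is an isomorphism — and this line is the $(\rho_{\sigma_i})_*$-image of a generator of $H_{2(d-k)}(N_{\sigma_i};\Qq)$; hence the top arrow of the square is an isomorphism $\bigoplus_i\Qq\cdot\xx_{\sigma_i}\xrightarrow{\,\cong\,}\bigoplus_i H_{2(d-k)}(N_{\sigma_i};\Qq)$. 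The one real change from Proposition~\ref{prop:link} is that, $\{\xx_{\sigma_i}\}$ being a basis of $(\Qq[\Delta]/\Theta)_{2k}=H^{2k}(M_\Delta;\Qq)$, the left vertical arrow is now an isomorphism rather than a mere surjection; together with the Poincar\'e duality isomorphism along the bottom this forces $\bigoplus_i(\rho_{\sigma_i})_*\colon\bigoplus_i H_{2(d-k)}(N_{\sigma_i};\Qq)\to H_{2(d-k)}(M_\Delta;\Qq)$ to be an isomorphism. Dualising over $\Qq$ gives that $\bigoplus_i\rho_{\sigma_i}^*$ is an isomorphism in cohomological degree $2(d-k)$, which is the assertion for $j=d-k$.

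For $j<d-k$ the face‑pair diagram \eqref{diagram:link surj} cannot simply be copied: a chosen basis of $(\Qq[\Delta]/\Theta)_{2k}$ need not be subordinate to every $(d-j-1)$-face of $\Delta$, so the analogue of its left vertical map need not be surjective. Instead I would argue algebraically. Write $A=\Qq[\Delta]/\Theta$; it is generated in degree $2$, so $A_{2(d-j)}=A_{2k}\cdot A_{2(d-j-k)}$, where $d-j-k\geqslant 1$. Let $\alpha\in A_{2j}$ with $\Psi_{\sigma_i}(\alpha)=0$ for all $i$. For every $\beta\in A_{2(d-j-k)}$ we get $\Psi_{\sigma_i}(\alpha\beta)=\Psi_{\sigma_i}(\alpha)\Psi_{\sigma_i}(\beta)=0$, so $\alpha\beta$ lies in the kernel of $\bigoplus_i\Psi_{\sigma_i}$ in degree $2(d-k)$, which vanishes by the case just proved; hence $\alpha\cdot A_{2(d-j-k)}=0$, and therefore $\alpha\cdot A_{2(d-j)}=\alpha\cdot A_{2k}\cdot A_{2(d-j-k)}=0$. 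Since $A$ is a Poincar\'e duality algebra (Lemma~\ref{lem:duality}), the pairing $A_{2j}\times A_{2(d-j)}\to A_{2d}\cong\Qq$ is perfect, so $\alpha=0$; that is, $\bigoplus_i\Psi_{\sigma_i}$ is injective on $A_{2j}$.

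The main obstacle is exactly this passage to small degrees: one loses the purely geometric ``push‑down'' along the maps $N_\tau\to N_{\sigma_i}$ used in Proposition~\ref{prop:link}, and must replace it by the multiplicative argument above, which relies on the $\Psi_{\sigma_i}$ being ring maps, on $\Qq[\Delta]/\Theta$ being generated in degree $2$, and on Poincar\'e duality. Everything else is a routine rerun of the diagram \eqref{diagram:link}, so I expect the full argument to be short.
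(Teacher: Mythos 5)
Your proposal is correct, and for the top degree $j=d-k$ it coincides with the paper's argument: both rerun diagram \eqref{diagram:link} with the direct sum indexed by the basis $\{\xx_{\sigma_1},\dots,\xx_{\sigma_{h_k}}\}$, so that the left vertical map becomes an isomorphism and Poincar\'e duality (Lemma \ref{lem:duality}) forces $\bigoplus_i(\rho_{\sigma_i})_*$, hence $\bigoplus_i\Psi_{\sigma_i}$, to be an isomorphism. Where you diverge is the range $j<d-k$. The paper does in fact keep the geometric push-down diagram \eqref{diagram:link surj}: the obstacle you point out (the basis need not be ``subordinate'' to every $(d-1-j)$-face) is exactly what the unlabeled lemma stated right after the theorem resolves, by using the l.s.o.p.\ to show that for $n>k$ the degree-$2n$ part is spanned by face monomials $\xx_\tau$ with $\tau\in\bigcup_i\mathrm{st}_{\sigma_i}\Delta$, which supplies the surjectivity of the left vertical map in the adapted diagram. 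You instead give a purely algebraic socle-type argument: since each $\Psi_{\sigma_i}$ is a ring map and $A=\Qq[\Delta]/\Theta$ is generated in degree $2$, any $\alpha\in A_{2j}$ killed by all $\Psi_{\sigma_i}$ multiplies $A_{2(d-j-k)}$ into the (zero) kernel in degree $2(d-k)$, hence annihilates $A_{2(d-j)}=A_{2k}\cdot A_{2(d-j-k)}$, and the perfectness of the Poincar\'e duality pairing of $A$ gives $\alpha=0$. This is a genuinely different route for the lower degrees: it trades the paper's geometric factorization $\rho_\tau=\rho_{\sigma_i}\circ\rho_{\tau|\sigma_i}$ plus the spanning lemma for the ring structure and the Poincar\'e duality algebra property alone, needing only injectivity in the single degree $2(d-k)$; the paper's version, in exchange, stays parallel to Proposition \ref{prop:link} and makes the homological mechanism (which cycles cover $H_{2j}(M_\Delta)$) explicit. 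Both arguments are valid and of comparable length.
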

	\begin{proof}
		The isomorphism comes from diagram \eqref{diagram:link}, and the injection comes from  diagram \eqref{diagram:link surj} and the following lemma.
	\end{proof}
	
	\begin{lem}
		If $\{\xx_{\sigma_1},\dots,\xx_{\sigma_{h_k}}\}$ is a basis for $(\Qq[\Delta]/\Theta)_{2k}$, then for each $n>k$,  $(\Qq[\Delta]/\Theta)_{2n}$ is spanned by the face monomials   $\{\xx_{\tau}:\tau\in\bigcup_{i=1}^{h_k}\mathrm{st}_{\sigma_i}\Delta\}$.
	\end{lem}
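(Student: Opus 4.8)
The plan is to reduce the statement to two facts: that $(\Qq[\Delta]/\Theta)_{2n}$ coincides, for $n>k$, with the $2n$-graded piece of the ideal generated by the given basis, and that modulo $\Theta$ every monomial can be rewritten in terms of squarefree face monomials supported on a face containing its own support.

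First I would set $I=(\xx_{\sigma_1},\dots,\xx_{\sigma_{h_k}})\subset\Qq[\Delta]/\Theta$. Since $\Qq[\Delta]/\Theta$ is generated by its degree-$2$ part, one has $(\Qq[\Delta]/\Theta)_{2n}=(\Qq[\Delta]/\Theta)_{2(n-k)}\cdot(\Qq[\Delta]/\Theta)_{2k}$ for all $n\geqslant k$; as $(\Qq[\Delta]/\Theta)_{2k}=I_{2k}$ by hypothesis, this gives $(\Qq[\Delta]/\Theta)_{2n}=I_{2n}$. Hence $(\Qq[\Delta]/\Theta)_{2n}$ is spanned by the images of the monomials $\xx^{\mathbf a}\xx_{\sigma_i}$ with $|\mathbf a|=n-k$ and $1\leqslant i\leqslant h_k$, and each such element is either zero in $\Qq[\Delta]$ or is a monomial whose support is the face $\mathrm{supp}(\mathbf a)\cup\sigma_i$, a face that contains $\sigma_i$ and therefore lies in $\mathrm{st}_{\sigma_i}\Delta$. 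This already proves the assertion with ``$\xx_\tau$'' weakened to ``arbitrary monomial supported on $\bigcup_i\mathrm{st}_{\sigma_i}\Delta$''.

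The real work, and the step I expect to be the main obstacle, is to upgrade ``monomial'' to ``squarefree face monomial''. I would prove: if $\xx^{\mathbf b}\in\Qq[\Delta]$ is a monomial with $\mathrm{supp}(\mathbf b)=\tau$, then modulo $\Theta$ it is a $\Qq$-linear combination of face monomials $\xx_{\tau'}$ with $\tau'\supseteq\tau$. Induct on the excess $e(\mathbf b)=|\mathbf b|-|\tau|$; when $e(\mathbf b)>0$, choose $v\in\tau$ with $b_v\geqslant 2$. The key device is the face-ring characterisation of an l.s.o.p.: the restriction $r_\tau(\Theta)$ generates $\Qq[x_j:j\in\tau]$, so its $\Qq$-span is the whole degree-one space $\langle x_j:j\in\tau\rangle$, and therefore there is an \emph{honest linear combination} $\theta=\sum_j c_j\theta_j\in\Theta$ with $r_\tau(\theta)=x_v$, i.e. $\theta=x_v+\sum_{w\notin\tau}c_wx_w$. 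Substituting $x_v\equiv -\sum_{w\notin\tau}c_wx_w\pmod\Theta$ into $\xx^{\mathbf b}=x_v\cdot\xx^{\mathbf b-\mathbf e_v}$ expresses $\xx^{\mathbf b}$ (mod $\Theta$) as a combination of monomials $x_w\xx^{\mathbf b-\mathbf e_v}$, each of which is either zero or has support the face $\tau\cup\{w\}\supsetneq\tau$ and excess $e(\mathbf b)-1$; the induction then closes. The point of picking this particular $\theta$ is that all substituted variables lie \emph{outside} the current support $\tau$, so the support strictly grows and the excess strictly drops, which is precisely what guarantees termination; a careless substitution (by any element of $\mathrm{span}(\Theta)$ with nonzero $x_v$-coefficient) could leave the excess unchanged and need not terminate.

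Finally I would splice the two steps together: a spanning element $\xx^{\mathbf a}\xx_{\sigma_i}$ of $(\Qq[\Delta]/\Theta)_{2n}$ has support a face $\tau\supseteq\sigma_i$, and by the reduction it is a combination of face monomials $\xx_{\tau'}$ with $\tau'\supseteq\tau\supseteq\sigma_i$; since $\tau'\in\Delta$ this forces $\tau'\cup\sigma_i=\tau'\in\Delta$, i.e. $\tau'\in\mathrm{st}_{\sigma_i}\Delta$. Therefore $(\Qq[\Delta]/\Theta)_{2n}$ is spanned by $\{\xx_\tau:\tau\in\bigcup_{i=1}^{h_k}\mathrm{st}_{\sigma_i}\Delta\}$, as required. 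Besides the termination issue just discussed, the only inputs are that $\Qq[\Delta]/\Theta$ is standard graded and the face-ring characterisation of l.s.o.p.s, so no Cohen-Macaulay or homology-sphere hypothesis is actually needed in this lemma.
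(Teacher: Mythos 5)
Your proof is correct and is essentially the paper's argument: the paper also reduces to products of face monomials with degree-two classes and, when a variable already lies in the support, uses exactly your key device --- a linear combination $\gamma=x_i+\sum_{l\notin\tau}a_lx_l$ of the $\theta_j$'s coming from the l.s.o.p.\ restriction property --- to push the extra variable outside the support. The only difference is bookkeeping: the paper proceeds degree by degree staying squarefree at each step, while you multiply out first and then straighten by induction on the excess, which is the same mechanism.
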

	\begin{proof}
		Since $(\Qq[\Delta]/\Theta)_{2k+2}=(\Qq[\Delta]/\Theta)_2\cdot(\Qq[\Delta]/\Theta)_{2k}$, it is spanned by the monomials of the form $x_i\xx_{\sigma_j}$. If $i\not\in\sigma_j$ and $x_i\xx_{\sigma_j}\neq0$, then $\tau=\{i\}\cup\sigma_j\in\mathrm{st}_{\sigma_i}\Delta$, so assume $i\in\sigma_j$.
		Since $\theta_1,\dots,\theta_d$ is an l.s.o.p for $\Qq[\Delta]$, some linear combination of them has the form
		\[\gamma=x_i+\sum_{x_l\not\in\sigma_j}a_lx_l,\ a_l\in\Qq.\]
		Then in $\Qq[\Delta]/\Theta$ we have $x_i\xx_{\sigma_j}=(x_i-\gamma)\xx_{\sigma_j}$, and so reduce to the case $x_i\not\in\sigma_j$. This prove the case $n=k+1$.
		Doing this inductively for $n=k+2,\dots,d$, we get the conclusion of the lemma.
	\end{proof}
	
	Before proceeding further, let us define a combinatorial construction.
	\begin{Def}
		Let $\Delta$ be a pure simplicial complex of dimension $d-1$. For an integer $1\leq i\leq d$, the \emph{$i$th partial barycentric subdivision}  $\DD_i(\Delta)$ of $\Delta$ is recursively defined to be the simplicial complex obtained from $\DD_{i-1}(\Delta)$ (setting $\DD_0(\Delta)=\Delta$) by applying stellar subdivision operations at all faces $\sigma\in\FF_{d-i}(\Delta)$. (Note that $\FF_{d-i}(\Delta)\subset\FF_{d-i}(D_{i-1}(\Delta))$.)
	\end{Def}
	
	Roughly speaking, the $i$th partial barycentric subdivision arises when only the simplices of dimension $\geq d-i$ are barycentrically subdivided. 
	It is easily verified that $\DD_i(\Delta)$ is well defined, i.e., it dose not depend on the order of the stellar subdivision operations we perform. 
	
	Note that $\DD_{d-1}(\Delta)=\DD_d(\Delta)$ is the barycentric subdivision of $\Delta$, and for $i<d$
	\[\FF_0(\DD_i(\Delta))=\FF_0(\Delta)\cup\{v_\sigma:\sigma\in\Delta,\ \dim\sigma\geq d-i\}.\]
	Recall that $v_\sigma$ denotes the adding vertex in the stellar subdivision at the face $\sigma\in\Delta$.
	
	\begin{prop}\label{prop:link of derived}
		Let $\Delta$ be a rational homology $(d-1)$-sphere, and suppose $M_{\DD_i(\Delta)}$ is a rational toric manifold.
		For an integer $0<i<d$, let $\VV_i=\FF_0(\DD_i(\Delta))\setminus\FF_0(\Delta)$. Then for each $0\leqslant k<i$, we have an injection
		\[H^{2k}(M_{\DD_i(\Delta)};\Qq)\xr{\bigoplus\rho_{v_\sigma}^*}\bigoplus_{v_\sigma\in\VV_i} H^{2k}(N_{v_\sigma};\Qq).\]
		Here $N_{v_\sigma}$ is the rational toric manifold associated to $\mathrm{lk}_{v_\sigma}\DD_i(\Delta)$ defined in subsection \ref{subsec:local}.
	\end{prop}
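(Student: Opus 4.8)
The plan is to dualize via the Poincar\'e duality Lemma~\ref{lem:duality}, reducing the claimed injection to a surjection on rational homology, and then to establish that surjection by a transcription of the argument in the proof of Proposition~\ref{prop:link}, where the r\^ole of the $(i-1)$-faces of $\Delta$ is now played by the new vertices of $\DD_i(\Delta)$. Since $\DD_i(\Delta)$ is again a rational homology $(d-1)$-sphere (a stellar subdivision does not change the PL homeomorphism type of $|\Delta|$), it is Cohen--Macaulay over $\Qq$, so Corollary~\ref{cor:coho toric orbi} gives $H^*(M_{\DD_i(\Delta)};\Qq)\cong\Qq[\DD_i(\Delta)]/\Theta$ and Lemma~\ref{lem:duality} applies to $M_{\DD_i(\Delta)}$. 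Over $\Qq$ the map $\bigoplus\rho_{v_\sigma}^*\colon H^{2k}(M_{\DD_i(\Delta)};\Qq)\to\bigoplus_{v_\sigma\in\VV_i}H^{2k}(N_{v_\sigma};\Qq)$ is the $\Qq$-linear dual of $\bigoplus(\rho_{v_\sigma})_*\colon\bigoplus_{v_\sigma\in\VV_i}H_{2k}(N_{v_\sigma};\Qq)\to H_{2k}(M_{\DD_i(\Delta)};\Qq)$, so it suffices to show the latter is surjective for $k<i$. By Lemma~\ref{lem:duality} together with the fact that $(\Qq[\DD_i(\Delta)]/\Theta)_{2(d-k)}$ is spanned by the face monomials $\xx_\tau$, $\tau\in\FF_{d-k-1}(\DD_i(\Delta))$ (cf.~\cite[Lemma III.2.4]{S96}, as used in Proposition~\ref{prop:link}), the group $H_{2k}(M_{\DD_i(\Delta)};\Qq)$ is spanned by the classes $(\rho_\tau)_*([N_\tau])$ over all $\tau\in\FF_{d-k-1}(\DD_i(\Delta))$.

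\textbf{The combinatorial core.} The key step is the claim that every face $\tau$ of $\DD_i(\Delta)$ with $\dim\tau\ge d-i$ contains a vertex of $\VV_i$. A stellar subdivision of a complex at a face $\sigma$ deletes exactly the faces that contain $\sigma$ and introduces only faces that contain the new apex vertex $v_\sigma$; hence, by induction on the number of subdivisions, every face of $\DD_i(\Delta)$ all of whose vertices lie in $\FF_0(\Delta)$ is already a face of $\Delta$, and such a face survives in $\DD_i(\Delta)$ precisely when it contains no face that was subdivided. Since a face of $\Delta$ of dimension $d-j$ is never a subset of a strictly higher-dimensional face, it is untouched by the subdivisions performed in steps $1,\dots,j-1$, so at step $j$ all of $\FF_{d-j}(\Delta)$ is subdivided; thus the set of subdivided faces is exactly $\{\sigma\in\Delta:\dim\sigma\ge d-i\}$, and a face $\tau\in\Delta$ survives in $\DD_i(\Delta)$ iff $\dim\tau\le d-i-1$. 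In particular any face $\tau$ of $\DD_i(\Delta)$ with $\dim\tau\ge d-i$ must have a vertex outside $\FF_0(\Delta)$, i.e.\ a vertex in $\VV_i$, proving the claim.

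\textbf{Conclusion.} For $k<i$ every $\tau\in\FF_{d-k-1}(\DD_i(\Delta))$ satisfies $\dim\tau=d-k-1\ge d-i$, so by the claim it contains some $v_\sigma\in\VV_i$; exactly as in the proof of Proposition~\ref{prop:link}, $\rho_\tau$ then factors as $N_\tau\xr{\rho_{\tau|v_\sigma}}N_{v_\sigma}\xr{\rho_{v_\sigma}}M_{\DD_i(\Delta)}$, whence $(\rho_\tau)_*([N_\tau])$ lies in the image of $(\rho_{v_\sigma})_*$. Since these classes span $H_{2k}(M_{\DD_i(\Delta)};\Qq)$, the map $\bigoplus_{v_\sigma\in\VV_i}(\rho_{v_\sigma})_*$ is surjective for $k<i$, and dualizing gives the asserted injection. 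I expect the bookkeeping in the combinatorial core — verifying the order in which faces are subdivided and that each relevant face really does get subdivided (using the delete/re-add behaviour of stellar subdivision) — to be the only genuinely delicate part; once the claim is in hand, the remainder is a direct adaptation of the proof of Proposition~\ref{prop:link}.
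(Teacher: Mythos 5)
Your proposal is correct and takes essentially the same route as the paper's own proof: reduce by duality to surjectivity of $\bigoplus(\rho_{v_\sigma})_*$ on $H_{2k}$, note (via Proposition~\ref{prop:link}, i.e.\ Lemma~\ref{lem:duality} plus the spanning of the top degree by face monomials) that $H_{2k}(M_{\DD_i(\Delta)};\Qq)$ is generated by the classes $(\rho_\tau)_*([N_\tau])$ with $\tau\in\FF_{d-k-1}(\DD_i(\Delta))$, and use the factorization of $\rho_\tau$ through $N_{v_\sigma}$ for a new vertex $v_\sigma\in\tau$ — the only difference being that you actually prove the combinatorial fact (every face of dimension $\geqslant d-i$ in $\DD_i(\Delta)$ meets $\VV_i$) which the paper merely asserts. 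One small wording slip in that argument: a $(d-j)$-face of $\Delta$ is untouched by the earlier subdivision steps because it never \emph{contains} a strictly higher-dimensional face, not because it ``is never a subset of'' one.
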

	
	\begin{proof}
		As before, it is equivalent to show that
		\[
		\bigoplus_{v_\sigma\in\VV_i} H_{2k}(N_{v_\sigma};\Qq)\xr{\bigoplus(\rho_{v_\sigma})_*} H_{2k}(M_{\DD_i(\Delta)};\Qq)
		\]
		is surjective for $k<i$.
		
		As in the proof of Proposition \ref{prop:link}, we have a commutative diagram
		\[
		\xymatrix{
			\bigoplus\limits_{v_\sigma\in\VV_i}\big(\bigoplus\limits_{\substack{\tau\ni v_\sigma,\\\tau\in\FF_{d-1-k}(\DD_i(\Delta))}} H_{2k}(N_\tau;\Qq)\big) \ar[rr]^-{\bigoplus\bigoplus(\rho_{\tau|v_\sigma})_*} \ar@{->>}[dd]^-{\bigoplus\bigoplus(\rho_\tau)_*} & & \bigoplus\limits_{v_\sigma\in\VV_i} H_{2k}(N_{v_\sigma};\Qq) \ar[dd]^-{\bigoplus(\rho_{v_\sigma})_*}\\
			& &\\
			H_{2k}(M_{\DD_i(\Delta)};\Qq) \ar@{=}[rr] & & H_{2k}(M_{\DD_i(\Delta)};\Qq)
		}\]
		By Proposition \ref{prop:link},
		\[\bigoplus_{\tau\in\FF_{d-1-k}(\DD_i(\Delta))} H_{2k}(N_\tau;\Qq)\xr{\bigoplus(\rho_\tau)_*} H_{2k}(M_{\DD_i(\Delta)};\Qq)\]
		is a surjection. Note that if $k<i$, then any face $\tau\in\FF_{d-1-k}(\DD_i(\Delta))$ must contain at least one vertex $v_\sigma\in\VV_i$.
		It follows that the left vertical map is surjective in the diagram above, then so is the right vertical map.
	\end{proof}
	For a subset $\mathscr{A}\subset\FF_k(\Delta)$, define $\Ss(\Delta,\mathscr{A})$ to be the set of simplicial complexes obtained from
	$\Delta$ by a sequence of stellar subdivision operations at each face of $\mathscr{A}$.
	(In general, changing the order of stellar subdivision operations produces a different simplicial complex.)
	
	Let $\Delta$ be a rational homology $(d-1)$-sphere, $\Theta$ a generic l.s.o.p. for $\Qq[\Delta]$ and
	suppose $\{\xx_{\sigma_1},\dots,\xx_{\sigma_{h_k}}\}$ is a basis of $(\Qq[\Delta]/\Theta)_{2k}$.
	Set $\mathscr{A}_{k-1}=\{\sigma_1,\dots,\sigma_{h_k}\}\subset\FF_{k-1}(\Delta)$. Then for any $\Delta'\in\Ss(\Delta,\mathscr{A}_{k-1})$, we can give an l.s.o.p. $\Theta'$ for $\Qq[\Delta']$ such that it is $\Theta$ when restricted to the vertices of $\Delta$ (cf. Remark \ref{rem:stellar sub}).
	Let $\VV=\FF_0(\Delta')-\FF_0(\Delta)=\{v_{\sigma_1},\dots,v_{\sigma_{h_k}}\}$. Then $(\Qq[\Delta']/\Theta')_{2k}$ has a face monomial basis as follows.
	\begin{lem}\label{lem:basis}
		$(\Qq[\Delta']/\Theta')_{2k}$ has a basis of the form $\{\xx_{\tau_1},\dots,\xx_{\tau_s}\}$ such that
		$\tau_j\cap\VV\neq\varnothing$ for each $1\leqslant j\leqslant s$.
	\end{lem}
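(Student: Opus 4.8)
\emph{Proof idea.} The plan is to show that the face monomials $\xx_\tau$ with $\tau\in\FF_{k-1}(\Delta')$ and $\tau\cap\VV\neq\varnothing$ already span $(\Qq[\Delta']/\Theta')_{2k}$; any basis extracted from this spanning set then has the required form $\{\xx_{\tau_1},\dots,\xx_{\tau_s}\}$ with $\tau_j\cap\VV\neq\varnothing$. Two preliminary observations are needed. First, a single stellar subdivision at a face $\sigma$ removes precisely the faces of $\Delta$ containing $\sigma$ and adds only faces meeting the new vertex $v_\sigma$; performing the subdivisions at $\sigma_1,\dots,\sigma_{h_k}$ in turn (each $\sigma_i$ remains a face at every stage, being of the same dimension as the faces already subdivided) therefore gives $\FF(\Delta')\cap 2^{[m]}=\{\tau\in\Delta:\sigma_i\not\subseteq\tau\ \text{for all }i\}$. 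Since $|\sigma_i|=k$, this means $\FF_{k-1}(\Delta')\cap 2^{[m]}=\FF_{k-1}(\Delta)\setminus\mathscr{A}_{k-1}$, and it also shows that the inclusion of polynomial rings $\Qq[x_j:j\in[m]]\hookrightarrow\Qq[x_j:j\in[m]][x_{v_i}:i]$ descends to a ring homomorphism $\iota\colon\Qq[\Delta]\to\Qq[\Delta']$ with $\iota(x_j)=x_j$ and $\iota(\xx_{\sigma_i})=0$. Second, since $\Theta'$ restricts to $\Theta$ on the vertices of $\Delta$, one may write $\theta'_l=\theta_l+\sum_i\mu_{l,i}x_{v_i}$, so that $\theta_l=-\sum_i\mu_{l,i}x_{v_i}$ in $\Qq[\Delta']/\Theta'$.

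The key technical ingredient is the following refinement of the standard fact that $\kk[\Gamma]/\Theta$ is spanned by face monomials: for any simplicial complex $\Gamma$ on a vertex set $W$, any l.s.o.p.\ $\Theta$ of $\kk[\Gamma]$, and any $W_0\subseteq W$, the ideal $(x_w:w\in W_0)\subseteq\kk[\Gamma]/\Theta$ is spanned as a $\kk$-vector space by $\{\xx_\tau:\tau\in\Gamma,\ \tau\cap W_0\neq\varnothing\}$. I would prove this by the usual reduction argument — induction on the excess $\deg(x^a)-|\mathrm{supp}(x^a)|$, using that if $\mathrm{supp}(x^a)=\sigma\in\Gamma$ then $r_\sigma(\Theta)$ generates $\kk[x_j:j\in\sigma]$, so any $x_i$ with $i\in\sigma$ can be eliminated at the cost of lowering its exponent by one — but additionally keeping a fixed vertex $w_0\in\mathrm{supp}(x^a)\cap W_0$ in the support throughout: eliminating a variable different from $w_0$ leaves the exponent of $w_0$ untouched, while eliminating $w_0$ itself (which is then only done when its exponent is at least $2$) still leaves it in the support.

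With these in hand, fix $\rho\in\FF_{k-1}(\Delta')$ with $\rho\cap\VV=\varnothing$, so $\rho\in\FF_{k-1}(\Delta)\setminus\mathscr{A}_{k-1}$ by the first observation. Since $\{\xx_{\sigma_1},\dots,\xx_{\sigma_{h_k}}\}$ is a basis of $(\Qq[\Delta]/\Theta)_{2k}$, there are $c_i\in\Qq$ and $\eta_l\in\Qq[\Delta]$ with $\xx_\rho-\sum_i c_i\xx_{\sigma_i}=\sum_{l=1}^d\theta_l\eta_l$ in $\Qq[\Delta]$. Applying $\iota$ kills each $\xx_{\sigma_i}$, and reducing the resulting identity modulo $\Theta'$ gives, in $\Qq[\Delta']/\Theta'$,
\[
\xx_\rho=\sum_{l}\theta_l\,\iota(\eta_l)=-\sum_i x_{v_i}\Bigl(\sum_l\mu_{l,i}\,\iota(\eta_l)\Bigr)\in (x_{v_1},\dots,x_{v_{h_k}})_{2k}.
\]
By the technical ingredient applied with $W_0=\VV$, this ideal is spanned in degree $2k$ by the $\xx_\tau$ with $\tau\in\FF_{k-1}(\Delta')$ and $\tau\cap\VV\neq\varnothing$, so $\xx_\rho$ lies in their span. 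As $(\Qq[\Delta']/\Theta')_{2k}$ is spanned by all degree-$2k$ face monomials, it is then spanned by those meeting $\VV$, and any basis extracted from the latter completes the proof.

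I expect the only delicate point to be the bookkeeping in the technical ingredient above, namely arranging the classical face-monomial reduction so that divisibility by a chosen variable is preserved; everything else is a direct manipulation of the hypothesis that $\{\xx_{\sigma_i}\}$ is a basis together with the compatibility $\theta'_l=\theta_l+\sum_i\mu_{l,i}x_{v_i}$.
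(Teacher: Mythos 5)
Your argument is correct, but it takes a genuinely different route from the paper. The paper proves the lemma by induction over the individual stellar subdivisions: writing $\Delta_i=\Delta_{i-1}(\sigma_i)$, it uses at each stage the short exact sequence $0\to J_i/(J_i\cap\Theta)\to\Qq[\Delta_i]/\Theta\to\Qq[D_i]/\Theta\to 0$, where $J_i=(x_{v_{\sigma_i}})$ and $D_i$ is the closure of the complement of $\mathrm{st}_{v_{\sigma_i}}\Delta_i$, observing that the images of the not-yet-subdivided $\xx_{\sigma_{i+1}},\dots,\xx_{\sigma_{h_k}}$ give a basis of the quotient and that the kernel contributes face monomials containing the new vertex; iterating produces the desired basis. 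You instead argue globally in $\Delta'$: since each $\sigma_i$ becomes a non-face after subdivision, the natural ring map $\iota\colon\Qq[\Delta]\to\Qq[\Delta']$ kills every $\xx_{\sigma_i}$, so the hypothesis that $\{\xx_{\sigma_i}\}$ is a basis of $(\Qq[\Delta]/\Theta)_{2k}$, together with $\theta_l\equiv-\sum_i\mu_{l,i}x_{v_i}$ in $\Qq[\Delta']/\Theta'$, forces every old degree-$2k$ face monomial into the ideal $(x_{v}:v\in\VV)$, and your refined face-monomial reduction (keeping a chosen variable in the support throughout) puts it in the span of face monomials meeting $\VV$. You correctly flag that refinement as the only delicate point; note that the paper's induction implicitly relies on the same kind of reduction when it asserts that $(J_i/(J_i\cap\Theta))_{2k}$ is spanned by face monomials containing $v_{\sigma_i}$, so neither route escapes it. What your version buys is the elimination of the step-by-step bookkeeping (in particular, the check at each stage that the remaining $\xx_{\sigma_j}$ map to a basis of $\Qq[D_i]/\Theta$), and it makes manifest that the conclusion is independent of the order of the subdivisions, i.e.\ that it holds for every $\Delta'\in\Ss(\Delta,\mathscr{A}_{k-1})$; what the paper's version buys is consistency with the excision/short-exact-sequence machinery used elsewhere (Lemma \ref{lem:excision}, Proposition \ref{prop:stellar subdivision}) and a slightly more structured basis at each intermediate stage.
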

	\begin{proof}
		Without loss of generality, we may assume $\Delta=\Delta_0$, $\Delta'=\Delta_{h_k}$, and $\Delta_i=\Delta_{i-1}(\sigma_i)$ for $1\leqslant i\leqslant h_k$. For notational simplicity, we use $\Theta$ to denote the l.s.o.p. for all $\Qq[\Delta_i]$.
		Let $D_i$ ($i\geqslant1$) be the closure of $\Delta_i-\mathrm{st}_{v_{\sigma_i}}\Delta_i$. Then for each $1\leqslant i\leqslant h_k$, we have a short exact sequence
		\begin{equation}\label{eq:short exact sequence for subdivision}
			0\to J_i/(J_i\cap\Theta)\to\Qq[\Delta_i]/\Theta\to\Qq[D_i]/\Theta\to 0,
		\end{equation}
		where $J_i$ is the ideal generated by the vertex $v_{\sigma_i}$.
		
		From the short exact sequence
		\[0\to I/(I\cap\Theta)\to\Qq[\Delta]/\Theta\to\Qq[D_1]/\Theta\to 0,\quad I=(\xx_{\sigma_1}),\]
		it is easy to see that $\{\xx_{\sigma_2},\dots,\xx_{\sigma_{h_k}}\}$ is a basis of $(\Qq[D_1]/\Theta)_{2k}$. Thus the short exact sequence \eqref{eq:short exact sequence for subdivision} implies that $(\Qq[\Delta_1]/\Theta)_{2k}$ has a basis of the form
		$\{\xx_{\tau_1},\dots,\xx_{\tau_r}\}\cup\{\xx_{\sigma_2},\dots,\xx_{\sigma_{h_k}}\}$, where $v_{\sigma_1}\in\tau_j$ for all $1\leqslant j\leqslant r$.
		Doing this inductively for $i=2,3\dots$, we see that $(\Qq[\Delta']/\Theta)_{2k}$ has the desired basis.
	\end{proof}
	By using Lemma \ref{lem:basis} and the same argument as in the proof of Proposition \ref{prop:link}, we can get that:
	\begin{prop}\label{prop:injection of stellar sub}
		In the notation above, for any $\Delta'\in\Ss(\Delta,\mathscr{A}_{k-1})$ and $i\leqslant d-k$, we have an injection
		\[(\Qq[\Delta']/\Theta')_{2i}\to\bigoplus_{1\leqslant j\leqslant h_k}(\Qq[\mathrm{st}_{v_{\sigma_j}}\Delta']/\Theta')_{2i}.\]
	\end{prop}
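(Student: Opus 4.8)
The plan is to transport the Poincar\'e-duality argument of Proposition~\ref{prop:link}, in the sharpened form of Theorem~\ref{thm:basis link}, from $\Delta$ to the subdivided complex $\Delta'$, feeding into it the special face-monomial basis of $(\Qq[\Delta']/\Theta')_{2k}$ produced by Lemma~\ref{lem:basis}, and then to collapse the resulting injection onto the stars of the newly created vertices $v_{\sigma_1},\dots,v_{\sigma_{h_k}}$. The mechanism that makes this work is that Theorem~\ref{thm:basis link} (equivalently, the duality diagrams \eqref{diagram:link} and \eqref{diagram:link surj}) needs only a rational homology sphere equipped with a face-monomial basis in one fixed even degree, and a stellar subdivision leaves us inside that class.

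First I would verify the hypotheses of Theorem~\ref{thm:basis link} for $\Delta'$. A stellar subdivision does not change the PL homeomorphism type of the geometric realization, so $\Delta'$ is again a rational homology $(d-1)$-sphere, and $\Theta'$ is an l.s.o.p.\ for $\Qq[\Delta']$ by the construction preceding Lemma~\ref{lem:basis} (cf.\ Remark~\ref{rem:stellar sub}); hence $M_{\Delta'}$ is a closed rational toric $2d$-manifold and the whole of \S\ref{subsec:duality} applies to it. By Lemma~\ref{lem:basis}, $(\Qq[\Delta']/\Theta')_{2k}$ has a basis $\{\xx_{\tau_1},\dots,\xx_{\tau_s}\}$ consisting of face monomials, and since $\deg\xx_{\tau_j}=2|\tau_j|=2k$ each $\tau_j$ is a $(k-1)$-face of $\Delta'$. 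Applying Theorem~\ref{thm:basis link} to $\Delta'$ with this basis then shows that
\[ (\Qq[\Delta']/\Theta')_{2i}\ \xrightarrow{\ \bigoplus_{j}\Psi_{\tau_j}\ }\ \bigoplus_{1\leqslant j\leqslant s}\bigl(\Qq[\mathrm{st}_{\tau_j}\Delta']/\Theta'_{\Ss_{\tau_j}}\bigr)_{2i} \]
is injective for every $i\leqslant d-k$ (indeed an isomorphism for $i=d-k$), where $\Psi_{\tau_j}$ is the restriction map of \eqref{eq:restriction}.

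Second I would use the extra feature of this basis, namely that each $\tau_j$ meets $\VV=\{v_{\sigma_1},\dots,v_{\sigma_{h_k}}\}$; fix a vertex $v_{\sigma_{l(j)}}\in\tau_j$. Since a vertex of a face lies in the star of that face, $\mathrm{st}_{\tau_j}\Delta'\subseteq\mathrm{st}_{v_{\sigma_{l(j)}}}\Delta'$ and $\Ss_{\tau_j}\subseteq\Ss_{v_{\sigma_{l(j)}}}$, so the evident projection of face rings writes $\Psi_{\tau_j}$ as the composite of $\Psi_{v_{\sigma_{l(j)}}}$ with a further restriction; in particular $\ker\Psi_{v_{\sigma_{l(j)}}}\subseteq\ker\Psi_{\tau_j}$. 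Consequently, on $(\Qq[\Delta']/\Theta')_{2i}$ with $i\leqslant d-k$,
\[ \bigcap_{l=1}^{h_k}\ker\Psi_{v_{\sigma_l}}\ \subseteq\ \bigcap_{j=1}^{s}\ker\Psi_{v_{\sigma_{l(j)}}}\ \subseteq\ \bigcap_{j=1}^{s}\ker\Psi_{\tau_j}=0, \]
which is exactly the injectivity of $(\Qq[\Delta']/\Theta')_{2i}\to\bigoplus_{1\leqslant j\leqslant h_k}(\Qq[\mathrm{st}_{v_{\sigma_j}}\Delta']/\Theta')_{2i}$ claimed in the statement.

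I expect the only delicate points to be purely organizational: confirming that the hypotheses of Theorem~\ref{thm:basis link} pass verbatim from $\Delta$ to $\Delta'$ (which rests on stellar subdivision preserving the class of rational homology spheres carrying a compatible l.s.o.p.) and that the restriction maps of \eqref{eq:restriction} compose along the inclusions $\mathrm{st}_{\tau_j}\Delta'\subseteq\mathrm{st}_{v_{\sigma_{l(j)}}}\Delta'$. All the substantive work has already been done in Lemma~\ref{lem:basis} and in the duality argument behind Proposition~\ref{prop:link}/Theorem~\ref{thm:basis link}, so no genuinely new obstacle should arise.
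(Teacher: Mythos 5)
Your proposal is correct and follows essentially the same route as the paper, which proves the proposition by combining Lemma \ref{lem:basis} with the duality argument behind Proposition \ref{prop:link}/Theorem \ref{thm:basis link}. Your factorization of $\Psi_{\tau_j}$ through $\Psi_{v_{\sigma_{l(j)}}}$ via $\mathrm{st}_{\tau_j}\Delta'\subseteq\mathrm{st}_{v_{\sigma_{l(j)}}}\Delta'$ is just the algebraic dual of the paper's diagram \eqref{diagram:link surj}, so no new content is added or missing.
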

	
	\section{Weak Lefschetz proporty and subdivisions of rational homolgoy spheres}\label{sec:stellar sub}
	Stellar subdivisions play an important role in piecewise-linear geometry.
	In this section, we investigate the problem that which stellar subdivisions of a rational homology sphere has WLP.
	(Babson-Nevo's paper \cite{BN10} is a good reference for the strong-Lefschetz property about this question.)
	We begin with a lemma which is needed later on.
	\begin{lem}\label{lem:join WLP}
		If a rational homology $(d-1)$-sphere $\Delta$ has the form $\Delta=\partial\Delta^n*\Delta'$ with $n\geqslant\lceil d/2\rceil$, then $\Delta$ has the WLP in the sense that for any l.s.o.p $\Theta$, there exists a WLE for $\Qq[\Delta]/\Theta$.
	\end{lem}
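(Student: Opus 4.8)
The plan is to realize $\Qq[\Delta]/\Theta$ as the quotient of a one-dimensional Cohen--Macaulay graded ring by a single nonzerodivisor of unusually high degree, and then to read off the weak Lefschetz map from a Koszul (snake-lemma) computation. First I would record the Stanley--Reisner picture. Write $\partial\Delta^n$ on the vertex set $\{1,\dots,n+1\}$; its Stanley--Reisner ideal is the principal ideal $(g)$ with $g=x_1\cdots x_{n+1}$, and since the missing faces of a join are exactly the missing faces of the two factors, $\Qq[\Delta]=S/(g)$ where $S:=\Qq[x_1,\dots,x_{n+1}]\otimes_\Qq\Qq[\Delta']$ is a polynomial ring in $n+1$ variables over $\Qq[\Delta']$. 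With $d'=\dim\Delta'+1$ we have $d=n+d'$, $\dim S=(n+1)+d'=d+1$, and $\deg g=2(n+1)$ in the convention $\deg x_i=2$.

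Given an arbitrary l.s.o.p.\ $\Theta=\{\theta_1,\dots,\theta_d\}$ for $\Qq[\Delta]$, I would argue as follows. Since $\Delta$ is a $\Qq$-homology sphere, $\Qq[\Delta]$ is Cohen--Macaulay, hence so is the tensor factor $\Qq[\Delta']$, and therefore so is the polynomial extension $S$. Finiteness of $\Qq[\Delta]/\Theta=S/(g,\theta_1,\dots,\theta_d)$ means that the $d+1$ homogeneous elements $g,\theta_1,\dots,\theta_d$ form a system of parameters for the $(d+1)$-dimensional Cohen--Macaulay ring $S$, hence a regular sequence; in particular $B:=S/\Theta S$ is Cohen--Macaulay of Krull dimension $1$, generated in degree $2$, the image $\bar g\in B$ of $g$ is a nonzerodivisor, and $B/(\bar g)\cong\Qq[\Delta]/\Theta$. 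Now I would pick a generic linear form $\omega\in B_2$: since $B$ is one-dimensional and Cohen--Macaulay over the infinite field $\Qq$, no minimal prime of $B$ contains all of $B_2$, so a generic $\omega$ lies in none of them and is a nonzerodivisor on $B$. Tensoring the short exact sequence $0\to B(-2n-2)\xr{\cdot\bar g}B\to B/(\bar g)\to0$ with the Koszul complex of $\omega$ on $B$ and using that $\omega$ is a nonzerodivisor on $B$, one obtains that the kernel of $\cdot\omega\colon(\Qq[\Delta]/\Theta)_{2\lceil d/2\rceil-2}\to(\Qq[\Delta]/\Theta)_{2\lceil d/2\rceil}$ is isomorphic to the kernel of $\cdot\bar g\colon(B/\omega B)_{2\lceil d/2\rceil-2n-2}\to(B/\omega B)_{2\lceil d/2\rceil}$. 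The hypothesis $n\geq\lceil d/2\rceil$ puts the source of this last map in degree $2\lceil d/2\rceil-2n-2\leq-2$, where $B/\omega B$ vanishes; hence $\cdot\omega$ is injective in degree $2\lceil d/2\rceil-2$, so by Proposition~\ref{prop:WLP} this $\omega$ is a weak Lefschetz element for $\Qq[\Delta]/\Theta$. Since $\Theta$ was arbitrary, $\Delta$ has the WLP in the stated sense.

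The step I expect to need the most care is the ring-theoretic setup in the second paragraph: checking that $\Qq[\Delta']$ inherits Cohen--Macaulayness from $\Qq[\Delta]$ (via Reisner's criterion together with the K\"unneth formula for joins, applied to the links $\mathrm{lk}_\tau\Delta=\partial\Delta^n*\mathrm{lk}_\tau\Delta'$ of faces $\tau\in\Delta'$), that a polynomial ring over a Cohen--Macaulay ring is again Cohen--Macaulay, that an l.s.o.p.\ really does complete $g$ to a system of parameters of $S$, and the prime-avoidance argument producing a linear nonzerodivisor on $B$. Once these are in place, the Koszul computation and the elementary inequality finish the proof; the conceptual point is simply that the one relation $g$ of $\partial\Delta^n$ sits in the high degree $2(n+1)$, large enough --- precisely when $n\geq\lceil d/2\rceil$ --- to push the obstruction group below the bottom of the grading.
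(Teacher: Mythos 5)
Your proposal is correct, and it proves the lemma by a genuinely different route than the paper. The paper argues topologically: it picks a facet $\sigma$ of the join factor $\partial\Delta^n$ (so $\mathrm{st}_\sigma\Delta=\sigma*\Delta'$ and the kernel of $\Qq[\Delta]\to\Qq[\mathrm{st}_\sigma\Delta]$ is the principal ideal $(x_i)$ for the one omitted vertex $i$), identifies $\Qq[\mathrm{st}_\sigma\Delta]/\Theta_{\Ss_\sigma}$ with $H^*(N_\sigma;\Qq)$ for a rational toric $2k$-manifold with $k=d-n\leqslant\lfloor d/2\rfloor$, deduces that this quotient vanishes in degrees $\geqslant 2\lfloor d/2\rfloor+2$, and concludes that the explicit vertex variable $\omega=x_i$ is a WLE via the surjectivity half of Proposition~\ref{prop:WLP}. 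You instead stay entirely in commutative algebra: the hypersurface presentation $\Qq[\Delta]=S/(g)$ with $\deg g=2(n+1)$, the fact that a homogeneous s.o.p.\ in the Cohen--Macaulay ring $S$ is a regular sequence, and the snake-lemma (Tor-symmetry) identification of $\ker\bigl(\cdot\omega\bigr)$ in degree $2\lceil d/2\rceil-2$ with a piece of $B/\omega B$ sitting in negative degree, which gives the injectivity half of Proposition~\ref{prop:WLP} for a generic linear $\omega$; your prime-avoidance step is sound since $B$ is one-dimensional Cohen--Macaulay, so its associated primes are minimal and cannot contain $B_2$. The two arguments are roughly dual --- the paper quotients by the principal ideal $(x_i)$, whose quotient dies above the middle degree, while you resolve by the principal relation $g$, which lives in high degree --- and both rest on Proposition~\ref{prop:WLP}. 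What your approach buys is a self-contained algebraic proof needing no toric spaces and making transparent that the whole point is the high degree of the single relation of $\partial\Delta^n$; what the paper's approach buys is the sharper conclusion that the specific linear form $x_i$ supported on the $\partial\Delta^n$-factor is a WLE, a refinement the paper quietly reuses later (in the proof of Proposition~\ref{prop:stellar subdivision}(a), where $x_i$ with $i\in\sigma$ is taken as a WLE for $\Qq[\mathrm{st}_{v_\sigma}\Delta']/\Theta'_{\Ss_{v_\sigma}}$), whereas your argument only certifies a generic WLE --- sufficient for the lemma as stated.
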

	\begin{proof}
		By Proposition \ref{prop:WLP}, we only need to show that there exists a linear form $\omega\in\Qq[\Delta]$ such that the map \[\cdot\omega:(\Qq[\Delta]/\Theta)_{2\lfloor d/2 \rfloor}\to(\Qq[\Delta]/\Theta)_{2\lfloor d/2 \rfloor+2}\] is a surjection.
		
		Choosing an arbitrary facet $\sigma\in\partial\Delta^n$, we have $\mathrm{st}_\sigma\Delta=\sigma*\Delta'$.
		Consider the short exact sequence
		\begin{equation}\label{eq:star}
			0\to I\to\Qq[\Delta]\to\Qq[\mathrm{st}_\sigma\Delta]\to 0.
		\end{equation}
		Suppose $\FF_0(\Delta^n)\setminus\sigma=\{i\}$, then it is easy to see that $I=(x_i)$.
		If we quotient out by $\Theta$ in \eqref{eq:star}, we obtain the short exact sequence
		\begin{equation}\label{eq:quotient star}
			0\to I/(I\cap\Theta)\to\Qq[\Delta]/\Theta\to\Qq[\mathrm{st}_\sigma\Delta]/\Theta_{\Ss_\sigma}\to 0,
		\end{equation}
		in which $\Ss_\sigma=\FF_0(\mathrm{st}_\sigma\Delta)$.
		
		Let $k=\dim\Delta'+1$. Then $k\leqslant\lfloor d/2\rfloor$, since $n\geqslant \lceil d/2\rceil$ and $n+k=d$ by assumption.
		As we have seen in subsection \ref{subsec:local}, $\Qq[\mathrm{st}_\sigma\Delta]/\Theta_{\Ss_\sigma}$ is isomorphic to the cohomology of the rational toric $2k$-manifold $N_\sigma$.
		Hence, we have $(\Qq[\mathrm{st}_\sigma\Delta]/\Theta_{\Ss_\sigma})_{2l}=0$ for $l\geqslant \lfloor d/2\rfloor+1$.
		It follows from \eqref{eq:quotient star} that
		\begin{equation}\label{eq:identity}
			(I/(I\cap\Theta))_{2l}=(\Qq[\Delta]/\Theta)_{2l},\quad \text{for }l\geqslant \lfloor d/2\rfloor+1.
		\end{equation}
		
		Let $R$ be the image of the map $\cdot x_i:\Qq[\Delta]/\Theta\to \Qq[\Delta]/\Theta$. Since $I=(x_i)$, it follows that $I/(I\cap\Theta)=R$.
		Combining this with \eqref{eq:identity} we get the surjection
		\[\cdot x_i:(\Qq[\Delta]/\Theta)_{2\lfloor d/2\rfloor}\to(\Qq[\Delta]/\Theta)_{2\lfloor d/2\rfloor+2}.\]
		The form $\omega=x_i$ is what we need.
	\end{proof}
	
	The WLP of a rational homology sphere and the one of its stellar subdivisions are related by the following algebraic result, which is proved initially by B\"ohm-Papadakis \cite{BS15}. Here we give a simpler proof.
	\begin{prop}\label{prop:stellar subdivision}
		Let $\Delta$ be a rational homology $(d-1)$-sphere. For the stellar subdivision $\Delta'=\Delta(\sigma)$ at a face $\sigma\in\FF_n(\Delta)$, we have:
		
		(a) If $\Delta$ has the WLP and $n\geqslant d/2$, then $\Delta'$ has the WLP.
		
		(b) If $\Delta'$ has the WLP and $n>d/2$, then $\Delta$ has the WLP.
	\end{prop}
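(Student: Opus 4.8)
The plan is to compare $\Delta$ and $\Delta'$ through their common ``outer part'' $D\subset\Delta$, the deletion of $\sigma$ (the subcomplex of faces not containing $\sigma$), so that $\Delta=D\cup\mathrm{st}_\sigma\Delta$ and $\Delta'=D\cup\mathrm{st}_{v_\sigma}\Delta'$ with $\mathrm{st}_\sigma\Delta=\sigma*L$, $\mathrm{st}_{v_\sigma}\Delta'=v_\sigma*\partial\sigma*L$ (writing $L:=\mathrm{lk}_\sigma\Delta$, a rational homology $(d-n-2)$-sphere), the two pieces meeting in $D\cap\mathrm{st}=\partial\sigma*L$ in both cases. Since $|D|=|\Delta|\smallsetminus(\text{open star of }\sigma)$ and the closure $\mathrm{st}_\sigma\Delta=\sigma*L$ of that open star is a cone, $D$ is a rational homology $(d-1)$-ball, hence Cohen--Macaulay over $\Qq$. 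I would fix a generic l.s.o.p.\ $\Theta$ for $\Qq[\Delta]$ together with an l.s.o.p.\ $\Theta'$ for $\Qq[\Delta']$ restricting to $\Theta$ on the vertices of $D$ (available by Remark \ref{rem:stellar sub}); it then restricts to an l.s.o.p.\ of $\Qq[D]$ as well. Killing $\xx_\sigma$ in $\Qq[\Delta]$, resp.\ $x_{v_\sigma}$ in $\Qq[\Delta']$, yields $\Qq[D]$ in both cases, and since $\Theta$ is a regular sequence on the Cohen--Macaulay ring $\Qq[D]$ this gives exact sequences of graded vector spaces
\[
0\to K\to \Qq[\Delta]/\Theta\to \Qq[D]/\Theta\to 0,\qquad 0\to K'\to\Qq[\Delta']/\Theta'\to \Qq[D]/\Theta\to 0,
\]
with $K=\xx_\sigma\cdot(\Qq[\Delta]/\Theta)$, $K'=x_{v_\sigma}\cdot(\Qq[\Delta']/\Theta')$ and common quotient $\Qq[D]/\Theta$. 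Exactly as in \S\ref{subsec:local} one identifies $K\cong H^*(N_\sigma;\Qq)(-2(n+1))$ and $K'\cong H^*(N_{v_\sigma};\Qq)(-2)$, on which $\Qq[\Delta]/\Theta$, resp.\ $\Qq[\Delta']/\Theta'$, acts through restriction to $L$, resp.\ to $\mathrm{lk}_{v_\sigma}\Delta'=\partial\sigma*L$. The key observation is that $\partial\sigma*L=\partial\Delta^n*L$, so Lemma \ref{lem:join WLP} applies (because $n\geqslant d/2\geqslant\lceil(d-1)/2\rceil$) and tells us $\partial\sigma*L$ has the WLP; in particular $H^*(N_{v_\sigma};\Qq)$ admits a WLE.

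For part (a), set $q=\lceil d/2\rceil$; since $q\leqslant n$, $K$ vanishes in degrees $2q-2$ and $2q$, so the quotient identifies $(\Qq[\Delta]/\Theta)_{2i}\cong(\Qq[D]/\Theta)_{2i}$ for $i\in\{q-1,q\}$, compatibly with multiplication by linear forms. I would then choose a linear form $\omega'\in(\Qq[\Delta'])_2$ whose restriction to the vertices of $\Delta$ is a WLE for $\Qq[\Delta]/\Theta$ (possible because $\Delta$ has the WLP and $\Theta$ is generic; a nonempty Zariski-open condition, cf.\ Lemma \ref{lem:generic}) and whose restriction to $\partial\sigma*L$ is a WLE for $H^*(N_{v_\sigma};\Qq)$ (possible by the remark just made): these two nonempty open conditions intersect. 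In the resulting ladder with exact rows and vertical maps $\cdot\omega'$ between degrees $2q-2$ and $2q$, the map on $K'$ is $\cdot\omega'\colon H^{2q-4}(N_{v_\sigma})\to H^{2q-2}(N_{v_\sigma})$, injective because multiplication by a WLE on the rational homology $(d-2)$-sphere $\partial\sigma*L$ is injective in cohomological degrees at or below its middle (Proposition \ref{prop:WLP} together with Dehn--Sommerville symmetry of its $h$-vector); the map on $\Qq[D]/\Theta$ is, via the identification above, multiplication by a WLE on $(\Qq[\Delta]/\Theta)_{2q-2}\to(\Qq[\Delta]/\Theta)_{2q}$, injective by Proposition \ref{prop:WLP}. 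A diagram chase then gives injectivity of $\cdot\omega'$ on $(\Qq[\Delta']/\Theta')_{2q-2}$, so $\Delta'$ has the WLP by Proposition \ref{prop:WLP}.

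For part (b), set $p=\lfloor d/2\rfloor$. Since $\Delta'$ has the WLP I would pick $(\omega',\Theta')$ generic in $\WW(\Delta')$, arranging in addition (a nonempty Zariski-open condition on $\Theta'$, realized by a Remark \ref{rem:stellar sub} extension) that $\Theta'$ restricts to an l.s.o.p.\ $\Theta$ of $\Qq[\Delta]$ on the vertices of $D$, and form the two exact sequences with this $\Theta,\Theta'$. Here $n>d/2$ forces $n+1\geqslant p+2$, so $K$ vanishes in degrees $2p$ and $2p+2$ and $(\Qq[\Delta]/\Theta)_{2i}\cong(\Qq[D]/\Theta)_{2i}$ for $i\in\{p,p+1\}$. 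By Proposition \ref{prop:WLP}, $\cdot\omega'$ is surjective $(\Qq[\Delta']/\Theta')_{2p}\to(\Qq[\Delta']/\Theta')_{2p+2}$; composing with the surjection onto $\Qq[D]/\Theta$ and using commutativity, multiplication by the restriction of $\omega'$ to the vertices of $D$ is surjective in degree $2p$ on $\Qq[D]/\Theta$; transporting back, the form $\omega:=\omega'|_{\FF_0(\Delta)}\in(\Qq[\Delta])_2$ makes $(\Qq[\Delta]/\Theta)_{2p}\to(\Qq[\Delta]/\Theta)_{2p+2}$ surjective, so $\Delta$ has the WLP by Proposition \ref{prop:WLP}.

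The step I expect to demand the most care is the identification of $K$ and $K'$ with shifts of $H^*(N_\sigma)$ and $H^*(N_{v_\sigma})$ as modules over the corresponding cohomology rings with the restriction-to-link action, together with the exactness of the two short exact sequences: this packages the Tor-vanishing coming from Cohen--Macaulayness of $D$ (to see $\xx_\sigma\cdot(\Qq[\Delta]/\Theta)\cong(\Qq[\mathrm{st}_\sigma\Delta]/\Theta_{\mathcal S_\sigma})(-2(n+1))$, etc.) with the local description of \S\ref{subsec:local}. Once these are in place, the hypotheses $n\geqslant d/2$ (resp.\ $n>d/2$) do no more than force $K$ to sit above the degree where the weak Lefschetz condition is tested, and the only substantial additional input is the WLP of the join $\partial\sigma*\mathrm{lk}_\sigma\Delta$ (Lemma \ref{lem:join WLP}), needed only for part (a).
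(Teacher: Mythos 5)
Your proof is correct, and it follows the same basic strategy as the paper's (compare $\Qq[\Delta]/\Theta$ and $\Qq[\Delta']/\Theta'$ through short exact sequences localized at the subdivided face, use Lemma \ref{lem:join WLP} for the link of $v_\sigma$, and let $n\geqslant d/2$ push the error term out of the tested degrees), but you run it through the transposed sequences: the paper surjects onto the star rings $\Qq[\mathrm{st}_\sigma\Delta]/\Theta_{\Ss_\sigma}$ and $\Qq[\mathrm{st}_{v_\sigma}\Delta']/\Theta'_{\Ss_{v_\sigma}}$ and identifies the two kernels with each other via the excision Lemma \ref{lem:excision}, whereas you surject onto the common deletion $\Qq[D]/\Theta$ and identify the kernels with shifted star rings $H^*(N_\sigma;\Qq)(-2(n+1))$ and $H^*(N_{v_\sigma};\Qq)(-2)$ --- the identification you rightly flag as the delicate step, and which is secured either by your Tor-vanishing argument (since $D$ is a rational homology ball, hence Cohen--Macaulay) or equivalently by Lemma \ref{lem:excision} together with Corollary \ref{cor:relative coho} applied to $\mathrm{st}_{v_\sigma}\Delta'$. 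What the transposition buys is part (b): there you only need that the kernel $K$ lives above degree $2\lfloor d/2\rfloor+2$, so surjectivity is transported through the common quotient $\Qq[D]/\Theta$ with no parity case split, no excision lemma and no Poincar\'e-duality step, which is genuinely cleaner than the paper's argument; part (a) is parallel in difficulty (your injectivity-plus-four-lemma versus the paper's surjectivity-plus-five-lemma), and the extra fact you invoke --- that a WLE on the $(d-2)$-sphere $\partial\sigma*\mathrm{lk}_\sigma\Delta$ multiplies injectively in all degrees strictly below the middle --- does follow, as you indicate, from full rank in every degree plus the symmetry $h_i=h_{d-1-i}$ (failure of injectivity in a low degree would force surjectivity from there up to the symmetric position, hence equality of the intervening $h_i$ and bijectivity, a contradiction). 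Two cosmetic points: the form acting on $K'$ is the image $\Psi_{v_\sigma}(\omega')$ in the star ring (its $x_{v_\sigma}$-coefficient enters too, though modulo $\Theta'_{\Ss_{v_\sigma}}$ this changes nothing), and your genericity bookkeeping (one $\omega'$ in the intersection of two nonempty Zariski-open conditions, a $\Theta'$ restricting to an l.s.o.p.\ admitting a WLE) is at the same level of rigor as the paper's own use of Lemma \ref{lem:generic} and Remark \ref{rem:stellar sub}, so there is no gap there.
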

	\begin{proof}
		(a) If $\Delta$ has the WLP, then for a pair $(\omega_0,\Theta)\in\WW(\Delta)$ we have a surjection
		\[
		\cdot\omega_0:(\Qq[\Delta]/\Theta)_{2\lfloor d/2\rfloor}\to(\Qq[\Delta]/\Theta)_{2\lfloor d/2\rfloor+2}.
		\]
		Since $\dim\sigma\geqslant d/2$, $\dim\mathrm{lk}_\sigma\Delta=d-\dim\sigma-2\leqslant\lfloor d/2\rfloor-2.$
		Therefore, as in the proof of Lemma \ref{lem:join WLP}, for the short exact sequence
		\[
		0\to I/(I\cap\Theta)\to\Qq[\Delta]/\Theta\to\Qq[\mathrm{st}_\sigma\Delta]/\Theta_{\Ss_\sigma}\to 0,
		\]
		we have
		\begin{equation}\label{eq:identity more}
			(I/(I\cap\Theta))_{2l}=(\Qq[\Delta]/\Theta)_{2l},\quad \text{for }l\geqslant \lfloor d/2\rfloor.
		\end{equation}
		So the map $(I/(I\cap\Theta))_{2\lfloor d/2\rfloor}\xr{\cdot\omega_0}(I/(I\cap\Theta))_{2\lfloor d/2\rfloor+2}$ is surjective too.
		
		On the other hand, let $\Theta'$ be a generic l.s.o.p. for $\Qq[\Delta']$. Here we may assume $\Theta$ is the restriction of $\Theta'$ to $x_1,\dots,x_m$.
		Note that $\mathrm{lk}_{v_\sigma}\Delta'=\partial\sigma*\mathrm{lk}_\sigma\Delta$ is a $(d-2)$-sphere, then by Lemma \ref{lem:join WLP} and equation \eqref{eq:link=star}, for any $i\in\sigma$, $x_i$ is a WLE for $\Qq[\mathrm{st}_{v_\sigma}\Delta']/\Theta'_{\Ss_{v_\sigma}}$, where $\Ss_{v_\sigma}=\FF_0(\mathrm{st}_{v_\sigma}\Delta')$.
		Let $\omega$ be a generic linear combination of $\omega_0$ and $x_i$. Now consider the following commutative diagram of exact sequences:
		\begin{equation}\label{eq:diagram}
			\begin{gathered}
				\xymatrix{
					0 \ar[r]& I'/(I'\cap\Theta')\ar[d]^{\cdot\omega}\ar[r]&\Qq[\Delta']/\Theta'\ar[d]^{\cdot\omega}\ar[r]&\Qq[\mathrm{st}_{v_\sigma}\Delta']/\Theta'_{\Ss_{v_\sigma}}\ar[d]^{\cdot\omega}\ar[r]&0\\
					0 \ar[r]& I'/(I'\cap\Theta')\ar[r]&\Qq[\Delta']/\Theta'\ar[r]&\Qq[\mathrm{st}_{v_\sigma}\Delta']/\Theta'_{\Ss_{v_\sigma}}\ar[r]&0}
			\end{gathered}
		\end{equation}
		(For notational convenience, we omit the degree subscripts in both rows, which are $2\lfloor d/2\rfloor$ and $2\lfloor d/2\rfloor+2$ resp.)
		Thus, the right vertical map is a surjection. By Lemma \ref{lem:excision}, we have $I'/(I'\cap\Theta')\cong I/(I\cap\Theta)$,
		so the left vertical map is a surjection too. The five-lemma then gives a surjection
		\[\cdot\omega:(\Qq[\Delta']/\Theta)_{2\lfloor d/2\rfloor}\to(\Qq[\Delta']/\Theta)_{2\lfloor d/2\rfloor+2}.\]
		Hence $\Delta'$ has the WLP.
		
		(b) We divide the proof into two cases, depending on the parity of $d$.
		Consider first the case that $d$ is odd. In this case, $n>d/2$ is equivalent to $n\geqslant\lfloor d/2\rfloor+1$. Let the degree of the first and second rows in \eqref{eq:diagram} are $2\lfloor d/2\rfloor$ and $2\lfloor d/2\rfloor+2$ resp. Thus if $\Delta'$ has WLP, then for a generic choice of $(\omega,\Theta)\in\WW(\Delta')$,
		the middle vertical map is an isomorphism by Poincar\'e duality, and so the left vertical map is an injection.
		Combining this with \eqref{eq:identity more} and $h_{\lfloor d/2\rfloor}(\Delta)=h_{\lfloor d/2\rfloor+1}(\Delta)$ we see that the map
		\[\cdot\omega:(\Qq[\Delta]/\Theta)_{2\lfloor d/2\rfloor}\to(\Qq[\Delta]/\Theta)_{2\lfloor d/2\rfloor+2}\]
		is an isomorphism.
		So $\Delta$ has the WLP.
		
		For the case that $d$ is even, we have $d/2=\lfloor d/2\rfloor$. So $n>d/2$ implies that $\dim\mathrm{lk}_\sigma\Delta\leqslant\lfloor d/2\rfloor-3$, and therefore by the same argument as before, we have
		\begin{equation}\label{eq:identity even more}
			(I/(I\cap\Theta))_{2l}=(\Qq[\Delta]/\Theta)_{2l},\quad \text{for }l\geqslant \lfloor d/2\rfloor-1.
		\end{equation}
		Note that when $d$ is even, the WLP of $\Delta$ is equivalent to the injectivity of
		\begin{equation}\label{eq:inj stellar}
			\cdot\omega:(\Qq[\Delta]/\Theta)_{2\lfloor d/2\rfloor-2}\to(\Qq[\Delta]/\Theta)_{2\lfloor d/2\rfloor}\quad \text{for some } \omega.
		\end{equation}
		This time let the degree of the first and second rows in \eqref{eq:diagram} are $2\lfloor d/2\rfloor-2$ and $2\lfloor d/2\rfloor$ resp.
		The WLP of $\Delta'$ implies that for a generic choice of $(\omega,\Theta)\in\WW(\Delta')$, the middle vertical map is an injection. So the left vertical map is also an injection. It follows from Lemma \ref{lem:excision} and \eqref{eq:identity even more} that the map in \eqref{eq:inj stellar} is an injection too. So $\Delta$ has the WLP.
	\end{proof}
	
	\begin{cor}\label{cor:WLP of link}
		Let $\Delta$ be a rational homology $(d-1)$-sphere, $\VV_i=\FF_0(\DD_i(\Delta))\setminus\FF_0(\Delta)$.
		If $i\leqslant\lceil d/2\rceil$, then for every $v_\sigma\in\VV_i$, $\mathrm{lk}_{v_\sigma}\DD_i(\Delta)$ has the WLP.
	\end{cor}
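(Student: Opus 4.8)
The plan is to identify the homeomorphism type of $\mathrm{lk}_{v_\sigma}\DD_i(\Delta)$ explicitly and then feed it into Lemma \ref{lem:join WLP} and Proposition \ref{prop:stellar subdivision}(a). Fix $v_\sigma\in\VV_i$; then $\sigma\in\Delta$ with $\dim\sigma=d-j$ for some $1\leqslant j\leqslant i$, the vertex $v_\sigma$ being introduced at the $j$th stage of the partial barycentric subdivision. I would work with the explicit ``partially resolved order complex'' model of $\DD_i(\Delta)$: its faces are $\mu\cup\{v_{F_1},\dots,v_{F_r}\}$, where $F_1\subsetneq\cdots\subsetneq F_r$ is a chain of faces of $\Delta$ of dimension $\geqslant d-i$ and $\mu\in\Delta$ is a (possibly empty) face with $\mu\subsetneq F_1$ and $\dim\mu<d-i$, the case $r=0$ being allowed. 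Since $\dim\sigma=d-j\geqslant d-i$, any face of $\DD_i(\Delta)$ containing $v_\sigma$ must have $\sigma$ among the $F_l$, say $F_p=\sigma$, and then splits canonically as a join: the part below $\sigma$ is a subface of $\sigma$ together with resolved faces of dimension in $\{d-i,\dots,d-j-1\}$, which is exactly a face of $\DD_{i-j}(\partial\sigma)$ with $\partial\sigma=\partial\Delta^{d-j}$; the part above $\sigma$, after deleting $\sigma$, is a chain of nonempty faces of $\mathrm{lk}_\sigma\Delta$, i.e.\ a face of $\mathrm{sd}(\mathrm{lk}_\sigma\Delta)$, where $\mathrm{sd}(-)$ denotes barycentric subdivision. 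This yields the key identification
\[
\mathrm{lk}_{v_\sigma}\DD_i(\Delta)\;\cong\;\DD_{i-j}\bigl(\partial\Delta^{d-j}\bigr)\,*\,\mathrm{sd}\bigl(\mathrm{lk}_\sigma\Delta\bigr),\qquad\DD_0=\mathrm{id}.
\]

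With this in hand the rest is bookkeeping with the numerical hypotheses. As $\mathrm{lk}_\sigma\Delta$ is a rational homology $(j-2)$-sphere, so is $\mathrm{sd}(\mathrm{lk}_\sigma\Delta)$, hence $\partial\Delta^{d-j}*\mathrm{sd}(\mathrm{lk}_\sigma\Delta)$ is a rational homology $(d-2)$-sphere of the form $\partial\Delta^n*\Delta'$ with $n=d-j$; since $j\leqslant i\leqslant\lceil d/2\rceil$ we have $n=d-j\geqslant\lfloor d/2\rfloor=\lceil(d-1)/2\rceil$, so Lemma \ref{lem:join WLP} (applied to this $(d-2)$-sphere) shows that $\partial\Delta^{d-j}*\mathrm{sd}(\mathrm{lk}_\sigma\Delta)$ has the WLP. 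Now $\DD_{i-j}(\partial\Delta^{d-j})*\mathrm{sd}(\mathrm{lk}_\sigma\Delta)$ is obtained from $\partial\Delta^{d-j}*\mathrm{sd}(\mathrm{lk}_\sigma\Delta)$ by a succession of stellar subdivisions, each at a face lying in the $\partial\Delta^{d-j}$-factor whose dimension runs through $d-j-1,\,d-j-2,\,\dots,\,d-i$; each such face, viewed as a face of the ambient complex, has dimension $\geqslant d-i\geqslant\lfloor d/2\rfloor\geqslant(d-1)/2$, and each intermediate complex is again a rational homology $(d-2)$-sphere, so Proposition \ref{prop:stellar subdivision}(a) applies at every stage and preserves the WLP. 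Therefore $\mathrm{lk}_{v_\sigma}\DD_i(\Delta)$ has the WLP, which is the assertion.

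The main obstacle is making the link identification of the first paragraph fully rigorous. One must verify that, among the stellar subdivisions performed at the stages $j+1,\dots,i$, the only ones touching the closed star of $v_\sigma$ are those at proper subfaces of $\sigma$ --- which follows because a face of $\DD_i(\Delta)$ containing $v_\sigma$ together with any original vertices must have those original vertices spanning a proper subface of $\sigma$ --- and that such a subdivision restricts, inside $\mathrm{lk}_{v_\sigma}$, to a stellar subdivision of the join factor $\partial\sigma$, via the identity $(A*B)(\tau)=A(\tau)*B$ for $\tau$ a face of $A$. Handling $\DD_i(\Delta)$ through its order-complex model, rather than tracking the stellar subdivisions one at a time, is the cleanest way to sidestep the order-of-operations subtleties. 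I would also record that the two inequalities used above --- the bound $n\geqslant\lceil(d-1)/2\rceil$ in Lemma \ref{lem:join WLP} and the bound $n\geqslant(d-1)/2$ in Proposition \ref{prop:stellar subdivision}(a), both for rational homology $(d-2)$-spheres --- each translate into exactly $i\leqslant\lceil d/2\rceil$, so the hypothesis of the corollary is precisely what the argument requires.
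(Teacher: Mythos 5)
Your proof is correct and follows essentially the same route as the paper: identify the link of $v_\sigma$ as a join $\partial\sigma * (\text{subdivided }\mathrm{lk}_\sigma\Delta)$ at the stage where $v_\sigma$ appears, get the WLP from Lemma \ref{lem:join WLP} using $\dim\sigma\geqslant\lceil(d-1)/2\rceil$, and then absorb the remaining stellar subdivisions (all at faces of dimension $\geqslant d-i\geqslant(d-1)/2$) via Proposition \ref{prop:stellar subdivision}(a). The only difference is presentational: you compute $\mathrm{lk}_{v_\sigma}\DD_i(\Delta)\cong\DD_{i-j}(\partial\Delta^{d-j})*\mathrm{sd}(\mathrm{lk}_\sigma\Delta)$ explicitly from the order-complex model, whereas the paper tracks the same link through the intermediate complexes $\DD_{k_\sigma}(\Delta),\dots,\DD_i(\Delta)$.
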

	\begin{proof}
		It is easy to see that $\FF_0(\Delta)=\VV_0\subset\VV_1\subset\cdots\subset\VV_i$.
		Let $k_\sigma=d-\dim\sigma$. Then $k_\sigma$ is the smallest number such that $v_\sigma\in\VV_{k_\sigma}$, and \[\mathrm{lk}_{v_\sigma}\DD_{k_\sigma}(\Delta)=\partial\sigma*\mathrm{lk}_\sigma\DD_{k_\sigma-1}(\Delta)\]
		is a rational homology $(d-2)$-sphere.
		So when $k_\sigma\leqslant i\leqslant\lceil d/2\rceil$, we have $\dim\sigma\geqslant\lfloor d/2\rfloor=\lceil (d-1)/2\rceil$, and then
		$\mathrm{lk}_{v_\sigma}\DD_{k_\sigma}(\Delta)$ has the WLP by Lemma \ref{lem:join WLP}.
		
		When $i>k_\sigma$, $\mathrm{lk}_{v_\sigma}\DD_{i}(\Delta)$ is obtained from $\mathrm{lk}_{v_\sigma}\DD_{k_\sigma}(\Delta)$ by a sequence of stellar subdivisions at some faces of dimension greater than or equal to $d-i\geqslant\lfloor d/2\rfloor\geqslant (d-1)/2$.
		Hence we conclude our assertion with the help of Propositon \ref{prop:stellar subdivision} (a).
	\end{proof}
	
	\begin{cor}\label{cor:equiv of WLP}
		Let $\Delta$ be a rational homology $(d-1)$-sphere. Then for $i< d/2$, $\DD_i(\Delta)$ has the WLP if and only if $\Delta$ has the WLP.
	\end{cor}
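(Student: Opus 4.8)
The plan is to induct on $i$ and reduce the statement to repeated use of Proposition~\ref{prop:stellar subdivision}. The base case $i=0$ is trivial, since $\DD_0(\Delta)=\Delta$. For the inductive step I would assume the equivalence for $i-1$, which is legitimate because $i-1<i<d/2$. By the order-independence in the definition of $\DD_i$, the complex $\DD_i(\Delta)$ can be realized as the end of a chain of one-step stellar subdivisions
\[
\DD_{i-1}(\Delta)=\Gamma_0,\quad \Gamma_t=\Gamma_{t-1}(\sigma_t)\ \ (1\leqslant t\leqslant N),\quad \Gamma_N=\DD_i(\Delta),
\]
where $\sigma_1,\dots,\sigma_N$ enumerate $\FF_{d-i}(\Delta)\subset\FF_{d-i}(\DD_{i-1}(\Delta))$ in some order. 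At each stage $\sigma_t$ is still a $(d-i)$-face of $\Gamma_{t-1}$: two distinct $(d-i)$-faces of $\Delta$ either fail to span a face of $\Delta$ at all, or span one of dimension $>d-i$, which has already been removed at an earlier level of the subdivision, so neither lies in the star of the other in $\Gamma_{t-1}$. Each $\Gamma_t$ is again a rational homology $(d-1)$-sphere, and crucially $\dim\sigma_t=d-i>d/2$ because $i<d/2$.

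Next I would walk along this chain in each direction. For the forward implication: if $\Delta$ has the WLP, then $\Gamma_0=\DD_{i-1}(\Delta)$ has it by the induction hypothesis, and applying Proposition~\ref{prop:stellar subdivision}(a) to the subdivisions $\Gamma_t=\Gamma_{t-1}(\sigma_t)$ for $t=1,\dots,N$ (valid since $\dim\sigma_t\geqslant d/2$) shows $\Gamma_N=\DD_i(\Delta)$ has the WLP. For the backward implication: if $\DD_i(\Delta)=\Gamma_N$ has the WLP, then applying Proposition~\ref{prop:stellar subdivision}(b) to the same subdivisions in reverse order, $t=N,N-1,\dots,1$ (valid since $\dim\sigma_t>d/2$), transfers the WLP back to $\Gamma_0=\DD_{i-1}(\Delta)$, and then to $\Delta$ by the induction hypothesis. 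This closes the induction.

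I do not expect a genuine obstacle; the corollary is essentially bookkeeping. The two points requiring care are, first, that every face subdivided in passing from $\DD_{i-1}(\Delta)$ to $\DD_i(\Delta)$ has dimension exactly $d-i$ and that the single numerical hypothesis $i<d/2$ forces $d-i>d/2$, which is simultaneously compatible with the condition $n\geqslant d/2$ in Proposition~\ref{prop:stellar subdivision}(a) and the stricter condition $n>d/2$ in part~(b); and second, that each intermediate complex $\Gamma_t$ really is a stellar subdivision of a rational homology $(d-1)$-sphere at a positive-dimensional face, so that Proposition~\ref{prop:stellar subdivision} applies verbatim at every step.
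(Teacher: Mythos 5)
Your proposal is correct and is essentially the paper's own argument: the paper likewise observes that $\DD_i(\Delta)$ arises from $\Delta$ by a sequence of stellar subdivisions at faces of dimension at least $d-i>d/2$ and then invokes Proposition~\ref{prop:stellar subdivision} in both directions, which is exactly what your induction on $i$ together with the one-step chain carries out. Your extra bookkeeping (that the $(d-i)$-faces of $\Delta$ survive as faces of each intermediate complex and that each $\Gamma_t$ is again a rational homology sphere) is sound, though for survival it suffices to note that a face is destroyed only when it contains the subdivided face, which cannot happen among distinct faces of equal dimension.
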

	\begin{proof}
		By definition, $\DD_i(\Delta)$ is obtained from $\Delta$ by a sequence of stellar subdivisions at faces of dimension greater than or equal to $d-i>d/2$.
		Then the conclusion follows from Proposition \ref{prop:stellar subdivision}.
	\end{proof}
	
	As an application of the previous results, we get the following theorem.
	\begin{thm}\label{thm:WLP of derived}
		For any rational homology $(d-1)$-sphere $\Delta$ and a generic l.s.o.p. $\Theta$ for $\Qq[\DD_k(\Delta)]$ with $k\leqslant\lceil d/2\rceil$, there exists a linear form $\omega\in\Qq[\DD_k(\Delta)]$ such that the map
		\[\cdot\omega:(\Qq[\DD_k(\Delta)]/\Theta)_{2i-2}\to (\Qq[\DD_k(\Delta)]/\Theta)_{2i}\]
		is an injection for $i\leqslant\min\{k,\lfloor d/2\rfloor\}$. In particular, if $d$ is even, and $k=d/2$, there is a WLE for $\Qq[\DD_k(\Delta)]/\Theta$.
	\end{thm}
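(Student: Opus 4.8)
The plan is to reduce the injectivity of $\cdot\,\omega$ on $\Qq[\DD_k(\Delta)]/\Theta$ to the weak Lefschetz property of the links of the newly introduced vertices, via the toric--topological machinery of \S\ref{subsec:local} and Proposition \ref{prop:link of derived}. A stellar subdivision of a rational homology $(d-1)$-sphere is again one (the only link whose homology type changes is $\mathrm{lk}_{v_\sigma}=\partial\sigma*\mathrm{lk}_\sigma$, a join of a sphere boundary with a rational homology sphere), so $\DD_k(\Delta)$ is a rational homology $(d-1)$-sphere, hence Gorenstein*, hence Cohen--Macaulay. For a generic l.s.o.p.\ $\Theta$ the associated characteristic matrix gives a rational toric $2d$-manifold $M_{\DD_k(\Delta)}$ with $H^*(M_{\DD_k(\Delta)};\Qq)\cong\Qq[\DD_k(\Delta)]/\Theta$ by Corollary \ref{cor:coho toric orbi}. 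Write $\VV_k=\FF_0(\DD_k(\Delta))\setminus\FF_0(\Delta)$; for each $v_\sigma\in\VV_k$ we have the rational toric $2(d-1)$-manifold $N_{v_\sigma}=M(\mathrm{lk}_{v_\sigma}\DD_k(\Delta),\Gamma)$ of \S\ref{subsec:local} together with the ring homomorphism $\rho_{v_\sigma}^*\colon H^*(M_{\DD_k(\Delta)};\Qq)\to H^*(N_{v_\sigma};\Qq)$ induced by $\rho_{v_\sigma}$ in \eqref{eq:map}. Proposition \ref{prop:link of derived}, applied in cohomological degree $2(i-1)$ --- admissible exactly when $i-1<k$ --- then furnishes an injection
\[
\bigoplus_{v_\sigma\in\VV_k}\rho_{v_\sigma}^*\colon\ H^{2i-2}\big(M_{\DD_k(\Delta)};\Qq\big)\ \longrightarrow\ \bigoplus_{v_\sigma\in\VV_k}H^{2i-2}\big(N_{v_\sigma};\Qq\big).
\]

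The core of the argument is then a diagram chase. For a linear form $\omega$ set $\omega_\sigma:=\rho_{v_\sigma}^*(\omega)$; since each $\rho_{v_\sigma}^*$ is a ring map, the square
\[
\xymatrix{
(\Qq[\DD_k(\Delta)]/\Theta)_{2i-2} \ar[r]^-{\cdot\,\omega} \ar[d]_-{\bigoplus\rho_{v_\sigma}^*} & (\Qq[\DD_k(\Delta)]/\Theta)_{2i} \ar[d]^-{\bigoplus\rho_{v_\sigma}^*}\\
\bigoplus\limits_{v_\sigma\in\VV_k}H^{2i-2}(N_{v_\sigma};\Qq) \ar[r]^-{\bigoplus\,\cdot\,\omega_\sigma} & \bigoplus\limits_{v_\sigma\in\VV_k}H^{2i}(N_{v_\sigma};\Qq)
}
\]
commutes. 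If $\omega x=0$ then $\omega_\sigma\cdot\rho_{v_\sigma}^*(x)=0$ in $H^{2i}(N_{v_\sigma};\Qq)$ for every $\sigma$; provided each $\cdot\,\omega_\sigma$ is injective in this bidegree and the left vertical arrow is injective, this forces $x=0$. Now $\mathrm{lk}_{v_\sigma}\DD_k(\Delta)$ is a rational homology $(d-2)$-sphere with symmetric $h$-vector of length $d$, and a weak Lefschetz element $\omega_\sigma$ makes $\cdot\,\omega_\sigma$ full rank in every degree (Definition \ref{def:WLP}); a short computation with this symmetric $h$-vector shows that such a full-rank map is injective from degree $2i-2$ to $2i$ precisely when $i\leqslant\lfloor d/2\rfloor$. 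Combined with the constraint $i\leqslant k$ from Proposition \ref{prop:link of derived}, this recovers the asserted range $i\leqslant\min\{k,\lfloor d/2\rfloor\}$.

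It remains to choose $\omega$ so that $\omega_\sigma$ is a weak Lefschetz element of $\Qq[\mathrm{lk}_{v_\sigma}\DD_k(\Delta)]/\Theta_\Gamma$ for every $v_\sigma\in\VV_k$ at once. Since $k\leqslant\lceil d/2\rceil$, Corollary \ref{cor:WLP of link} gives that each $\mathrm{lk}_{v_\sigma}\DD_k(\Delta)$ has the WLP; for a sufficiently generic $\Theta$ the restricted l.s.o.p.\ $\Theta_\Gamma$ on each link therefore admits a weak Lefschetz element. As the degree-two part of $\rho_{v_\sigma}^*$ is a surjective linear map, the set $O_\sigma$ of $\omega$ whose restriction $\omega_\sigma$ is a weak Lefschetz element is a non-empty Zariski-open subset of $(\Qq[\DD_k(\Delta)]/\Theta)_2$ (openness of $\WW$, used via Lemma \ref{lem:generic}); as $\VV_k$ is finite, $\bigcap_{v_\sigma\in\VV_k}O_\sigma\neq\varnothing$, and any $\omega$ in it works. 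For the final clause, take $d$ even and $k=d/2=\lfloor d/2\rfloor$: then $\cdot\,\omega$ is injective from degree $2\lfloor d/2\rfloor-2$ to degree $2\lfloor d/2\rfloor$, which by Proposition \ref{prop:WLP} (noting $\lceil d/2\rceil=\lfloor d/2\rfloor$) says exactly that $\omega$ is a WLE for $\Qq[\DD_k(\Delta)]/\Theta$.

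The step I expect to be the main obstacle is the genericity bookkeeping in the last paragraph: one must verify that the restriction of a generic l.s.o.p.\ of $\DD_k(\Delta)$ to each star $\mathrm{st}_{v_\sigma}\DD_k(\Delta)$ is generic enough that the induced l.s.o.p.\ on $\mathrm{lk}_{v_\sigma}\DD_k(\Delta)$ lies in the locus where the WLP of Corollary \ref{cor:WLP of link} is witnessed --- equivalently, that the relevant restriction map between l.s.o.p.-spaces is dominant --- and that the finitely many Zariski-open conditions $O_\sigma$ can then be imposed simultaneously. Once the linear-algebra genericity (Lemma \ref{lem:generic}) is organised carefully, the remaining steps are routine diagram-chasing.
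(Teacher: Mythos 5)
Your proposal is correct and is essentially the paper's own proof: the same commutative square comparing $\cdot\,\omega$ on $\Qq[\DD_k(\Delta)]/\Theta$ with the direct sum over the new vertices $v_\sigma\in\VV_k$, with injectivity of the horizontal map supplied by Proposition \ref{prop:link of derived} (degree $2i-2$, $i\leqslant k$), injectivity of the right vertical map supplied by Corollary \ref{cor:WLP of link} together with the genericity Lemma \ref{lem:generic}, and the final clause via Proposition \ref{prop:WLP}. Your phrasing in terms of $H^*(N_{v_\sigma};\Qq)$ rather than $\Qq[\mathrm{st}_{v_\sigma}\DD_k(\Delta)]/\Theta_{\Ss_{v_\sigma}}$ is the same thing by \eqref{eq:link=star}, and the genericity bookkeeping you flag at the end is treated at the same (implicit) level of detail in the paper.
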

	\begin{proof}
		Consider the morphism
		\[\Qq[\DD_{k}(\Delta)]/\Theta\xr{\bigoplus \Psi_{v_\sigma}}\bigoplus_{v_\sigma\in\VV_k}\Qq[\mathrm{st}_{v_\sigma}\DD_{k}(\Delta)]/\Theta_{\Ss_{v_\sigma}},\]
		where $\VV_k=\FF_0(\DD_k(\Delta))\setminus\FF_0(\Delta)$ and $\Ss_{v_\sigma}=\FF_0(\mathrm{st}_{v_\sigma}\DD_{k}(\Delta))$.
		Since $k\leqslant\lceil d/2\rceil$,
		it follows by Corollary \ref{cor:WLP of link} and Lemma \ref{lem:generic} that for a generic choice of linear form $\omega$, and for $i\leqslant\lfloor d/2\rfloor$, the right vertical map is injective in the following commutative diagram:
		\[\begin{CD}
			(\Qq[\DD_{k}(\Delta)]/\Theta)_{2i-2} @>\bigoplus \Psi_{v_\sigma}>>\bigoplus_{v_\sigma\in\VV_k}(\Qq[\mathrm{st}_{v_\sigma}\DD_{k}(\Delta)]/\Theta_{\Ss_{v_\sigma}})_{2i-2} \\
			@VV\cdot\omega V @VV\cdot\omega V\\
			(\Qq[\DD_{k}(\Delta)]/\Theta)_{2i} @>\bigoplus \Psi_{v_\sigma}>>\bigoplus_{v_\sigma\in\VV_k}(\Qq[\mathrm{st}_{v_\sigma}\DD_{k}(\Delta)]/\Theta_{\Ss_{v_\sigma}})_{2i}
		\end{CD}\]
		Proposition \ref{prop:link of derived} implies that the upper horizontal map is an injection for $i\leqslant k$, so the left vertical map is also an injection for $i\leqslant\min\{k,\lfloor d/2\rfloor\}$.
		When $d$ is even and $k=d/2$, this is an equivalent condition for $\omega$ to be a WLE.
	\end{proof}
	
	\begin{rem}
		For an odd-dimensional rational homology $(d-1)$-sphere $\Delta$, Corollary \ref{cor:equiv of WLP} says that if $\DD_{d/2-1}(\Delta)$ has the WLP then so dose $\Delta$.  However, it seems that we can only get the WLP of $\DD_{d/2}(\Delta)$ from Theorem \ref{thm:WLP of derived}.
		
		Compare the last statement of Theorem \ref{thm:WLP of derived} with the result in \cite{MY14}, which says that the barycentric subdivision of an odd-dimensional Cohen-Macaulay complex has the WLP.
	\end{rem}
	
	\begin{cor}
		Let $\Delta$ be a rational homology $(d-1)$-sphere. Then the $g$-conjecture holds for $\DD_{\lceil d/2\rceil}(\Delta)$ and $\DD_{\lfloor d/2\rfloor}(\Delta)$.
	\end{cor}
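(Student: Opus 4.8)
The plan is to read this corollary off from Theorem \ref{thm:WLP of derived}, repeating the elementary argument behind Proposition \ref{prop:WLP}. First I would note that for $k=\lceil d/2\rceil$ or $k=\lfloor d/2\rfloor$ the complex $\DD_k(\Delta)$ is still a rational homology $(d-1)$-sphere: it is obtained from $\Delta$ by a sequence of stellar subdivisions, and these do not change the homeomorphism type of $|\Delta|$ (equivalently, a stellar subdivision of a rational homology sphere is again one, since all links in question are rational homology spheres of the expected dimension). In particular $\DD_k(\Delta)$ is Cohen--Macaulay over $\Qq$, so by Theorem \ref{thm:stanley} we have $\dim_\Qq(\Qq[\DD_k(\Delta)]/\Theta)_{2i}=h_i(\DD_k(\Delta))$ for every l.s.o.p.\ $\Theta$.

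Next I would take $\Theta$ to be a generic l.s.o.p.\ for $\Qq[\DD_k(\Delta)]$ and apply Theorem \ref{thm:WLP of derived}, whose hypothesis $k\leqslant\lceil d/2\rceil$ holds in both cases. This produces a linear form $\omega$ for which
\[
\cdot\,\omega\colon(\Qq[\DD_k(\Delta)]/\Theta)_{2i-2}\longrightarrow(\Qq[\DD_k(\Delta)]/\Theta)_{2i}
\]
is injective for all $i\leqslant\min\{k,\lfloor d/2\rfloor\}=\lfloor d/2\rfloor$; the key numerical point is that this minimum equals $\lfloor d/2\rfloor$ whether $k=\lceil d/2\rceil$ or $k=\lfloor d/2\rfloor$, i.e.\ the range of injectivity covers the whole $g$-vector. (When $d$ is even, $k=d/2$ and $\omega$ is then an honest weak Lefschetz element, so one could instead quote the final clause of Theorem \ref{thm:WLP of derived} together with Proposition \ref{prop:WLP}.)

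Finally I would pass to the quotient $A=(\Qq[\DD_k(\Delta)]/\Theta)/(\omega)$, a connected commutative graded $\Qq$-algebra generated in degree two. The injectivity of multiplication by $\omega$ forces $\dim_\Qq A_0=1=g_0$ and $\dim_\Qq A_{2i}=h_i(\DD_k(\Delta))-h_{i-1}(\DD_k(\Delta))=g_i(\DD_k(\Delta))$ for $1\leqslant i\leqslant\lfloor d/2\rfloor$, so by Macaulay's theorem (Theorem \ref{thm:m-vector}) the sequence $(\dim_\Qq A_0,\dim_\Qq A_2,\dim_\Qq A_4,\dots)$ is an $M$-sequence, and its initial segment $(g_0(\DD_k(\Delta)),\dots,g_{\lfloor d/2\rfloor}(\DD_k(\Delta)))$ is an $M$-vector --- which is precisely the $g$-conjecture for $\DD_k(\Delta)$. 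I do not expect a genuine obstacle: essentially all of the work is already contained in Theorem \ref{thm:WLP of derived}, and the only points that deserve an explicit word are that $\DD_k(\Delta)$ remains a rational homology $(d-1)$-sphere (hence Cohen--Macaulay, so Stanley's formula applies) and that $\min\{k,\lfloor d/2\rfloor\}=\lfloor d/2\rfloor$ gives injectivity of $\cdot\,\omega$ exactly up to the middle, which is just enough to recover the full $g$-vector from $A$.
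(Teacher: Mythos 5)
Your proposal is correct and follows exactly the route the paper intends: the corollary is stated without proof as an immediate consequence of Theorem \ref{thm:WLP of derived}, and your argument simply spells out the standard deduction (injectivity of $\cdot\omega$ up to degree $2\lfloor d/2\rfloor$ gives $\dim_\Qq A_{2i}=g_i$ for the quotient $A$, and Macaulay's theorem then shows the $g$-vector is an $M$-vector), together with the routine observations that $\DD_k(\Delta)$ is again a rational homology $(d-1)$-sphere and that $\min\{k,\lfloor d/2\rfloor\}=\lfloor d/2\rfloor$ in both cases.
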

	
	\begin{thm}\label{thm:stellar WLP}
		In the notation of Proposition \ref{prop:injection of stellar sub}, if $k\geqslant \lceil \frac{d-1}{2}\rceil$, then for any $\Delta'\in\Ss(\Delta,\mathscr{A}_{k})$, there exists a linear form $\omega\in\Qq[\Delta']$ such that the map
		\[\cdot\omega:(\Qq[\Delta']/\Theta')_{2i-2}\to (\Qq[\Delta']/\Theta')_{2i}\]
		is an injection for $i\leqslant\min\{d-k,\lfloor d/2\rfloor\}$. Especially, if $d$ is even, and $k=d/2$, then there is a WLE for $\Qq[\Delta']/\Theta'$.
	\end{thm}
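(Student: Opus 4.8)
The plan is to transcribe the proof of Theorem \ref{thm:WLP of derived}, replacing the partial barycentric subdivision $\DD_k(\Delta)$ there by an arbitrary $\Delta'\in\Ss(\Delta,\mathscr{A}_k)$, Corollary \ref{cor:WLP of link} by Lemma \ref{lem:join WLP} together with Proposition \ref{prop:stellar subdivision}(a), and Proposition \ref{prop:link of derived} by Proposition \ref{prop:injection of stellar sub}. Write $\VV=\FF_0(\Delta')\setminus\FF_0(\Delta)=\{v_{\sigma_1},\dots,v_{\sigma_s}\}$ for the vertices created by the stellar subdivisions. Since stellar subdivision does not change the homeomorphism type, $\Delta'$ is again a rational homology $(d-1)$-sphere, so $\Qq[\Delta']/\Theta'$ is a Poincar\'e duality algebra. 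For a linear form $\omega\in\Qq[\Delta']$ I would work with the commutative square whose horizontal maps are the restriction morphism $\bigoplus_j\Psi_{v_{\sigma_j}}$ of \eqref{eq:restriction} in degrees $2i-2$ and $2i$ and whose vertical maps are multiplication by $\omega$, exactly as in the proof of Theorem \ref{thm:WLP of derived}.

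The main step is to prove that every link $\mathrm{lk}_{v_{\sigma_j}}\Delta'$ carries the WLP. Ordering the stellar subdivisions as in the proof of Lemma \ref{lem:basis}, immediately after subdividing at $\sigma_j$ the link of $v_{\sigma_j}$ is $\partial\sigma_j*L$ for some subdivision $L$ of $\mathrm{lk}_{\sigma_j}\Delta$; since $\partial\sigma_j=\partial\Delta^{k}$ with $k\geqslant\lceil(d-1)/2\rceil$, Lemma \ref{lem:join WLP} gives the WLP for this rational homology $(d-2)$-sphere. The remaining subdivisions of $\mathscr{A}_k$, whenever they meet the star of $v_{\sigma_j}$, restrict to stellar subdivisions of this link at faces of dimension $k\geqslant\lceil(d-1)/2\rceil\geqslant(d-1)/2$, so Proposition \ref{prop:stellar subdivision}(a), applied one subdivision at a time, propagates the WLP all the way to $\mathrm{lk}_{v_{\sigma_j}}\Delta'$. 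By \eqref{eq:link=star} the ring $\Qq[\mathrm{st}_{v_{\sigma_j}}\Delta']/\Theta'_{\Ss_{v_{\sigma_j}}}$ is isomorphic to $\Qq[\mathrm{lk}_{v_{\sigma_j}}\Delta']$ modulo an l.s.o.p. (it is the cohomology of $N_{v_{\sigma_j}}$) and hence also has the WLP; by Definition \ref{def:WLP}, Proposition \ref{prop:WLP} and the Dehn--Sommerville relations, a weak Lefschetz element for it multiplies injectively from degree $2i-2$ to degree $2i$ for every $i\leqslant\lceil(d-1)/2\rceil=\lfloor d/2\rfloor$.

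Next, for each $j$ the set of linear forms of $\Qq[\Delta']$ whose restriction to $\mathrm{st}_{v_{\sigma_j}}\Delta'$ is a weak Lefschetz element is a nonempty Zariski open set; intersecting these finitely many conditions (Lemma \ref{lem:generic}) I can choose a single $\omega$ for which the right vertical map of the square is injective whenever $i\leqslant\lfloor d/2\rfloor$. On the other hand, Proposition \ref{prop:injection of stellar sub}, re-indexed for the family $\mathscr{A}_k$ of $k$-faces, makes the top horizontal map injective for $i\leqslant d-k$. A diagram chase then shows that $\cdot\omega\colon(\Qq[\Delta']/\Theta')_{2i-2}\to(\Qq[\Delta']/\Theta')_{2i}$ is injective whenever $i\leqslant\min\{d-k,\lfloor d/2\rfloor\}$. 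When $d$ is even and $k=d/2$ this range is $i\leqslant d/2$, which by Proposition \ref{prop:WLP} says exactly that $\omega$ is a WLE for $\Qq[\Delta']/\Theta'$.

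The genuinely delicate point is the first step: one must verify that along any sequence of stellar subdivisions defining an element of $\Ss(\Delta,\mathscr{A}_k)$, the subdivisions at the other faces of $\mathscr{A}_k$ really do restrict to admissible stellar subdivisions of $\mathrm{lk}_{v_{\sigma_j}}\Delta'$ at faces of dimension $\geqslant(d-1)/2$ --- in particular that those faces are not destroyed prematurely and are subdivided in an allowable order --- so that Proposition \ref{prop:stellar subdivision}(a) can legitimately be applied at each stage. Everything else is a routine adaptation of the argument for Theorem \ref{thm:WLP of derived}.
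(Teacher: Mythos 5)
Your proof is correct and follows essentially the same route as the paper, which likewise combines Proposition \ref{prop:injection of stellar sub} with a generically chosen weak Lefschetz element on the star/link rings in the commutative square, ``the same reasoning as in the proof of Theorem \ref{thm:WLP of derived}.'' In fact your treatment of the links --- Lemma \ref{lem:join WLP} applied right after the subdivision at $\sigma_j$, then Proposition \ref{prop:stellar subdivision}(a) to propagate the WLP through the later subdivisions at the other $k$-faces --- is slightly more careful than the paper's terse assertion that $\mathrm{lk}_{v_{\sigma_j}}\Delta'$ itself has the form $\partial\sigma_j*L$, which can fail when some $\sigma_l$ meeting $\sigma_j$ is subdivided afterwards.
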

	\begin{proof}
		By Proposition \ref{prop:injection of stellar sub}, we have an injection
		\[(\Qq[\Delta']/\Theta')_{2i}\to\bigoplus_{1\leqslant j\leqslant h_k}(\Qq[\mathrm{st}_{v_{\sigma_j}}\Delta']/\Theta')_{2i}\quad \text{for }i\leqslant d-k-1.\]
		Since $\mathrm{lk}_{v_{\sigma_j}}\Delta'$ is a $(d-2)$-sphere, and has the form $\partial\sigma_j*L$,
		it follows that if $\dim\sigma_j=k\geqslant\lceil \frac{d-1}{2}\rceil$, there is a WLE for $\Qq[\mathrm{st}_{v_{\sigma_j}}\Delta']/\Theta'$ by Lemma \ref{lem:join WLP}.
		The same reasoning as in the proof of Theorem \ref{thm:WLP of derived} gives the desired result.
	\end{proof}
	\begin{cor}\label{cor:g-conj}
		Let $\Delta$ be a rational homology $(d-1)$-sphere. Then the $g$-conjecture holds for any $\Delta'\in \Ss(\Delta,\mathscr{A}_{\lceil d/2\rceil})$ or $\Delta'\in\Ss(\Delta,\mathscr{A}_{\lfloor d/2\rfloor})$.
	\end{cor}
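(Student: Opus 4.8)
The plan is to deduce Corollary~\ref{cor:g-conj} formally from Theorem~\ref{thm:stellar WLP}, the substantive work having already gone into that theorem; what remains is to match up the numerical hypotheses and to translate a weak-Lefschetz-type injectivity statement into the $g$-conjecture.

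First I would record two preliminaries. Any $\Delta'\in\Ss(\Delta,\mathscr{A})$ obtained from $\Delta$ by a sequence of stellar subdivisions is again a rational homology $(d-1)$-sphere: stellar subdivision preserves the PL-homeomorphism type of $|\Delta|$ (equivalently, one checks directly that every link of $\Delta'$ is a rational homology sphere of the appropriate dimension). Consequently $\Qq[\Delta']$ is Cohen--Macaulay, so $\dim_\Qq(\Qq[\Delta']/\Theta')_{2i}=h_i(\Delta')$ by Theorem~\ref{thm:stanley}, and the $h$-vector obeys the Dehn--Sommerville relations $h_i(\Delta')=h_{d-i}(\Delta')$. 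Secondly, I would isolate the following sufficient condition for the $g$-conjecture for $\Delta'$: that there exist an l.s.o.p. $\Theta'$ for $\Qq[\Delta']$ and a linear form $\omega$ for which
\[\cdot\omega\colon(\Qq[\Delta']/\Theta')_{2i-2}\to(\Qq[\Delta']/\Theta')_{2i}\]
is injective for every $1\le i\le\lfloor d/2\rfloor$. Indeed, set $A=(\Qq[\Delta']/\Theta')/(\omega)$; this is a connected graded $\Qq$-algebra, concentrated in even degrees and generated by $A_2$ (because $\Qq[\Delta']/\Theta'$ is generated in degree $2$). The injectivity hypothesis gives $\dim_\Qq A_{2i}=h_i(\Delta')-h_{i-1}(\Delta')=g_i(\Delta')$ for $0\le i\le\lfloor d/2\rfloor$; reindexing by halving degrees and applying Macaulay's Theorem~\ref{thm:m-vector} to $A$ shows that $(\dim_\Qq A_0,\dim_\Qq A_2,\dim_\Qq A_4,\dots)$ is an $M$-sequence, hence its initial segment $(g_0,\dots,g_{\lfloor d/2\rfloor})$ --- the $g$-vector of $\Delta'$ --- is an $M$-vector.

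It then remains to verify that Theorem~\ref{thm:stellar WLP} supplies such a pair $(\Theta',\omega)$ for $\Delta'\in\Ss(\Delta,\mathscr{A}_{\lceil d/2\rceil})$ and for $\Delta'\in\Ss(\Delta,\mathscr{A}_{\lfloor d/2\rfloor})$. If $d$ is even, then $\lceil d/2\rceil=\lfloor d/2\rfloor=d/2=\lceil(d-1)/2\rceil$, so with $k=d/2$ the theorem applies and in fact yields a weak Lefschetz element for $\Qq[\Delta']/\Theta'$; Proposition~\ref{prop:WLP} then gives the $g$-conjecture immediately. If $d$ is odd, then $\lceil(d-1)/2\rceil=(d-1)/2=\lfloor d/2\rfloor$, and for $k=\lceil d/2\rceil=(d+1)/2$ as well as for $k=\lfloor d/2\rfloor=(d-1)/2$ one has $k\ge\lceil(d-1)/2\rceil$, so the theorem applies; since $d-k\ge(d-1)/2=\lfloor d/2\rfloor$ in both cases, the injective range $i\le\min\{d-k,\lfloor d/2\rfloor\}$ in Theorem~\ref{thm:stellar WLP} is exactly $i\le\lfloor d/2\rfloor$, which is what the sufficient condition above requires.

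I do not anticipate a genuine obstacle here: the corollary is bookkeeping layered on Theorem~\ref{thm:stellar WLP}. The two points that deserve care are (i) the elementary observation that weak-Lefschetz injectivity up to the \emph{middle} degree already forces the $g$-vector to be an $M$-vector (via Macaulay), so that for odd $d$ one neither obtains nor needs the full weak Lefschetz property that Proposition~\ref{prop:WLP} characterizes through the degree $2\lceil d/2\rceil$; and (ii) checking that $\min\{d-k,\lfloor d/2\rfloor\}=\lfloor d/2\rfloor$ exactly when $k\in\{\lfloor d/2\rfloor,\lceil d/2\rceil\}$, which explains why these are precisely the families of subdivisions for which the injective range provided by Theorem~\ref{thm:stellar WLP} reaches the middle.
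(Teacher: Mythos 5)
Your proposal is correct and matches the paper's intended argument: the corollary is stated as an immediate consequence of Theorem \ref{thm:stellar WLP}, with exactly your bookkeeping ($k=\lceil d/2\rceil$ or $\lfloor d/2\rfloor$ gives $\min\{d-k,\lfloor d/2\rfloor\}=\lfloor d/2\rfloor$) and the standard step that injectivity of $\cdot\omega$ up to the middle degree makes $(g_0,\dots,g_{\lfloor d/2\rfloor})$ the initial Hilbert function of $(\Qq[\Delta']/\Theta')/(\omega)$, hence an $M$-vector by Theorem \ref{thm:m-vector} (and Proposition \ref{prop:WLP} in the even case). Your explicit note that for odd $d$ one does not need (and does not get) the full WLP is a point the paper leaves implicit, but it is the same route.
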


	\section{Toric spaces associated to Buchsbaum complexes}\label{sec:buchsbaum complex}
	In this section, we consider toric spaces associated to rational Buchsbaum complexes. This class of simplicial complexes plays a significant role in algebraic combinatorics and includes rational homology manifolds as a special subclass.
	We start with some general results about the cohomology properties of toric spaces by the D-J construction.
	\subsection{A decomposition  of toric spaces}
	In this subsection, $\Delta$ is an arbitrary simplicial complex of dimension $d-1$, and $M_\Delta=M(\Delta,\mathit{\Lambda})$ is a toric space associated to $\Delta$.
	As we have seen in \ref{subsec:rational toric manifold}, $M_\Delta$ is the quotient space $\CC\Delta\times T^d/\sim $. So, $M_\Delta$ is the union of two spaces:
	\begin{equation}\label{eq:decompositon}
		M_\Delta=(\CC\Delta\times T^d)\cup_{\Delta\times T^d} ({I\times\Delta\times T^d}/\sim),
	\end{equation}
	where $I=[0,1]$, with the relation `$\sim$' defined on $\{0\}\times \Delta'\times T^d$ and the gluing identity map defined on $\{1\}\times \Delta\times T^d$.
	So we have a long exact sequence
	\begin{equation}\label{eq:long exact}
		\begin{split}
			\cdots\to H^*(M_\Delta,{I\times\Delta\times T^d}/\sim)\xr{j^*}&H^*(M_\Delta)\xr{i^*}H^*({I\times\Delta\times T^d}/\sim)\\
			&\xr{\partial} H^{*+1}(M_\Delta,{I\times\Delta\times T^d}/\sim)\to\cdots
		\end{split}
	\end{equation}
	By excision and the fact that $(\CC\Delta\times T^d)/(\Delta\times T^d)=\Sigma\Delta\wedge T^d_+$, we have
	\[H^*(M_\Delta,{I\times\Delta\times T^d}/\sim)\cong H^*({\CC\Delta\times T^d},{\Delta\times T^d})\cong \w H^*(\Sigma\Delta)\otimes H^*(T^d).\]
	
	\begin{lem}\label{lem:product vanishing}
		In the natation above, we have $\mathrm{Im}\,j^*\cdot\w H^*(M_\Delta)=0$
	\end{lem}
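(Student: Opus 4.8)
The plan is to analyze the map $j^*$ via the geometric decomposition \eqref{eq:decompositon} and show that every class in its image already dies when we restrict to the subspace $\CC\Delta\times T^d$, so that products of such classes with anything in $H^*(M_\Delta)$ vanish. The key point is that the relative group $H^*(M_\Delta, I\times\Delta\times T^d/\!\sim)$ was identified by excision with $H^*(\CC\Delta\times T^d, \Delta\times T^d)$, and under this identification the map $j^*$ factors through the restriction to the pair $(\CC\Delta\times T^d,\ \Delta\times T^d)$. Concretely, writing $U=\CC\Delta\times T^d\subset M_\Delta$ (a closed subspace, the "bottom" piece of the union \eqref{eq:decompositon}), there is a commutative diagram relating the long exact sequence of $(M_\Delta, I\times\Delta\times T^d/\!\sim)$ to that of $(U,\Delta\times T^d)$, and $j^*$ is the composite
\[
H^*(M_\Delta, I\times\Delta\times T^d/\!\sim)\xrightarrow{\ \cong\ } H^*(U,\Delta\times T^d)\xrightarrow{\ \partial'\ } H^*(U)\xrightarrow{\ \text{?}\ } \cdots
\]
— more precisely, the composition $H^*(M_\Delta)\xleftarrow{j^*}H^*(M_\Delta, I\times\Delta\times T^d/\!\sim)$ followed by the restriction $H^*(M_\Delta)\to H^*(U)$ lands in the image of $H^*(U,\Delta\times T^d)\to H^*(U)$.

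First I would make this factorization precise: let $\iota\colon U\hookrightarrow M_\Delta$ be the inclusion. Excision gives the isomorphism $H^*(M_\Delta, I\times\Delta\times T^d/\!\sim)\cong H^*(U,\Delta\times T^d)$, and naturality of the long exact sequence of a pair with respect to the inclusion of pairs $(U,\Delta\times T^d)\hookrightarrow (M_\Delta, I\times\Delta\times T^d/\!\sim)$ shows that $\iota^*\circ j^*$ equals the composition $H^*(M_\Delta, I\times\Delta\times T^d/\!\sim)\cong H^*(U,\Delta\times T^d)\to H^*(U)$. Now $U=\CC\Delta\times T^d$ is homotopy equivalent to $T^d$ (since $\CC\Delta$ is contractible), so $H^*(U)\cong H^*(T^d)=\Lambda[v_1,\dots,v_d]$, and the image of $H^*(U,\Delta\times T^d)\to H^*(U)$ is exactly the kernel of the restriction $H^*(U)\to H^*(\Delta\times T^d)$. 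But $\Delta\times T^d\hookrightarrow \CC\Delta\times T^d$ induces an isomorphism on the $T^d$-factor in cohomology (both project to $T^d$ compatibly), hence $H^*(U)\to H^*(\Delta\times T^d)$ is injective, so its kernel is zero. Therefore $\iota^*\circ j^* = 0$, i.e. $\iota^*(\mathrm{Im}\,j^*)=0$.

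The final step is to upgrade "$\iota^*$ kills $\mathrm{Im}\,j^*$" to "$\mathrm{Im}\,j^*\cdot \widetilde H^*(M_\Delta)=0$". This I would do by a standard relative-cup-product argument: $j^*$ is the map in the long exact sequence of the pair, and for the cup product structure on the long exact sequence of a pair $(X,A)$ one has the relation that the image of $H^*(X,A)\to H^*(X)$ is an ideal whose product with the image of $H^*(X)\to H^*(X/\text{complement})$... — more to the point, I would use that the composite $H^*(M_\Delta,I\times\Delta\times T^d/\!\sim)\otimes H^*(M_\Delta)\to H^*(M_\Delta,I\times\Delta\times T^d/\!\sim)\xrightarrow{j^*\otimes\mathrm{id}}$ is compatible with $j^*$, so $j^*(\alpha)\smile \beta = j^*(\alpha\smile \beta)$ for $\alpha\in H^*(M_\Delta,I\times\Delta\times T^d/\!\sim)$, $\beta\in H^*(M_\Delta)$. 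Hence $\mathrm{Im}\,j^*$ is an ideal in $H^*(M_\Delta)$. An ideal generated by classes in the kernel of the restriction $\iota^*$ to the subspace $\CC\Delta\times T^d$: to conclude it squares to zero against all of $\widetilde H^*(M_\Delta)$ I would instead argue directly that $I\times\Delta\times T^d/\!\sim$ deformation retracts — via the obvious retraction of $I$ onto $\{1\}$ composed with the identification — no; cleaner: the open cover of $M_\Delta$ by (a neighborhood of) $\CC\Delta\times T^d$ and (a neighborhood of) $I\times\Delta\times T^d/\!\sim$ shows, as in the standard proof that products of relative classes from a two-set cover vanish, that the product of a class pulled back from $H^*(M_\Delta, I\times\Delta\times T^d/\!\sim)$ with a class pulled back from $H^*(M_\Delta, \CC\Delta\times T^d)$ is zero; since every class in $\widetilde H^*(M_\Delta)$ restricting trivially to $\CC\Delta\times T^d$ comes from $H^*(M_\Delta,\CC\Delta\times T^d)$, and we have just shown $\iota^*(\mathrm{Im}\,j^*)=0$ means $\mathrm{Im}\,j^*$ itself consists of such classes (indeed $\mathrm{Im}\,j^*\subset\ker\iota^*$), we get $\mathrm{Im}\,j^*\cdot\ker\iota^* = 0$; but $\mathrm{Im}\,j^*\subseteq\ker\iota^*$, and we want $\mathrm{Im}\,j^*\cdot\widetilde H^*(M_\Delta)=0$, which follows because any $\beta\in\widetilde H^*(M_\Delta)$ can be split, after restricting, as $\iota^*\beta\in\widetilde H^*(T^d)$ plus a correction in $\ker\iota^*$ — and $\mathrm{Im}\,j^*\cdot(\text{pullback from }T^d)$ already vanishes since the relative classes in $H^*(\CC\Delta\times T^d,\Delta\times T^d)$ multiply trivially with the $T^d$-part. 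The main obstacle I anticipate is precisely pinning down this last bookkeeping: making the splitting $H^*(M_\Delta)\cong (\text{image of }H^*(T^d)) \oplus \ker\iota^*$ rigorous (it need not be a ring splitting) and verifying that $\mathrm{Im}\,j^*$ times the $T^d$-summand vanishes. I expect the honest argument is the two-open-sets cup product vanishing lemma: if $X=A\cup B$ with $A,B$ open and $a\in H^*(X,A)$, $b\in H^*(X,B)$, then $a\smile b=0$ in $H^*(X,A\cup B)=0$; applying this with $A$ a neighborhood of the $I\times\Delta\times T^d$-piece and $B$ a neighborhood of $\CC\Delta\times T^d$, together with the observation that $\widetilde H^*(M_\Delta)=\mathrm{Im}(H^*(M_\Delta,B)\to H^*(M_\Delta))+\mathrm{Im}(H^*(M_\Delta,A)\to H^*(M_\Delta))$ (since $H^*(B)=H^*(\CC\Delta\times T^d)=H^*(T^d)$ accounts for the rest), gives the claim cleanly. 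That covering/Mayer–Vietoris-for-relative-cup-products step is where the real content lies.
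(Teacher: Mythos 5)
Your closing step---the relative cup product vanishing coming from the fact that the two pieces of \eqref{eq:decompositon} cover $M_\Delta$, i.e. $j^*(\alpha)\ssm f^*(\beta)=0$ for $\alpha\in H^*(M_\Delta,{I\times\Delta\times T^d}/\sim)$ and $\beta\in H^*(M_\Delta,\CC\Delta\times T^d)$, where $f^*$ denotes the map $H^*(M_\Delta,\CC\Delta\times T^d)\to H^*(M_\Delta)$---is exactly the mechanism the paper uses. The gap is in what comes before: to deduce $\mathrm{Im}\,j^*\cdot\w H^*(M_\Delta)=0$ from this you need every class of $\w H^*(M_\Delta)$ to be of the form $f^*(\beta)$, i.e. you need $f^*$ to be \emph{onto} $\w H^*(M_\Delta)$, and your proposal never establishes this. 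The paper gets it from a geometric observation you do not use: the fibre torus $T^d\simeq\CC\Delta\times T^d$ is null-homotopic in $M_\Delta$ (push the fibre into the interior of a facet of the orbit space, where the isotropy subtorus has full rank), so the restriction $g^*:H^k(M_\Delta)\to H^k(\CC\Delta\times T^d)$ is zero for $k>0$, and exactness of the long exact sequence of the pair $(M_\Delta,\CC\Delta\times T^d)$ gives surjectivity of $f^*$.

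Your attempted substitute---splitting $\w H^*(M_\Delta)$ into a ``$T^d$-part'' plus $\ker\iota^*$ and handling the summands separately---is exactly the point you flag as unresolved, and the sketch for the $T^d$-part does not work. Your correct first step shows only $\mathrm{Im}\,j^*\subset\ker\iota^*$, whence $\iota^*\bigl(j^*(\alpha)\ssm\beta\bigr)=0$; but a class in $H^*(M_\Delta)$ cannot be shown to vanish by restricting it to $U=\CC\Delta\times T^d$, so nothing follows about $j^*(\alpha)\ssm\beta$ itself when $\iota^*\beta\neq0$. Moreover, even granting the proposed decomposition $\w H^*(M_\Delta)=\mathrm{Im}\,f^*+\mathrm{Im}\,j^*$, the two-subsets lemma only kills products of an $A$-relative class with a $B$-relative class; it says nothing about $\mathrm{Im}\,j^*\cdot\mathrm{Im}\,j^*$, whose relative representative lives in $H^*(M_\Delta,A)$ rather than in $H^*(M_\Delta,M_\Delta)=0$, so that case would remain open. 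Once you add the observation that $T^d$ is contractible in $M_\Delta$, all of this bookkeeping disappears: $g^*$ vanishes in positive degrees, $f^*$ is onto, and your relative-cup-product step finishes the proof exactly as in the paper; your excision argument for $\iota^*\circ j^*=0$, while correct, is then not needed.
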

	\begin{proof}
		Consider another long exact sequence
		\[\begin{split}
			\cdots\to H^*(M_\Delta,\CC\Delta\times T^d)\xr{f^*}H^*(M_\Delta)&\xr{g^*}H^*(\CC\Delta\times T^d)\\
			&\xr{\partial} H^{*+1}(M_\Delta,\CC\Delta\times T^d)\to\cdots
		\end{split}\]
		Since $T^d$ is contractible in $M_\Delta$, $H^k(M_\Delta)\xr{g^*}H^k(\CC\Delta\times T^d)$ is a zero map for $k>0$. Exactness of the sequence then implies that $H^*(M_\Delta,\CC\Delta\times T^d)\xr{f^*}\w H^*(M_\Delta)$ is onto. On the other hand, from the definition of cup product, we can see that $j^*(\alpha)\ssm f^*(\beta)=0$ for any $\alpha\in H^*(M_\Delta,I\times\Delta\times T^d/\sim)$ and $\beta\in H^*(M_\Delta,\CC\Delta\times T^d)$. Thus, the lemma is proved.
	\end{proof}
	\begin{lem}\label{lem:zero mapping}
		For any nonempty face $\sigma\in \Delta$, the following composition is zero.
		\[H^*(M_\Delta,{I\times\Delta\times T^d}/\sim)\xr{j^*} H^*(M_\Delta)\to H^*(M_\sigma),\]
		where $M_\sigma\subset M_\Delta$ is the toric space defined in \S \ref{subsec:local}.
	\end{lem}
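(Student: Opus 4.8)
The idea is that $\mathrm{st}_\sigma\Delta$ is a cone, so the relative cohomology group appearing in the long exact sequence \eqref{eq:long exact} vanishes when that sequence is formed for $M_\sigma$ in place of $M_\Delta$; this vanishing is then transported to $M_\Delta$ by naturality. To carry this out I would first note that $M_\sigma=M(\mathrm{st}_\sigma\Delta,\mathit{\Lambda}_{\Ss_\sigma})$ is again a toric space associated to the complex $\mathrm{st}_\sigma\Delta$ (the restricted matrix $\mathit{\Lambda}_{\Ss_\sigma}$ is a characteristic matrix for $\mathrm{st}_\sigma\Delta$, since an l.s.o.p.\ for $\Qq[\Delta]$ restricts to an l.s.o.p.\ for the face ring of any subcomplex of the same dimension), so it carries the decomposition \eqref{eq:decompositon},
\[
M_\sigma=\big(\CC(\mathrm{st}_\sigma\Delta)\times T^d\big)\cup_{\mathrm{st}_\sigma\Delta\times T^d}\big(I\times\mathrm{st}_\sigma\Delta\times T^d/\!\sim\big),
\]
and hence sits in a long exact sequence of the shape \eqref{eq:long exact} whose boundary map I denote $j_\sigma^*$; by the same excision computation as in the text,
\[
H^*\big(M_\sigma,\,I\times\mathrm{st}_\sigma\Delta\times T^d/\!\sim\big)\cong\w H^*\big(\Sigma(\mathrm{st}_\sigma\Delta)\big)\otimes H^*(T^d).
\]
Since $\sigma\neq\varnothing$, any vertex $i\in\sigma$ exhibits $\mathrm{st}_\sigma\Delta$ as the cone on $\mathrm{lk}_i(\mathrm{st}_\sigma\Delta)$ with apex $i$, so $|\mathrm{st}_\sigma\Delta|$ is contractible, $\w H^*(\Sigma(\mathrm{st}_\sigma\Delta))=0$, the relative group above is zero, and therefore $j_\sigma^*=0$.

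Next I would identify the map $H^*(M_\Delta)\to H^*(M_\sigma)$ in the statement: by \eqref{eq:map}--\eqref{eq:link=star} it is induced by the continuous map $\psi_\sigma\circ q_\sigma\colon M_\sigma\to\hat M_\sigma\hookrightarrow M_\Delta$, which on $T^d$-orbit spaces is just the inclusion $P_{\mathrm{st}_\sigma\Delta}\hookrightarrow P_\Delta$ coming from $\mathrm{st}_\sigma\Delta\subset\Delta$ (the finite quotient $q_\sigma$ is the identity on orbit spaces). This inclusion sends the cone point of $P_{\mathrm{st}_\sigma\Delta}$ to the cone point of $P_\Delta$ and the collar of $P_{\mathrm{st}_\sigma\Delta}$ into the collar of $P_\Delta$, so $\psi_\sigma\circ q_\sigma$ maps the cone piece of $M_\sigma$ into that of $M_\Delta$ and the collar piece into that of $M_\Delta$; in other words it is a map of pairs
\[
\big(M_\sigma,\,I\times\mathrm{st}_\sigma\Delta\times T^d/\!\sim\big)\longrightarrow\big(M_\Delta,\,I\times\Delta\times T^d/\!\sim\big),
\]
the torus identifications matching because $\mathit{\Lambda}_{\Ss_\sigma}$ is a submatrix of $\mathit{\Lambda}$. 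Functoriality of the long exact sequence of a pair now gives a commutative square with top row $j^*$, bottom row $j_\sigma^*$, and vertical maps the restrictions; since $j_\sigma^*=0$, the composite $H^*(M_\Delta,I\times\Delta\times T^d/\!\sim)\xr{j^*}H^*(M_\Delta)\to H^*(M_\sigma)$ equals the restriction on relative cohomology followed by $j_\sigma^*=0$, hence vanishes.

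The step I expect to cost the most work is the assertion that the inclusion of orbit polyhedra $P_{\mathrm{st}_\sigma\Delta}\hookrightarrow P_\Delta$ respects the cone/collar decomposition, i.e.\ that $P_{\mathrm{st}_\sigma\Delta}$ contains the cone point of $P_\Delta$ and that its collar is contained in the collar of $P_\Delta$; checking this means opening up the D-J construction (and the identification $M_\sigma=\ZZ_{\mathrm{st}_\sigma\Delta}/K_{\mathit{\Lambda}_{\Ss_\sigma}}$) rather than arguing formally. One also has to record that replacing $M_\sigma$ by $\hat M_\sigma$ is harmless: $q_\sigma$ is a rational cohomology isomorphism and, being a quotient by a finite subgroup of $T^d$, is compatible with the decomposition. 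Everything else is a routine diagram chase.
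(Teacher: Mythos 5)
Your argument is correct and is essentially the paper's own proof: both factor the composite, via naturality of the map of pairs $(M_\sigma,\,I\times\mathrm{st}_\sigma\Delta\times T^d/\!\sim)\to(M_\Delta,\,I\times\Delta\times T^d/\!\sim)$, through $H^*\big(M_\sigma,\,I\times\mathrm{st}_\sigma\Delta\times T^d/\!\sim\big)\cong\w H^*\big(\Sigma(\mathrm{st}_\sigma\Delta)\big)\otimes H^*(T^d)$, which vanishes because $\mathrm{st}_\sigma\Delta$ is a cone. The extra bookkeeping you flag (the map respecting the cone/collar decomposition, and the finite quotient $q_\sigma$ being harmless rationally) is exactly what the paper leaves implicit.
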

	\begin{proof}
		The composition in the lemma is the same as the composition
		\[H^*(M_\Delta,{I\times\Delta\times T^d}/\sim)\to H^*(M_\sigma,{I\times\mathrm{st}_\sigma\Delta\times T^d}/\sim)\to H^*(M_\sigma).\]
		The middle relative cohomology is isomorphic to the reduced cohomology of
		\[(\CC(\mathrm{st}_\sigma\Delta)\times T^d)/(\mathrm{st}_\sigma\Delta\times T^d)=\Sigma(\mathrm{st}_\sigma\Delta)\wedge T^d_+\simeq pt.\]
		The right homotopy equivalent follows from the fact that $\mathrm{st}_\sigma\Delta$ is contractible.
		Hence this composition factors through a zero term, and so itself is zero.
	\end{proof}
	
	\subsection{Cohomology of toric spaces associated to Buchsbaum complexes}\label{subsec:buchsbaum}
	In this subsection, we will give topological proofs of several fundamental algebraic results about Buchsbaum complexes.
	\begin{lem}\label{lem:cokernel}
		Suppose $\Delta$ is a Buchsbaum complex.
		Let $\Delta\times T^d=\{0\}\times \Delta\times T^d$, and $\pi:\Delta\times T^d\to\Delta\times T^d/\sim$ the quotient map in the definition of $M_\Delta$.
		Then the following composition is zero for any $q>p\geqslant0$.
		\[H^{p+q}({\Delta\times T^d}/\sim;\Qq)\xr{\pi^*}H^{p+q}({\Delta\times T^d};\Qq)\to H^p(\Delta;\Qq)\otimes H^q(T^d;\Qq).\]
	\end{lem}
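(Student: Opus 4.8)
The plan is to compare the Mayer-Vietoris spectral sequences of compatible covers of $\Delta\times T^d/\sim$ and of the trivial bundle $\Delta\times T^d$. Write $Y:=\Delta\times T^d/\sim$ and $\pi\colon\Delta\times T^d\to Y$ for the quotient map. After the Künneth identification $H^*(\Delta\times T^d;\Qq)\cong H^*(\Delta;\Qq)\otimes H^*(T^d;\Qq)$ the map in the statement is the component of $\pi^*$ landing in bidegree $(p,q)$, so it suffices to show that $\mathrm{Im}\,\pi^*$ meets no summand $H^p(\Delta;\Qq)\otimes H^q(T^d;\Qq)$ with $q\geqslant1$; equivalently, that $\mathrm{Im}\,\pi^*\subseteq H^*(\Delta;\Qq)\otimes H^0(T^d;\Qq)$ (any class of bidegree $(p,q)$ with $q>p\geqslant0$ has $q\geqslant1$).

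First I would set up the cover. Realize $Y$ as the part of $M_\Delta$ over the barycentric subdivision $\Delta'\subset P_\Delta$, and cover it by (open thickenings of) the closed pieces $\w F_i:=\pi(F_i\times T^d)$, $i\in\FF_0(\Delta)$, where $F_i\subset\Delta'$ is the realization of $\{\tau\in\Delta:i\in\tau\}$. Since $\bigcup_iF_i=\Delta'$ these cover $Y$, and $[x,g]\in\w F_i$ exactly when $x\in F_i$; hence $\w F_{i_0}\cap\dots\cap\w F_{i_s}=\pi(F_\sigma\times T^d)$ for $\sigma=\{i_0,\dots,i_s\}\in\Delta$ (and is empty otherwise), where $F_\sigma$ is the cone on $(\mathrm{lk}_\sigma\Delta)'$ with apex the vertex $[\sigma]$ of $\Delta'$. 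The key point is that over every point of $F_\sigma$ the isotropy subtorus contains $T_\sigma$, so $T_\sigma$ is collapsed throughout $\pi(F_\sigma\times T^d)=F_\sigma\times T^d/\sim$; dividing it out shows $\pi(F_\sigma\times T^d)$ is, rationally, the toric manifold $N_\sigma=M(\mathrm{lk}_\sigma\Delta,\Gamma)$ of \S\ref{subsec:local}. Pulling this cover back along $\pi$ covers $\Delta\times T^d$ by $\{F_i\times T^d\}$, whose multi-intersections $F_\sigma\times T^d$ are homotopy equivalent to $T^d$ since $F_\sigma$ is contractible.

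Next I would compare the spectral sequences. For $\Delta\times T^d$ one gets $E_1^{s,t}=\bigoplus_{\sigma\in\FF_s(\Delta)}H^t(T^d;\Qq)\Rightarrow H^{s+t}(\Delta\times T^d;\Qq)$, degenerating at the Künneth decomposition $E_2^{s,t}=H^s(\Delta;\Qq)\otimes H^t(T^d;\Qq)$ with base-degree filtration; for $Y$ one gets $E_1^{s,t}=\bigoplus_{\sigma\in\FF_s(\Delta)}H^t(N_\sigma;\Qq)\Rightarrow H^{s+t}(Y;\Qq)$. On the $\sigma$-summand of $E_1$ the map induced by $\pi$ is the homomorphism $H^t(N_\sigma;\Qq)\to H^t(T^d;\Qq)$ coming from $\pi|_{F_\sigma\times T^d}$. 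Since $F_\sigma\times T^d$ deformation retracts onto the fibre $\{[\sigma]\}\times T^d$ and $\pi$ carries that onto the inclusion $T^d/T_\sigma\hookrightarrow N_\sigma$ of the fibre of $N_\sigma$ over $[\sigma]$, this homomorphism factors through the restriction $H^t(N_\sigma;\Qq)\to H^t(T^d/T_\sigma;\Qq)$, which vanishes for $t>0$ because the fibre inclusion $T^d/T_\sigma\hookrightarrow N_\sigma$ is rationally null-homotopic: $\Delta$ being pure (part of the Buchsbaum hypothesis, Theorem~\ref{thm:algebraic property}), $\sigma$ lies in a facet $\tau$, over whose dual vertex in $P_{\mathrm{lk}_\sigma\Delta}$ the isotropy has full rank, so the fibre there is a point rationally, and sliding $[\sigma]$ to that vertex contracts $T^d/T_\sigma$. (Equivalently: $\mathrm{lk}_\sigma\Delta$ is Cohen-Macaulay by Buchsbaumness, so $H^*(N_\sigma;\Qq)\cong\Qq[\mathrm{lk}_\sigma\Delta]/\Theta$ lies in even degrees by Corollary~\ref{cor:coho toric orbi}, handling the odd rows, while the null-homotopy handles the even positive ones.) On the $t=0$ line the map is the identity of $H^s(\Delta;\Qq)$. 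Thus $\pi$ induces the zero map on $E_r^{s,t}$ for every $t>0$ and an isomorphism on the $t=0$ line, and since for $\Delta\times T^d$ the spectral sequence degenerates at $E_2$ with the base-degree filtration, this forces $\pi^*$ to take values in the bottom filtration piece $H^*(\Delta;\Qq)\otimes H^0(T^d;\Qq)$, which is the lemma.

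I expect the main obstacle to be the careful set-up of the cover — choosing honest open thickenings of the $F_i$ that deformation retract onto them compatibly, checking that the nerve is $\Delta$, and identifying the thickened multi-intersections rationally with the toric manifolds $N_\sigma$ — together with the last bookkeeping step that converts the vanishing of the comparison map off the $t=0$ line into the stated Künneth-component statement; the conceptual crux, however, is the rational null-homotopy of the fibre inclusions $T^d/T_\sigma\hookrightarrow N_\sigma$, which is exactly where purity of $\Delta$ enters and which makes the comparison of spectral sequences collapse.
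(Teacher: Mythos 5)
Your set-up (the cover of $\Delta\times T^d/\sim$ by the pieces over the stars $\mathrm{st}_i\Delta'$, the identification of the multi-intersections with toric spaces over links, and the vanishing of the comparison map on the $E_1$-page in positive fiber degrees, using Buchsbaumness/evenness of $H^*(N_\sigma;\Qq)$) is essentially the paper's Mayer--Vietoris double complex, and that part is fine. The gap is in the last step, and it is fatal: knowing that the comparison of spectral sequences vanishes on $E_r^{s,t}$ for all $t>0$ does \emph{not} force $\mathrm{Im}\,\pi^*$ into the deepest filtration piece $H^*(\Delta;\Qq)\otimes H^0(T^d;\Qq)$. If $x\in H^n(Y)$ has exact \v{C}ech filtration $s$, the vanishing on $E_\infty^{s,n-s}$ only tells you $\pi^*x\in F^{s+1}$; you cannot iterate, because the class of $\pi^*x$ in $E_\infty^{s+1,n-s-1}$ of the product is no longer the image of anything under the comparison map. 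Indeed the containment you reduce to is simply false. Take $\Delta=\partial\Delta^2$, $d=2$, with the standard characteristic matrix $\boldsymbol{\lambda}_1=(1,0)^T$, $\boldsymbol{\lambda}_2=(0,1)^T$, $\boldsymbol{\lambda}_3=(-1,-1)^T$: then $Y=\Delta\times T^2/\sim$ is a necklace of three $2$-spheres $S_1,S_2,S_3$, and for the $2$-cycle $Z=\Delta\times T_1\subset\Delta\times T^2$ one computes $\pi_*[Z]=\pm[S_2]\pm[S_3]\neq0$ (over an interior point of $F_2$ the map $T_1\to T^2/T_2$ is a degree-$\pm1$ homeomorphism). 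Hence the class $\alpha\in H^2(Y;\Qq)$ dual to $[S_2]$ has $\langle\pi^*\alpha,[\Delta]\otimes[T_1]\rangle\neq0$, i.e.\ $\pi^*\alpha$ has a nonzero component in $H^1(\Delta;\Qq)\otimes H^1(T^2;\Qq)$. This does not contradict the lemma (which only asserts vanishing for $q>p$, here the $(0,2)$-component), but it kills the stronger statement your proof aims at.

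What the paper proves, and what the lemma actually needs, is only that $\mathrm{Im}\,\pi^*$ lies in the sum of the K\"unneth components with base degree at least fiber degree, and this requires information finer than the $E_\infty$-comparison: the paper works at the cochain level, using Buchsbaumness to represent every class of $H^{2q}(\UU_\sigma;\Qq)$ by explicit ``orbit-cell'' cochains (Appendix on cellular models), then chases the zig-zag in the \v{C}ech double complex to produce a total cocycle representative $\beta$ for a class in $\mathrm{^{II}}E_\infty^{p,2q}$ whose pullback visibly lies in $H^{p+q}(\Delta';\Qq)\otimes H^{q}(T^d;\Qq)$. Each \v{C}ech step raises the base degree of the representative while lowering the fiber degree, which is exactly the bookkeeping your filtration argument cannot see. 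To repair your proof you would have to carry out this cochain-level (or an equivalent) argument; the rational null-homotopy of the fiber tori alone, i.e.\ the $E_1$-level vanishing, is not enough.
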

	Since the proof of Lemma \ref{lem:cokernel} needs more complicated topological arguments, we put it in Appendix \ref{appdx:proof of cokernel lemma}.
	\begin{cor}\label{cor:cokernel}
		In the notations of the discussion preceding Lemma \ref{lem:product vanishing}, if $\Delta$ is Buchsbaum, then the restriction of $j^*$ to the cohomology subgroup
		\[\bigoplus_{q\geqslant p}\w H^p(\Sigma\Delta;\Qq)\otimes H^q(T^d;\Qq)\subset H^*(M_\Delta,{I\times\Delta\times T^d}/\sim;\Qq)\]
		is an injection.
	\end{cor}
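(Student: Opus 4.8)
The plan is to read off $\ker j^*$ from the long exact sequence \eqref{eq:long exact} and observe that it misses the indicated subgroup. Exactness gives $\ker j^*=\mathrm{Im}\,\partial$, so the whole question reduces to describing the image of the connecting homomorphism $\partial$ inside $\w H^*(\Sigma\Delta)\otimes H^*(T^d)$.

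First I would pin down $\partial$. Write $B=\CC\Delta\times T^d$, $C=\Delta\times T^d$, and $A={I\times\Delta\times T^d}/\sim$, so that $M_\Delta=B\cup_C A$ and (as recorded in the discussion preceding Lemma \ref{lem:product vanishing}) excision identifies $H^*(M_\Delta,A)$ with $H^*(B,C)\cong\w H^*(\Sigma\Delta)\otimes H^*(T^d)$. By naturality of the long exact sequence along the inclusion of pairs $(B,C)\hookrightarrow(M_\Delta,A)$, under this identification $\partial$ becomes the composite
\[
H^{*-1}(A)\xr{\ \rho\ }H^{*-1}(C)\xr{\ \delta\ }H^*(B,C),
\]
where $\rho$ is restriction along $C\hookrightarrow A$ and $\delta$ is the connecting map of $(B,C)$. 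Since $A$ deformation retracts onto its $\{0\}$-end ${\Delta\times T^d}/\sim$, and this retraction carries $C=\{1\}\times\Delta\times T^d$ onto ${\Delta\times T^d}/\sim$ by the quotient map $\pi$ of Lemma \ref{lem:cokernel}, the map $\rho$ is precisely $\pi^*$. Hence $\ker j^*=\mathrm{Im}\,\partial=\delta(\mathrm{Im}\,\pi^*)$.

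Next I would compute $\delta$ with the K\"unneth theorem. Since $(B,C)=(\CC\Delta,\Delta)\times T^d$, over $\Qq$ the long exact sequence of $(B,C)$ is that of $(\CC\Delta,\Delta)$ tensored with $H^*(T^d)$; hence $\delta$ is block-diagonal for the K\"unneth decompositions and preserves the $T^d$-degree, sending the summand $H^a(\Delta)\otimes H^b(T^d)$ into $H^{a+1}(\CC\Delta,\Delta)\otimes H^b(T^d)\cong\w H^{a+1}(\Sigma\Delta)\otimes H^b(T^d)$. On the other hand, Lemma \ref{lem:cokernel} says exactly that $\mathrm{Im}\,\pi^*\subset H^*(\Delta\times T^d)$ has vanishing component in every K\"unneth summand $H^a(\Delta)\otimes H^b(T^d)$ with $b>a$, i.e. $\mathrm{Im}\,\pi^*\subset\bigoplus_{b\leqslant a}H^a(\Delta)\otimes H^b(T^d)$. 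Applying $\delta$ and relabelling $p=a+1$, $q=b$ gives
\[
\ker j^*=\delta(\mathrm{Im}\,\pi^*)\subset\bigoplus_{q<p}\w H^p(\Sigma\Delta)\otimes H^q(T^d),
\]
which intersects $\bigoplus_{q\geqslant p}\w H^p(\Sigma\Delta)\otimes H^q(T^d)$ only in $0$. Therefore $j^*$ restricted to the latter subgroup is injective.

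The one step requiring care is the identification $\partial=\delta\circ\pi^*$ in the second step — keeping track of the deformation retraction of $A$ and verifying that the composite $C\hookrightarrow A\simeq{\Delta\times T^d}/\sim$ is the quotient map $\pi$. Everything afterwards is bookkeeping with K\"unneth bidegrees; the genuine topological content sits in Lemma \ref{lem:cokernel}, so this corollary becomes formal once $\partial$ is so described.
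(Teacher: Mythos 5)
Your proof is correct and takes essentially the same route as the paper: the paper likewise reads $\ker j^*=\mathrm{Im}\,\partial$ off the exactness of \eqref{eq:long exact} and invokes Lemma \ref{lem:cokernel} to see that $\mathrm{Im}\,\partial$ misses the summands with $q\geqslant p$. Your identification $\partial=\delta\circ\pi^*$ via excision and the K\"unneth bidegree bookkeeping merely spell out the step the paper leaves implicit.
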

	\begin{proof}
		The statement of Lemma \ref{lem:cokernel} is equivalent to saying that the cohomology subgroup $\bigoplus_{q\geqslant p}\w H^p(\Sigma\Delta;\Qq)\otimes H^q(T^d;\Qq)$ is not in the image of the boundary homomorphisms $\partial$ in the sequence \eqref{eq:long exact}. So exactness of the sequence gives the result.
	\end{proof}
	Recall the following fibration in the proof of Proposition \ref{prop:equi toric}
	\[T^d \to ET^m\times_{T^{m-d}}\ZZ_\Delta \to ET^{m}\times_{T^m}\ZZ_\Delta,\]
	where $T^d$ is the torus associated to an l.s.o.p. $\Theta=\{\theta_1,\dots,\theta_d\}$ for $\Qq[\Delta]$.
	For the Serre spectral sequence of this fibration, the $E_2$-term, as we have seen in the proof of Proposition \ref{prop:equi toric}, is
	\[
	E_2=\Qq[\Delta]\otimes\Lambda[v_1,\dots,v_d],\quad d_2(v_i)=\theta_i.
	\]
	For the $E_3$-term, we have the following result.
	\begin{lem}\label{lem:E3-page}
		If $\Delta$ is a Buchsbaum complex, then the $E_3$-term of the rational Serre spectral sequence of the  fibration above is $E_3^{p,q}=0$ if $p$ is odd, and
		\[
		\dim E_3^{2p,q}=
		\begin{cases}
			\tbinom{d}{p+q}\w\beta_{p-1}(\Delta)\quad&\text{for }q>0,\\
			\\
			h_p(\Delta)-\tbinom{d}{p}\sum_{i=1}^{p-1}(-1)^i\w \beta_{p-i-1}(\Delta)\quad&\text{for }q=0.
		\end{cases}\]
	\end{lem}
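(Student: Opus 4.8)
The plan is to recognise the $E_2$-page together with $d_2$ as the Koszul complex of the l.s.o.p.\ $\Theta$, so that the $E_3$-page is a Koszul homology module, and then to identify its two rows with, respectively, Schenzel's Hilbert-function theorem and the standard description of the higher Koszul homology of a Buchsbaum module.

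First I would unwind the bigrading. By the description recalled just before the lemma, $(E_2,d_2)=(\Qq[\Delta]\otimes\Lambda[v_1,\dots,v_d],\ v_i\mapsto\theta_i)$, which is exactly the homological Koszul complex $K_\bullet(\theta_1,\dots,\theta_d;\Qq[\Delta])$: the fibre degree $q$ is the exterior degree, and $d_2$ sends $E_2^{p,q}=\Qq[\Delta]_p\otimes\Lambda^q$ to $E_2^{p+2,q-1}$, preserving the internal degree $p+2q$. Hence
\[E_3^{2p,q}\;\cong\;H_q(\theta_1,\dots,\theta_d;\Qq[\Delta])_{\,2(p+q)},\]
the degree-$2(p+q)$ part of the $q$-th Koszul homology. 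Since $\Qq[\Delta]_p=0$ for odd $p$, we get $E_2^{p,q}=0$, hence $E_3^{p,q}=0$, for $p$ odd; this is the parity statement.

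For $q=0$ no differential leaves $E_2^{2p,0}$, and the incoming one has image $(\Theta\cdot\Qq[\Delta])_{2p}$, so $E_3^{2p,0}=(\Qq[\Delta]/\Theta)_{2p}$, whose dimension is exactly the stated expression by Schenzel's theorem, Theorem~\ref{thm:schenzel}. For $q>0$ I would bring in the Buchsbaum hypothesis twice. By Theorem~\ref{thm:algebraic property}(c) every nonempty link of $\Delta$ is Cohen--Macaulay, and together with Reisner's criterion (Theorem~\ref{thm:algebraic property}(a)) this forces the graded local cohomology modules $H^j_{\mathfrak{m}}(\Qq[\Delta])$ for $j<d$ to be concentrated in internal degree $0$, with $\dim_\Qq H^j_{\mathfrak{m}}(\Qq[\Delta])_0=\w\beta_{j-1}(\Delta;\Qq)$ — the same local-cohomology input that underlies Theorem~\ref{thm:schenzel}. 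Then I invoke the structure of the Koszul homology of a system of parameters on a Buchsbaum module: for $q\geqslant1$ there is a graded isomorphism
\[H_q(\theta_1,\dots,\theta_d;\Qq[\Delta])\;\cong\;\bigoplus_{j=0}^{d-1}\bigl(H^j_{\mathfrak{m}}(\Qq[\Delta])(-2(j+q))\bigr)^{\oplus\binom{d}{j+q}}.\]
Feeding in the degree-$0$ concentration and restricting to internal degree $2(p+q)$ selects the summand $j=p$ and yields $\dim E_3^{2p,q}=\binom{d}{p+q}\w\beta_{p-1}(\Delta)$, as asserted.

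The main obstacle is justifying this last isomorphism with the correct internal-degree shift. I would either quote it from the theory of Buchsbaum rings (Schenzel; St\"uckrad--Vogel) after checking the shift on a small example — for $\Delta$ two disjoint edges one has $d=2$, $\w\beta_0=1$, and the formula correctly produces a one-dimensional $E_3^{2,1}$ and nothing else in the positive rows, which is easily verified by hand — or reprove it inside the paper by induction on $d$: $\theta_1$ is a non-zerodivisor on $\Qq[\Delta]$ (because $\mathrm{Soc}(\Qq[\Delta])=0$ when $\dim\Delta\geqslant 0$), so $H_q(\theta_1,\dots,\theta_d;\Qq[\Delta])\cong H_q(\theta_2,\dots,\theta_d;\Qq[\Delta]/\theta_1)$, after which one tracks the graded local cohomology of the successive Buchsbaum quotients $\Qq[\Delta]/(\theta_1,\dots,\theta_i)$ through the mapping-cone long exact sequences of the Koszul complexes, using the socle description of $\ker(\cdot\theta_{i+1})$ built into the definition of Buchsbaum. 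Either way, the $q=0$ row costs nothing beyond Schenzel's theorem, so the substance of the argument lies entirely in the positive rows.
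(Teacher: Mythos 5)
Your reduction of the lemma to graded Koszul homology is exactly right and matches what the paper does implicitly: the paper too identifies $E_3^{2p,q}$ with $H^{-q}(\Qq[\Delta]\otimes\Lambda[v_1,\dots,v_d],\,dv_i=\theta_i)_{2(p+q)}$, and your parity argument and the $q=0$ identification $E_3^{2p,0}=(\Qq[\Delta]/\Theta)_{2p}$ are fine. The problem is where you put all the remaining weight: the graded isomorphism
\[H_q(\theta_1,\dots,\theta_d;\Qq[\Delta])\;\cong\;\bigoplus_{j=0}^{d-1}\bigl(H^j_{\mathfrak{m}}(\Qq[\Delta])(-2(j+q))\bigr)^{\oplus\binom{d}{j+q}},\qquad q\geqslant 1,\]
is, given the degree-$0$ concentration of $H^{j}_{\mathfrak m}$ for $j<d$, literally equivalent to the positive rows of the lemma. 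You do not prove it: you propose to quote it from Schenzel/St\"uckrad--Vogel ``after checking the shift on a small example,'' while admitting you are unsure of the exact graded form, and checking one example is not a justification of a general graded isomorphism with specified twists and multiplicities. Your fallback induction is also only a sketch at exactly the delicate points: passing to $\Qq[\Delta]/(\theta_1,\dots,\theta_i)$ requires knowing that these successive quotients retain the Buchsbaum (or at least the needed weak-sequence/socle) properties and that the long exact sequences in local cohomology split with the right degree shifts, which is precisely the bookkeeping that produces the coefficients $\binom{d}{j+q}$; none of this is carried out. So as written the proposal has a genuine gap at its central step --- unless the exact graded statement can be located and cited, the positive rows remain unproved.

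For comparison, the paper avoids any appeal to the structure theory of Buchsbaum modules (and to Theorem \ref{thm:schenzel}): it computes the Koszul homology directly from a \v Cech--Koszul double complex $\RR^{p,-q}=\bigoplus_{\sigma\in\FF_{p-1}(\Delta)}\Qq[\mathrm{st}_\sigma\Delta]\otimes\Lambda^{-q}$. One spectral sequence uses only the acyclicity of stars to show the total cohomology in internal degree $2i$ is $\binom{d}{i}\widetilde H^{*}(\Delta;\Qq)$; the other uses Cohen--Macaulayness of stars of nonempty faces (Theorem \ref{thm:algebraic property}(\ref{item:3})) to collapse onto the column giving the Koszul homology of $\Qq[\Delta]$ itself, which yields the $q>0$ rows; the $q=0$ row then falls out of an Euler-characteristic/Hilbert-series computation. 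In particular the paper \emph{derives} Schenzel's formula rather than assuming it --- this is the announced purpose of \S\ref{subsec:buchsbaum} --- so even if you repair the $q>0$ argument by a citation, quoting Theorem \ref{thm:schenzel} for the $q=0$ row would make the lemma logically correct but would forfeit the topological exposition of that formula which the paper claims.
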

	We will prove this lemma by using a double complex, whose calculation is due to Adiprasito \cite[Lemma 3.4 and Proposition 3.9]{A18}.
	Note that the formula for $E_3^{2p,0}$ is just Schenzel's formula (Theorem \ref{thm:schenzel}).
	
	\begin{proof}[Proof of Lemma \ref{lem:E3-page}]
		For each $\sigma\in\Delta$, define a differential graded algebra
		\begin{gather*}
			\Ll^*_\sigma:=(\Qq[\mathrm{st}_\sigma\Delta]\otimes\Lambda[v_1,\dots,v_d],\, d),\quad dv_i=\theta_i,\quad dx_i=0;\\
			\mathrm{deg}\,v_i=-1,\quad \mathrm{deg}\,x_i=0.
		\end{gather*}
		Particularly, when $\sigma=\varnothing$, we briefly write $\Ll^*_\varnothing$ as $\Ll^*$. Note that $\Ll^*_\sigma$ as a $\Qq[m]$-module has another even internal grading which is preserved by the differential. We will denote this internal grading by subscript.
		
		Viewing $\{U_i:=\mathrm{st}_i\Delta\}_{i\in\FF_0(\Delta)}$ as a open cover of $\Delta$,
		for a subset $\sigma=\{i_1,\dots,i_k\}\subset[m]$, we formally set  the intersection $U_\sigma:=U_{i_1}\cap\cdots\cap U_{i_k}$ to be
		\[U_\sigma=
		\begin{cases}
			\varnothing\ &\text{if }\sigma\not\in\Delta,\\
			\mathrm{st}_\sigma\Delta\ &\text{if }\sigma\in\Delta\ \text{ (its usual form).}
		\end{cases}\]
		Now we define a double complex $(\RR,\delta,d)$ by
		\begin{gather*}
			\RR=\bigoplus_{p,q\geqslant 0}\RR^{p,-q},\quad
			\RR^{p,-q}=\bigoplus_{\sigma\in\FF_{p-1}(\Delta)}\Ll^{-q}_\sigma,\\
			\delta:\RR^{p,-q}\to \RR^{p+1,-q},\quad d:\RR^{p,-q}\to \RR^{p,-(q-1)}.
		\end{gather*}
		Here $\delta$ is the \v  Cech coboundary operator with respect to the intersection rules we formally set above. Its total complex is $\RR^*=\bigoplus_k\RR^k=\bigoplus_k\bigoplus_{p-q=k}\RR^{p,-q}$ with total differential $D=\delta+(-1)^pd$.
		There are two spectral sequences converging to the total cohomology $H^*(\RR^*,D)$.
		One spectral sequence starts with $\mathrm{^I}E_1=H_\delta$ and $\mathrm{^I}E_2=H_d H_\delta$, and another with $\mathrm{^{II}}E_1=H_d$ and $\mathrm{^{II}}E_2=H_\delta H_d$.
		
		By Hochster's theorem (unpublished, see \cite[Stanley Theorem II.4.1]{S80}), we have
		\[\mathrm{^I}E_1^{p,-q}=\w H^{p-1}(\Delta;\Qq)\otimes\Lambda^{-q}[v_1,\dots,v_d].\]
		Since $\w H^{p-1}(\Delta;\Qq)$ has zero $\Qq[m]$-module degree, $\mathrm{^I}E_1$ collapses at the $E_1$-term.
		This implies that
		\begin{equation}\label{eq:E_1}
			(H^{-k}(\RR^*,D))_{2i}=\tbinom{d}{i}\w H^{i-k-1}(\Delta;\Qq).
		\end{equation}
		
		Since $\Delta$ is a Buchsbaum complex, Theorem \ref{thm:algebraic property} \eqref{item:3} implies that $\mathrm{st}_\sigma\Delta$ is Cohen-Macaulay for each $\sigma\neq\varnothing$. Hence, for the second spectral sequence,  we have
		\begin{gather*}
			\mathrm{^{II}}E_1^{p,-q}=0\quad \text{for } p,q>0,\quad \text{and}\\
			\mathrm{^{II}}E_1^{p,0}=\bigoplus_{\sigma\in\FF_{p-1}(\Delta)}\Qq[\mathrm{st}_\sigma\Delta]/\Theta\quad \text{for } p>0.
		\end{gather*}
		It follows that this spectral sequence collapses at the $E_2$-term, and so $H^k(\RR^*,D)=\bigoplus_{p-q=k}\mathrm{^{II}}E_2^{p,-q}.$
		The $E_1$-term also tells us that
		\begin{equation}\label{eq:E_2}
			\begin{split}
				\mathrm{^{II}}E_1^{0,-q}&=\mathrm{^{II}}E_2^{0,-q}\quad \text{for }q>0,\text{ and }\\
				\mathrm{^{II}}E_2^{0,-q}&=H^{-q}(\RR^*,D)\quad \text{for }q\geqslant 0.
			\end{split}
		\end{equation}
		
		An easy calculation shows that the $E_3$-term of the Serre spectral sequence in the lemma is just $E_3^{2p,q}=(\mathrm{^{II}}E_1^{0,-q})_{2p+2q}$.
		Combining this with formula \eqref{eq:E_1} and \eqref{eq:E_2}, we get the desired dimension of $E_3^{2p,q}$ for $q>0$.
		
		It remains to consider $E_3^{2p,0}$. Note that $\Ll_{2k}^*$ is a subcomplex of $\Ll^*$ since the internal grading is preserved by the differentials.
		Moreover, for each $k$ and $i$, $\Ll_{2k}^{-i}$ is a finite dimensional vector space over $\Qq$, so we can calculate the Euler characteristic of $\Ll_{2k}^*$:
		\[\chi(\Ll_{2k}^*)=\sum_{0\leqslant i\leqslant d}(-1)^i\dim_\Qq\Ll^{-i}_{2k}=\sum_{0\leqslant i\leqslant k}(-1)^i\dim_\Qq\Ll^{-i}_{2k}.\]
		Recall the Hilbert series of $\Qq[\Delta]$ is
		\[F(\Qq[\Delta],\lambda)=\frac{h_0+h_1\lambda^2+\cdots+h_d\lambda^{2d}}{(1-\lambda^2)^d}=1+a_1\lambda^2+a_2\lambda^4+\cdots\]
		It follows that $\chi(\Ll_{2k}^*)=\sum_{i=0}^k(-1)^i\tbinom{d}{i}a_{k-i}$. A straightforward calculation shows that this number is equal to the coefficient of $\lambda^{2k}$ in the expansion of the polynomial
		\[(1-\lambda^2)^d(1+a_1\lambda^2+a_2\lambda^4+\cdots),\] which is just $h_k$.
		
		On the other hand, we can also compute $\chi(\Ll_{2k}^*)$ in terms of the cohomology of $\Ll^*$, i.e.,
		\[\chi(\Ll_{2k}^*)=\sum_{0\leqslant i\leqslant k}(-1)^i\dim_\Qq H^{-i}(\Ll^*,d)_{2k}.\]
		Since $\chi(\Ll_{2k}^*)=h_k$ and $H^{-i}(\Ll^*,d)_{2k}=E_3^{2k-2i,i}$, we can immediately get the desired expression of $E_3^{2k,0}$ from the calculation of $E_3^{2k,>0}$ above.
	\end{proof}
	Now we can give the cohomology of toric spaces associated to Buchsbaum complexes.
	\begin{thm}\label{thm:coho of toric over buchs}
		Let $\Delta$ be a Buchsbaum complex, $\Theta$ an l.s.o.p. for $\Qq[\Delta]$. Then for the associated toric space $M_\Delta$, we have
		\[H^k(M_\Delta;\Qq)\cong\bigoplus_{2p+q=k} E_3^{2p,q},\]
		where $E_3^{2p,q}$ is given by Lemma \ref{lem:E3-page}. The ring structure of $H^k(M_\Delta;\Qq)$ is given by
		\[H^*(M_\Delta;\Qq)\cong \RR \oplus\Qq[\Delta]/\Theta,\quad \RR^k=\bigoplus_{q>0,\,2p+q=k}\tbinom{d}{p+q}\w H^{p-1}(\Delta;\Qq),\]
		where $\RR$ has trivial multiplication structure.
	\end{thm}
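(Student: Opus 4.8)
The plan is to read off $H^*(M_\Delta;\Qq)$ from the Serre spectral sequence of the fibration
\[
T^d\to ET^m\times_{T^{m-d}}\ZZ_\Delta\to ET^{m}\times_{T^m}\ZZ_\Delta
\]
used in the proof of Proposition \ref{prop:equi toric}. By Theorem \ref{thm:cohomology of orbifold} its total space computes $H^*(M_\Delta;\Qq)$, its $E_2$-page is the Koszul-type dg-algebra $\Qq[\Delta]\otimes\Lambda[v_1,\dots,v_d]$ with $d_2v_i=\theta_i$, and its $E_3$-page is exactly the one computed in Lemma \ref{lem:E3-page}. With this in hand, the additive assertion $H^k(M_\Delta;\Qq)\cong\bigoplus_{2p+q=k}E_3^{2p,q}$ is equivalent to the collapse of this spectral sequence at $E_3$.

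To prove the collapse I would proceed in two steps. First, by Lemma \ref{lem:E3-page} the page $E_3$ is supported in even base degrees, and a differential $d_{2r+1}$ shifts the base degree by an odd amount; hence $d_3=0$, so $E_4=E_3$, and inductively every odd differential vanishes and each page $E_{2r}$ is again supported in even base degrees. Second, to rule out the remaining even differentials $d_{2r}$ $(r\geqslant2)$ I would match total dimensions against the long exact sequence \eqref{eq:long exact}. One always has $\dim_\Qq H^k(M_\Delta;\Qq)\leqslant\sum_{2p+q=k}\dim_\Qq E_3^{2p,q}$. For the reverse inequality: the edge homomorphism $\Qq[\Delta]=E_2^{*,0}\to E_3^{*,0}=\Qq[\Delta]/\Theta\to H^*(M_\Delta;\Qq)$ is a ring map whose image is $E_\infty^{*,0}$; Corollary \ref{cor:cokernel} gives a subgroup of $H^*(M_\Delta,I\times\Delta\times T^d/\sim;\Qq)$, accounting degree by degree for the $q>0$ part of $E_3$, on which $j^*$ is injective; and Lemmas \ref{lem:product vanishing} and \ref{lem:zero mapping} control both how $\mathrm{Im}\,j^*$ multiplies and how it restricts to the local pieces $M_\sigma$. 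Feeding these into \eqref{eq:long exact} should force $\dim_\Qq H^k(M_\Delta;\Qq)=\sum_{2p+q=k}\dim_\Qq E_3^{2p,q}$, which simultaneously proves $E_3=E_\infty$ and that the edge map $\Qq[\Delta]/\Theta\to H^*(M_\Delta;\Qq)$ is injective.

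For the ring structure, once $E_3=E_\infty$ the multiplicativity of the decreasing filtration $F^\bullet$ on $H^*(M_\Delta;\Qq)$ shows that $\bigoplus_n F^nH^n(M_\Delta;\Qq)=\bigoplus_n E_\infty^{n,0}$ is a graded subring, which under the injective edge map is identified with $\Qq[\Delta]/\Theta$. Choosing a graded vector-space complement $\RR$, it maps isomorphically onto $\bigoplus_{q>0}E_\infty^{*,q}$, so Lemma \ref{lem:E3-page} gives $\dim_\Qq\RR^k=\sum_{q>0,\,2p+q=k}\binom{d}{p+q}\w\beta_{p-1}(\Delta)$, matching the statement. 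Finally, since Lemma \ref{lem:product vanishing} asserts $\mathrm{Im}\,j^*\cdot\w H^*(M_\Delta;\Qq)=0$, the identification (from the previous step) of the non-bottom-row part of $H^*(M_\Delta;\Qq)$ with (a piece of) $\mathrm{Im}\,j^*$ lets one take $\RR\subseteq\mathrm{Im}\,j^*$, so $\RR\cdot\w H^*(M_\Delta;\Qq)=0$; in particular $\RR$ is a square-zero ideal and $H^*(M_\Delta;\Qq)\cong\RR\oplus\Qq[\Delta]/\Theta$ as graded rings.

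The main obstacle is the collapse at $E_3$ — concretely, making the dimension bookkeeping through \eqref{eq:long exact} come out exactly right. The delicate point is the cokernel of $j^*$: one must show $\mathrm{Im}\,i^*$ has dimension exactly $\dim_\Qq\Qq[\Delta]/\Theta$ in every degree, so that the image of the edge map together with $\mathrm{Im}\,j^*$ exhausts $H^*(M_\Delta;\Qq)$ with no rank lost to higher differentials. Lemma \ref{lem:zero mapping} (the vanishing of $\mathrm{Im}\,j^*$ under restriction to each $M_\sigma$, $\sigma\neq\varnothing$) is the input I expect to need for this, since it separates the "$j^*$-part" from the "$\Qq[\Delta]/\Theta$-part" cleanly.
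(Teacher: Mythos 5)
Your setup (the Serre spectral sequence of $T^d\to ET^m\times_{T^{m-d}}\ZZ_\Delta\to ET^m\times_{T^m}\ZZ_\Delta$, collapse at $E_3$, then Lemma \ref{lem:product vanishing} for the ring structure) is the paper's framework, but the crucial step — ruling out the even differentials $d_4,d_6,\dots$ — is not actually carried out, and the quantitative target you single out as the delicate point is wrong. You propose to show that $\mathrm{Im}\,i^*$ has dimension exactly $\dim(\Qq[\Delta]/\Theta)_{2p}$ in every degree. But Corollary \ref{cor:cokernel} makes $j^*$ injective on $\bigoplus_{q\geqslant p}\w H^p(\Sigma\Delta)\otimes H^q(T^d)$, and in total degree $2p$ this subgroup has dimension equal to the $q>0$ part of $E_3$ \emph{plus} an extra $\tbinom{d}{p}\w\beta_{p-1}(\Delta)$ (the $q=p$ diagonal, which after reindexing lands in fiber degree $0$); so it does not merely ``account for the $q>0$ part of $E_3$'' as you claim — it already places a subgroup of that extra dimension inside $\mathrm{Im}\,j^*=\mathrm{Ker}\,i^*$. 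If in addition $\dim\mathrm{Im}\,i^*_{2p}=\dim(\Qq[\Delta]/\Theta)_{2p}$, then $\dim H^{2p}=\dim\mathrm{Im}\,j^*_{2p}+\dim\mathrm{Im}\,i^*_{2p}$ would exceed the $E_3$ upper bound whenever $\w\beta_{p-1}(\Delta)\neq0$ (e.g.\ any orientable homology manifold that is not a sphere). The correct count is $\dim\mathrm{Im}\,i^*_{2p}=\dim(\Qq[\Delta]/\Theta)_{2p}-\tbinom{d}{p}\w\beta_{p-1}(\Delta)$, and nothing in your outline produces any lower bound for $\mathrm{Im}\,i^*$, i.e.\ any upper bound on the ranks of the higher differentials; ``feeding these into \eqref{eq:long exact}'' does not close by itself.

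What the paper uses, and what is missing from your proposal, is the comparison with the toric spaces over the vertex stars: the maps $\varphi_i$ to $M_i=M(\mathrm{st}_i\Delta,\cdot)$ induce morphisms of spectral sequences; since each $\mathrm{st}_i\Delta$ is Cohen--Macaulay (Buchsbaumness via Theorem \ref{thm:algebraic property}), the target spectral sequences collapse at $E_3$, so $\dim\mathrm{Ker}\,\Phi_\infty\leqslant\dim\mathrm{Ker}\,\Phi_3$; the map $\Phi_3$ is identified with the \v Cech differential $d_1$ of the double complex of Lemma \ref{lem:E3-page}, giving $\dim\mathrm{Ker}\,\Phi_3=\sum_{p,q}\tbinom{d}{p+q}\w\beta_{p-1}(\Delta)$; and Lemma \ref{lem:zero mapping} plus Corollary \ref{cor:cokernel} give $\dim\mathrm{Ker}\,\Phi_\infty\geqslant\dim\mathrm{Im}\,j^*\geqslant\dim\mathrm{Ker}\,\Phi_3$. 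The resulting equalities force $E_3^{*,>0}$ to survive (hence $E_3=E_\infty$) and give $\mathrm{Im}\,j^*=\mathrm{Ker}\,\Phi_3$, which is exactly what legitimizes your last step: you assert that a complement $\RR$ of $\Qq[\Delta]/\Theta$ can be chosen inside $\mathrm{Im}\,j^*$, but knowing only $\dim\mathrm{Im}\,j^*\geqslant c_k$ says nothing about how $\mathrm{Im}\,j^*$ meets the bottom filtration piece, so the square-zero conclusion does not follow from your dimension count alone. (As an aside, the additive collapse can in fact be salvaged numerically: in odd total degrees the Corollary \ref{cor:cokernel} subgroup has exactly the dimension of the whole $E_3$ column, and every $d_r$ connects an odd and an even total degree, so all higher differentials die; but this observation is not in your proposal, and even with it the identification $\mathrm{Im}\,j^*=\mathrm{Ker}\,\Phi_3$ — i.e.\ the star-restriction analysis — is still needed for the ring statement.)
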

	\begin{proof}
		According to Theorem \ref{thm:cohomology of orbifold} and lemma \ref{lem:E3-page}, it is equivalent to prove that $E_3=E_\infty$ in the Serre spectral sequence of the fibration in Lemma \ref{lem:E3-page}.
		
		For each vertex $\{i\}\in[m]$, there is an inclusion $X_i=\ZZ_{\mathrm{st}_i\Delta}\times T^{m-j_i}\subset \ZZ_\Delta$, where $j_i=\#\FF_0(\mathrm{st}_i\Delta)$, and we have a Serre fibration map:
		\[\xymatrix{
			T^d \ar[r] \ar@{=}[d] & ET^m\times_{T^{m-d}}X_i \ar[r]\ar[d]^-{\varphi_i} & ET^{m}\times_{T^m}X_i \ar[d]\\
			T^d \ar[r]  & ET^m\times_{T^{m-d}}\ZZ_\Delta \ar[r] & ET^{m}\times_{T^m}\ZZ_\Delta
		}\]
		$\varphi_i$ induces a morphism of cohomology $\varphi_i^*:H^*(M_\Delta)\to H^*(M_i)$ (after an isomorphism given by Theorem \ref{thm:cohomology of orbifold}). Since the Serre spectral sequence construction is functorial, $\varphi_i$ also
		induces a morphism $E_r\xr{\varphi^*_{i,r}} \sideset{_i}{}{\mathop{E}}_r$ of Serre spectral sequences.
		
		Let $\mathrm{^{II}}E_r$ be the spectral sequence defined in the proof of Lemma \ref{lem:E3-page}, $\Phi_r=\bigoplus_{i=1}^m\varphi^*_{i,r}$. Then for the $E_3$-term of the Serre spectral sequence, the map $\Phi_3^{2p,q}$ is just the differential $d_1:\mathrm{^{II}}(E_1^{0,-q})_{2p}\to \mathrm{^{II}}(E_1^{1,-q})_{2p}$.
		So from \eqref{eq:E_1} and \eqref{eq:E_2}, we have
		\begin{equation}\label{eq:kernel}
			\begin{split}
				\mathrm{Ker}\,\Phi_3^{2p,q}&=E_3^{2p,q}\quad \text{for }q>0,\ \text{ and}\\
				\mathrm{Ker}\,\Phi_3^{2p,q}&=\mathrm{^{II}}(E_2^{0,-q})_{2p+2q}=\tbinom{d}{p+q}\w H^{p-1}(\Delta;\Qq)\quad \text{for }q\geqslant 0.
			\end{split}
		\end{equation}
		It follows that $\dim\mathrm{Ker}\,\Phi_3=\sum_{p,q=0}^{d}\tbinom{d}{p+q}\w \beta_{p-1}(\Delta)$.
		
		On the other hand, combining Lemma \ref{lem:zero mapping} and Corollary \ref{cor:cokernel} with the fact that $\Phi_\infty=\bigoplus_{i=1}^m\varphi^*_i$, we can get that
		\[\dim\mathrm{Ker}\,\Phi_\infty\geqslant \dim\mathrm{Im\,}j^* \geqslant\sum_{p,q=0}^{d}\tbinom{d}{p+q}\w \beta_{p-1}(\Delta).\]
		However, since $\mathrm{st}_i\Delta$ is Cohen-Macaulay,
		$\sideset{_i}{}{\mathop{E}}_3=\sideset{_i}{}{\mathop{E}}_\infty$, which implies that
		\[\dim\mathrm{Ker}\,\Phi_\infty\leqslant\dim\mathrm{Ker}\,\Phi_3=\sum_{p,q=0}^{d}\tbinom{d}{p+q}\w \beta_{p-1}(\Delta).\]
		Hence these two inequalities shows that $\mathrm{Ker}\,\Phi_3=\mathrm{Ker}\,\Phi_\infty$, and therefore $E_3^{2p,>0}$ survives to $E_\infty$  by formula \eqref{eq:kernel}, but this already implies that the Serre spectral sequence collapses at the $E_3$-term.
		
		It remains to see the ring structure of $H^*(M_\Delta;\Qq)$. It is clear that $E_3^{*,0}=\Qq[\Delta]/\Theta$. So we can define $\RR=E_3^{*,>0}$.
		Lemma \ref{lem:zero mapping} shows that $\mathrm{Ker}\,\Phi_3\subset \mathrm{Im}\,j^*$, and formula \eqref{eq:kernel} shows that $E_3^{*,>0}=\mathrm{Ker}\,\Phi_3^{*,>0}$.
		Thus the cohomology ring formula follows immediately from Lemma \ref{lem:product vanishing}.
	\end{proof}
	
	\begin{rem}\label{rem:ker=im}
		Note that the two inequalities in the proof of Theorem \ref{thm:coho of toric over buchs} also implies that $\mathrm{Ker}\,\Phi_3=\mathrm{Im}\,j^*$, where $j^*$ is the map in the exact sequence \eqref{eq:long exact}.
	\end{rem}
	From the proof of Theorem \ref{thm:coho of toric over buchs}, we can get an interesting result about the socle of a Buchsbaum complex over $\Qq$,
	which was initially obtained by Novik and Swartz \cite{NS09a} for any infinite field $\kk$.
	\begin{prop}[{\cite[Theorem 2.2]{NS09a}}]\label{prop:socle}
		Let $\Delta$ be a Buchsbaum complex, $\Theta$ an l.s.o.p. for $\Qq[\Delta]$.  Then
		\[\dim\mathrm{Soc}(\Qq[\Delta]/\Theta)_{2p}\geqslant\tbinom{d}{p}\w\beta_{p-1}(\Delta).\]
	\end{prop}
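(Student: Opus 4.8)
The plan is to read the inequality off the computation of $H^*(M_\Delta;\Qq)$ carried out in the proof of Theorem~\ref{thm:coho of toric over buchs}: I will exhibit a subspace of $\mathrm{Soc}(\Qq[\Delta]/\Theta)_{2p}$ of dimension $\tbinom{d}{p}\w\beta_{p-1}(\Delta)$. First I would replace $\Theta$ by the l.s.o.p.\ attached to some characteristic matrix $\mathit{\Lambda}$ for $\Delta$ (harmless, since $\Qq[\Delta]/\Theta$ depends only on the rational ideal generated by $\Theta$, and $\Qq[\Delta]$ always admits an l.s.o.p.), so that the toric space $M_\Delta$ and the machinery of Section~\ref{sec:buchsbaum complex} are available. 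Two facts from the proof of Theorem~\ref{thm:coho of toric over buchs} are needed: via the edge homomorphism of the Serre spectral sequence of $T^d\to ET^m\times_{T^{m-d}}\ZZ_\Delta\to ET^{m}\times_{T^m}\ZZ_\Delta$, the ring $\Qq[\Delta]/\Theta=E_\infty^{*,0}$ embeds as a \emph{subring} of $H^*(M_\Delta;\Qq)$; and $\mathrm{Ker}\,\Phi_3=\mathrm{Im}\,j^*$ (Remark~\ref{rem:ker=im}), with $j^*$ as in \eqref{eq:long exact} and $\Phi_3=\bigoplus_i\varphi^*_{i,3}$.

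Restricting $\Phi_3$ to the edge $q=0$, formulas \eqref{eq:kernel} and \eqref{eq:E_1} say that $\bigcap_i\ker\Psi_i\subset\Qq[\Delta]/\Theta$ — which is the $q=0$ summand of $\mathrm{Ker}\,\Phi_3=\mathrm{Im}\,j^*$ — has degree-$2p$ dimension equal to $\tbinom{d}{p}\w\beta_{p-1}(\Delta)$. (Here $\Psi_i\colon\Qq[\Delta]/\Theta\to\Qq[\mathrm{st}_i\Delta]/\Theta_{\Ss_i}$ is the restriction homomorphism of \S\ref{subsec:local}.) So it suffices to prove that every $\alpha\in\bigcap_i\ker\Psi_i$ lies in $\mathrm{Soc}(\Qq[\Delta]/\Theta)$. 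This is where Lemma~\ref{lem:product vanishing} enters: since $\alpha\in\mathrm{Im}\,j^*$ and $\mathrm{Im}\,j^*\cdot\w H^*(M_\Delta;\Qq)=0$, the class $\alpha$ annihilates every positive-degree class in $H^*(M_\Delta;\Qq)$, in particular the degree-two generators $x_1,\dots,x_m$ of $\Qq[\Delta]/\Theta$. Because $\alpha$ and the $x_k$ all lie in the subring $\Qq[\Delta]/\Theta$, the products $x_k\alpha$ computed there coincide with those computed in $H^*(M_\Delta;\Qq)$, hence vanish; as $\Qq[\Delta]/\Theta$ is generated in degree two, $\alpha\in\mathrm{Soc}(\Qq[\Delta]/\Theta)$. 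Combining this containment with the dimension count finishes the proof.

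The one point requiring care is that $\bigcap_i\ker\Psi_i$ really sits inside $\mathrm{Im}\,j^*$, not merely inside $\mathrm{Ker}\,\Phi_\infty=\bigcap_i\ker\varphi_i^*$ (the latter being immediate from $\Psi_i(\alpha)=0$): it is the image of $j^*$, not an arbitrary square-zero ideal, to which Lemma~\ref{lem:product vanishing} applies. That $\mathrm{Im}\,j^*=\mathrm{Ker}\,\Phi_3$ is exactly the content of the two dimension estimates in the proof of Theorem~\ref{thm:coho of toric over buchs} (cf.\ Remark~\ref{rem:ker=im}), so once this is in hand the argument reduces to the bookkeeping above and I expect no further obstacle.
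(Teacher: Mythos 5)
Your proposal is correct and follows essentially the same route as the paper: it rests on the identification $\mathrm{Ker}\,\Phi_3=\mathrm{Im}\,j^*$ (Remark~\ref{rem:ker=im}), the dimension count $\dim\mathrm{Ker}\,\Phi_3^{2p,0}=\tbinom{d}{p}\w\beta_{p-1}(\Delta)$ from \eqref{eq:kernel}, and the vanishing $\mathrm{Im}\,j^*\cdot\w H^*(M_\Delta;\Qq)=0$ of Lemma~\ref{lem:product vanishing} together with the ring splitting of Theorem~\ref{thm:coho of toric over buchs} to place this kernel inside $\mathrm{Soc}(\Qq[\Delta]/\Theta)_{2p}$. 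The only difference is expository: you spell out the edge-homomorphism/subring identification and the reduction to an l.s.o.p.\ coming from a characteristic matrix, which the paper leaves implicit.
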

	\begin{proof}
		In formula \eqref{eq:kernel}, we have $\mathrm{Ker}\,\Phi_3^{2p,0}=\mathrm{^{II}}(E_2^{0,0})_{2p}=\tbinom{d}{p}\w H^{p-1}(\Delta;\Qq)$.
		Since $\mathrm{Ker}\,\Phi_3=\mathrm{Im}\,j^*$, by using Lemma \ref{lem:product vanishing} and Theorem \ref{thm:coho of toric over buchs} we get that $\mathrm{Ker}\,\Phi_3^{2p,0}\subset \mathrm{Soc}(\Qq[\Delta]/\Theta)_{2p}$, and so the inequality in the proposition holds.
	\end{proof}
	
	\subsection{Toric spaces associated to rational homology manifolds}
	In this subsection, $\Delta$ is a rational homology manifold without boundary. As we have shown, the toric space $M_\Delta$ is not a rational homology manifold unless $\Delta$ is a rational homology sphere. However, if we look at the local topology of $M_\Delta$ in the D-J construction, we can see that an open neighbourhood of a point $x\in M_\Delta-\{*\}\times T^d$, where $*$ is the cone point in $\CC\Delta$, is the same as the case that $\Delta$ is a rational homology sphere.
	So $M_\Delta-\{*\}\times T^d$ is an open manifold, and the subspace ${I\times\Delta\times T^d}/\sim\subset M_\Delta$ (see \eqref{eq:decompositon}) is a compact manifold with boundary $\{1\}\times\Delta\times T^d$.
	Moreover, if $\Delta$ is $\Qq$-orientable, then so is ${I\times\Delta\times T^d}/\sim$. The following result on its own is interesting in toric topology.
	
	\begin{thm}\label{thm:poincare duality}
		Let $\Delta$ be a $(d-1)$-dimensional connected simplicial complex. For $j^*$ in \eqref{eq:long exact}, let $\II=\bigoplus_{k=1}^{d-1}(\mathrm{Im}\,j^*)_{2k}$. If $\Delta$ is an
		orientable rational homology manifold without boundary, then the quotient algebra
		\[\Aa=H^*(M_\Delta;\Qq)/\II\]
		is a Poincar\'e duality algebra.
	\end{thm}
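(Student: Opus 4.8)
The plan is to equip $M_\Delta$ with a fundamental class coming from the compact manifold‑with‑boundary $N:=I\times\Delta\times T^d/\!\sim$, and to identify $\II$ with the radical of the resulting cup‑product pairing; nondegeneracy will be supplied by Lefschetz duality for $N$.

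First I would record the geometry noted just before the statement: $N$ is a compact orientable rational homology $2d$‑manifold with boundary $\partial N=\{1\}\times\Delta\times T^d\cong\Delta\times T^d$ (orientability inherited from that of $\Delta$ together with the standard orientation of $T^d$), and $M_\Delta=(\CC\Delta\times T^d)\cup_{\Delta\times T^d}N$ with $\CC\Delta\times T^d\simeq T^d$. Since $H_k(\CC\Delta\times T^d;\Qq)=0$ for $k>d$ while $2d-1,2d>d$, the long exact sequence of the pair $(M_\Delta,\CC\Delta\times T^d)$ together with excision gives isomorphisms $H_{2d}(M_\Delta;\Qq)\xrightarrow{\,j_*\,}H_{2d}(M_\Delta,\CC\Delta\times T^d;\Qq)\cong H_{2d}(N,\partial N;\Qq)\cong\Qq$. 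Let $[M_\Delta]$ be the generator mapping to the relative fundamental class $[N,\partial N]$, so that $j_*[M_\Delta]=[N,\partial N]$. Using Theorem \ref{thm:coho of toric over buchs} one has $H^{2d}(M_\Delta;\Qq)\cong\Qq$, and Lefschetz duality for $N$ shows $\langle\mu,[M_\Delta]\rangle\neq 0$ for a generator $\mu$; thus $-\sfr[M_\Delta]$ is an isomorphism $H^0\to H_{2d}$ and $H^{2d}\to H_0$.

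The heart of the proof is the following claim: for $1\le j\le 2d-1$, the radical of
\[H^{j}(M_\Delta;\Qq)\otimes H^{2d-j}(M_\Delta;\Qq)\xrightarrow{\ \ssm\ }H^{2d}(M_\Delta;\Qq)\xrightarrow{\,\langle-,\,[M_\Delta]\rangle\,}\Qq\]
is precisely $(\mathrm{Im}\,j^*)_j$. The inclusion of $\mathrm{Im}\,j^*$ into the radical is immediate from Lemma \ref{lem:product vanishing}, since $2d-j\ge 1$ puts the second factor in $\w H^{>0}(M_\Delta)$. For the converse, suppose $\beta\in H^j(M_\Delta)$ pairs trivially with everything; then $\beta\sfr[M_\Delta]=0$ in $H_{2d-j}(M_\Delta;\Qq)$, and applying $j_*$, using naturality of the cap product and $j_*[M_\Delta]=[N,\partial N]$, yields $(i^*\beta)\sfr[N,\partial N]=0$ in $H_{2d-j}(N,\partial N;\Qq)$ for the inclusion $i\colon N\hookrightarrow M_\Delta$. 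Lefschetz duality for $N$ forces $i^*\beta=0$, hence $\beta\in\ker i^*=\mathrm{Im}\,j^*$ by exactness of \eqref{eq:long exact}. Comparing with the descriptions of $H^*(M_\Delta;\Qq)$ and of $\mathrm{Im}\,j^*=\mathrm{Ker}\,\Phi_3$ from Theorem \ref{thm:coho of toric over buchs} and Remark \ref{rem:ker=im} then identifies this radical, in the relevant degrees, with $\II$. Finally, Lemma \ref{lem:product vanishing} makes $\II$ an ideal, so $\Aa=H^*(M_\Delta;\Qq)/\II$ is a graded $\Qq$‑algebra with $\Aa^0\cong\Qq$ (as $i^*$ is injective in degree $0$) and $\Aa^{2d}\cong H^{2d}(M_\Delta;\Qq)\cong\Qq$; for each $j$ the cup product descends to $\Aa^j\otimes\Aa^{2d-j}\to\Aa^{2d}\cong\Qq$, which by the radical computation (applied in degrees $j$ and $2d-j$, using graded‑commutativity) is perfect. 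Hence $\Aa$ is a Poincar\'e duality algebra.

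The main obstacle is the converse inclusion in the radical computation: one must transport the cap product correctly through the excision isomorphism $H_*(M_\Delta,\CC\Delta\times T^d;\Qq)\cong H_*(N,\partial N;\Qq)$ and invoke Lefschetz duality for the merely rational‑homology manifold‑with‑boundary $N$ — and this is exactly the point where the hypothesis that $\Delta$ is an orientable rational homology manifold without boundary is used, since that hypothesis is what makes $N$ an oriented rational homology $2d$‑manifold with boundary $\Delta\times T^d$. A secondary, purely bookkeeping, obstacle is matching the degreewise pieces of $\mathrm{Im}\,j^*$ against the definition of $\II$, for which the spectral‑sequence computations of §\ref{subsec:buchsbaum} (Lemma \ref{lem:E3-page} and formula \eqref{eq:kernel}) are needed.
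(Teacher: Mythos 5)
Your proposal is correct and is essentially the paper's own argument in dual form: both proofs rest on the decomposition \eqref{eq:decompositon}, excision from $(M_\Delta,\CC\Delta\times T^d)$ to the pair $(M_2,\partial M_2)=(N,\partial N)$, Lefschetz duality for that compact orientable rational homology manifold with boundary, and exactness of \eqref{eq:long exact}, the only difference being that you phrase it as computing the radical of the pairing against a fundamental class $[M_\Delta]$ via cap products, whereas the paper runs the same steps through the relative cup-product diagram \eqref{eq:lefs dual diagram}. The one caveat --- that $\II$ as literally defined contains only the even-degree part of $\mathrm{Im}\,j^*$, so your claim of perfection ``for each $j$'' should be read in even degrees (equivalently, with $\II$ replaced by all of $\mathrm{Im}\,j^*$ below the top degree) --- is an imprecision inherited from the statement itself and is present in the paper's proof as well.
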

	\begin{proof}
		Since $\Delta$ is connected and orientable, it follows from Lemma \ref{lem:E3-page} and Theorem \ref{thm:coho of toric over buchs} that the top cohomology group of $M_\Delta$ is $H^{2d}(M_\Delta)=\Qq$. So it suffices to show that for any $0\neq\alpha\in \Aa_{2k}$ with $k<d$, there exists $\beta\in\Aa_{2d-2k}$, such that $\beta\alpha\neq0.$
		
		Let $M_1=\CC\Delta\times T^d$, $M_2={I\times\Delta\times T^d}/\sim$ in the decomposition formula \eqref{eq:decompositon}. Consider the following commutative diagram:
		\begin{equation}\label{eq:lefs dual diagram}
			\begin{gathered}
				\xymatrix{
					H^{2d-2k}(M_\Delta,M_1)\otimes H^{2k}(M_\Delta) \ar@<-6ex>[d]_-{\cong}^-{i^*} \ar@<10ex>[d]^-{i^*} \ar[r]^-{\ssm} & H^{2d}(M_\Delta,M_1) \ar[d]_-{\cong}^-{i^*}\\
					H^{2d-2k}(M_2,\partial M_2)\otimes H^{2k}(M_2)  \ar[r]^-{\ssm} & H^{2d}(M_2,\partial M_2)}
			\end{gathered}
		\end{equation}
		The vertical isomorphisms come from excision.
		
		Suppose $\alpha\in H^{2k}(M_\Delta)$ with $k<d$, such that its image is not zero in $\Aa_{2k}$, i.e., $\alpha\not\in \mathrm{Im}\,j^*$, then by the exactness of \eqref{eq:long exact}, $0\neq i^*(\alpha)\in H^{2k}(M_2)$.
		Since $M_2$ is an orientable manifold with boundary,
		the Lefschetz duality of $(M_2,\partial M_2)$ tells us that there exists $\beta\in H^{2d-2k}(M_2,\partial M_2)$ such that $\beta\ssm i^*(\alpha)\neq0$.
		Let $\beta'=(i^*)^{-1}(\beta)$.
		Diagram \eqref{eq:lefs dual diagram} shows that $\beta'\ssm \alpha\neq0$.
		Clearly the map $f^*:H^{2d}(M_\Delta,M_1)\to H^{2d}(M_\Delta)$ is an isomorphism. So $f^*(\beta')\ssm \alpha=f^*(\beta'\ssm \alpha)\neq 0$, and we get the desired element $f^*(\beta')$.
	\end{proof}
	\begin{rem}
		Suppose $[M_\Delta]$ is a generator of $H_{2d}(M_\Delta;\Qq)\cong\Qq$. Then for a face monomial $\xx_\sigma\in \Aa$, we have
		\[[M_\Delta]\sfr\xx_\sigma=\Qq\cdot(\rho_\sigma)_*([N_\sigma]),\]
		where $[N_\sigma]$ is the rational fundamental class of $N_\sigma$.
		This can be proved in the same way as Lemma \ref{lem:duality}.
	\end{rem}
	
	In \S \ref{subsec:buchsbaum} we have already seen that if $\Delta$ is a Buchsbaum complex, $\mathrm{Ker}\,\Phi_3=\mathrm{Im}\,j^*$, $\RR=\mathrm{Ker}\,\Phi^{*,>0}_3$ and $\mathrm{Soc}(\Qq[\Delta]/\Theta)_{2k}\supset \mathrm{Ker}\,\Phi^{2k,0}_3=\tbinom{d}{k}\w H^{k-1}(\Delta;\Qq)$. So Theorem \ref{thm:poincare duality} implies that if $\Delta$ is an orientable rational homology manifold, then
	\[\begin{split}
		&\dim\mathrm{Soc}(\Qq[\Delta]/\Theta)_{2k}=\tbinom{d}{k}\w\beta_{k-1}(\Delta),\\
		&\dim\Aa_{2k}=h_k(\Delta)-\tbinom{d}{k}\sum_{i=0}^{k-1}(-1)^i\w \beta_{k-i-1}(\Delta),\quad \text{and}\\
		&\Aa\cong \Qq[\Delta]/(\Theta+I),\quad\text{where }I=\bigoplus_{k=1}^{d-1}\mathrm{Soc}(\Qq[\Delta]/\Theta)_{2k}.
	\end{split}\]
	
	So in fields of characteristic zero, Theorem \ref{thm:poincare duality} is a topological explanation of the following important result of Novik and Swartz:
	\begin{thm}[\cite{NS09b}]\label{thm:socle of manifolds}
		Let $\Delta$ be a $(d-1)$-dimensional connected simplicial complex, $\kk$ an infinite field. If  $\Delta$ is an
		orientable $\kk$-homology manifold without boundary, then for any l.s.o.p. $\Theta$ for $\kk[\Delta]$.
		\[\dim\mathrm{Soc}(\kk[\Delta]/\Theta)_{2i}=\tbinom{d}{i}\w\beta_{i-1}(\Delta;\kk).\]
		Moreover, let $I=\bigoplus_{i=1}^{d-1}\mathrm{Soc}(\kk[\Delta]/\Theta)_{2i}$, then $\kk[\Delta]/(\Theta+I)$ is a Poincar\'e duality $\kk$-algebra.
	\end{thm}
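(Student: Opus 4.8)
The plan is to deduce this theorem in characteristic zero from the toric package of \S\ref{subsec:buchsbaum}, and to invoke \cite{NS09b} for the remaining characteristics. So I first take $\kk=\Qq$ (a general field of characteristic zero changes nothing, since reduced Betti numbers and socle dimensions are invariant under field extension). As $\Delta$ is a rational homology manifold it is Buchsbaum over $\Qq$, so Theorem \ref{thm:coho of toric over buchs} gives $H^*(M_\Delta;\Qq)\cong\RR\oplus\Qq[\Delta]/\Theta$ with $\RR$ of trivial multiplication, and Remark \ref{rem:ker=im} together with \eqref{eq:kernel} yields $(\mathrm{Im}\,j^*)_{2k}=\RR_{2k}\oplus\big(\tbinom{d}{k}\w H^{k-1}(\Delta;\Qq)\big)$, the second summand sitting inside $\mathrm{Soc}(\Qq[\Delta]/\Theta)_{2k}$ by Lemma \ref{lem:product vanishing} (this is Proposition \ref{prop:socle}). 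Hence the ideal $\II$ of Theorem \ref{thm:poincare duality} equals, in each degree $2k$ with $0<k<d$, precisely $\RR_{2k}\oplus\big(\tbinom{d}{k}\w H^{k-1}(\Delta;\Qq)\big)$.

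The next step is to upgrade Proposition \ref{prop:socle} to an equality using the Poincar\'e duality of $\Aa=H^*(M_\Delta)/\II$ supplied by Theorem \ref{thm:poincare duality} (available because $\Delta$ is connected and orientable). Suppose $x\in\mathrm{Soc}(\Qq[\Delta]/\Theta)_{2k}$ with $0<k<d$ and $x\notin(\mathrm{Im}\,j^*)_{2k}$; then $\bar x\ne 0$ in $\Aa_{2k}$, so by duality some $\bar y\in\Aa_{2d-2k}$ satisfies $\bar x\bar y\ne 0$ in $\Aa_{2d}=H^{2d}(M_\Delta)=\Qq$ (here $\II$ vanishes in degree $2d$ because $k\le d-1$). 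Since $\RR$ annihilates $H^*(M_\Delta)$, this product is computed inside $\Qq[\Delta]/\Theta$ from a lift $y\in(\Qq[\Delta]/\Theta)_{2d-2k}$, so $xy\ne 0$; but $2d-2k>0$ and $x$ lies in the socle, a contradiction. Thus $\mathrm{Soc}(\Qq[\Delta]/\Theta)_{2k}\subset(\mathrm{Im}\,j^*)_{2k}$, and intersecting with the $\Qq[\Delta]/\Theta$ summand gives $\mathrm{Soc}(\Qq[\Delta]/\Theta)_{2k}=\tbinom{d}{k}\w H^{k-1}(\Delta;\Qq)$ for $0<k<d$, of dimension $\tbinom{d}{k}\w\beta_{k-1}(\Delta)$; the case $k=d$ is the statement $\dim(\Qq[\Delta]/\Theta)_{2d}=\w\beta_{d-1}(\Delta)=1$, already contained in the proof of Theorem \ref{thm:poincare duality}. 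Because socle elements are annihilated by the irrelevant ideal, the ideal $I=\bigoplus_{i=1}^{d-1}\mathrm{Soc}(\Qq[\Delta]/\Theta)_{2i}$ is exactly $\bigoplus_{i=1}^{d-1}\big((\mathrm{Im}\,j^*)_{2i}\cap\Qq[\Delta]/\Theta\big)$, so $\Aa\cong\Qq[\Delta]/(\Theta+I)$ as graded algebras; Theorem \ref{thm:poincare duality} then says this is a Poincar\'e duality algebra.

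For an arbitrary infinite field $\kk$ the toric model $M_\Delta$ exists only rationally, so the argument above settles the characteristic-zero case only. For general $\kk$ I would cite \cite{NS09b}, noting that the homology-manifold hypothesis forces $\kk[\Delta]$ to be Buchsbaum over $\kk$, so that the Buchsbaum analogue of Proposition \ref{prop:socle} gives $\dim\mathrm{Soc}(\kk[\Delta]/\Theta)_{2i}\ge\tbinom{d}{i}\w\beta_{i-1}(\Delta;\kk)$ unconditionally, while the reverse inequality and the Poincar\'e duality of $\kk[\Delta]/(\Theta+I)$ are the substance of Novik--Swartz's algebraic proof.

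The main obstacle is precisely this last passage: the construction of $M(\Delta,\mathit{\Lambda})$, the Borel-construction spectral sequence of \S\ref{subsec:buchsbaum}, and all the attendant (co)homology computations are carried out over $\Qq$, so the topological method genuinely reaches only characteristic zero; a field-independent version would need a torsion-sensitive replacement of Proposition \ref{prop:equi toric} and Theorem \ref{thm:coho of toric over buchs}, which these tools do not provide. A smaller point to check carefully is that the product on $H^*(M_\Delta;\Qq)$, restricted to the $\Qq[\Delta]/\Theta$ summand, is the face-ring product, so that pairings in $\Aa$ can be read off inside $\Qq[\Delta]/\Theta$; this is immediate from Theorem \ref{thm:coho of toric over buchs} but is used at the core of the duality step.
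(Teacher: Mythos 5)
Your proposal matches the paper's own treatment of this statement: the paper likewise presents it as Novik--Swartz's theorem \cite{NS09b} and only derives the characteristic-zero case topologically, exactly as you do, from Remark \ref{rem:ker=im}, formula \eqref{eq:kernel}, Proposition \ref{prop:socle}, Lemma \ref{lem:product vanishing} and the Poincar\'e duality of $\Aa$ in Theorem \ref{thm:poincare duality}; your argument that a socle element outside $\mathrm{Im}\,j^*$ would violate that duality is precisely the step the paper leaves implicit in the paragraph preceding the theorem. The one loose point (shared with the paper, and harmless since you invoke \cite{NS09b} for full generality anyway) is the passage from an arbitrary characteristic-zero field and arbitrary l.s.o.p.\ to the rational/characteristic-matrix setting, which your field-extension remark does not quite justify on its own.
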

	
	\appendix
	\section{Some topological facts about toric spaces}
	\subsection{Proof of Theorem \ref{thm:cohomology of orbifold}}\label{appdx:A1}
	The cohomology with coefficients in $\Qq$ will be implicit throughout the proof.
	First, we consider the case that $\mathit{\Lambda}:\Zz^m\to\Zz^d$ is onto. In this case, $M(\Delta,\mathit{\Lambda})=\ZZ_\Delta/T^{m-d}$, where $T^{m-d}$ is the kernel of the tori map $\exp\mathit{\Lambda}:T^m\to T^d$.
	Using the notation in subsection \ref{subsec:m-a}, we have a $T^m$-subspace $B_\sigma\subset \ZZ_\Delta$ for each $\sigma\in\Delta$.
	By definition, if $\dim\sigma=k-1$, $B_\sigma=(D^2)^k_\sigma\times T^{m-k}_{[m]\setminus\sigma}$. Let
	\[C_\sigma=\{(z_1,\dots,z_m)\in(D^2)^m:z_i=0 \text{ for } i\in\sigma\}.\]
	Then $C_\sigma\cong T^{m-k}$ is a $T^m$-invariant subspace of $B_\sigma$. Since $C_\sigma$ is a deformation retract of $B_\sigma$,
	$C_\sigma/T^{m-d}=T^{d-k}$ is a deformation retract of $B_\sigma/T^{m-d}$. It follows that the composition $T^{m-d}\to B_\sigma\to B_\sigma/T^{m-d}$ induces an isomorphism
	\[H^*(B_\sigma/T^{m-d})\otimes H^*(T^{m-d})\cong H^*(B_\sigma).\]
	
	On the other hand, applying the Leray-Hirsch theorem to the fiber bundle $T^{m-d}\to ET^m\times B_\sigma\to ET^m\times_{T^{m-d}}B_\sigma$, we get a $H^*(ET^m\times_{T^{m-d}}B_\sigma)$-module isomorphism:
	\[H^*(ET^m\times_{T^{m-d}}B_\sigma)\otimes H^*(T^{m-d})\cong H^*(ET^m\times B_\sigma).\]
	Hence from a commutative diagram argument it follows that there is a ring isomorphism
	\[p_\sigma^*:H^*(B_\sigma/T^{m-d})\to H^*(ET^m\times_{T^{m-d}}B_\sigma),\]
	which is induced by the restriction $p_\sigma:ET^m\times_{T^{m-d}}B_\sigma\to B_\sigma/T^{m-d}$ of $p$.
	
	Now we can get the desired cohomology isomorphism by double induction on the number of facets of $\Delta$ and $\dim\Delta$.
	We proceed the inductive argument by applying Mayer-Vietoris sequences, and the base of the induction is given above.
	
	For the general case that $M(\Delta,\mathit{\Lambda})=\ZZ_\Delta/(T^{m-d}\times G)$, consider the quotient map
	$\pi:\ZZ_\Delta/T^{m-d}\to M(\Delta,\mathit{\Lambda})$.
	We only need to show that $\pi^*:H^*(M(\Delta,\mathit{\Lambda}))\to H^*(\ZZ_\Delta/T^{m-d})$ is an isomorphism.
	Note that the $G$-action on $\ZZ_\Delta/T^{m-d}$ extends to a toral action. Thus, $H^*(\ZZ_\Delta/T^{m-d})$ is fixed under the induced $G$-action.
	Recall the classical result for a finite $G$-action: $G\times X\to X$, that is
	\[\pi^*:H^*(X/G;\Qq)\to H^*(X;\Qq)^G:=\{x\in H^*(X;\Qq):gx=x\}\]
	is a ring isomorphism (see for example \cite{Bor60}). Then the theorem follows. \qed
	\subsection{Algebraic model for cellular cochains}\label{appdx:cell rep}
	In this subsection, we prove the promised statement in the proof of Lemma \ref{lem:duality}. That is, for a $(k-1)$-face $\sigma\in\Delta$,
	the monomial $\xx_\sigma\in\Qq[\Delta]/\Theta=H^*(M_\Delta;\Qq)$ is, up to multiplication by an integer, represented by a cocycle $\tilde e^*_\sigma\in C^{2k}(M_\Delta;\Qq)$. First, we need to know the algebraic models for the cellular cochain algebras $C^*(\ZZ_\Delta)$, $C^*(ET^m)$, $C^*(BT^m)$, $C^*(ET^m\times_{T^m}\ZZ_\Delta)$, etc.
	
	Recall that in \cite[\S 4.5]{BP15}, $S^\infty$ is given a cell decomposition with one cell in each dimension; the boundary of an even cell is the
	closure of an odd cell, and the boundary of an odd cell is the $0$-cell. Thus, the cellular cochain complex of $S^\infty$ can be identified with
	the Koszul algebra
	\[\Lambda[y]\otimes\Zz[u],\quad \mathrm{deg}\,y=1,\ \mathrm{deg}\,u=2,\ dy=u,\ du=0.\]
	Similarly, $\Cc P^\infty$ has a cell decomposition with one cell in each even dimension, and $C^*(\Cc P^\infty)$ can be identified with the polynomial algebra $\Zz[u]$. It follows that the cochain homomorphism $C^*(BT^m)\to C^*(ET^m)$ induced by the universal principal $T^m$-bundle $ET^m\to BT^m$ has an algebraic model of the form
	\[\Zz[u_1,\dots,u_m]\to\Lambda[y_1\dots,y_m]\otimes\Zz[u_1,\dots,u_m].\]
	
	On the other hand, recall the Koszul complex $(\Lambda[y_1\dots,y_m]\otimes\Zz[\Delta],d)$ of the face ring  defined in subsection \ref{subsec:cohom m-a}.
	It is an algebraic model of $C^*(\ZZ_\Delta)$. Precisely, the cochain map
	\[\Lambda[y_1\dots,y_m]\otimes\Zz[\Delta]\to C^*(\ZZ_\Delta),\quad y_i\mapsto t_i^*;\ x_i\mapsto e_i^*\]
	induces an isomorphism in cohomology.
	It follows that the differential graded algebra
	\[
	\RR=\Zz[u_1,\dots,u_m]\otimes\Lambda[y_1\dots,y_m]\otimes\Zz[\Delta],\quad dy_i=u_i-x_i
	\]
	is an algebraic model of $C^*(ET^m\times_{T^m}\ZZ_\Delta)$ (see for example \cite[Theorem 12.6.1]{Nei10}).
	This has the consequence that for a $(k-1)$-face $\sigma\in\Delta$, the monomial $\xx_\sigma\in\Zz[\Delta]=H^*(ET^m\times_{T^m}\ZZ_\Delta)$ can be represented by the cocycle \[(pt, e_\sigma)^*\in C^{2k}(ET^m\times_{T^m}\ZZ_\Delta).\]
	
	Now consider the fibration sequence
	\[ET^{m-d}\times_{T^{m-d}}\ZZ_\Delta\xr{i} ET^m\times_{T^m}\ZZ_\Delta\xr{\pi} BT^d.\]
	From the proof of Proposition \ref{prop:equi toric} we know that the fiber inclusion map $i$ induces a homomorphism of rational cohomology $i^*:\Qq[\Delta]\to\Qq[\Delta]/\Theta$. Since $i^*((pt, e_\sigma)^*)=(pt, e_\sigma)^*$,  $\xx_\sigma\in \Qq[\Delta]/\Theta=H^*(ET^{m-d}\times_{T^{m-d}}\ZZ_\Delta;\Qq)$ can also be represented by the cocycle \[(pt, e_\sigma)^*\in C^{2k}(ET^{m-d}\times_{T^{m-d}}\ZZ_\Delta;\Qq).\]
	
	On the other hand, for the `orbit cell' $\tilde e_\sigma$, the cellular cochain $\tilde e_\sigma^*\in C^{2k}(M_\Delta)$ satisfies that
	$p^*(\tilde e^*_\sigma)=(pt, e_\sigma)^*$ up to  multiplication by an integer, where $p:ET^{m-d}\times_{T^{m-d}}\ZZ_\Delta\to M_\Delta$ is the quotient map. Since $p^*$ is an isomorphism on rational cohomology by Theorem \ref{thm:cohomology of orbifold},
	the assertion follows.
	\subsection{Proof of Lemma \ref{lem:cokernel}}\label{appdx:proof of cokernel lemma}
	Let $\Delta'$ be the barycentric subdivision of $\Delta$. Recall that the relation `$\sim$' in $\Delta\times T^d/\sim$ is defined by means of the polyhedral decomposition
	\[\Delta'=\bigcup_{i\in\FF_0(\Delta)}\mathrm{st}_i\Delta'.\]
	Thus, $\mathscr{U}=\{\UU_i:={\mathrm{st}_i\Delta'\times T^d}/\sim\}_{i\in\FF_0(\Delta)}$ can be viewed as an `open' cover of ${\Delta\times T^d}/\sim$.
	For a subset $\sigma=\{i_1,\dots,i_k\}\subset[m]$, let $\UU_\sigma$ denote the intersection $\UU_{i_1}\cap\cdots\cap\UU_{i_k}$. There are some obvious facts:
	
	(i) $\UU_\sigma\neq\varnothing$ if and only if $\sigma\in\Delta$.
	
	(ii) For $\sigma\in\Delta$, $\UU_\sigma$ is a toric space associated to the geometric realization of the
	poset $\Delta_{>\sigma}=\{\tau\in K:\tau>\sigma\}$. This geometric realization, which we denote by $L_\sigma$, is a subcomplex of $\Delta'$ and combinatorially equivalent to $\mathrm{lk}_\sigma\Delta$. Precisely,
	\[\UU_\sigma={F_\sigma \times T^{d-|\sigma|}}/\sim,\]
	where $F_\sigma\subset \Delta'$ is the geometric realization of the
	poset $\Delta_{\geqslant\sigma}$ as defined in \S \ref{subsec:m-a},  $T^{d-|\sigma|}=T^d/(T_{i_1}\times\cdots\times T_{i_k})$.
	
	Now consider the \v Cech double complex
	\[
	\begin{split}
		(\KK^*,d)=\bigoplus_k\KK^k=\bigoplus_k\bigoplus_{p+q=k}(\KK^{p,q},\partial,\delta),\\
		\KK^{p,q}=\bigoplus_{\sigma\in\FF_p(\Delta)}C^q(\UU_\sigma,\partial;\Qq),\quad d=\delta+(-1)^p\partial,
	\end{split}
	\]
	where $C^*(\UU_\sigma,\partial;\Qq)$ is the rational cellular cochain complex of $\UU_\sigma$ and $\delta$ is the \v  Cech coboundary operator.
	There are two spectral sequences converging to the total cohomology $H^*(\KK^*,d)$. One spectral
	sequence starts with $\mathrm{^I}E_1=H_\delta$ and $\mathrm{^I}E_2=H_\partial H_\delta$, and another with $\mathrm{^{II}}E_1=H_\partial$ and $\mathrm{^{II}}E_2=H_\delta H_\partial $. (The second one is also known as the \emph{Mayer-Vietoris spectral Sequence}.)
	
	Since $\mathscr{U}$ is an open cover, \[\mathrm{^{I}}E_1=\mathrm{^{I}}E_1^{0,*}=C^*({\Delta\times T^d}/\sim,\partial;\Qq).\]
	Hence, the first spectral sequence collapses at the $E_1$-term and therefore
	\[H^*(\KK^*,d)\cong H^*({\Delta\times T^d}/\sim;\Qq).\]
	For the second spectral sequence, we have $\mathrm{^{II}}E_1^{p,q}=0$ for $q$ odd, and
	\[\mathrm{^{II}}E_1^{p,2q}=\bigoplus_{\sigma\in\FF_p}H^{2q}(\UU_\sigma;\Qq).\]
	This is because $\UU_\sigma$ is a toric space associated ot $L_\sigma$, which is a Cohen-Macaulay complex by the assumption that $\Delta$ is a Buchsbaum complex.
	
	It follows from Appendix \ref{appdx:cell rep} that
	a cohomology class of $H^{2q}(\UU_\sigma;\Qq)$ can be represented by the cellular cochain $\sum k_\varrho\tilde  e^*_\varrho,$
	where $\varrho\in L_\sigma$ is a simplex of the form $\varrho =(\sigma_1<\cdots<\sigma_q)$ with $\sigma_i\in\Delta_{>\sigma}$, $\tilde e^*_\varrho$ is the `orbit cell' corresponding to the cell $e_\varrho=e^2_{\sigma_1}\times\cdots\times e^2_{\sigma_q}\subset\ZZ_{L_\sigma}$.
	An easy topological observation shows that $e_\varrho$ is the image of $(\sigma<\sigma_1<\cdots<\sigma_q)\times T^q_\varrho$ under the quotient map $F_\sigma\times T^k\to\ZZ_{L_\sigma}$, where $k=|\FF_0(L_\sigma)|$. Hence,
	\[\tilde e_\varrho=\pi((\sigma<\sigma_1<\cdots<\sigma_q)\times T^q_\varrho),\quad\text{where } \pi:F_\sigma\times T^{d-|\sigma|}\to\UU_\sigma.\]
	
	Since $\mathrm{^{II}}E_\infty\cong H^*({\Delta\times T^d}/\sim;\Qq)$, and the analysis  above shows that an element $\alpha\in \mathrm{^{II}}E_\infty^{p,2q}$ is represented by the linear combination of the dual orbit cells $\tilde e^*_\varrho$ with $\varrho\in\FF_{q-1}(L_\sigma)$ for some $\sigma\in\FF_p(\Delta)$, a diagram chasing in the double  complex $\KK^{*,*}$  shows that $\alpha$ is represented by a cocycle $\beta\in C^{p+2q}({\Delta'\times T^d}/\sim)$ of the form
	\[\begin{split}
		\beta&=\sum_{\tau\in\FF_{p+q}(\Delta')}k_\tau\tilde c^*_\tau,\quad \text{where $\tilde c_\tau\subset {\Delta'\times T^d}/\sim$ has the form}\\
		\tilde c_\tau&=\pi((\tau_1<\cdots<\tau_p<\sigma<\sigma_1\cdots<\sigma_q)\times T^q_{\varrho}).
	\end{split}\]
	So $\pi^*(\beta)\in H^{p+q}(\Delta';\Qq)\otimes H^q(T^d;\Qq)$, and therefore lemma \ref{lem:cokernel} holds. \qed
	
	\bibliography{M-A}
	\bibliographystyle{amsplain}
	
\end{document}